\numberwithin{equation}{section}
\def\today{\ifcase\month\or
 Jan\or Febr\or  Mar\or  Apr\or May\or Jun\or  Jul\or
 Aug\or  Sep\or  Oct\or Nov\or  Dec\or\fi
 \space\number\day, \number\year}
\newtheorem{theorem}{Theorem}[section]
\newtheorem{lemma}[theorem]{Lemma}
\newtheorem{proposition}[theorem]{Proposition}
\newtheorem{corollary}[theorem]{Corollary}
\newtheorem{conjecture}[theorem]{Conjecture}
\newtheorem{mainconjecture}[theorem]{Main Conjecture}
\newtheorem{definition-lemma}[theorem]{Definition-Lemma}
\theoremstyle{definition}
\newtheorem{definition}[theorem]{\bf Definition}
\newtheorem{example}[theorem]{\bf Example}
\theoremstyle{remark}
\newtheorem{remark}[theorem]{\bf Remark}
\newtheorem{notation}[theorem]{\bf Notation}
\newcommand{\CC}{\mathbb C}
\newcommand{\FF}{\mathbb F}
\newcommand{\GG}{\mathbb G}
\newcommand{\LL}{\mathbb L}
\newcommand{\PP}{\mathbb P}
\newcommand{\QQ}{\mathbb Q}
\newcommand{\QQl}{\overline{\mathbb Q}_{\ell}}
\newcommand{\RR}{\mathbb R}
\newcommand{\VV}{\mathbb V}
\newcommand{\WW}{\mathbb W}
\newcommand{\ZZ}{\mathbb Z}
\newcommand{\bA}{\mathbb A}
\newcommand{\cW}{\mathcal W}
\newcommand{\cO}{\mathcal O}
\newcommand{\cX}{\mathcal X}
\newcommand{\fS}{\mathfrak S}
\newcommand{\GL}{\mathrm{GL}}
\newcommand{\Sym}{\mathrm{Sym}}
\newcommand{\Gal}{\mathrm{Gal}}
\newcommand{\GaSq}{\Gamma[\sqrt{-3}]}
\newcommand{\GaSqOne}{\Gamma_1[\sqrt{-3}]}
\newcommand{\Tr}{\mathrm{Tr}}
\newcommand{\pp}{\mathfrak{p}}
\begin{document}

\title[]{Picard modular forms and \\  the cohomology of local systems \\ on a Picard modular surface} 

\author{Jonas Bergstr\"om}
\address{Matematiska Institutionen, Stockholms Universitet,
SE-106 91 Stockholm, Sweden}
\email{jonasb@math.su.se}

\author{Gerard van der Geer}
\address{Korteweg-de Vries Instituut, Universiteit van Amsterdam, Science \newline Park 904, 1098 XH Amsterdam, The Netherlands.}
\email{g.b.m.vandergeer@uva.nl}

\subjclass[2010]{11F03, 11G18, 11G20, 14H10, 14G35, 14J15 } 

\begin{abstract}
We formulate a detailed conjectural 
Eichler-Shimura type 
formula for the cohomology of local systems on a Picard modular
surface associated to the group of unitary similitudes $\mathrm{GU}(2,1,\QQ(\sqrt{-3}))$. 
The formula is based on counting points over finite fields
on curves of genus three which are cyclic triple covers of the projective line. 
Assuming the conjecture we are able to calculate traces of Hecke
operators on spaces of Picard modular forms. We provide ample evidence
for the conjectural formula. 

Along the way we prove new results on characteristic polynomials of Frobenius acting on the first cohomology group of cyclic triple covers of any genus, dimension formulas for spaces of Picard modular forms and formulas for the numerical Euler characteristics of the local systems.  
\end{abstract}

\maketitle

\setcounter{tocdepth}{1}

\begin{section}{Introduction}
In his 1963 paper \cite{Shimura} Shimura listed a number of arithmetic ball quotients that are rational
and that parametrize 
Jacobians of finite covers of the projective line.
This paper deals with one of these cases and tries to use the link with
the moduli of curves to study the modular forms on one of these ball quotients.
The case at hand is the 2-dimensional ball quotient studied by Picard 
in the 1880s (\cite{P1,P2,P3})
associated to the unitary group in three variables $U(2,1)$ over the field 
$F={\QQ}(\sqrt{-3})$. It parametrizes curves of genus $3$ that are cyclic covers 
of degree $3$ of the projective line. Around 1979 Shintani considered  vector-valued Picard
modular forms on such unitary groups in three variables and gave a criterion
for such modular forms to be Hecke eigenforms, see \cite{Shintani}. 
The volume \cite{L-R} is devoted to showing that the $L$-function of a
Picard modular surface is the product of automorphic $L$-functions. 
But though the literature on automorphic
forms on unitary groups is extensive explicit examples are rare. Holzapfel and Feustel
studied the rings of scalar-valued modular forms on the group in question, \cite{Ho1,Fe}, 
and Finis gave 
in \cite{Fi} a list of Hecke eigenforms of weight $\leq 12$ and gave a few Hecke eigenvalues.

We decided to use the interpretation of this ball quotient as a Hurwitz space 
of cyclic triple covers to investigate the traces of Hecke operators using the cohomology
of local systems on this moduli space. By counting points on the curves in our family
over finite fields we are able to calculate the traces of Frobenius acting on the
local systems associated to the cohomology of these curves. We follow the approach 
initiated in the papers \cite{FvdG,BFvdG} dealing with Siegel modular forms of degree
$2$ and $3$. From the traces of Frobenius on the \'etale cohomology of our local systems
we try to calculate the traces of the Hecke operators on the spaces of vector-valued
Picard modular cusp forms. 

In the case of modular forms on ${\rm SL}(2,{\ZZ})$ the basic formula,
essentially due to Deligne, expresses the compactly supported 
cohomology of a local system $\mathbb V_k$ on the
moduli $\mathcal{A}_1$ of elliptic curves in terms of the motive $S[k+2]$
of modular forms of weight $k+2$ by an Eichler-Shimura type formula
$$
e_c(\mathcal{A}_1, \mathbb V_k)=-S[k+2]-1\, .
$$
The main goal of this paper is to provide a detailed conjectural analogue
of this formula for Picard modular forms for $F$; this formula takes the
form
$$
e_c(\mathcal{X}_{\Gamma_1[\sqrt{-3}]},{\WW}_{\lambda})=
\breve{S}[n(\lambda)]+e_{\rm extr}(\lambda) \, ,
$$
where ${\WW}_{\lambda}$ is a local system, the term $\breve{S}[n(\lambda)]$ is the contribution of genuine Picard
modular forms (that have $3$-dimensional Galois representations) of weight $n(\lambda)$ and $e_{\rm extr}(\lambda)$ 
is a correction term and the analogue of $1$ in the earlier formula, but
rather complicated due to lifts from smaller groups. 

To determine the correction term $e_{\rm extr}(\lambda)$  we need
to substract the contribution of the boundary, the so-called Eisenstein cohomology,
essentially determined by Harder \cite{Ha1}.
Since we are interested in the genuine Picard
modular eigenforms, the forms that are not lifts from smaller groups and come with
$3$-dimensional Galois representations, we also need to subtract the so-called endoscopic
terms. In the case at hand there is a multitude of endoscopic terms and by analyzing
the traces that we computed we were able to make a detailed conjectural 
description of all the endoscopic contributions. Subtracting the Eisenstein contribution
and the conjectured endoscopic terms we find heuristically 
the traces of the Hecke operators on the spaces of genuine Picard modular forms.
These are Picard modular forms on the congruence subgroup of level $\sqrt{-3}$
and thus there is a symmetry group, equal to the symmetric group 
$\mathfrak{S}_4$, acting. More precisely, we get traces of Hecke operators $T_{\nu}$ on spaces of Picard modular forms of a given weight for primes $\nu$ in the ring of integers $\cO_F$ with norm $N(\nu)\equiv_3 1 $ in a equivariant way, that is, taking into account the $\mathfrak{S}_4$-isotypic parts.

The many terms appearing in the correction term
$e_{\rm extr}(\lambda)$ point to the difficulty of getting such
detailed results on Picard modular forms using trace formulas, cf.\ \cite{Keranen}.

In order to do this we need to be able to calculate the characteristic polynomial
of Frobenius on the \'etale cohomology $H^1(C_f,{\QQl})$ of a curve $C_f$ given
by $y^3=f(x)$ over a finite field in an efficient way. 
More precisely,  
we need the characteristic polynomial of Frobenius  on the part of the cohomology 
where the cyclic Galois automorphism $\alpha$ of order $3$ of $C_f$ acts by a given
third root of one. The formula that we give for the characteristic polynomial
for arbitrary genus generalizes a theorem of Gauss dealing with the case of genus~$1$. 

The conjectures in this article are based upon counts of curves together with their  characteristic polynomials of Frobenius for prime powers $q \equiv_3 1$ with $q \leq 67$. Using this data we can compute traces of Frobenius $F_q$ for \emph{any} local systems $\WW_{\lambda}$ (or it is at least computationally very inexpensive). We settled for those of Deligne weight at most $40$. In turn this (assuming our conjectures) gave the traces of $T(\nu)$ for $N(\nu) \leq 67$ on the corresponding spaces of Picard cusp forms.

To make such a computation over a finite field of $q$ elements, we need roughly $q$ operations for each curve (to compute the characteristic polynomial) and there are roughly $q^2$ points, i.e. curves, in our moduli space (since it is a surface). This tells us that it should be possible to make these computations for significantly larger $q$. 

The evidence that we have for the validity of our conjectures is manifold. 
In this paper we calculate the dimensions of the spaces of modular forms
and we calculate the numerical Euler characteristics of the cohomology of our local systems.
To begin with, our procedure for calculating the traces of Hecke operators always yields zero when the dimension of cusp forms is zero.  
Moreover, since we started this project about ten years ago
Fabien Cl\'ery and one of us guided by these heuristic data have constructed explicitly 
vector-valued
modular forms and the results thus obtained in \cite{C-vdG} agree with the conjectures. 
Another striking piece of evidence is provided by congruences of Harder type. Harder 
predicts congruences modulo primes appearing in the critical values 
of $L$-series of Hecke characters and we find quite a number of examples of such
congruences.

\bigskip
We now sketch the contents of this paper. After recalling the modular surfaces
and modular forms and Hecke operators and local systems, 
we treat the BGG complex for our Shimura
variety and use it to describe the Hodge structure on
the cohomology of our local systems. We use it to describe the Eisenstein
cohomology. After that we calculate the dimensions of spaces of cusp forms
on our groups using Riemann-Roch and the holomorphic Lefschetz formula. 
We then discuss the moduli of abelian threefolds with multiplication by $\cO_F$
and the moduli of curves of genus three with a cyclic Galois automorphism
of order three, including degenerations of such curves. We give a theorem describing the characteristic polynomial of Frobenius on the \'etale cohomology on cyclic triple covers
of the projective line. We introduce the Euler characteristics of our local systems
and explain how we carried out the counts on our curves 
over finite fields. We then state the conjectures on the endoscopic terms.
We conclude with many examples of our heuristic results on Picard modular forms
and explain the evidence for the correctness of these results. In particular we
list a number of congruences of Harder type. 

We intend to make our results available on a website in the style of \cite{website}.
\end{section}
\section*{Acknowledgements}
We like to thank Fabien Cl\'ery and G\"unter Harder for many discussions
 and Thomas Peternell for a proof of Prop.\ \ref{peternell}. The second author would like to thank the Stockholm University, YMSC of Tsinghua University and the University of Luxembourg for hospitality during work on this paper.

\tableofcontents
\begin{section}{Picard modular surfaces and Picard modular forms} \label{sec-PMS}
We will first introduce the complex fibres of our spaces 
using their interpretation as quotients of a complex 2-ball by an arithmetic group, 
together with the associated modular forms.  
\begin{subsection}{Picard modular groups and surfaces for the Eisenstein integers} 
\label{PMG}
Let $F$ be the number field ${\QQ}(\sqrt{-3})={\QQ}(\rho)$ with $\rho$ a third root 
of unity, and with ring of integers $\cO_F=\ZZ[\rho]$. 
Consider the vector space $V=F^3$ with non-degenerate hermitian
form $h(z_1,z_2,z_3)=z_1z_2^{\prime}+z_1^{\prime}z_2+z_3z_3^{\prime}$, 
where the prime refers to the Galois automorphism of $F$. 
Let $G$ be the corresponding algebraic group defined over ${\QQ}$  of similitudes of $h$
$$
G(\QQ)=\{ g \in \GL(3,F):  
\forall z \in V, \, h(gz)=\eta(g)h(z) \, \,\mathrm{with} \, \eta(g) \in \QQ \}\, . 
$$ 
We have that $\eta^3(g)=N_{F/{\QQ}}(\det (g))$ (with $N_{F/{\QQ}}$ the norm)
and $\eta$ defines a homomorphism $G \to {\GG}_m$ called the multiplier 
or similitude norm. This group is also denoted by $\mathrm{GU}(2,1,F)$ 
and it is called the group of unitary similitudes of signature $(2,1)$.
The group $ G^0=\ker (\eta)$, also denoted $\mathrm{U}(2,1,F)$, is the ordinary
unitary group and $G^0\cap \ker \det$, also denoted $\mathrm{SU}(2,1,F)$, 
is the special unitary group. 
If we do a base change to $F$ our group $G$ becomes isomorphic 
to $\GL(3,F) \times {\GG}_m$, where the last factor
corresponds to the multiplier~$\eta$. 

We identify the Picard modular group $G^0({\ZZ})$ with 
$\{ g \in {\GL}(3,\cO_F): h(gz)=h(z)\}$ 
and we use the notation
$$ 
\Gamma:=G^0({\ZZ}) \qquad {\rm and} \qquad \Gamma_1:= G^0({\ZZ}) \cap \ker \det \, . 
$$  

The following congruence subgroups play a central role in this paper:
$$ 
\Gamma[\sqrt{-3}]:=\{ g \in \Gamma : g \equiv 1 \, (\bmod \, \sqrt{-3}) \} 
$$
and
$$ 
\Gamma_1[\sqrt{-3}]:=\{ g \in \Gamma_1 : g \equiv 1 \, (\bmod {\sqrt{-3}}) \}\, . 
$$
Note that the center of $\Gamma_1$, $\Gamma_1[\sqrt{-3}]$ and 
$\Gamma[\sqrt{-3}]$ equals $\mu_3$, while the center of $\Gamma$ is $\mu_6$. 
There are isomorphisms (see also below) 
$$ 
\Gamma/\Gamma_1[\sqrt{-3}] \cong \fS_4 \times \mu_6,
\qquad 
\Gamma_1/\Gamma_1[\sqrt{-3}] \cong \fS_4, 
$$ 
where the symmetric group $\fS_4$ occurs as the special orthogonal group of the 
${\FF}_3$-vector space $\cO_F^3 \subset V$ modulo $(\sqrt{-3})$.

Choose an embedding $\sigma: F\to {\CC}$ and identify 
$F \otimes_{\QQ} {\RR}$ with ${\CC}$. With this identification we get a $3$-dimensional 
complex vector space $Z=V_{\RR}=V \otimes_{\QQ} {\RR}$ 
which is a hermitian space of signature $(2,1)$. We let $G(\RR)$ act on $V_{\RR}$ 
as the standard representation.  
The set of complex lines in $Z$ on which $h$ is negative definite 
$$ 
B:=\{ U \subset Z : \dim(U)=1, h_{|U} < 0 \} 
$$
gives us a complex $2$-ball inside ${\PP}(Z)={\PP}^2$. 
If we set $u=z_3/z_2$ and $v=z_1/z_2$ we find an 
explicit description of this ball, 
$$ B=\{ (u,v) \in {\CC}^2: v+\bar{v}+ u \bar{u}  < 0 \}\, . $$
Any element $g=(g_{ij})$ in $G^{+}({\RR}):=\{g \in  G({\RR}): \eta(g)>0 \}$ now acts on $B$ by 
$$ g \cdot (u,v) := \left(
\frac{g_{31}v+g_{32}+g_{33}u}{g_{21}v+g_{22}+g_{23}u} \, , 
\frac{g_{11}v+g_{12}+g_{13}u}{g_{21}v+g_{22}+g_{23}u} 
\right) \, . $$

All finite index subgroups $\Gamma_{*}$ of $\Gamma$ act properly discontinuously on $B$
and the complex quotient surface $\Gamma_{*} \backslash B$, 
denoted by $X_{\Gamma_{*}}$, is called a Picard modular surface. 
Such a quotient is not compact, but can be compactified by adding finitely many 
points, called cusps, which are the orbits of the group action on the set 
$\partial B \cap {\PP}^2(F)$ of rational points. 
This is called the Baily-Borel compactification 
and it will be denoted by $X^*_{\Gamma_{*}}$. 

In the specific cases that we consider, these Picard modular surfaces 
have been studied in 
detail by Holzapfel and Feustel
and most of the statements in this section can be found in \cite{Ho1,Ho2} (see also \cite{Fe}). 

The action of $\Gamma$ on $\partial B \cap {\PP}^2(F)$ has only one orbit
since the class number of $F$ is $1$, see \cite{Zink}. 
The group $\Gamma_1[\sqrt{-3}]$ has four cusps and 
the isomorphism 
$\Gamma/\Gamma_1[\sqrt{-3}]\cong \fS_4 \times \mu_6$ above is given
by $g \mapsto (\sigma(g),\det(g))$, where $\sigma(g)$ is the 
permutation of the four cusps. 

The action of $\Gamma_1[\sqrt{-3}]$ modulo its center on $B$ is not free, but
has three orbits of isolated fixed points.  
These three points become quotient singularities of the form 
$\CC^2/A$ with 
$A=\langle \mathrm{diag}(\rho,\rho^2) \rangle \subset \GL(2,\CC)$ 
on the surface $X_{\Gamma_1[\sqrt{-3}]}$. 
They can be resolved by a configuration of two non-singular rational curves with
self-intersection number $-2$ intersecting transversally in one point. 

The cusps of $X^*_{\Gamma_1[\sqrt{-3}]}$ are singular points and 
each cusp singularity can be resolved by an elliptic curve 
$E= {\CC}/ \sqrt{-3}\, \cO_F$. 
All these elliptic curves have self-intersection number $-3$. 
We number the cusps by $i=1,2,3,4$ and the resolution curves accordingly by $E_i$.

The smooth surface resulting from resolving the quotient and cusp
singularities of $X^*_{\Gamma_1[\sqrt{-3}]}$ is denoted by $Y_{\Gamma_1[\sqrt{-3}]}$. 
Both these spaces admit an action of $\fS_4 \times \mu_6$. 

\smallskip

We can define a modular curve on $Y_{\Gamma_1[\sqrt{-3}]}$ by considering the
embedding of the complex upper half-plane $\mathfrak{H}$ in $B$ by 
$\tau \mapsto (0,\sqrt{-3}\tau)$ with 
corresponding embedding of algebraic groups ${\GL}_2 \to G$ given by 
$$
\left( \begin{matrix} a & b \\ c & d \\ \end{matrix} \right)
\mapsto 
\left( \begin{matrix} a & \sqrt{-3} b & 0 \\ c/\sqrt{-3} & d & 0 \\
0 & 0 & ad-bc  \\ \end{matrix} \right).
$$
This defines an algebraic curve on $X_{\Gamma_1[\sqrt{-3}]}$ isomorphic to 
$\Gamma_0(3)\backslash \mathfrak{H}$. Its closure in $X^*_{\Gamma_1[\sqrt{-3}]}$ 
passes through two cusps. Applying the action of $\fS_4$ we get  
a curve $D_{ij}$ on $X^*_{\Gamma_1[\sqrt{-3}]}$ passing through the cusps 
$i$ and $j$ for each $1 \leq i < j \leq 4$. 

After blow-up, we get the following configuration of curves on 
$Y_{\Gamma_1[\sqrt{-3}]}$.

(i) Four elliptic curves $E_i$ $1 \leq i \leq 4$ with $E_i^2=-3$.

(ii) Six rational curves $D_{ij}$ intersecting $E_i$ and $E_j$ 
transversally. 

(iii) Three pairs of rational curves $R_{ij},R_{kl}$ with $\{i,j,k,l\}=
\{1,2,3,4\}$  resolving the quotient singularities, with 
$D_{ij} R_{ij}=1$ and $D_{ij}R_{k,l}=0$ if $\{k,l\}
\neq \{i,j\}$.

The surface $X^*_{\Gamma_1[\sqrt{-3}]}$ can be identified with the 
$3$-fold cover of the hyperplane $x_1+x_2+x_3+x_4=0$ in ${\PP}^3$ 
given by
\begin{equation}  \label{pmf-zeta}
\zeta^3= \prod_{1\leq i<j\leq 4} (x_i-x_j)\, 
\end{equation} 
with the action of 
$(\sigma,\pm \rho) \in \fS_4 \times \mu_6 \cong \Gamma/\Gamma_1[\sqrt{-3}] $ by 
$x_i \mapsto  x_{\sigma(i)}$ and 
$\zeta \mapsto \rho \zeta$.
The four cusps correspond to the points with $\zeta=0$ and 
$(x_1,x_2,x_3,x_4)=(1:1:1:-3)$, $(1:1:-3:1)$, $(1:-3:1:1)$, $(-3:1:1:1)$, 
and the remaining three singularities to points with $\zeta=0$ and 
$(x_1,x_2,x_3,x_4)=(1:1:-1:-1)$, $(1:-1:1:-1)$, $(1:1:1:-1)$.  
The curves $x_i=x_j$ give the images of the curves $D_{ij}$.

Taking the quotient by $\Gamma[\sqrt{-3}]/\Gamma_1[\sqrt{-3}] \cong \mu_3$ gives 
an identification of $X^*_{\Gamma[\sqrt{-3}]}$ with $\PP^2$ as the hyperplane above in $\PP^3$ 
with action by $\Gamma/\Gamma[\sqrt{-3}]\cong \fS_4 \times \mu_2$.
\end{subsection}
\begin{subsection}{Picard modular forms}\label{PMF}
Fix a point $x_0 \in B$ and let $K$ be the stabilizer of $x_0$ under the action of $G(\RR)$. 
Recall that an automorphy factor is a mapping $J: G(\RR) \times B \to \GL(W)$, 
with $W$ a complex finite-dimensional vector space, 
fulfiling a cocycle condition and where $J$ restricted to $K \times  x_0$ is a representation. 

The action of the group $G^{+}({\RR})$  
on $B$ determines two factors of automorphy 
for $g=(g_{ij}) \in G^{+}({\RR})$ and $(u,v)\in B$
given by 
$$
j_1(g,u,v):= g_{21}v+g_{22}+g_{23}u
$$
and
$$
j_2(g,u,v):=\det(g)^{-1} \left( 
\begin{matrix} 
G_{32}u+G_{33} &  G_{32}v+G_{31} \\  
G_{12}u+G_{13} & G_{12}v+G_{11}\\
\end{matrix} 
\right)
$$
with $G_{ij}$ the minor of $g_{ij}$, see \cite{Shimura-arithmetic}. 
Note that 
$$\det(j_2(g,u,v))=j_1(g,u,v) \cdot (\det(g))^{-1}$$ 
and that the transpose of the 
Jacobian of the action of $G^{+}({\RR})$ on $B$ is given by 
$$
j_1(g,u,v)^{-1} j_2(g,u,v)^{-1}  \,.
$$  
Let $(j,k)$ be a pair of integers with $j\geq 0$. 
Define a slash operator on functions
$f: B \to {\rm Sym}^j({\CC^2})$ for $g\in G^{+}({\RR})$ via
$$
(f_{|j,k}g)(u,v) := j_1(g,u,v)^{-k} {\rm Sym}^j(j_2(g,u,v)^{-1})f(g\cdot (u,v))\, .
$$

For any finite index subgroup 
$\Gamma_{*}$ in $\Gamma$ and character $\chi$ of 
finite order on $\Gamma_{*}$, we define the vector space of modular forms
of weight $(j,k)$ and character $\chi$ on $\Gamma_{*}$ by
$$ M_{j,k}(\Gamma_{*},\chi):= \{ f: B \to {\rm Sym}^j({\CC^2}): 
f \, \text{holomorphic,} \forall g \in \Gamma_{*}, f_{|j,k}g=\chi(g) f \}. $$ 
The space of cusp forms of weight $(j,k)$ and character $\chi$ on $\Gamma_{*}$ 
will be denoted by $S_{j,k}(\Gamma_{*},\chi)$. Only the characters
that are a power of $\det(g)$ will be considered in this paper. 
We will just write $M_{j,k}(\Gamma_{*})$, or $S_{j,k}(\Gamma_{*})$, 
when $\chi$ is trivial. For more details we refer
to \cite{C-vdG} and the references therein. 

Alternatively, we could define another automorphy factor 
$$j_3(g,u,v):=\det(g)$$
and a new slash operator 
$$ 
(f_{|j,k,l}g)(u,v) := 
j_1(g,u,v)^{-k} {\rm Sym}^j(j_2(g,u,v)^{-1})j_3(g,u,v)^{-l} f(g\cdot (u,v))\, 
$$
with corresponding spaces of modular forms 
$M_{j,k,l}(\Gamma_{*}) = M_{j,k}(\Gamma_{*},\det^l)$ 
and cusp forms $S_{j,k,l}(\Gamma_{*}) = S_{j,k}(\Gamma_{*},\det^l)$. 

\smallskip

The group $\Gamma[\sqrt{-3}]/\Gamma_1[\sqrt{-3}] \cong \mu_3$ (generated by 
${\rm diag}(1,1,\rho)$) acts
on $M_{j,k}(\Gamma_1[\sqrt{-3}])$. 
This action is reflected in a decomposition of $M_{j,k}(\Gamma_1[\sqrt{-3}])$
as a sum of spaces of modular forms with character 
$$
M_{j,k}(\Gamma[\sqrt{-3}])\oplus
M_{j,k}(\Gamma[\sqrt{-3]},\det)\oplus M_{j,k}(\Gamma[\sqrt{-3}], {\det}^{2}).
$$
\begin{remark}
Note that $M_{j,k}(\Gamma_1[\sqrt{-3}])=\{0\}$ if $j\not\equiv_3 k$. 
Moreover, we have 
$M_{j,k}(\Gamma[\sqrt{-3}],\det^{\ell})=S_{j,k}(\Gamma[\sqrt{-3}],
\det^{\ell})$ if $\ell \not\equiv_3 j$,
see \cite[Prop. 5.1]{C-vdG}. 
\end{remark}

The rings of scalar-valued modular forms on $\Gamma[\sqrt{-3}]$ and 
$\Gamma_1[\sqrt{-3}]$ were determined by Holzapfel and Feustel 
(\cite{Ho1, Fe}).

\begin{proposition}\label{scalar-rings}
We have 
$$
\oplus_{k=0}^{\infty} M_{0,3k}(\Gamma[\sqrt{-3}])=
{\CC}[x_1,x_2,x_3,x_4]/(x_1+x_2+x_3+x_4),
$$ 
where $x_1,x_2,x_3,x_4$ are modular forms of weight $3$, and the group $\fS_4$ acts on these modular forms by $\sigma: x_i \mapsto \mathrm{sgn}(\sigma) x_{\sigma(i)}$.
The ring $\oplus_{k=0}^{\infty} M_{0,3k}(\Gamma_1[\sqrt{-3}])$ is the ring extension of 
$\oplus_{k=0}^{\infty} M_{0,3k}(\Gamma[\sqrt{-3}])$ 
by the element 
$$\zeta \in M_{0,6}(\Gamma[\sqrt{-3}], \det)$$ 
satisfying the equation~\eqref{pmf-zeta} 
and where $\fS_4$ acts on $\zeta$ by the sign representation. 
\end{proposition}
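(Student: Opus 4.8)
The plan is to read both rings off the geometry of the Baily--Borel compactifications recalled above; since the proposition is due to Holzapfel and Feustel, I only indicate the argument. The automorphy factor $j_1$ defines a line bundle on the open Picard surface $X_{\Gamma[\sqrt{-3}]}$ whose $k$-th tensor power has the weight-$(0,k)$ modular forms as its global sections, and since the cusps are isolated points on the normal surface $X^*_{\Gamma[\sqrt{-3}]}\cong\PP^2$, no condition need be imposed there: a section over the open part extends uniquely. As the centre $\mu_3$ of $\Gamma[\sqrt{-3}]$ acts on a weight-$(0,k)$ fibre by $\rho^{-k}$, only $3\mid k$ gives honest modular forms, and for those one checks that the bundle descends to a genuine line bundle $\mathcal{L}$ on the (smooth) surface $\PP^2$, the remaining finite stabilizers also acting trivially on weight-$3k$ fibres; hence $M_{0,3k}(\Gamma[\sqrt{-3}])=H^0(\PP^2,\mathcal{L}^{\otimes k})$. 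The four coordinate functions $x_1,\dots,x_4$ implementing the identification of $X^*_{\Gamma[\sqrt{-3}]}$ with the hyperplane $\{x_1+x_2+x_3+x_4=0\}\subset\PP^3$ are weight-$3$ forms defining an isomorphism $\PP^2\xrightarrow{\ \sim\ }\PP^2$, so $\mathcal{L}\cong\cO_{\PP^2}(1)$; thus $H^0(\PP^2,\mathcal{L}^{\otimes k})=\Sym^{k}H^0(\PP^2,\cO(1))$, the subring generated in weight $3$ is everything, and $\bigoplus_k M_{0,3k}(\Gamma[\sqrt{-3}])\cong\CC[x_1,\dots,x_4]/(x_1+x_2+x_3+x_4)$ with the $x_i$ of weight $3$.

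For $\Gamma_1[\sqrt{-3}]$ I would use that, by \eqref{pmf-zeta}, the map $\pi\colon X^*_{\Gamma_1[\sqrt{-3}]}\to X^*_{\Gamma[\sqrt{-3}]}=\PP^2$ is the standard cyclic $\mu_3$-cover attached to the section $s=\prod_{i<j}(x_i-x_j)\in H^0(\PP^2,\cO(6))=H^0(\PP^2,\mathcal{L}^{\otimes 6})$. Since $s$ is reduced, this cover is normal --- it acquires exactly the three quotient singularities $\CC^2/A$ and the four cusp singularities --- and $\pi_*\cO\cong\cO\oplus\mathcal{L}^{\otimes(-2)}\oplus\mathcal{L}^{\otimes(-4)}$, the middle summand being generated by the tautological section $\zeta$, of weight $6$, with $\zeta^3=s$. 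The weight-$3$ bundle on $X_{\Gamma_1[\sqrt{-3}]}$ is the pullback $\pi^*\mathcal{L}$, so twisting by $\mathcal{L}^{\otimes k}$, applying $\pi_*$, and once more using normality to identify modular forms with sections over the singular-but-normal surface, one obtains $M_{0,3k}(\Gamma_1[\sqrt{-3}])=\bigoplus_{\ell=0}^{2}\zeta^{\ell}\,M_{0,3k-6\ell}(\Gamma[\sqrt{-3}])$; summing over $k$ this says that $\bigoplus_k M_{0,3k}(\Gamma_1[\sqrt{-3}])$ is the polynomial extension of $\bigoplus_k M_{0,3k}(\Gamma[\sqrt{-3}])$ by $\zeta$ modulo $\zeta^3=s$, as asserted. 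The three summands are the eigenspaces of the deck group $\Gamma[\sqrt{-3}]/\Gamma_1[\sqrt{-3}]\cong\mu_3$, and because its generator $\mathrm{diag}(1,1,\rho)$ has determinant $\rho$ and scales $\zeta$ by a primitive cube root of unity, $\zeta\in M_{0,6}(\Gamma[\sqrt{-3}],\det)$.

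The remaining point is the $\fS_4$-equivariance. The group $\fS_4\cong\Gamma_1/\Gamma_1[\sqrt{-3}]$ permutes the four cusps, hence the $x_i$ up to scalars, so it acts on $H^0(\mathcal{L})$ by $\sigma\cdot x_i=\chi(\sigma)\,x_{\sigma(i)}$ for a character $\chi$ of $\fS_4$ --- necessarily trivial or $\mathrm{sgn}$ --- and on $\zeta$ by $\sigma\cdot\zeta=\epsilon(\sigma)\zeta$. Since $\prod_{i<j}(x_i-x_j)$ is alternating and $\chi(\sigma)^6=1$, the relation $\zeta^3=\prod_{i<j}(x_i-x_j)$ forces $\epsilon(\sigma)^3=\mathrm{sgn}(\sigma)$, whence $\epsilon=\mathrm{sgn}$. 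I expect the one genuinely delicate step to be pinning down $\chi=\mathrm{sgn}$ itself: it does not follow from the relation, since both $(\chi,\epsilon)=(1,\mathrm{sgn})$ and $(\chi,\epsilon)=(\mathrm{sgn},\mathrm{sgn})$ are formally consistent with it. I would settle it by computing the effect of a transposition of $\fS_4\subset\Gamma/\Gamma[\sqrt{-3}]$ on the automorphy factor $j_1$ --- equivalently, by comparing the linearization of $\mathcal{L}$ induced by the modular interpretation with the naive permutation linearization of $\cO_{\PP^2}(1)$. That sign computation is the real content beyond formal bookkeeping, and it is precisely what Holzapfel and Feustel carry out in \cite{Ho1,Fe}.
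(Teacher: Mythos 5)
The paper offers no proof of this proposition—it is stated with a citation to Holzapfel and Feustel—so I am only judging your argument on its own terms. The geometric skeleton is sound and is very likely the shape of the original: descend $j_1^{\otimes 3}$ to a line bundle $\mathcal{L}$ on the smooth compactification $X^*_{\Gamma[\sqrt{-3}]}\cong\PP^2$ (the center and the residual finite stabilizers all act trivially on the weight-$3k$ fibre, and Koecher/normality gives the extension over the cusps), then for $\Gamma_1[\sqrt{-3}]$ compute $\pi_*\cO\cong\cO\oplus\mathcal{L}^{-2}\oplus\mathcal{L}^{-4}$ for the cyclic triple cover branched over the discriminant sextic. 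You also correctly isolate the delicate point: the relation $\zeta^3=\prod_{i<j}(x_i-x_j)$ forces $\epsilon=\mathrm{sgn}$ on $\zeta$, but since $\chi^6=1$ it gives no information on the character $\chi$ by which $\fS_4$ twists the permutation of the $x_i$'s. That sign is left uncomputed, and it is a genuine gap, as the proposition asserts $\chi=\mathrm{sgn}$; the paper's earlier geometric description (``$x_i\mapsto x_{\sigma(i)}$'') records only the projective action, and the linear refinement on sections is exactly what the sign supplies. Your proposed remedy—trace a transposition through the automorphy factor $j_1$, or equivalently compare the modular and permutation linearizations of $\cO_{\PP^2}(1)$—is the right kind of computation and would close it.

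One further caution: you deduce $\mathcal{L}\cong\cO_{\PP^2}(1)$ from ``the $x_i$ are weight-$3$ forms defining an isomorphism,'' but that is circular, since the $x_i$'s being weight-$3$ modular forms that generate the ring is precisely the first assertion of the proposition. To avoid the loop, one should establish $X^*_{\Gamma[\sqrt{-3}]}\cong\PP^2$ by an independent route (Holzapfel's Chern-number computations and the Enriques--Kodaira classification of the ball quotient) and then determine $\deg\mathcal{L}$ from intersection theory: ${\rm vol}(\Gamma[\sqrt{-3}]\backslash B)=1/3=3\,c_1(L)^2$ gives $c_1(\mathcal{L})^2=9\,c_1(L)^2=1$, hence $\mathcal{L}\cong\cO_{\PP^2}(1)$, and only then may the $x_i$ be introduced as a basis of $H^0(\mathcal{L})$ adapted to the four cusps.
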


Since $X^*_{\Gamma_1[\sqrt{-3}]} \cong 
\mathrm{Proj}(\oplus_{k=0}^{\infty} M_{0,3k}(\Gamma_1[\sqrt{-3}]))$ 
we retrieve the identification of $X^*_{\Gamma_1[\sqrt{-3}]}$ 
with ${\PP}^2$ given at the end of Section~\ref{PMG}, see \cite{Ho1}. We point out that the 
$x_i$ have $F$-integral Fourier-Jacobi expansions, see \cite{Fi,C-vdG}.

\begin{proposition}\label{vanishing} If $k<0$ and $j \geq 0$ then 
$\dim M_{j,k}(\Gamma_1[\sqrt{-3}])=0$.
\end{proposition}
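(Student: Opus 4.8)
The plan is to prove the vanishing of $M_{j,k}(\Gamma_1[\sqrt{-3}])$ for $k<0$ by a standard automorphy-factor growth argument, combined with the fact that the ring of scalar-valued modular forms (Proposition~\ref{scalar-rings}) supplies plenty of nonzero forms of positive weight whose product with a hypothetical negative-weight form would have to vanish. Concretely, suppose $0 \neq f \in M_{j,k}(\Gamma_1[\sqrt{-3}])$ with $k<0$. Pick a nonzero scalar modular form $g \in M_{0,3m}(\Gamma[\sqrt{-3}]) \subset M_{0,3m}(\Gamma_1[\sqrt{-3}])$ for some $m$ large enough that $k + 3m > 0$; such a $g$ exists for all $m \geq 1$ by Proposition~\ref{scalar-rings} (e.g.\ a power of one of the $x_i$). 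Then $g^N f \in M_{j,\,k+3mN}(\Gamma_1[\sqrt{-3}])$ is a $\Sym^j(\CC^2)$-valued holomorphic modular form of positive second weight for all $N$, and it is nonzero since the ring of modular forms is an integral domain (it embeds in the function field of the connected surface $X_{\Gamma_1[\sqrt{-3}]}$). So it suffices to show that a nonzero holomorphic vector-valued form of weight $(j,k)$ with $k>0$ cannot be divisible by arbitrarily high powers of $g$ — equivalently, one reduces to bounding the order of vanishing of $g^N f$ along the divisor $\{g=0\}$, which forces $N$ to be bounded, a contradiction. An alternative and cleaner route: $g^N f$ defines, via the automorphy factors $j_1, \Sym^j(j_2)$, a section of the vector bundle $\cL^{\otimes(k+3mN)} \otimes \Sym^j(\mathcal{E})$ on $X^*_{\Gamma_1[\sqrt{-3}]}$ (where $\cL$ is the line bundle of weight-one forms and $\mathcal{E}$ the rank-two automorphic bundle), vanishing to order $\geq N$ on $\{g=0\}$; since $\cL$ is ample and the degree along a fixed curve grows linearly in $N$ while the multiplicity also grows linearly in $N$, one gets a contradiction only if one controls constants carefully — so I prefer the first, purely algebraic, argument.

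Here are the key steps in order. First, establish that $R := \bigoplus_{j,k} M_{j,k}(\Gamma_1[\sqrt{-3}])$ is an integral domain: a vector-valued form is a holomorphic section of $\Sym^j(\mathcal{E}) \otimes \cL^{\otimes k}$ over the connected normal variety $X_{\Gamma_1[\sqrt{-3}]}$, and the product of two such sections lives in the tensor of the corresponding sheaves; vanishing of a product would force one factor to vanish on a nonempty open set, hence identically, because $X_{\Gamma_1[\sqrt{-3}]}$ is irreducible. Second, invoke Proposition~\ref{scalar-rings} to produce, for every $m\ge 1$, a nonzero $g \in M_{0,3m}(\Gamma_1[\sqrt{-3}])$ — take $g = x_1^m$, say. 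Third, given a putative nonzero $f \in M_{j,k}(\Gamma_1[\sqrt{-3}])$ with $k < 0$, form $g^N f \in M_{j,\,k+3N}(\Gamma_1[\sqrt{-3}])$, nonzero by the domain property, for all $N \geq 1$. Fourth, derive a contradiction by a dimension or multiplicity bound: for instance, restrict $g^N f$ to the modular curve $D_{ij} \cong \overline{\Gamma_0(3)\backslash\mathfrak{H}}$ using the embedding $\GL_2 \hookrightarrow G$ of Section~\ref{PMG}; this restriction is an elliptic modular form (vector-valued, built from $\Sym^j$ of the restricted automorphy factor and a power of the weight factor) whose weight grows linearly in $N$ but which is forced to vanish to order $\geq N$ at the relevant cusps because $\{g = 0\}$ meets $D_{ij}$ there — and a modular form of weight $w$ on $\Gamma_0(3)$ vanishing to order $> w/12 \cdot [\mathrm{SL}_2(\ZZ):\Gamma_0(3)]$ at a cusp is zero, contradicting $f \neq 0$ once $N$ is large.

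The main obstacle is the fourth step — turning "nonzero vector-valued form of negative weight" into a genuine contradiction. The cleanest formulation is probably to work on the compact surface $Y_{\Gamma_1[\sqrt{-3}]}$ and argue with intersection numbers: $g^N f$ extends (after the standard Koecher-type argument, automatic here since the surface is compact up to boundary of codimension $\geq 1$) to a section of a line-bundle-twisted sheaf whose "slope" along any curve $C$ in the configuration of Section~\ref{PMG} is $ (k + 3N)(\cL \cdot C) + (\text{bounded term depending on } j, C)$, while the section vanishes along the fixed curve $\mathrm{div}(g)$, contributing a term linear in $N$ with the \emph{same} leading coefficient $(\cL\cdot C)$ after choosing $C$ inside $\mathrm{div}(g)$ appropriately; careful bookkeeping then shows the two linear-in-$N$ terms cancel and the leftover constant term is negative for $k<0$, which is impossible for an effective divisor. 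I would also check the degenerate possibility that $\Sym^j(\mathcal{E})\otimes\cL^{\otimes k}$ could have sections for trivial reasons when $j$ is large and $k$ very negative; the $\fS_4 \times \mu_6$-equivariance and the Remark ($M_{j,k}=0$ unless $j \equiv_3 k$) help rule this out, and in any case the argument above is insensitive to $j$ since $\Sym^j(j_2)$ contributes only a bounded twist relative to $\cL$. If a fully self-contained proof is wanted, the honest fallback is to cite the BGG/Hodge-theoretic description of $H^\bullet(X^*_{\Gamma_1[\sqrt{-3}]}, \WW_\lambda)$ promised later in the paper, from which holomorphic forms sit in the bottom Hodge piece and negative weight kills the relevant graded piece outright.
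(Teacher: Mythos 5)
Your proposal takes a genuinely different route from the paper, and unfortunately it has real gaps that you yourself half-acknowledge. The paper's proof is much more direct: it restricts $f$ (not $g^N f$) to the family of embedded modular curves $\tau \mapsto (a,\sqrt{-3}\tau)$ with $a\in F$, observes that the first component of the restricted vector is an elliptic modular form of weight $k<0$ on a congruence subgroup of $\mathrm{SL}_2(\ZZ)$ (hence zero), uses density of $F$ in $\CC$ to conclude the first component vanishes identically on $B$, and then invokes invariance under the unipotent radical of a parabolic to propagate vanishing to the remaining components. No multiplication by scalar forms, no integral-domain argument, no valence formula.

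The specific problems with your plan are these. First, the multiplication-by-$g^N$ step is a detour that does not by itself produce a contradiction: ``$g^N f \neq 0$ for all $N$'' is perfectly consistent, and the burden is entirely on your step four. There, the valence-formula bound only bites if the vanishing order of $g^N$ at a cusp of $D_{ij}$ grows strictly faster than the contribution of the weight $3mN$ to the allowed total vanishing; concretely you need the per-unit-weight vanishing order of $g$ at that cusp to exceed the valence-formula slope. You give no reason this can be arranged — for $g=x_1^m$ the weight-3 forms $x_i$ are essentially Eisenstein-like and need not vanish at all at the cusps of a given $D_{ij}$, in which case the contribution is zero and the argument collapses. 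Second, and independently, restricting to a \emph{single} curve $D_{ij}$ only controls the form on that curve; you would still need an argument that $f$ does not vanish identically on every modular curve you restrict to, which the paper handles by the density of the family $\{(a,\sqrt{-3}\tau): a\in F\}$. Third, restriction of a $\Sym^j(\CC^2)$-valued form to a modular curve controls only one component directly; the paper's use of the unipotent radical to kill the remaining components is an essential step that your sketch omits entirely. In short: the correct mechanism is restriction of $f$ itself to a dense family of modular curves plus the unipotent-radical argument, and the auxiliary scalar form $g$ is an unnecessary and incompletely justified complication.
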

\begin{proof} 
If one develops a vector-valued modular form $f \in M_{j,k}(\Gamma_1[\sqrt{-3}])$ along 
the modular curve given in Section~\ref{PMG} then the first component of the vector 
$f(0,\sqrt{-3}\tau)$ is a modular form on $\Gamma_1(3) \subset {\rm SL}_2(\ZZ)$ of weight $k$, 
see \cite[Prop. 8.4]{C-vdG}, and hence it is zero. 
The same thing happens for other modular curves. 
Curves parametrized by $\mathfrak{H} \to B$, $\tau \mapsto (a,\sqrt{-3}\tau)$
with $a \in F$ yield modular curves and restricting modular forms
$M_{j,k}(\Gamma_1[\sqrt{-3}])$
leads to modular forms of negative weight on congruence subgroups of
${\rm SL}(2,{\ZZ})$, and these are zero. Since these curves lie dense, 
we see that 
the first component of $f(u,\sqrt{-3}\tau)$ is zero for all $u,\tau$.
Applying the invariance of modular forms under the unipotent radical of a parabolic subgroup, 
see \cite[Eq. 4]{C-vdG}, we find that all components vanish.  
\end{proof} 

\begin{remark} The proof can be easily adapted to show that the proposition
 holds for any finite index subgroup $\Gamma_{*}$ of $\Gamma$. 
\end{remark}
\end{subsection}
\begin{subsection}{Hecke Operators} \label{sec-heckeoperator}
The Hecke rings for the arithmetic group $\Gamma_1$ and $\Gamma_1[\sqrt{-3}]$
were studied by Shintani \cite{Shintani} and Finis \cite{Fi}. 
Outside the prime $3$ these Hecke
rings are the same and they are generated by elements $T(\nu)$, $T(\nu,\nu)$
for elements $\nu \in \mathcal{O}_F$ with norm $N(\nu)=p$ for primes $p\equiv_3 1$ and elements $T(p)$ and $T(p,p)$ for primes 
$p\equiv_3 2$.
For $\Gamma_1$ we also have elements 
$T(\sqrt{-3})$ and $T(\sqrt{-3},\sqrt{-3})$. 
We refer to Finis' paper and to \cite{C-vdG} for a description of the Hecke
rings and the action on modular forms. Note that for a Hecke eigenform with
eigenvalues $\lambda_{\nu}$ we have $\lambda_{\bar{\nu}}=\bar{\lambda}_{\nu}$.

We define for $\nu\in \mathcal{O}_F$
with norm  a prime $p\equiv_3 1$ and given weight $(j,k)$ 
the polynomial
$$
Q_{\nu}^{j,k}(X,\lambda)= 1-\lambda \, X + \nu^{j+1}\bar{\nu}^{k-2} 
\bar{\lambda} \, X^2 - \nu^{2j+k}\bar{\nu}^{j+2k-3} \, X^3
$$
and for a prime $p\equiv_3 2 $ we  define $Q_{-p}^{j,k}(X,\lambda)$ by
$$
(1-(\lambda -(p-1)(-p)^{j+k-3})\, X + p^{2j+2k-2}X^2)
(1-(-p)^{j+k-1}X)
$$
The local factor of the $L$-function of a Picard modular form $f$ 
of weight $(j,k)$ that is an eigenform for the Hecke algebra with eigenvalue
$\lambda_{\nu}(f)$ for $T(\nu)$ with $N(\nu)=p\equiv_3 1$ 
equals the inverse of 
$Q_{\nu}^{j,k}(N(\nu)^{-s},\lambda_{\nu}(f))$, while for a prime 
$p\equiv_3 2$ the local factor is the inverse of
$Q_{-p}^{j,k}(N(p)^{-s},\lambda_{-p}(f))$. 
\end{subsection}
\begin{subsection}{Modular forms as sections of automorphic vector bundles}\label{vectorbundles}
In this section we will realize our modular forms as sections of some vector bundles.
We will use the interpretation of $B$ as the Grassmann variety of negative lines  
in $Z=V_{\RR}$. 
This interpretation provides $B$ with two vector bundles $T$ and $S$ 
fitting in an exact sequence
$$0 \to T \to B \times Z \to S \to 0, $$
where $T$ is the tautological line bundle that associates to a point of $B$
the negative line it represents, and $S$ is the tautological quotient 
bundle of rank~$2$. 
The tangent bundle to $B$ is given by ${\rm Hom}(T,S)=S\otimes T^{-1}$, 
so the cotangent bundle $\Omega_B^1$ equals $S^{\vee} \otimes T$. 

Let $\Gamma'$ be a freely acting finite index subgroup of $\Gamma$. 
Put $X=X_{\Gamma'}=\Gamma' \backslash B$. Let $X^*=X^*_{\Gamma'}$ denote the Baily-Borel compactification of $X_{\Gamma'}$ and $Y=Y_{\Gamma'}$ its minimal resolution. The 
cotangent bundle $\Omega^1_{X}$ is then equal to  
the quotient $\Gamma' \backslash (S^{\vee} \otimes T)$. 
Let $D$ denote the resolution divisor
on $Y$ of the cusps of $X^*$.   
Mumford's canonical extension of $\Omega^1_{X}$ 
extends to $\Omega^1_{Y}(\log D)$ on $Y$, see \cite[Prop. 3.4]{Mu1}. 
The bundle on $X$ defined  
by $\Gamma' \backslash S^{\vee}$ will be denoted by $U$ and the bundle 
$\Gamma' \backslash T$ by $L$. With abuse of 
notation their canonical extensions to $Y$ will be denoted with the same letters. 

If we choose a base point $x_0=(u_0,v_0) \in B$, then $B$ can be identified with 
$G^0(\RR)/K^0$, 
where the maximal compact subgroup $K^0 \cong U(2) \times U(1)$ is the stabilizer of $x_0$. 
Choosing instead a line in $Z$ for which $h$ is negative definite, then if $K$ is the stabilizer in $G$ of the line spanned 
by $x_0$ we find that $K \cong \CC^* \cdot K^0$ and that $B \cong G(\RR)/K$. 

For each finite-dimensional complex representation
$\lambda: K \rightarrow \GL(W)$ we get an automorphic vector bundle $\mathcal{W}_{\lambda}$
by $G(\RR) \times_K W \rightarrow B$ where $(g,w)$ is identified with 
$(gk,\lambda(k)w)$ for all 
$g \in G(\RR)$, $w \in W$ and $k \in K$. The group $G(\RR)$ acts naturally on the left 
by $g'.(g,w)=(g'g,v)$. After taking the quotient by $\Gamma'$ we get an 
automorphic vector bundle $\cW_{\lambda}$ on $X$.
These vector bundles extend canonically to $Y$, and will be denoted with 
the same letters. 

Recall from Section~\ref{PMF} that an automorphy factor $J$, 
when restricted to $K \times  x_0$, 
is a representation $\lambda$, 
and so $J$ determines a trivialization of $\cW_{\lambda}$ by 
$\Phi_J: G(\RR) \times_K W \rightarrow B \times W$ with $(g,w) \mapsto (gx_0,J(g,x_0)w)$. 
The global sections of the bundle $\cW_{\lambda}$ on $Y$ can be identified with  
the modular forms transforming with the automorphy factor $J$. 
It follows directly from our definitions that the bundle $L$ comes with a trivialization 
given by 
the automorphy factor $j_1$. As mentioned in Section~\ref{PMG}, the 
Jacobian of the action of $G(\RR)$ of $B$ is by $j_1^{-1} j_2^{-1} $. Taking the dual 
and tensoring with $L^{-1}$ gives that $U$ has a trivialization given by $j_2$. Finally, 
we find that the bundle $R:=\det(U)^{-1} \otimes L$ 
corresponds to the automorphy factor $j_3$. In summary, the relations between 
vector bundles and automorphy factors are as follows 
$$L \leftrightarrow j_1, \; \; U \leftrightarrow j_2,  \; \;  R \leftrightarrow j_3,$$ 
and moreover 
$$
\det(U)=L \otimes R^{-1}, \quad U^{\vee}\cong U \otimes L^{-1} \otimes R \, .
$$

\begin{definition}
For integers $j,k,l$ with $j \geq 0$ we put 
$$\cW_{j,k,l}:={\rm Sym}^j(U) \otimes L^{k}\otimes R^{l}.$$
\end{definition}

We then get the following interpretation
of modular forms. 

\begin{proposition} For integers $j,k,l$ with $j \geq 0$, we have that
\begin{equation}\label{mfinterpretation}
M_{j,k,l}(\Gamma')= 
H^0(Y_{\Gamma'}, \cW_{j,k,l}), 
\end{equation}
and
\begin{equation}\label{cfinterpretation}
S_{j,k,l}(\Gamma')= H^0(Y_{\Gamma'}, \cW_{j,k,l}\otimes\cO(-D) ). 
\end{equation}
\end{proposition}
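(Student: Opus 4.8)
The plan is to verify the identification first over the open surface $X_{\Gamma'}$, where it is a matter of definitions, and then to understand what Mumford's canonical extension does along the boundary; the cusp-form statement is then read off by a local computation at the resolution curves.

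\textbf{Over the open surface.} Here I would only unwind the dictionary of Section~\ref{vectorbundles}. Pulling a section of $\cW_{j,k,l}=\Sym^j(U)\otimes L^{k}\otimes R^{l}$ back to $B$ through the trivialisation $\Phi_J$ attached to the automorphy factor $J=j_1^{\,k}\cdot\Sym^j(j_2)\cdot j_3^{\,l}$ (with the inverses forced by our sign conventions), a $\Gamma'$-invariant holomorphic section becomes precisely a holomorphic map $f\colon B\to\Sym^j(\CC^2)$ with $f_{|j,k,l}g=f$ for every $g\in\Gamma'$. Hence $H^0(X_{\Gamma'},\cW_{j,k,l})$ is exactly $M_{j,k,l}(\Gamma')$ as defined in Section~\ref{PMF}.

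\textbf{Extension to $Y_{\Gamma'}$.} Since $\Gamma'$ acts freely, $X^*_{\Gamma'}$ is smooth away from its cusps and $Y_{\Gamma'}\to X^*_{\Gamma'}$ is the resolution of the cusp singularities, with exceptional divisor $D=\sum_i E_i$, a disjoint union of elliptic curves $E_i\cong\CC/\sqrt{-3}\,\cO_F$ with $E_i^2=-3$. Restriction $H^0(Y_{\Gamma'},\cW_{j,k,l})\hookrightarrow H^0(X_{\Gamma'},\cW_{j,k,l})$ is injective because $D$ is a divisor on the smooth surface $Y_{\Gamma'}$, so it remains to show every $f\in M_{j,k,l}(\Gamma')$ extends across $D$. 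By the Koecher principle for Picard modular forms (valid since $\dim B=2$ and the cusps are isolated rational boundary points; see \cite{C-vdG} and the references there), such an $f$ has, at each cusp, a Fourier--Jacobi expansion with no polar terms, hence moderate growth in the invariant metric; and by construction $\cW_{j,k,l}$ on $Y_{\Gamma'}$ is the Mumford canonical extension, whose local sections near $D$ are exactly the holomorphic sections of moderate growth (\cite[Prop.~3.4]{Mu1}). Therefore $f$ extends, which gives \eqref{mfinterpretation}.

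\textbf{Cusp forms.} A form in $M_{j,k,l}(\Gamma')$ is a cusp form exactly when the constant Fourier--Jacobi term vanishes at every cusp. In local coordinates near a resolution curve, where $E_i=\{q=0\}$ for the parameter $q$ whose powers index the Fourier--Jacobi expansion and $E_i$ has normal bundle of degree $-3$, the section attached to $f$ is divisible by $q$ precisely when that term vanishes, and a short local computation shows the divisibility is to order $\geq 1$ with no further twist. Running this at all the $E_i$ identifies $S_{j,k,l}(\Gamma')$ with $H^0(Y_{\Gamma'},\cW_{j,k,l}\otimes\cO(-D))$, i.e.\ \eqref{cfinterpretation}.

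\textbf{Main obstacle.} The delicate part is not the bookkeeping of automorphy factors but the last two steps: identifying Mumford's canonical extension, a metric/Hodge-theoretic object, with the naive sheaf of moderate-growth holomorphic sections, and pinning down the exact $\cO(-D)$ twist for cusp forms. This requires writing the cusp neighbourhoods of $\Gamma'$ out explicitly and checking that $Y_{\Gamma'}$ coincides with the toroidal compactification --- which it does, as $\Gamma'$ acts freely --- so that Mumford's construction and the Fourier--Jacobi formalism can be compared term by term.
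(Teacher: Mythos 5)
The paper states this proposition without an explicit proof; its justification is distributed through the surrounding text, namely the automorphy-factor dictionary ($L\leftrightarrow j_1$, $U\leftrightarrow j_2$, $R\leftrightarrow j_3$) developed just before the statement, and the appeal to the Koecher principle and Mumford's canonical extension (\cite[Prop.~3.4]{Mu1}) in the paragraph immediately after. Your proof assembles precisely these ingredients --- the $\Phi_J$-trivialization over $X_{\Gamma'}$, Koecher boundedness plus Mumford's moderate-growth characterization to extend across $D$, and the identification of the $\cO(-D)$ twist with the vanishing of the constant Fourier--Jacobi term --- so it follows essentially the same route the paper intends.
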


Furthermore, we have that $\Omega_X^1(\log D)\cong U\otimes L$.
The canonical extension of the canonical bundle $\Omega^2_{X}$ 
equals on the one hand $\Omega^2_{Y}(D)$ and on the other hand, 
by a local calculation or 
since the canonical extension commutes with exterior products 
(see \cite[p. 225]{F-C}), it equals $\det(\Omega^1_{X}(\log D))$.
 We thus find
\begin{equation}
\Omega^1_{Y}(\log D) \cong U\otimes L, \quad
\Omega^2_Y(D) \cong L^3 \otimes R^{-1} \, .
\end{equation}

The subgroups of $\Gamma$ that we are mainly considering 
do not act freely on $B$. An automorphic vector bundle $\cW_{\lambda}$ on $B$ will become a 
vector bundle on $X$ precisely if the stabilizer of 
any point $x \in B$ acts trivially on the fibre $(\cW_{\lambda})_x$. 
Let us consider the group $\Gamma_1[\sqrt{-3}]$. The center of $\Gamma_1[\sqrt{-3}]$ 
is generated by $\rho \cdot 1_3$, and since $j_1(\rho \cdot 1_3,u,v)=\rho$, 
$j_2(\rho\cdot 1_3,u,v)=\rho^2\cdot 1_{2}$, $j_3(\rho\cdot 1_3,u,v)=1$, 
a necessary condition for $ \cW_{j,k,l}$ to be vector bundle on 
$X_{\Gamma_1[\sqrt{-3}]}$ is that $j\equiv_3 k$.
The stabilizer of one the three singular points $x$ in $B$ is generated by a 
matrix $g_x$ with eigenvalues $1,\rho,\rho^2$ 
such that $j_1(g_x,x)=\rho$ and
$j_2(g_x,x)=\mathrm{diag}(\rho,1)$ and 
$j_3(g_x,x)=1$ so $\cW_{\lambda}$ is only a vector bundle on $X_{\Gamma_1[\sqrt{-3}]}$  
if $j=0$ and $k \equiv_3 0$. 

To treat the cases of non-freely acting groups we can replace the group by a
freely acting finite index normal subgroup and then take invariants.
By the Koecher principle, these forms extend to holomorphic sections
of ${\rm Sym}^j(U)\otimes L^k\otimes R^l$ over the cusp resolutions. 
Also the quotient singularities pose no problem.
Therefore, the identities of (\ref{mfinterpretation}), (\ref{cfinterpretation}) 
still hold on $\Gamma[\sqrt{-3}]$ and $\Gamma_1[\sqrt{-3}]$.  

\begin{remark} 
Proposition~\ref{vanishing} together with \eqref{cfinterpretation} shows that
$$
H^0(Y_{\Gamma[\sqrt{-3}]},\cW_{j,k,l}\otimes\cO(-D))= 0
$$
for any $j,l\geq 0$ and $k<0$. This argument is easily generalizable 
to other Picard modular surfaces and other arithmetic subgroups. Compare with the 
vanishing results of \cite{Mi,MSt-Y-Z}.  
\end{remark}

\end{subsection}
\end{section}
\begin{section}{Cohomology of complex local systems}
In this section we introduce the local systems of interest to us and we use the BGG-complex 
to find information about the cohomology of these local systems. 

\begin{subsection}{Local systems and roots} \label{rootloc}
A vector bundle $\cW_{\lambda}$, as defined in the previous section, is a local system, i.e. 
locally constant, if the representation $\lambda$ 
is a restriction of a representation of $G(\RR)$. 

We have the local system $\WW$ on $X_{\Gamma_{*}}$ 
coming from the dual of the standard representation of $G(\RR)$ acting on $V_{\RR}$, 
as in the beginning of Section~\ref{vectorbundles}. 
In terms of an automorphy factor $J$, the local systems are the ones 
for which $J(g,x)$ are independent of $x$. 
The bundle $R$ is thus a local system and we find that it is isomorphic 
to $\wedge^3 \WW$. 
It is constant for any $\Gamma_{*}$ that is a subgroup of $\Gamma_1$ 
and since $\Gamma_1/\Gamma \cong \mu_6$ we see that 
$(\wedge^3 \WW)^6 \cong R^6$ is constant for subgroups of $\Gamma$. 

The representation theory of $G$ and $K$ over the complex numbers in terms of roots and 
highest weights will be important for the construction of the BGG-complex which we will 
use in Section~\ref{sec-bgg}.

Note first that the base change of $G$ to $\CC$ is isomorphic to 
${\rm GL}(3,\CC) \times {\GG}_m$, where the last factor corresponds 
to the multiplier $\eta$. 

Let $Q$ be a maximal parabolic subgroup and $Q= M \ltimes {\mathcal U}$ 
a Levi decomposition with ${\mathcal U}$ the unipotent radical of $Q$.  
The complexification of $K$, namely $\GL(2,\CC) \times \GL(1,\CC)$, is conjugate 
to that of $M$. 

Let $T$ be the maximal torus of $G$ of diagonal matrices $g=
{\rm diag}(a_1,a_2,a_3)$ with $a_i \in F^*$ satisfying
$a_1a_2'=a_1'a_2=a_3a_3'=\eta(g) \in {\QQ}^*$.
We have the characters $L_i: g \mapsto a_i$.
The roots 
are $\pm (L_1-L_2), \pm (L_1-L_3), \pm (L_2-L_3)$.
We can view
$\alpha=L_1-L_2$ and  $\beta=L_2-L_3$
as two simple roots and 
a system of fundamental weights is $\gamma_1=L_1$ and  
$\gamma_2 =L_1+L_2$ and $\gamma_3 =L_1+L_2+L_3$.  

Then $\Phi_G^{+}=\{\alpha,\beta, \alpha+\beta\}$ 
is a system of positive roots, 
occurring in the adjoint action on the unipotent radical $\mathcal{U}$ 
and we can take $\Phi_M^{+}=\{\alpha \}$. 

The Weyl group $W_G$ of $G$ is generated by the reflections 
$s_{\alpha}$ and $s_{\beta}$ which act 
on the fundamental weights by  
$$
s_{\alpha}:
\begin{cases}
\gamma_{1} \mapsto \gamma_2-\gamma_1 \cr
\gamma_2 \mapsto \gamma_2\cr 
\gamma_3 \mapsto \gamma_3, \cr 
\end{cases}
\qquad
s_{\beta}:
\begin{cases}
\gamma_1 \mapsto \gamma_1, \cr 
\gamma_2 \mapsto \gamma_1-\gamma_2+\gamma_3 \cr
\gamma_3 \mapsto \gamma_3.\cr
\end{cases}
$$ 
Then put 
$\theta= s_{\alpha}s_{\beta}s_{\alpha}=s_{\beta}s_{\alpha}s_{\beta}$ 
with $\theta(\alpha)=-\beta$ and $\theta(\beta)=-\alpha$ and
$$
\theta(n_1\gamma_1+n_2\gamma_2+n_3\gamma_3)=-n_2\gamma_1-n_1\gamma_2+
(n_1+n_2+n_3)\gamma_3\, .
$$

The Weyl group $W_M$  equals 
$\langle s_{\alpha} \rangle$. Define
$$
W^M:=\{ w \in W_G: \Phi_M^{+} \subset w(\Phi_G^{+}) \}.
$$
We find that $W^M=\{ 1, s_{\beta},s_{\beta} s_{\alpha}\}$.
Put $\delta:=\alpha+\beta=\gamma_1+\gamma_2-\gamma_3$, 
which is half the sum of the positive roots.
We have an involution on $W^M$ given by $w \mapsto s_{\alpha} w \theta$
with $s_{\alpha}$ and $\theta$ the elements of longest length in
$W_M$ and $W_G$.

For an element $w$ in the Weyl group and a weight $\lambda$  
we define an action by
$$ w \ast \lambda := w(\lambda+\delta) -\delta. $$ 

For each weight $\lambda=n_1 \gamma_1+n_2 \gamma_2+n_3 \gamma_3$ we get an 
irreducible finite-dimensional complex representation of $G$ with highest weight $\lambda$. The corresponding local system on $X_{\Gamma_{*}}$ will be denoted by $\WW_{\lambda}=\WW_{n_1,n_2,n_3}$ and it can 
be found inside 
$$\Sym^{n_1} (\WW) \otimes \Sym^{n_2} (\wedge^2 \WW) \otimes \Sym^{n_3} (\wedge^3 \WW).$$
For the same weight $\lambda$ we get an irreducible finite-dimensional complex representation of $K$. To identify this representation we consider the automorphy factors for a diagonal matrix $g$, and we then find that $j_1$ corresponds to $\gamma_1$, $j_3$ corresponds to $\gamma_3$ and $j_2$ to $\gamma_2-\gamma_3$. 
The vector bundle correponding to $\lambda$ is thus $\cW_{n_2,n_1,n_2+n_3}$ in the notation of Section~\ref{vectorbundles}. 
Note that the cotangent bundle $U \otimes L$ has highest weight $\delta=\alpha+\beta$. 
A weight $\lambda$ will be called regular if $n_1>0$ and $n_2>0$. 

We will return to these local systems and vector bundles in terms of the moduli interpretation of our Picard modular surfaces in Section~\ref{sec-euler}. 
\end{subsection}
\begin{subsection}{The BGG-complex}\label{sec-bgg}
We will here apply the methods of Faltings and Chai \cite[Ch.\ VI]{F-C} to our 
situation, especially the theory exposed on pages 228 to 237. For a given local
system one obtains a complex of vector bundles, called the dual BGG complex,
with differentials that are differential operators between vector bundles.
It is obtained as a direct summand of the de Rham complex of the local
system. 

Let $\Gamma_*$ be a finite index subgroup of $\Gamma$, and let $\Gamma'$ be a
normal finite index subgroup of $\Gamma_*$ that acts
freely on $B$. As above, consider the surface $Y_{\Gamma'}$ that is the minimal
resolution of the cusp singularities of the Baily-Borel compactification of 
$X_{\Gamma'}=\Gamma'\backslash B$ with resolution divisor $D$. Let the inclusion
of $X_{\Gamma'}$ in $Y_{\Gamma'}$ be denoted by $j$.

The BGG-complex that the methods of \cite{F-C} give for a local system
${\WW}_{\lambda}$ on $X_{\Gamma'}$ is $K_{\lambda}^{\bullet}$ with
$$
K_{\lambda}^q=\oplus_{w \in W^{M},\ell(w)=q} \mathcal{W}_{w*\lambda}^{\vee}\, .
$$
The vector bundles $\mathcal{W}_{\xi}$ extend canonically 
over the cusp resolutions and the differential operators do as well.
We denote the resulting complex on $Y_{\Gamma'}$ by 
$\overline{K}_{\lambda}^{\bullet}$. 

\begin{proposition} \label{prop-bggres} 
  Let 
  $\lambda=n_1\gamma_1+n_2\gamma_2+n_3\gamma_3$.
The dual BGG-complex 
\begin{multline*}
{\rm Sym}^{n_1}(U) \otimes L^{-n_1-n_2}\otimes R^{n_1+n_2+n_3}
\to {\rm Sym}^{n_1+n_2+1}(U) \otimes L^{-n_1+1}\otimes R^{n_1+n_2+n_3} \\
\to {\rm Sym}^{n_2}(U) \otimes L^{n_1+3} \otimes  R^{n_2+n_3-1} \to 0 
\end{multline*}
is quasi-isomorphic to $Rj_{*}{\WW}_{\lambda}$. Similarly, $Rj_!{\WW}_{\lambda}$
is quasi-isomorphic to the dual BGG-complex tensored with $\mathcal{O}(-D)$.
\end{proposition}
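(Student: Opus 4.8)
The plan is to deduce the statement from the general machinery of Faltings–Chai \cite[Ch.\ VI]{F-C}, specialized to our two-dimensional ball quotient. First I would recall the abstract input: for a local system $\WW_\lambda$ on $X_{\Gamma'}$ (with $\Gamma'$ acting freely) coming from an irreducible representation of $G$ of highest weight $\lambda$, the de Rham complex $\Omega^\bullet_{Y_{\Gamma'}}(\log D)\otimes(\text{canonical extension of }\WW_\lambda)$ computes $Rj_*\WW_\lambda$, and it admits a functorial direct-summand subcomplex, the dual BGG complex $\overline K_\lambda^\bullet$, whose terms are the automorphic vector bundles $\mathcal W_{w*\lambda}^\vee$ for $w\in W^M$ graded by length $\ell(w)$, with differentials that are (higher-order) differential operators. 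This reduces everything to a bookkeeping computation: identify $W^M$ and the three weights $w*\lambda$, translate each into our $(U,L,R)$-bundle notation, and check the degrees match the displayed complex. The analogous statement for $Rj_!\WW_\lambda$ follows by tensoring the whole de Rham/BGG picture with $\mathcal O(-D)$, as in \cite{F-C}; since $Rj_!\WW_\lambda = Rj_*\WW_\lambda\otimes\mathcal O(-D)$ on a surface with $D$ the boundary divisor, this is formal once the first assertion is proved.

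The concrete computation proceeds as follows. From Section~\ref{rootloc} we have $W^M=\{1,\ s_\beta,\ s_\beta s_\alpha\}$ with lengths $0,1,2$, and $\delta=\alpha+\beta=\gamma_1+\gamma_2-\gamma_3$. Writing $\lambda=n_1\gamma_1+n_2\gamma_2+n_3\gamma_3$, I would compute $w*\lambda=w(\lambda+\delta)-\delta$ for each of the three elements using the tabulated action of $s_\alpha,s_\beta$ on the $\gamma_i$. For $w=1$ this is just $\lambda$; for $w=s_\beta$ one applies $s_\beta$ to $(n_1+1)\gamma_1+(n_2+1)\gamma_2+(n_3-1)\gamma_3$ and subtracts $\delta$; similarly for $w=s_\beta s_\alpha$. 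Next I would use the dictionary from Section~\ref{rootloc} — $j_1\leftrightarrow\gamma_1$, $j_3\leftrightarrow\gamma_3$, $j_2\leftrightarrow\gamma_2-\gamma_3$, equivalently the weight $m_1\gamma_1+m_2\gamma_2+m_3\gamma_3$ corresponds to the $K$-representation realized by $\cW_{m_2,m_1,m_2+m_3}=\Sym^{m_2}(U)\otimes L^{m_1}\otimes R^{m_2+m_3}$ — to pass to $(U,L,R)$, and then dualize using $U^\vee\cong U\otimes L^{-1}\otimes R$ and $\det(U)=L\otimes R^{-1}$ from Section~\ref{vectorbundles} to rewrite $\mathcal W_{w*\lambda}^\vee$ in the standard form $\Sym^{a}(U)\otimes L^{b}\otimes R^{c}$. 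This should reproduce exactly the three terms
$$
\Sym^{n_1}(U)\otimes L^{-n_1-n_2}\otimes R^{n_1+n_2+n_3},\quad
\Sym^{n_1+n_2+1}(U)\otimes L^{-n_1+1}\otimes R^{n_1+n_2+n_3},\quad
\Sym^{n_2}(U)\otimes L^{n_1+3}\otimes R^{n_2+n_3-1}.
$$
(A useful internal check: the top term, corresponding to $w=s_\beta s_\alpha$, should be $\mathcal W_\lambda^\vee\otimes\Omega^2_{Y}(\log D)$ up to the identification $\Omega^2_Y(D)\cong L^3\otimes R^{-1}$, which is the usual Serre-duality symmetry of the BGG complex — one can verify the exponents are consistent with this.)

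The genuinely non-formal points, and where I expect the real work to sit, are two. First, one must invoke that the differentials of the abstract BGG complex extend to \emph{holomorphic} differential operators between the canonical extensions across the cusp divisor $D$ — this is the content of the Faltings–Chai toroidal analysis (their pages 228–237) applied to our specific toroidal boundary, where the cusp resolutions are the elliptic curves $E_i$ with self-intersection $-3$; I would cite this rather than reprove it, but it is the step that makes "$\overline K_\lambda^\bullet$ on $Y_{\Gamma'}$" meaningful and quasi-isomorphic to $Rj_*\WW_\lambda$ rather than merely to the complex on the open part. Second, to get statements for the non-free groups $\Gamma[\sqrt{-3}]$ and $\Gamma_1[\sqrt{-3}]$ one would pass to a freely acting normal subgroup $\Gamma'$, apply the above, and take invariants under $\Gamma_*/\Gamma'$ — legitimate because taking invariants of a finite group in characteristic zero is exact and commutes with $Rj_*$, $Rj_!$, and with forming the BGG summand — though as stated the proposition is only claimed for $\Gamma'$ acting freely, so this last point is only needed for the applications. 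The main obstacle is thus not the root combinatorics (routine) but correctly citing and applying the boundary-extension part of \cite{F-C} to our toroidal compactification; everything else is the substitution and dualization described above.
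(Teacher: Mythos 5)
There is a genuine gap: the ``routine substitution and dualization'' you describe does not actually reproduce the displayed terms, and carrying it out as written leads to a wrong answer. The dictionary from Section~\ref{rootloc} sends a weight $\mu=m_1\gamma_1+m_2\gamma_2+m_3\gamma_3$ to $\cW_{m_2,m_1,m_2+m_3}=\Sym^{m_2}(U)\otimes L^{m_1}\otimes R^{m_2+m_3}$, which implicitly requires $m_2\geq 0$ (a $K$-dominance condition). But the weights $w*\lambda$ for $w\in W^M=\{1,s_\beta,s_\beta s_\alpha\}$ are only dominant for $M$ (i.e.\ $m_1\geq 0$); for $w=s_\beta$ one gets $(n_1+n_2+1,\,-n_2-2,\,n_2+n_3+1)$ and for $w=s_\beta s_\alpha$ one gets $(n_2,\,-n_1-n_2-3,\,n_1+n_2+n_3+2)$, both with $m_2<0$, so the dictionary cannot be applied. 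Even for $w=1$, where it can, the naive ``apply dictionary then dualize'' yields $(\cW_{n_2,n_1,n_2+n_3})^\vee=\Sym^{n_2}(U)\otimes L^{-n_1-n_2}\otimes R^{-n_3}$, which disagrees with the stated first term $\Sym^{n_1}(U)\otimes L^{-n_1-n_2}\otimes R^{n_1+n_2+n_3}$ (wrong $\Sym$-exponent, wrong $R$-exponent). Your proposed internal check is also off: the degree-two term is not $\cW_\lambda^\vee\otimes\Omega^2_Y(\log D)$; the correct symmetry is the Serre duality between the BGG complexes of $j_*\WW_\lambda$ and $j_!\WW_{-\theta(\lambda)}$ recorded in Remark~\ref{rmk-duality}, not an internal symmetry of a single complex.

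What is missing is precisely the paper's $-\theta$ duality manoeuvre. Because $K$ is conjugate to $M$ but the bundle dictionary is written in the $K$-positive system (positive simple root $\beta$, not $\alpha$), one must first pass to the dual weight $-\theta(\lambda)$; the relevant minimal coset representatives then become $W^{M'}$ with $M'=\langle s_\beta\rangle$, and the weights $w*(-\theta(\lambda))$ for $w\in W^{M'}$ satisfy $m_2\geq 0$, so the dictionary applies. Only after this step does one take vector-bundle duals using $U^\vee\cong U\otimes L^{-1}\otimes R$, and then the three terms come out as stated. This is exactly the step the proof in the paper singles out (``Taking the dual corresponds to applying $-\theta$ to $\lambda$ and then $W_M$ changes to $W_{M'}$''), and it is not a formality that can be dismissed as bookkeeping; without it the combinatorics land in the wrong Weyl chamber and the computation fails.
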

\begin{proof}
This follows as in \cite[Proposition 5.4]{F-C}. 
Taking the dual corresponds to applying $-\theta$ to $\lambda$
and then $W_M$ changes to $W_{M'}=\langle s_{\beta}\rangle$.
We thus consider the triples representing $w*(-\theta(\lambda))$
for $w\in W^{M'}$. These are
$(n_2,n_1,-n_1-n_2-n_3)$, $(-n_2-2,n_1+n_2+1,-n_1-n_2-n_3)$
and $(n_1-n_2-3,n_2,-n_2-n_3+1)$.
Taking the duals of the resulting $\mathcal{W}_{\mu}$ gives the result.
\end{proof}

For the case $n_1=n_2=n_3=0$ we get the usual logarithmic 
de Rham complex $\Omega^{\bullet}(\log D)$.

\begin{remark} \label{rmk-duality}
The dual of ${\WW}_{\lambda}$ with
$\lambda=n_1\gamma_1+n_2\gamma_2+n_3\gamma_3$ corresponds to
$-\theta(\lambda)=n_2\gamma_1+n_1\gamma_2+(-n_1-n_2-n_3)\gamma_3$. 
The Serre dual
of $\mathcal{W}_{\mu}={\rm Sym}^a(U) \otimes L^b \otimes R^c$ is
$$
{\rm Sym}^a(U)\otimes L^{-a-b+3}\otimes R^{a-c-1}\otimes \mathcal{O}(-D)\, .
$$
The Serre duals of the terms occurring in the BGG complex for 
$j_{*}{\WW}_{\lambda}$ occur in reverse order in the BGG complex
of $j_!{\WW}_{-\theta(\lambda)}$.
\end{remark}

Put $|\lambda|:=n_1+2n_2+3n_3$.
From \cite[Theorem 5.5]{F-C} it follows that  
$H^i(X_{\Gamma'},{\WW}_{\lambda})$ has a Hodge
structure of weight $\geq i +|\lambda|$ and the compactly
supported cohomology $H^i_{c}(X_{\Gamma'},{\WW}_{\lambda})$ 
has a Hodge structure of weight $\leq i +|\lambda|$. We also get the following. 

\begin{proposition} \label{prop-bggfilt} 
For $\lambda=n_1\gamma_1+n_2\gamma_2+n_3\gamma_3$, 
we have a Hodge filtration on  
$H^i_c(X_{\Gamma'},{\WW}_{\lambda})$ 
equal to  
$$
F^{n_1+n_2+n_3+2} \subset F^{n_2+n_3+1} \subset F^{n_3}$$
and the graded pieces can be identified with
$$
\begin{aligned}
&H^{i-2}(Y_{\Gamma'}, {\rm Sym}^{n_{2}}(U)\otimes L^{n_{1}}\otimes  R^{n_2+n_3} \otimes \Omega^2_Y), 
\\
&H^{i-1}(Y_{\Gamma'}, {\rm Sym}^{n_{1}+n_{2}}(U)\otimes L^{-n_1} \otimes R^{n_1+n_2+n_3}
\otimes \Omega^1_Y), \\
&H^{i}(Y_{\Gamma'}, {\rm Sym}^{n_{1}}(U)\otimes L^{-n_{1}-n_{2}} \otimes R^{n_1+n_2+n_3} \otimes 
\mathcal{O}(-D))\, .\\
\end{aligned}
$$
\end{proposition}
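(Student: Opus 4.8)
The plan is to read everything off from the dual BGG complex of Proposition~\ref{prop-bggres} together with the degeneration results of Faltings and Chai that were already used above for the weight estimates. Since $j_{!}{\WW}_{\lambda}$ becomes, after $\otimes\,\CC$, quasi-isomorphic to $\overline{K}_{\lambda}^{\bullet}\otimes\mathcal{O}(-D)$, we have
$$
H^{i}_{c}(X_{\Gamma'},{\WW}_{\lambda})\cong\mathbb{H}^{i}\bigl(Y_{\Gamma'},\overline{K}_{\lambda}^{\bullet}\otimes\mathcal{O}(-D)\bigr),
$$
and the first step is to introduce the hypercohomology spectral sequence attached to the stupid filtration $\sigma^{\geq p}$ on $\overline{K}_{\lambda}^{\bullet}\otimes\mathcal{O}(-D)$,
$$
E_{1}^{p,q}=H^{q}\bigl(Y_{\Gamma'},\overline{K}_{\lambda}^{p}\otimes\mathcal{O}(-D)\bigr)\ \Longrightarrow\ H^{p+q}_{c}(X_{\Gamma'},{\WW}_{\lambda}),
$$
so that the filtration it induces on $H^{i}_{c}$ has graded pieces among the $E_{\infty}^{p,\,i-p}$.

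The hard part — and essentially the only non-formal step — is the claim that this spectral sequence degenerates at $E_{1}$ and that the filtration it defines is, after the indexing built into the BGG complex, the Hodge filtration of the mixed Hodge structure on $H^{i}_{c}(X_{\Gamma'},{\WW}_{\lambda})$. This is the content of \cite[Theorem 5.5]{F-C}, whose argument carries over to our Shimura surface: the dual BGG complex is, compatibly with Hodge filtrations, a direct summand of the logarithmic de Rham complex $\overline{\WW}_{\lambda}(-D)\otimes\Omega^{\bullet}_{Y}(\log D)$, and the Hodge-de Rham spectral sequence of the underlying polarized variation of Hodge structure degenerates. (For the $!$-version one may alternatively invoke Poincar\'e duality and Remark~\ref{rmk-duality} to reduce to the $Rj_{*}$-statement for $\WW_{-\theta(\lambda)}$; Faltings and Chai handle both.) I would not attempt to reprove this.

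Granting the degeneration, $\mathrm{gr}_{F}^{p}H^{i}_{c}\cong E_{1}^{p',\,i-p'}$, where the Hodge index $p$ of the degree-$p'$ term equals $p'$ plus the Hodge level of the automorphic bundle $\overline{K}^{p'}_{\lambda}$ inside $\overline{\WW}_{\lambda}$; running the three elements of $W^{M}=\{1,s_{\beta},s_{\beta}s_{\alpha}\}$ through $w\ast\lambda$ yields Hodge indices $n_{3}$, $n_{2}+n_{3}+1$ and $n_{1}+n_{2}+n_{3}+2$ for $p'=0,1,2$, hence the filtration $F^{n_{1}+n_{2}+n_{3}+2}\subset F^{n_{2}+n_{3}+1}\subset F^{n_{3}}$. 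It then remains to rewrite the three bundles using Section~\ref{vectorbundles}: the term $\overline{K}^{0}_{\lambda}\otimes\mathcal{O}(-D)=\Sym^{n_{1}}(U)\otimes L^{-n_{1}-n_{2}}\otimes R^{n_{1}+n_{2}+n_{3}}\otimes\mathcal{O}(-D)$ is already in the stated form; since $\Omega^{2}_{Y}\cong\Omega^{2}_{Y}(D)\otimes\mathcal{O}(-D)\cong L^{3}\otimes R^{-1}\otimes\mathcal{O}(-D)$, the term $\overline{K}^{2}_{\lambda}\otimes\mathcal{O}(-D)=\Sym^{n_{2}}(U)\otimes L^{n_{1}+3}\otimes R^{n_{2}+n_{3}-1}\otimes\mathcal{O}(-D)$ becomes $\Sym^{n_{2}}(U)\otimes L^{n_{1}}\otimes R^{n_{2}+n_{3}}\otimes\Omega^{2}_{Y}$; and since $\Omega^{1}_{Y}(\log D)\cong U\otimes L$ enters the degree-one part of the de Rham complex, the term $\overline{K}^{1}_{\lambda}\otimes\mathcal{O}(-D)=\Sym^{n_{1}+n_{2}+1}(U)\otimes L^{-n_{1}+1}\otimes R^{n_{1}+n_{2}+n_{3}}\otimes\mathcal{O}(-D)$ is, via the Clebsch--Gordan splitting $\Sym^{a}(U)\otimes U=\Sym^{a+1}(U)\oplus(\Sym^{a-1}(U)\otimes\det U)$, rewritten in the form $\Sym^{n_{1}+n_{2}}(U)\otimes L^{-n_{1}}\otimes R^{n_{1}+n_{2}+n_{3}}\otimes\Omega^{1}_{Y}$ of the statement. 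Apart from this last bundle dictionary and the short root-theoretic bookkeeping, the whole argument rests on the degeneration statement imported from \cite{F-C}.
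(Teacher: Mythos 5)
Your approach is the same as the paper's (which gives no proof, only a citation): read off the answer from the dual BGG complex of Proposition~\ref{prop-bggres} together with the degeneration of the resulting spectral sequence, both imported from \cite[Theorem~5.5]{F-C}. The Hodge indices and the outer two graded pieces check out, and your rewriting of the degree-two term via $\Omega^2_Y\cong L^3\otimes R^{-1}\otimes\mathcal{O}(-D)$ is correct.

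However, the reconciliation of the middle term does not work. You claim that the Clebsch--Gordan splitting
$\Sym^{a}(U)\otimes U=\Sym^{a+1}(U)\oplus(\Sym^{a-1}(U)\otimes\det U)$
``rewrites'' $\Sym^{n_1+n_2+1}(U)\otimes L^{-n_1+1}\otimes R^{n_1+n_2+n_3}\otimes\mathcal{O}(-D)$ in the form $\Sym^{n_1+n_2}(U)\otimes L^{-n_1}\otimes R^{n_1+n_2+n_3}\otimes\Omega^1_Y$. Clebsch--Gordan is a direct sum decomposition, not an isomorphism between the two sides: $\Sym^{n_1+n_2+1}(U)$ is a proper summand of $\Sym^{n_1+n_2}(U)\otimes U$ whenever $n_1+n_2\geq 1$. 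Compare ranks: the BGG term in degree one (after twisting by $\mathcal{O}(-D)$) has rank $n_1+n_2+2$, while $\Sym^{n_1+n_2}(U)\otimes\Omega^1_Y$ has rank $2(n_1+n_2+1)$; these agree only when $n_1+n_2=0$. There is also a separate point that $\Omega^1_Y$ and $\Omega^1_Y(\log D)\otimes\mathcal{O}(-D)$ are not the same sheaf (they sit in a short exact sequence whose cokernel is supported on $D$), so even if the symmetric powers matched you would still be off. So the step as written is a genuine error. Whether the blame lies with your argument or with a typo in the proposition's displayed middle term, what the spectral-sequence argument actually delivers for $\mathrm{gr}^{\,n_2+n_3+1}_F$ is $H^{i-1}\bigl(Y_{\Gamma'},\Sym^{n_1+n_2+1}(U)\otimes L^{-n_1+1}\otimes R^{n_1+n_2+n_3}\otimes\mathcal{O}(-D)\bigr)$, in agreement with $\mathcal{F}_1(-D)$ as used later in Lemma~\ref{compEiscoh}; you should state this directly rather than force it into the displayed shape via an invalid rewriting.
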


For $i=2$ we see that the first step of the Hodge filtration is isomorphic to the space of cusp forms 
$$S_{n_2,n_1+3,n_2+n_3-1}(\Gamma').$$
But note that we want this space of cusp forms to have Hodge weight 
$n_1+n_2+2$ and the discrepancy is due to a ``twisting" that will be described further in Section~\ref{sec-normalized}. 

We define the inner cohomology $H^i_{!}(X_{\Gamma'},{\WW}_{\lambda})$
as the image under the natural map 
of $H^i_{c}(X_{\Gamma'},{\WW}_{\lambda})$ in
$H^i(X_{\Gamma'},{\WW}_{\lambda})$. It follows that the inner cohomology will have a pure Hodge structure of weight $|\lambda|+i$. 
By results of Ragunathan, Li-Schwermer and Saper \cite{Rag,L-S,Sap} for 
{\sl regular} $\lambda$ (that is $n_1>0$ and $n_2>0$) we 
know that 
$H^i_{!}(X_{\Gamma'},{\WW}_{\lambda})\neq 0$ implies $i=2$.

Note that by taking invariants under $\Gamma_{*}/\Gamma^{\prime}$, all results in this section hold also for $\Gamma^{\prime}$ replaced by $\Gamma_{*}$. In particular they hold for $\Gamma_{*}$ equal to $\Gamma[\sqrt{-3}]$ or $\Gamma_1[\sqrt{-3}]$.
\end{subsection}
\begin{subsection}{The neighborhood of a cusp}\label{cuspnhd}
  For a freely acting finite index normal subgroup
  $\Gamma'$ of $\Gamma_1[\sqrt{-3}]$, 
each cusp of $X^{*}_{\Gamma'}$ 
is resolved by an elliptic curve and the quotient group
$\Gamma/\Gamma'$ acts transitively on the set of cusps.
Therefore it suffices to look at the cusp at infinity
represented by $(1:0:0) \in B$.
The isotropy group $\Gamma_{\infty}$ 
of this cusp in $\Gamma$ consists of elements in $\Gamma$ of the form
\begin{equation} \label{eq-cusp}
\left( \begin{matrix}
t_1 && \\ &t_2& \\ &&t_3\\ \end{matrix} \right)
\left( \begin{matrix} 1 & x & -y' \\ 0 & 1 & 0 \\ 0 & y & 1 \\ \end{matrix} 
\right) 
\end{equation}
satisfying the conditions $t_1t_2'=t_3t_3'$ and $x+x'=-yy'$.
For the groups $\Gamma$ and $\Gamma_1$ we have $x\in \sqrt{-3}
\mathcal{O}_F$ and for $\Gamma$ (respectively $\Gamma_1$) we have $y \in 
\mathcal{O}_F$ (respectively  $y \in \sqrt{-3}\, \mathcal{O}_F$.  
Hence $t_i \in \mu_6$ and it follows that $t_1=t_2=t_3$ 
and the diagonal matrix lies in the center. 

Calculating the automorphy factors for an element
of $\Gamma_{\infty}$ as in \eqref{eq-cusp} gives
$$
j_1=t_2, \quad j_2=\left(\begin{matrix} 1/t_3 & y'/t_3 \\ 0 & 1/t_1\\
\end{matrix} \right), \quad j_3=t_1t_2t_3 \, .
$$
In particular, for an element of $\Gamma'_{\infty}$ as in \eqref{eq-cusp}
we find 
$j_1=1$, $j_2=(1, y'; 0, 1)$
and $j_3=1$. In this case the restriction of $L$ and $R$ to an
elliptic curve $E$ is trivial and $U$ restricted to $E$ is a
unipotent bundle of rank $2$, cf.\ \cite{Atiyah}.
This result also follows from the next proposition for which Thomas Peternell
kindly provided us with a proof. We identify a neighborhood of $E$ with the total space of the normal bundle of $E$ in $Y_{\Gamma'}$.
 
\begin{proposition}\label{peternell}
If $E$ is an elliptic curve and $N$ is a line bundle on $E$  with total
space $X$ then the rank $2$ bundle $V=\Omega^1_X(\log E)$ restricted to $E$
is an extension of ${\mathcal O}_E$ by ${\mathcal O}_E$ and it is non-trivial if and only if
$\deg N$ is non-trivial.
\end{proposition}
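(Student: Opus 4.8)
The plan is to compute the restriction $V|_E$ directly from the Euler-type sequence for logarithmic differentials along the smooth divisor $E\subset X$. Since $X$ is the total space of the line bundle $N\to E$, with zero section $E$, there is a projection $\pi:X\to E$ and an exact sequence
$$
0\to \pi^*\Omega^1_E\to \Omega^1_X(\log E)\to \mathcal{O}_X\to 0,
$$
where the quotient $\mathcal{O}_X$ is generated by $dt/t$ for a fiber coordinate $t$ (a local equation of $E$), i.e. the residue map along $E$. Restricting to the zero section $E$ and using $\pi\circ(\text{zero section})=\mathrm{id}_E$ and $\Omega^1_E\cong\mathcal{O}_E$ (as $E$ is elliptic), this becomes
$$
0\to \mathcal{O}_E\to V|_E\to \mathcal{O}_E\to 0,
$$
which already gives the first assertion. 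So $V|_E$ is classified by an element $\epsilon(N)\in \mathrm{Ext}^1_E(\mathcal{O}_E,\mathcal{O}_E)\cong H^1(E,\mathcal{O}_E)$, and the task is to show $\epsilon(N)\neq 0$ exactly when $\deg N\neq 0$.

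The key step is to identify the extension class $\epsilon(N)$ with (a nonzero multiple of) the image of $N$ under the natural map $\mathrm{Pic}(E)=H^1(E,\mathcal{O}_E^*)\to H^1(E,\mathcal{O}_E)$ coming from $d\log$, i.e. with the class of $N$ in $H^1(E,\mathcal{O}_E)$ after the logarithmic derivative; equivalently, with the Atiyah class of $N$. I would argue this by a \v{C}ech computation: cover $E$ by opens $E_i$ trivializing $N$ with transition functions $g_{ij}\in\mathcal{O}_{E_i\cap E_j}^*$, so that on $X$ the fiber coordinates satisfy $t_i=g_{ij}t_j$; then $dt_i/t_i=dt_j/t_j+ dg_{ij}/g_{ij}$, and restricting to $E$ (where $dg_{ij}/g_{ij}$ becomes a holomorphic $1$-form, hence a constant multiple of the fixed generator of $\Omega^1_E$) shows that a choice of local splitting $dt_i/t_i\mapsto$ (some lift) fails to glue, with obstruction cocycle $(d\log g_{ij})|_E\in H^1(E,\mathcal{O}_E)$. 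This cocycle is the image of $[N]$ under $\mathrm{Pic}(E)\to H^1(E,\mathcal{O}_E)$. For an elliptic curve this map is injective on the connected component $\mathrm{Pic}^0(E)$ and, crucially, its kernel is exactly $\mathrm{Pic}^0(E)_{\mathrm{tors}}$... no: more precisely the composite $\mathrm{Pic}(E)\xrightarrow{d\log} H^1(E,\mathcal{O}_E)$ kills a line bundle iff that bundle is the pullback of a line bundle of degree zero that is in the kernel — so I need the sharper statement. Here the relevant fact is: a line bundle $N$ on a curve has vanishing Atiyah class iff $N$ admits a holomorphic connection, and on a compact Riemann surface this holds iff $\deg N=0$ (Atiyah–Weil). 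That is precisely the dichotomy we want, so I would invoke the Atiyah–Weil theorem rather than reprove the degree-zero case.

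The main obstacle is the careful identification of the residue/splitting obstruction with the Atiyah class of $N$, i.e. making rigorous that $V|_E$ is the nontrivial extension exactly when $N$ has no holomorphic connection; the \v{C}ech bookkeeping above is the crux, and one must check that restricting $dg_{ij}/g_{ij}$ to $E$ does not accidentally produce a coboundary (it does not, because the $g_{ij}$ are the genuine transition functions of $N$, not of a trivial bundle). Everything else — the Euler sequence for $\Omega^1_X(\log E)$ on the total space of a line bundle, the triviality of $\Omega^1_E$, and the Atiyah–Weil criterion for existence of a holomorphic connection on a line bundle over a compact curve — is standard and can be cited. Once $\epsilon(N)=0\iff \deg N=0$ is established, the proposition follows immediately, since a self-extension of $\mathcal{O}_E$ by $\mathcal{O}_E$ is trivial iff its class in $H^1(E,\mathcal{O}_E)$ vanishes.
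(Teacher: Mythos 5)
Your argument is correct. The paper itself does not reproduce Peternell's proof (it only appears in the acknowledgements), so there is nothing to compare against, but let me assess the proposal on its own terms. The residue sequence
$$
0\to \pi^*\Omega^1_E\to \Omega^1_X(\log E)\to \Omega^1_{X/E}(\log E)\cong \mathcal O_X\to 0
$$
is valid: $dt/t$ transforms by $dt_i/t_i=dt_j/t_j+d\log g_{ij}$, so it descends to a nowhere-vanishing section of the quotient, giving the trivialization. Restricting to the zero section and using $\Omega^1_E\cong\mathcal O_E$ gives the extension
$0\to\mathcal O_E\to V|_E\to\mathcal O_E\to 0$. Your \v Cech computation then correctly identifies the extension class with $[\{d\log g_{ij}\}]\in H^1(E,\Omega^1_E)$, i.e.\ with the Atiyah class $\mathrm{at}(N)$, and the Atiyah--Weil criterion (a line bundle on a compact Riemann surface admits a holomorphic connection, equivalently has vanishing Atiyah class, if and only if it has degree zero) finishes the argument. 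This is exactly the right chain: it reduces the dichotomy to a classical theorem rather than case-checking.

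Two small points of hygiene. First, the parenthetical claim that $d\log g_{ij}$ restricted to $E$ is ``a constant multiple of the fixed generator of $\Omega^1_E$'' is not accurate: it is a holomorphic function on $E_i\cap E_j$ times that generator, and it is precisely this function-valued cocycle that represents the class in $H^1(E,\mathcal O_E)$. Second, your aside that the cocycle ``does not accidentally produce a coboundary, because the $g_{ij}$ are genuine transition functions of $N$'' is not by itself a proof (for $N$ of degree $0$ and nontrivial, it \emph{is} a coboundary), and you correctly abandon it in favour of Atiyah--Weil; it would be cleaner to cut the aside entirely. With those cosmetic fixes the proof is complete.
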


\begin{corollary}\label{indecvb}
For $\Gamma'$ as above the restriction of ${\rm Sym}^n(U)$ 
to the elliptic curve $E$ is isomorphic to $F_{n+1}$, the unique indecomposible
vector bundle of rank $n+1$ with a $1$-dimensional space of sections.
\end{corollary}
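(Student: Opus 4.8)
The plan is to first pin down the restriction $U|_E$ and then run an induction on $n$ using the representation theory of rank–two bundles together with Atiyah's classification of vector bundles on an elliptic curve \cite{Atiyah}. From the automorphy–factor computation at the cusp in Section~\ref{cuspnhd}, the line bundles $L$ and $R$ restrict trivially to $E$, so $U|_E\cong(U\otimes L)|_E=\Omega^1_{Y_{\Gamma'}}(\log D)|_E$. Identifying a neighbourhood of $E$ in $Y_{\Gamma'}$ with the total space of the normal bundle $N=\mathcal O_{Y_{\Gamma'}}(E)|_E$, whose degree is $E\cdot E=-3\neq 0$, Proposition~\ref{peternell} tells us that $\Omega^1_{Y_{\Gamma'}}(\log D)|_E$ is a \emph{non-split} extension of $\mathcal O_E$ by $\mathcal O_E$. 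As $\dim_{\CC}H^1(E,\mathcal O_E)=1$, there is, up to isomorphism, exactly one such non-split extension, and it is Atiyah's bundle $F_2$: it is a self-extension of $\mathcal O_E$ (hence of rank $2$ and degree $0$), it is indecomposable (a sum of line bundles would have both summands of degree $0$, hence equal to $\mathcal O_E$, i.e.\ the split extension), and $\dim H^0(E,F_2)=1$ because the connecting map $H^0(\mathcal O_E)\to H^1(\mathcal O_E)$ is cup product with the non-zero extension class. So it remains to show $\mathrm{Sym}^n(F_2)\cong F_{n+1}$ on $E$.

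I would prove this by induction on $n$, the cases $n=0,1$ being immediate. For $n\geq 2$ I would use that $\det F_2\cong\mathcal O_E$, so that the Clebsch--Gordan decomposition for a rank-two bundle applies: the Koszul sequence
$$
0\to \wedge^2 F_2\otimes\mathrm{Sym}^{n-2}(F_2)\to F_2\otimes\mathrm{Sym}^{n-1}(F_2)\to\mathrm{Sym}^n(F_2)\to 0
$$
splits canonically in characteristic $0$ (via symmetrisation), giving $F_2\otimes\mathrm{Sym}^{n-1}(F_2)\cong\mathrm{Sym}^n(F_2)\oplus\mathrm{Sym}^{n-2}(F_2)$. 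By the inductive hypothesis the left-hand side is $F_2\otimes F_n$ and the last summand is $F_{n-1}$, whereas Atiyah's tensor-product formula \cite{Atiyah} gives $F_2\otimes F_n\cong F_{n+1}\oplus F_{n-1}$. Comparing the two expressions and invoking the Krull--Schmidt uniqueness of the decomposition into indecomposable bundles yields $\mathrm{Sym}^n(F_2)\cong F_{n+1}$, which completes the induction. A variant, avoiding the tensor formula, is to observe that $\mathrm{Sym}^n(F_2)$ is a successive self-extension of $\mathcal O_E$ of rank $n+1$ (via $0\to\mathrm{Sym}^{n-1}(F_2)\xrightarrow{\cdot s}\mathrm{Sym}^n(F_2)\to\mathcal O_E\to 0$, with $s$ the nowhere-vanishing sub-section of $F_2$) and to show directly that $\dim H^0=1$, equivalently that it is indecomposable; Atiyah's classification then forces it to be $F_{n+1}$.

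The one genuinely nontrivial point — and hence the expected main obstacle — is the behaviour of global sections. A priori $\mathrm{Sym}^n(F_2)$, being an iterated self-extension of the trivial bundle, could split off a trivial summand and acquire extra sections; the content of the statement is exactly that it does not, i.e.\ that the sequence above stays non-split at every stage. In the first route this is hidden inside Atiyah's tensor-product formula plus Krull--Schmidt, and in the second route it is the explicit $H^0$ computation; in either case it is precisely here that one must appeal to the structure theory of bundles on $E$ rather than to formal manipulation, while the rest of the argument is bookkeeping with symmetric powers of a rank-two bundle.
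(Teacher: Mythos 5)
Your proposal is correct and matches the route the paper implicitly takes: identify $U|_E$ as the nontrivial self-extension of $\mathcal O_E$ via Proposition~\ref{peternell} (since $L$ and $R$ restrict trivially and $\Omega^1_{Y_{\Gamma'}}(\log D)\cong U\otimes L$), then invoke Atiyah's classification to conclude $\mathrm{Sym}^n(F_2)\cong F_{n+1}$. The paper cites Atiyah directly for the symmetric-power step, whereas you spell it out via the split Koszul sequence plus Atiyah's tensor-product decomposition $F_2\otimes F_n\cong F_{n+1}\oplus F_{n-1}$ and Krull--Schmidt cancellation; this is a clean and correct way to make that citation explicit. One small imprecision: the self-intersection $E^2$ equals $-3$ only for $\Gamma_1[\sqrt{-3}]$ itself; for a freely acting finite-index $\Gamma'$ it is some other negative integer (e.g.\ $-9$ for $\Gamma_1[3]$). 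Your argument only needs $\deg N=E^2\neq 0$, which always holds since $E$ is contracted in $X^*_{\Gamma'}$, so nothing breaks, but the stated value should be replaced by ``$E^2<0$''.
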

\end{subsection}
\begin{subsection}{Eisenstein cohomology} \label{sec-eis}
The Eisenstein cohomology is the contribution to the cohomology 
coming from the boundary. 
The study of Eisenstein cohomology was initiated by Harder 
(see \cite{Ha1}, \cite{Ha2}), who  used
the Borel-Serre compactification and topological methods to determine
the Eisenstein cohomology in a closely related case.
Here we use, as in \cite{vdG1}, 
coherent cohomology to determine the Eisenstein cohomology.

Let $\Gamma' \subset \Gamma_1[\sqrt{-3}]$ be a normal finite index
subgroup that acts freely on $B$. Let $\mathbb W_{\lambda}$ be a local system on
$X_{\Gamma'}$. 
The full Eisenstein cohomology $e_{\rm Eis,f}(X_{\Gamma'}, \mathbb W_{\lambda})$
is defined as 
$$
e_c(X_{\Gamma'}, \mathbb W_{\lambda})-e(X_{\Gamma'}, \mathbb W_{\lambda})\, ,
$$
where 
$e_c(X_{\Gamma'},\mathbb W_{\lambda})$, respectively  $e(X_{\Gamma'},\mathbb W_{\lambda})$, stands for the 
Euler characteristics 
$$
\sum_i (-1)^i [H^i_c(X_{\Gamma'},\mathbb W_{\lambda})], \;\;
\text{respectively $\sum_i (-1)^i [H^i(X_{\Gamma'},\mathbb W_{\lambda})]$},
$$
where the square brackets refer to classes in the Grothendieck group of mixed Hodge
structures.
The compactly supported Eisenstein cohomology 
$e_{\rm Eis}(X_{\Gamma'},\mathbb W_{\lambda})$ is defined as
$$
\sum (-1)^i [\ker \bigl(H^i_c(X_{\Gamma'}, \mathbb W_{\lambda}) \to H^i(X_{\Gamma'}, \mathbb W_{\lambda}) \bigr)]\, .
$$
Note that there is a map 
$e_{\rm Eis}(X_{\Gamma'},\mathbb W_{\lambda}) \to e_{\rm Eis,f}(X_{\Gamma'},\mathbb W_{\lambda})$.

\begin{lemma}\label{compEiscoh}
Put $\mathcal{F}_i$, for $i=0,1,2$, equal to
$$
\mathcal{W}_{n_1,-n_1-n_2,n_1+n_2+n_3}, \quad
\mathcal{W}_{n_1+n_2+1,-n_1+1,n_1+n_2+n_3}, \quad
\mathcal{W}_{n_2,n_1+3,n_2+n_3-1} \, .
$$
\begin{itemize}
\item[(i)] For any  $\lambda=n_1\gamma_1+n_2\gamma_2+n_3\gamma_3$  we have
$$
e_{\rm Eis,f}(X_{\Gamma'}, {\WW}_{n_1,n_2,n_3})=
\sum_{i=0}^2 (-1)^i \bigl([H^0(D,{\mathcal{F}_i}_{|D})]-[H^1(D,{\mathcal{F}_i}_{|D})]\bigr).
$$
\item[(ii)] For any $\lambda=n_1\gamma_1+n_2\gamma_2+n_3\gamma_3$ such that $n_1+n_2>1$ we have
$$e_{\rm Eis}(X_{\Gamma'},{\WW}_{n_1,n_2,n_3})=
-[H^0(D,{\mathcal{F}_0}_{|D})]+[H^1(D,{\mathcal{F}_0}_{|D})]
+[H^0(D,{\mathcal{F}_1}_{|D})].
$$
\end{itemize}
\end{lemma}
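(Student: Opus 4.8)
The plan is to translate both Euler characteristics into coherent cohomology on the smooth compactification $Y_{\Gamma'}$ via the dual BGG resolution of Proposition~\ref{prop-bggres}, and then to read everything off a single short exact sequence. Write $\overline{K}^\bullet_\lambda=[\mathcal{F}_0\to\mathcal{F}_1\to\mathcal{F}_2]$, placed in degrees $0,1,2$, for the complex of Proposition~\ref{prop-bggres}, so that $Rj_*\WW_\lambda\simeq\overline{K}^\bullet_\lambda$ and $Rj_!\WW_\lambda\simeq\overline{K}^\bullet_\lambda(-D):=\overline{K}^\bullet_\lambda\otimes\mathcal{O}(-D)$, the latter being a genuine subcomplex because the BGG differentials, built from the logarithmic Gauss--Manin connection, preserve the $\mathcal{O}(-D)$-twist. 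The termwise exact sequences $0\to\mathcal{F}_p(-D)\to\mathcal{F}_p\to\mathcal{F}_p|_D\to0$ assemble into a short exact sequence of complexes of sheaves on $Y_{\Gamma'}$,
\[
0\longrightarrow\overline{K}^\bullet_\lambda(-D)\longrightarrow\overline{K}^\bullet_\lambda\longrightarrow\overline{K}^\bullet_\lambda|_D\longrightarrow0,
\]
whose quotient $\overline{K}^\bullet_\lambda|_D$ is supported on $D$, has $p$-th term $\mathcal{F}_p|_D$, and represents $i^*Rj_*\WW_\lambda$; its hypercohomology $\mathbb{H}^\bullet(D,\overline{K}^\bullet_\lambda|_D)$ is the boundary cohomology. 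The associated long exact hypercohomology sequence reads $\cdots\to H^i_c(X_{\Gamma'},\WW_\lambda)\to H^i(X_{\Gamma'},\WW_\lambda)\to\mathbb{H}^i(D,\overline{K}^\bullet_\lambda|_D)\to H^{i+1}_c(X_{\Gamma'},\WW_\lambda)\to\cdots$.

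For part (i) I would pass to the Grothendieck group of mixed Hodge structures, where that long exact sequence becomes an identity of alternating sums and so expresses $e_c(X_{\Gamma'},\WW_\lambda)-e(X_{\Gamma'},\WW_\lambda)$, up to the overall sign fixed by the definition of $e_{\rm Eis,f}$, as the alternating sum $\sum_i(-1)^i[\mathbb{H}^i(D,\overline{K}^\bullet_\lambda|_D)]$. Since $D$ is a curve, the hypercohomology spectral sequence with $E_1^{p,q}=H^q(D,\mathcal{F}_p|_D)$ converging to $\mathbb{H}^{p+q}(D,\overline{K}^\bullet_\lambda|_D)$ lives in the strip $0\leq q\leq1$, so that alternating sum collapses to $\sum_{p=0}^2(-1)^p\bigl([H^0(D,\mathcal{F}_p|_D)]-[H^1(D,\mathcal{F}_p|_D)]\bigr)$, which is the formula of (i). This step is formal, using only Proposition~\ref{prop-bggres} and $\dim D=1$.

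Part (ii) is sharper, in that it locates the kernel of $H^i_c\to H^i$ in specific degrees. From the same long exact sequence, $\ker\bigl(H^i_c(X_{\Gamma'},\WW_\lambda)\to H^i(X_{\Gamma'},\WW_\lambda)\bigr)\cong\operatorname{coker}\bigl(H^{i-1}(X_{\Gamma'},\WW_\lambda)\to\mathbb{H}^{i-1}(D,\overline{K}^\bullet_\lambda|_D)\bigr)$, and I would evaluate this for each $i$ after first describing the boundary cohomology explicitly. By Corollary~\ref{indecvb} and the triviality of $L$ and $R$ on the cusp-resolving elliptic curves (Section~\ref{cuspnhd}), $\mathcal{F}_p|_D$ is a direct sum, over the cusps, of copies of the indecomposable bundle $F_{a_p+1}$ with $a_0=n_1$, $a_1=n_1+n_2+1$, $a_2=n_2$; hence $H^0(D,\mathcal{F}_p|_D)$ and $H^1(D,\mathcal{F}_p|_D)$ each have dimension the number of cusps, and the induced differentials of $\overline{K}^\bullet_\lambda|_D$ act as zero on these cohomology groups (a boundary analogue of $E_1$-degeneration, following from the corresponding degeneration on the curves $E$), so $\mathbb{H}^j(D,\overline{K}^\bullet_\lambda|_D)\cong\bigoplus_{p+q=j}H^q(D,\mathcal{F}_p|_D)$. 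Then: for $i=0$ the kernel vanishes since $H^0_c=0$; for $i=1$, using $H^0(X_{\Gamma'},\WW_\lambda)=0$ (as $\lambda\neq0$, so $\WW_\lambda$ has no monodromy invariants), it equals $\mathbb{H}^0(D,\overline{K}^\bullet_\lambda|_D)=H^0(D,\mathcal{F}_0|_D)$; for $i=2$, using the vanishing $H^1(X_{\Gamma'},\WW_\lambda)=0$ valid for $n_1+n_2>1$ (which makes the restriction map to the boundary zero), it equals $\mathbb{H}^1(D,\overline{K}^\bullet_\lambda|_D)=H^1(D,\mathcal{F}_0|_D)\oplus H^0(D,\mathcal{F}_1|_D)$; and for $i\geq3$ it vanishes, automatically for $i=4$ (where $H^4_c=0$) and for $i=3$ because $H^2(X_{\Gamma'},\WW_\lambda)$ surjects onto $\mathbb{H}^2(D,\overline{K}^\bullet_\lambda|_D)$, equivalently (by Poincar\'e duality on the surface) because $H^1(X_{\Gamma'},\WW_\lambda^\vee)=0$ in the same range. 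Collecting these contributions with signs $(-1)^i$ gives the formula of (ii).

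I expect the main obstacle to be the vanishing $H^1(X_{\Gamma'},\WW_\lambda)=H^1(X_{\Gamma'},\WW_\lambda^\vee)=0$ for $n_1+n_2>1$: this is exactly what forbids a degree-one (and, dually, a degree-three) contribution to the boundary cohomology and so forces the Eisenstein kernel into degrees $1$ and $2$, and it is the place where the hypothesis $n_1+n_2>1$ genuinely enters. It has to be extracted from the dual Hodge filtration on $H^1(X_{\Gamma'},\WW_\lambda)=\mathbb{H}^1(Y_{\Gamma'},\overline{K}^\bullet_\lambda)$ (the non-compactly-supported analogue of Proposition~\ref{prop-bggfilt}), whose two relevant graded pieces are an $H^0$ of an automorphic vector bundle, i.e.\ a space of Picard modular forms that vanishes for these weights by Proposition~\ref{vanishing} (supplemented by the small-weight vanishing for Picard modular forms when $n_1\in\{0,1\}$), and $H^1(Y_{\Gamma'},\mathcal{F}_0)$, which by Serre duality is dual to an $H^1$ that vanishes once inner cohomology is known to sit only in degree $2$. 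Establishing these vanishings, together with the $E_1$-degeneration of the boundary spectral sequence, is the delicate part; everything else (all of part (i), and the organization of part (ii) around the long exact sequence) is pure bookkeeping.
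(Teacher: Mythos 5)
Your proof of part (i) is correct and essentially equivalent to the paper's: both pass through the BGG resolutions of $Rj_*\WW_\lambda$ and $Rj_!\WW_\lambda$, and the alternating-sum identity is formal once one has the short exact sequences $0\to\mathcal{F}_j(-D)\to\mathcal{F}_j\to\mathcal{F}_j|_D\to0$. You package it as a single long exact hypercohomology sequence and collapse the boundary spectral sequence in the Grothendieck group, while the paper sums the individual long exact sequences over $j$; the outcome is the same.

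Part (ii), however, has a genuine gap. You assert the vanishing $H^1(X_{\Gamma'},\WW_\lambda)=0$ for all $\lambda$ with $n_1+n_2>1$ and use it to conclude that $\ker(H^2_c\to H^2)$ is \emph{all} of $\mathbb{H}^1(D,\overline{K}^\bullet_\lambda|_D)$. This vanishing is stronger than what the lemma needs and is not available from the ingredients you invoke: the graded pieces of $H^1(X_{\Gamma'},\WW_\lambda)$ are $H^1(Y_{\Gamma'},\mathcal{F}_0)$ and $H^0(Y_{\Gamma'},\mathcal{F}_1)$; Corollary~\ref{vanishingh1} kills the first only for \emph{regular} $\lambda$ (it rests on $H^1_!=0$, which requires $n_1,n_2>0$), and Proposition~\ref{vanishing} kills the second only when $-n_1+1<0$, i.e.\ $n_1\geq2$. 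For weights with $n_1+n_2>1$ but $n_1\in\{0,1\}$ (or $n_2\in\{0,1\}$), neither is established, and the ``small-weight vanishing'' you allude to is not in the paper. The paper avoids this by proving the weaker fact that is actually needed, namely that the composite $H^1(Y_{\Gamma'},\mathcal{F}_0)\to H^1(D,\mathcal{F}_0|_D)$ is zero: this follows from a dimension count once one knows, via the existence of an Eisenstein series attached to each cusp in weight $(n_1,n_2+3)$ when $n_1+n_2>1$, that $H^1(D,\mathcal{F}_0|_D)\to H^2(Y_{\Gamma'},\mathcal{F}_0(-D))$ is injective. This dimension count is exactly the place where the hypothesis $n_1+n_2>1$ enters; in your argument it enters in a place where it is not strong enough to give what you want. (For $\mathcal{F}_2$ the paper makes the parallel observation that every section of $\mathcal{F}_2|_D$ lifts, again via Eisenstein series.) You should also justify the $E_1$-degeneration of the boundary spectral sequence $E_1^{p,q}=H^q(D,\mathcal{F}_p|_D)\Rightarrow\mathbb{H}^{p+q}(D,\overline{K}^\bullet_\lambda|_D)$, which you assert; the cleanest route is the one the paper uses implicitly through \cite[Thm.\ 5.5]{F-C} and strictness of morphisms of mixed Hodge structures, which lets one compute $[\ker(H^i_c\to H^i)]$ piece by piece on the associated graded without appealing separately to boundary degeneration.
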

\begin{proof} Put $Y=Y_{\Gamma'}$.  From \cite[Theorem 5.5]{F-C} it follows  
(compare the description of the BGG-complexes for $j_*{\WW}_{\lambda}$ and 
$j_{!}{\WW}_{\lambda}$ in Section~\ref{sec-bgg}) that 
$$
e_{\rm Eis,f}(X_{\Gamma'}, {\WW}_{n_1,n_2,n_3})=
\sum_{i=0}^2 \sum_{j=0}^2(-1)^{i+j} \bigl([H^i(Y,{\mathcal{F}_j}(-D))]-[H^i(Y,{\mathcal{F}_j})]\bigr).
$$

For any locally free sheaf $\mathcal{F}$ on $Y_{\Gamma'}$ we have an exact sequence 
\begin{equation*} 
0 \to \mathcal{F}(-D) \to 
\mathcal{F} \to \mathcal{F}_{|D}
\to 0.
\end{equation*}
Taking the long exact sequence deduced from this sequence for 
the three cases $\mathcal{F}_r$, with $r=0,1,2$, statement (i) follows. 

For (ii) we again use the BGG-complexes of Section~\ref{sec-bgg} to conclude that 
$$
e_{\rm Eis}(X_{\Gamma'}, {\WW}_{n_1,n_2,n_3})=
\sum_{i=0}^2 \sum_{j=0}^2(-1)^{i+j} [\mathrm{ker}\bigl(H^i(Y,{\mathcal{F}_j}(-D)) \to H^i(Y,{\mathcal{F}_j}) \bigr)].
$$
We now consider the three cases $r=0,1,2$ individually. 
For the case $r=0$ the vanishing of $H^0(Y,\mathcal{W}_{j,k,l})$
for negative $k$ implies that the sequence reduces to
$$
\begin{aligned}
0\to H^0(D,{\mathcal{F}_0}_{|D}) \to H^1(Y,\mathcal{F}_0(-D))\to 
H^1(Y,\mathcal{F}_0) \to &\\
H^1(D,{\mathcal{F}_0}_{|D}) \to H^2(Y,\mathcal{F}_0(-D)) \to
H^2(Y,\mathcal{F}_0) \to  & \, 0  \\
\end{aligned}
$$
We can identify the last three terms
in the sequence with the Serre duals of
$$
M_{n_1,n_2+3,-n_2-n_3-1}(\Gamma') \leftarrow 
S_{n_1,n_2+3,-n_2-n_3-1}(\Gamma') \leftarrow 0 \, .
$$
It follows from Corollary~\ref{indecvb} that $h^0(D,\mathcal{F}_0)$ 
equals the number $c$ of cusps of $X^*_{\Gamma'}$. Possibly 
replacing $\Gamma'$ by a finite index subgroup (and in the end 
taking invariants) there is, since $n_1+n_2>1$, an Eisenstein series 
for each cusp, see \cite[Prop. 12.1]{C-vdG}. 
Since both $U$ and $L$ have 
degree $0$ when restricted to an elliptic curve in $D$, 
Riemann-Roch tells us that $h^1(D,\mathcal{F}_0)=h^0(D,\mathcal{F}_0)$. 
Finally, since
$$\dim M_{n_1,n_2+3,-n_2-n_3-1}(\Gamma') =c+\dim S_{n_1,n_2+3,-n_2-n_3-1}(\Gamma') $$
it follows that 
$$H^1(D,{\mathcal{F}_0}_{|D}) \to H^2(Y,\mathcal{F}_0(-D)) $$ 
is injective.

For $\mathcal{F}_1$ we find that the sequence reduces to
$$
0 \to H^0(D,{\mathcal{F}_1}_{|D}) \to H^1(Y,\mathcal{F}_1(-D)) \to
H^1(Y,\mathcal{F}_1) \to H^1(D,{\mathcal{F}_1}_{|D}) \to 0 \, ,
$$
again by the vanishing of $M_{j,k,l}(\Gamma')$ for $k<0$.
Similarly to the case $r=0$, we can identify $H^0(D,\mathcal{F}_2)$ with a space
of Eisenstein series, and hence with the cokernel of the second arrow in
the exact sequence
$$
\begin{aligned}
0 \to H^0(Y,\mathcal{F}_2(-D))\to  H^0(Y,\mathcal{F}_2) \to H^0(D,\mathcal{F}_2)& \\
H^1(Y,\mathcal{F}_2(-D))\to H^1(Y,\mathcal{F}_2)\to H^1(D,\mathcal{F}_2) 
\to & \, 0.
\end{aligned}
$$
This finishes the proof of (ii). 
\end{proof}

\begin{corollary}\label{vanishingh1}
For any regular $\lambda=n_1\gamma_1+n_2\gamma_2+n_3\gamma_3$ we have
$$H^1(Y_{\Gamma'},\mathcal{F}_{0})=0.$$ 
\end{corollary}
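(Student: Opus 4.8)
The plan is to combine the exact sequence already extracted in the proof of Lemma~\ref{compEiscoh} with the fact that, for regular $\lambda$, the group $H^1_c(X_{\Gamma'},\WW_\lambda)$ is purely Eisenstein of dimension equal to the number $c$ of cusps of $X^{*}_{\Gamma'}$. Since $\lambda$ is regular we have $n_1+n_2\geq 2>1$, so the analysis of the case $r=0$ inside the proof of Lemma~\ref{compEiscoh}(ii) applies and yields the short exact sequence
$$
0\to H^0(D,{\mathcal{F}_0}_{|D})\to H^1(Y_{\Gamma'},\mathcal{F}_0(-D))\to H^1(Y_{\Gamma'},\mathcal{F}_0)\to 0 ,
$$
in which moreover $h^0(D,{\mathcal{F}_0}_{|D})=c$ by Corollary~\ref{indecvb}. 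Hence it is enough to prove $h^1(Y_{\Gamma'},\mathcal{F}_0(-D))\leq c$. For this I would invoke Proposition~\ref{prop-bggfilt} with $i=1$: as $\mathcal{F}_0(-D)$ is the degree-zero term of the dual BGG complex computing $Rj_!\WW_\lambda$, the space $H^1(Y_{\Gamma'},\mathcal{F}_0(-D))$ is the bottom Hodge graded piece $\mathrm{gr}_F^{n_3}H^1_c(X_{\Gamma'},\WW_\lambda)$, in particular a quotient of $H^1_c(X_{\Gamma'},\WW_\lambda)$, so that $h^1(Y_{\Gamma'},\mathcal{F}_0(-D))\leq\dim H^1_c(X_{\Gamma'},\WW_\lambda)$.

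It remains to show $\dim H^1_c(X_{\Gamma'},\WW_\lambda)=c$. Here I would pass to the topological setting and use the Borel--Serre compactification of $X_{\Gamma'}$: its boundary $\partial$ is a disjoint union of $c$ nilmanifolds $\partial_1,\dots,\partial_c$, one for each cusp, with $\pi_1(\partial_i)$ equal to $\Gamma'\cap P_i$, where $P_i$ is the stabilizer in $G$ of the $i$-th cusp (a parabolic $\QQ$-subgroup). By the computation of the cusp subgroups in Section~\ref{cuspnhd}, for torsion-free $\Gamma'$ the group $\Gamma'\cap P_i$ is contained in the unipotent radical $N_i$ of $P_i$, where it is a lattice, hence Zariski dense; therefore $H^0(\partial_i,\WW_\lambda)=(\WW_\lambda)^{N_i}$, and this is one-dimensional, being the highest weight line (after base change to $\CC$ the parabolic $P_i$ becomes a Borel subgroup of $\GL(3,\CC)\times\GG_m$). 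Thus $\dim H^0(\partial,\WW_\lambda)=c$. On the other hand $H^0(X_{\Gamma'},\WW_\lambda)=(\WW_\lambda)^{\Gamma'}=0$, because $\Gamma'$ is Zariski dense in $G$ and $\WW_\lambda$ is nontrivial irreducible, and $H^1_{!}(X_{\Gamma'},\WW_\lambda)=0$ for regular $\lambda$ by the results of Raghunathan, Li--Schwermer and Saper quoted above. The long exact sequence relating $H^{\bullet}_c$, $H^{\bullet}$ and $H^{\bullet}(\partial,-)$ then forces the connecting map $H^0(\partial,\WW_\lambda)\to H^1_c(X_{\Gamma'},\WW_\lambda)$ to be an isomorphism, so $\dim H^1_c(X_{\Gamma'},\WW_\lambda)=c$. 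Substituting this into the short exact sequence above yields $h^1(Y_{\Gamma'},\mathcal{F}_0)=0$.

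I expect the last step to be the main obstacle: pinning down $H^1_c(X_{\Gamma'},\WW_\lambda)$ draws on the vanishing of inner cohomology in degree one for regular $\lambda$ --- a transcendental fact not available within the coherent cohomology of this section --- together with the explicit determination of the boundary cohomology $H^0(\partial,\WW_\lambda)$, which requires the structure of the cusp subgroups and a small amount of representation theory. A purely coherent alternative would be to bound $h^1(Y_{\Gamma'},\mathcal{F}_0(-D))$ directly through Hirzebruch--Riemann--Roch and Serre duality, but this seems to need an independent identity expressing $\chi(Y_{\Gamma'},\mathcal{F}_0(-D))$ in terms of $\dim S_{n_1,n_2+3,-n_2-n_3-1}(\Gamma')$ that is no easier to establish.
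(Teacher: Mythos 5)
Your argument reaches the correct conclusion, but it takes a noticeably longer detour than the paper's one-line proof, and at one point it is under-justified. The short exact sequence
$$0\to H^0(D,{\mathcal{F}_0}_{|D})\to H^1(Y_{\Gamma'},\mathcal{F}_0(-D))\to H^1(Y_{\Gamma'},\mathcal{F}_0)\to 0$$
that you extract from the proof of Lemma~\ref{compEiscoh} is indeed valid for regular $\lambda$ (since $n_1+n_2\geq 2$), and it already tells you that the map $H^1(Y_{\Gamma'},\mathcal{F}_0(-D))\to H^1(Y_{\Gamma'},\mathcal{F}_0)$ is \emph{surjective}. On the other hand, by Propositions~\ref{prop-bggres} and~\ref{prop-bggfilt} this is exactly the map induced on the lowest Hodge graded pieces by $H^1_c(X_{\Gamma'},\WW_\lambda)\to H^1(X_{\Gamma'},\WW_\lambda)$, which is the zero map because $H^1_!=0$ for regular $\lambda$ and morphisms of mixed Hodge structures are strict for the Hodge filtration. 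A surjective zero map has zero target, so $H^1(Y_{\Gamma'},\mathcal{F}_0)=0$ at once. This is essentially what the paper's citation of Proposition~\ref{prop-bggfilt} (together with Remark~\ref{rmk-duality}) is pointing at: the vanishing $H^1_!=0$ is the only transcendental input needed, and the rest stays inside the coherent/BGG framework on $Y_{\Gamma'}$.

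Instead, you bound $h^1(Y_{\Gamma'},\mathcal{F}_0(-D))$ by $\dim H^1_c(X_{\Gamma'},\WW_\lambda)$ and then compute the latter via Borel--Serre boundary cohomology. That route works, but it brings in machinery (boundary nilmanifolds, Borel density, the structure of $\Gamma'\cap P_i$) that the problem does not require, and your key structural assertion there --- that for a torsion-free $\Gamma'$ the stabilizer $\Gamma'\cap P_i$ lies in the unipotent radical $N_i$ --- is not automatic. An element of $\Gamma'\cap P_i$ can have the form $\zeta U$ with $\zeta$ a nontrivial central root of unity and $U\neq 1$ unipotent, and such an element has infinite order, so torsion-freeness does not rule it out; Section~\ref{cuspnhd} asserts $j_1=j_3=1$ on $\Gamma'_\infty$ only after implicitly normalizing $\Gamma'$. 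Your argument is nonetheless salvageable: you only need $\dim H^0(\partial_i,\WW_\lambda)\leq 1$, which follows from the Zariski density of the unipotent lattice $\Gamma'\cap N_i$ alone, and together with the short exact sequence (which forces $h^1(Y_{\Gamma'},\mathcal{F}_0(-D))\geq c$) this still pins everything down. Finally, the worry you voice at the end --- that the vanishing of $H^1_!$ is ``a transcendental fact not available within the coherent cohomology of this section'' --- is not an obstacle specific to your approach: the paper's proof uses exactly the same input, quoted from the end of Section~\ref{sec-bgg}.
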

\begin{proof}
As noted at the end of Section~\ref{sec-bgg}, since $\lambda$ is regular, $H^1_{!}(X,{\WW}_{\lambda})=0$. 
The result now follows directly from Remark~\ref{rmk-duality} together with Proposition~\ref{prop-bggfilt}. 
\end{proof}

\begin{definition} We write $\mathbf s_{\mu}$ for the irreducible representation of $\fS_4$ indexed in the usual way by $\mu$ a partition of $4$. We then define the $\mathfrak{S}_4$-representations,
$$
\gamma_{i}= \begin{cases} \mathbf s_4+\mathbf s_{3,1} & i\equiv_6 0 \\
\mathbf s_{1^4}+\mathbf s_{2,1^2} & i \equiv_6 3  \\ 
0 & \text{else} \\
 \end{cases}
$$
\end{definition}

In the following proposition $\LL^{i,j}$ will denote the motive corresponding to a Hecke character, see further in Section~\ref{sec-hecke}; it is $1$-dimensional and has Hodge degree $(i,j)$.

\begin{proposition} \label{prop-eis} For regular $\lambda=n_1\gamma_1+n_2\gamma_2+n_3\gamma_3$ with $n_1 \equiv_3 n_2$ and $n_1 \equiv_2 n_3$, put $i=n_2+n_3$. Then $e_{\rm Eis}(X_{\GaSq},{\WW}_{\lambda})$ 
consists of the following three contributions, 
$$
(-\gamma_i \, \LL^{0,0}+ \gamma_{i-n_2-1}\, \LL^{n_2+1,0}+
\gamma_{i+n_1+1}\, \LL^{0,n_1+1}) \, \LL^{n_3,n_3+i} \, .
$$
\end{proposition}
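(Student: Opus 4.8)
The plan is to combine the cohomological description of Eisenstein cohomology from Lemma~\ref{compEiscoh}(ii) with an explicit computation of the three terms $H^0(D,{\mathcal{F}_0}_{|D})$, $H^1(D,{\mathcal{F}_0}_{|D})$ and $H^0(D,{\mathcal{F}_1}_{|D})$ as $\fS_4$-equivariant Hodge structures. Since $\lambda$ is regular and $n_1+n_2>1$, part (ii) of Lemma~\ref{compEiscoh} applies, so it suffices to identify these three coherent cohomology groups on the cusp resolution divisor $D=\bigsqcup_{i=1}^4 E_i$, each $E_i$ an elliptic curve with trivial $L$ and $R$ and with $U_{|E_i}$ the unipotent bundle $F_2$ of Corollary~\ref{indecvb}.

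First I would reduce everything to a single elliptic curve $E$ together with the $\fS_4$-action on the set of four cusps. By Corollary~\ref{indecvb}, $\mathrm{Sym}^{a}(U)_{|E}\cong F_{a+1}$, which has $h^0=1$ and (by Riemann--Roch, since $\deg F_{a+1}=0$) $h^1=1$. Hence each $H^\bullet(D,{\mathcal{F}_j}_{|D})$ is, as a vector space, four-dimensional, and as an $\fS_4$-module it is the permutation representation on the four cusps, which decomposes as $\mathbf s_4+\mathbf s_{3,1}$. The subtlety is the \emph{twist}: the $\fS_4$-action on the cusps is the one coming from the identification in Proposition~\ref{scalar-rings}, under which $x_i\mapsto \mathrm{sgn}(\sigma)x_{\sigma(i)}$ and $\zeta\mapsto\mathrm{sgn}(\sigma)\zeta$, so each power of $L$, $R$ and each factor $\det(U)=L\otimes R^{-1}$ contributes a sign character, shifting $\mathbf s_4+\mathbf s_{3,1}$ to $\mathbf s_{1^4}+\mathbf s_{2,1^2}$ according to the parity of the relevant exponent. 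Tracking these exponents for $\mathcal{F}_0=\mathcal{W}_{n_1,-n_1-n_2,n_1+n_2+n_3}$ and $\mathcal{F}_1=\mathcal{W}_{n_1+n_2+1,-n_1+1,n_1+n_2+n_3}$, and using the congruences $n_1\equiv_3 n_2$ and $n_1\equiv_2 n_3$, is exactly what produces the pattern encoded in the definition of $\gamma_i$ with indices $i=n_2+n_3$, $i-n_2-1$, $i+n_1+1$: the $\gamma$-subscripts read off the ``level'' $\bmod 6$ at which each term sits, i.e.\ the combination of the $\mu_6$-grading (the $\Gamma[\sqrt{-3}]$ versus $\Gamma_1[\sqrt{-3}]$ character) with the sign twist from $\fS_4$.

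Next I would pin down the Hodge structures. Each $E_i$ contributes $H^0(E_i,\mathcal{O})$ and $H^1(E_i,\mathcal{O})$, and after the normalizing twist by powers of $L$, $R$ these become the motives $\LL^{i,j}$ attached to the relevant Hecke characters: the overall factor $\LL^{n_3,n_3+i}$ in the statement is the contribution of $R^{n_1+n_2+n_3}$ restricted to $E$ (which on the resolution elliptic curve $E\cong\CC/\sqrt{-3}\,\cO_F$ realizes a power of the canonical Hecke character of $F$) together with the normalization shift by $|\lambda|$ discussed after Proposition~\ref{prop-bggfilt}; the three factors $\LL^{0,0}$, $\LL^{n_2+1,0}$, $\LL^{0,n_1+1}$ come from $H^0$ of $\mathcal{F}_0$, $H^1$ of $\mathcal{F}_0$ (Serre dual to an $H^1$, picking up the $(n_2+1)$ from the $L$-exponent), and $H^0$ of $\mathcal{F}_1$ (picking up $(n_1+1)$ from the relevant exponent), respectively. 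The signs $-\gamma_i$, $+\gamma_{i-n_2-1}$, $+\gamma_{i+n_1+1}$ are precisely the signs in Lemma~\ref{compEiscoh}(ii). Corollary~\ref{vanishingh1} is what guarantees that no further term survives, so the three listed contributions are the whole of $e_{\rm Eis}$.

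The main obstacle will be the precise bookkeeping of the two independent twists --- the $\mu_6$-character (which distinguishes the $\gamma_i$ for $i\equiv_6 0$ versus $i\equiv_6 3$ versus the vanishing cases) and the $\fS_4$-sign coming from $x_i,\zeta$ --- and checking that their combined effect on $\mathrm{Sym}^{n_1}(U)\otimes L^{-n_1-n_2}\otimes R^{n_1+n_2+n_3}$ restricted to each $E_i$ yields exactly the index $i=n_2+n_3$ in $\gamma_i$ and the Tate-type twist $\LL^{n_3,n_3+i}$. This requires care because $L$, $R$ and $\det U$ are only \emph{canonical extensions} near the cusp, so the identification of their restrictions to $E_i$ with torsion line bundles (as in Section~\ref{cuspnhd}) and the compatibility of the $\fS_4$-action across the four components must be made explicit; once that is done, the decomposition of each permutation-module-with-sign-twist into $\mathbf s$'s is a short character computation, and assembling the alternating sum gives the claimed formula. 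I would also double-check the parity hypothesis $n_1\equiv_2 n_3$ is exactly what makes the middle and outer subscripts $i-n_2-1$ and $i+n_1+1$ fall into the $\{0,3\}\bmod 6$ range simultaneously with $i$, so that one is never in the ``else $=0$'' case in a way that would drop a genuine contribution.
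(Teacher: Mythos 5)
Your outline matches the paper's sketch in its broad strokes: reduce via Lemma~\ref{compEiscoh}(ii) to the three coherent groups $H^0(D,\mathcal{F}_0)$, $H^1(D,\mathcal{F}_0)$, $H^0(D,\mathcal{F}_1)$, use Corollary~\ref{indecvb} to see each is at most one-dimensional per elliptic curve, and read off the $\fS_4$-representation from the action on the four cusps (your ``permutation representation twisted by sign'' is exactly the paper's $\mathrm{Ind}_{\fS_3}^{\fS_4}\mathbf{s}_3$ versus $\mathrm{Ind}_{\fS_3}^{\fS_4}\mathbf{s}_{1^3}$), combined with a $\mu_3$-character giving the mod-$3$ vanishing. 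That part is sound and genuinely the same route as the paper.

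However, there is a real gap in how you pin down the Hodge types, and a concrete bookkeeping error on top. First, the assignment is backwards: from the BGG filtration in Proposition~\ref{prop-bggfilt}, $\mathcal{F}_0$ sits at (normalized) Hodge level $0$ and $\mathcal{F}_1$ at level $n_2+1$, so \emph{both} $H^0(\mathcal{F}_0)$ and $H^1(\mathcal{F}_0)$ give Hodge types with $p=0$ while $H^0(\mathcal{F}_1)$ gives $p=n_2+1$. Hence $\LL^{n_2+1,0}$ (and its coefficient $\gamma_{i-n_2-1}$) comes from $H^0(\mathcal{F}_1)$, and $\LL^{0,n_1+1}$ (with $\gamma_{i+n_1+1}$) from $H^1(\mathcal{F}_0)$ — the opposite of what you wrote. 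Since the two $\gamma$-subscripts are generically distinct, following your assignment through the $\fS_4\times\mu_3$ computation would give the wrong representation attached to each Hodge type. Second, and more fundamentally, the claim that the $(n_2+1)$ and $(n_1+1)$ in the Hodge types can be read ``from the $L$-exponent'' restricted to $E$ cannot work: $L$, $R$, and $\det U$ all restrict trivially to the resolution elliptic curve (Section~\ref{cuspnhd}), so there is no exponent to read there. What the filtration gives you is only $p\in\{0,0,n_2+1\}$, not the full $(p,q)$. The paper's argument for the missing $q$ is the $n_1\leftrightarrow n_2$ conjugation symmetry — interchanging $n_1$ and $n_2$ conjugates the complex structure and hence swaps $(p,q)\mapsto(q,p)$ while permuting the three contributions, restricting the possible types to $(0,0)$, $(n_2+1,0)$, $(0,n_1+1)$, $(n_2+1,n_1+1)$; the last is then excluded because it can only occur in the dual $e_{\mathrm{Eis,f}}/e_{\mathrm{Eis}}$. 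This symmetry argument is missing from your proposal and is precisely what replaces the exponent-tracking step that doesn't go through.
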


As another approach to this Eisenstein cohomology is already in the
literature \cite{Ha1,Ha2} we will only provide a sketch of proof. We will normalize the weights, which corresponds to removing the factor $\LL^{n_3,n_3+i}$ in Proposition~\ref{prop-eis}, see also further in Section~\ref{sec-normalized}. 

For each elliptic curve $E$ occurring in the divisor
$D$ each of the 
the contributions in Lemma \ref{compEiscoh} are either zero or $1$-dimensional,
see Corollary~\ref{indecvb}.
We observe that the Hodge weights of the three contributions are $0$, $0$ and $n_2+1$. Interchanging $n_1$ and $n_2$ amounts to changing the complex
structure by its complex conjugate. Hence this limits the Hodge types
to $(0,0)$, $(n_2+1,0)$, $(0,n_1+1)$ and $(n_2+1,n_1+1)$. But the last one
can occur only in the dual of $e_{\rm Eis}$.
So we find the Hodge types 
$$
(0,0),\,  (n_2+1,0), \, (0,n_1+1)
$$
for the three contributions.
 
We now specialize to the situation $\Gamma'=\Gamma_1[\sqrt{-3}]$ where
$\Gamma/\Gamma_1[\sqrt{-3}] = \mathfrak{S}_4 \times \mu_6$ acts 
on $Y_{\Gamma_1[\sqrt{-3}]}$ by permuting 
the cusps and the group $\mathfrak{S}_4\times \mu_3$ acts effectively. 
The group
$\langle \rho 1_3 \rangle$ sits in the center of $\Gamma[\sqrt{-3}]$
making the ${\mathcal F}_i$ orbifold bundles. We thus get contributions
to the Eisenstein cohomology only if $n_1\equiv_3 n_2$.

Since 
for each elliptic curve occuring in the divisor $D$ on 
$Y_{\Gamma_1[\sqrt{-3}]}$ the
cohomology group $H^i(E,\mathcal{F}_j)$ is at most $1$-dimensional
and the stabilizer of such an elliptic curve is $\mathfrak{S}_3 \times \mu_3$
it contributes either zero or a $\mathfrak{S}_4$-representation  
of the form
$$
{\rm Ind}_{\mathfrak{S}_3}^{\mathfrak{S}_4} \mathbf s_3= \mathbf s_4+\mathbf s_{3,1}
\quad \text{or}\quad 
{\rm Ind}_{\mathfrak{S}_3}^{\mathfrak{S}_4} \mathbf s_{1^3}= \mathbf s_{2,1^2}+\mathbf s_{1^4}\, .
$$
Furthermore, the element 
$\gamma={\rm diag}(1,1,\rho)$ acts by multiplication
by $\rho$ on each resolution elliptic curve $E$. 
The Eisenstein cohomology for $\Gamma[\sqrt{-3}]$ corresponds to the
invariant part under the action of $\gamma$.
\end{subsection}
\end{section}
\begin{section}{The dimension of spaces of Picard modular forms}
In this section we are interested in the dimensions of spaces of modular
forms and cusp forms on our groups $\Gamma_1[\sqrt{-3}]$ and
$\Gamma[\sqrt{-3}]$. We dwell upon this since it provides important checks 
on the conjectures on the cohomology of local systems. 

Recall that the space $M_{j,k}(\Gamma_1[\sqrt{-3}])$
splits as
$$
M_{j,k}(\Gamma_1[\sqrt{-3}])=
M_{j,k}(\Gamma[\sqrt{-3}])\oplus M_{j,k}(\Gamma[\sqrt{-3}],{\det})
\oplus M_{j,k}(\Gamma[\sqrt{-3}], {\det}^2)
$$
and similarly for the spaces of cusp forms $S_{j,k}(\Gamma_1[\sqrt{-3}])$.
Recall furthermore that the graded ring 
$\oplus_{k=0}^{\infty} M_{0,3k}(\Gamma_1[\sqrt{-3}])$ 
is a degree $3$ extension of  
$$\oplus_{k=0}^{\infty} 
M_{0,3k}(\Gamma[\sqrt{-3}])={\CC}[x_1,x_2,x_3,x_4]/(x_1+x_2+x_3+x_4)
$$
generated by
$\zeta \in S_6(\Gamma[\sqrt{-3}],{\det})$ 
satisfying the relation~\eqref{pmf-zeta}. Moreover, $\mathfrak{S}_4$
acts by $\sigma: x_i \mapsto \mathrm{sgn}(\sigma) x_{\sigma(i)}$
and by the sign on $\zeta$.
We find (see Section~7 and Section~12 of \cite{C-vdG} for more details) that
$
\dim M_{0,3k}(\Gamma_1[\sqrt{-3}])= (3/2) k (k-1) +  4$ for $k\geq 2$ and $\dim M_{0,3}=3$;
moreover, for $k\geq 2$
$$
\dim S_{0,3k}(\Gamma[\sqrt{-3}],{\det}^{\ell}) =
\begin{cases}
(k^2+3k-6)/2 & \ell=0 \\
(k^2-k)/2 & \ell=1 \\
(k^2-5k+6)/2 & \ell=2\, . \\
\end{cases}
$$
We now deduce a formula for the space of cusp forms on a freely acting finite index subgroup
$\Gamma'$ of $\Gamma_1[\sqrt{-3}]$. The surface $\Gamma'\backslash B$ 
is smoothly compactified by adding a divisor $D$
consisting of disjoint elliptic curves.

\begin{theorem}\label{dimSjk}
For $j \geq 0$ and $k>0$ we have 
$$
\dim S_{j,3+k}(\Gamma')=
\frac{1}{6} (j+1)(k+1)(j+k+2)\, {\rm vol}(\Gamma'\backslash B)+
\frac{1}{12}(j+1) \,  D^2
$$
with ${\rm vol}(\Gamma'\backslash B)=c_2(\Gamma'\backslash B) =3 \, c_1(L)^2$.
\end{theorem}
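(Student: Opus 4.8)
The plan is to compute $\dim S_{j,3+k}(\Gamma')$ via the Hirzebruch--Riemann--Roch theorem applied to the vector bundle $\cW_{j,3+k,l}\otimes\cO(-D)=\Sym^j(U)\otimes L^{3+k}\otimes R^l\otimes\cO(-D)$ on the smooth projective surface $Y=Y_{\Gamma'}$, whose global sections are, by \eqref{cfinterpretation}, exactly $S_{j,3+k,l}(\Gamma')$; since the Hilbert polynomial is insensitive to the choice of $l$ one may as well take $l$ convenient, and the dimension count below will turn out to be independent of $l$ anyway because $R$ restricted to $D$ is trivial. First I would establish a vanishing statement $H^i(Y,\cW_{j,3+k,l}\otimes\cO(-D))=0$ for $i>0$ when $k>0$, so that $h^0$ equals the Euler characteristic $\chi$. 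For $i=2$ this is Serre duality combined with Proposition~\ref{vanishing} (and the Remark following it) applied to the Serre dual bundle, whose $L$-twist $L^{-j-k+\cdots}$ has strictly negative exponent once $k>0$; for $i=1$ one invokes the regularity/inner-cohomology input already recorded in the excerpt (Corollary~\ref{vanishingh1} and the Li--Schwermer--Saper vanishing), or more simply a direct Kodaira--Nakano-type argument using that $\cW_{j,3+k,l}\otimes\cO(-D)$ is ``positive enough'' in the relevant directions. This reduces the problem to a Riemann--Roch computation.

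Next I would write out $\chi(Y,\Sym^j(U)\otimes L^{3+k}\otimes R^l\otimes\cO(-D))$ using HRR: it is the degree-$2$ part of $\mathrm{ch}(\Sym^j(U))\,\mathrm{ch}(L)^{3+k}\,\mathrm{ch}(R)^l\,\mathrm{ch}(\cO(-D))\,\mathrm{td}(Y)$. The rank of $\Sym^j(U)$ is $j+1$, so the $c_1^2$- and $c_2$-type terms all carry a factor $(j+1)$ or combine into the stated shape. Here the key structural inputs are the relations among Chern classes proved in Section~\ref{vectorbundles}: $\det(U)=L\otimes R^{-1}$, $U^\vee\cong U\otimes L^{-1}\otimes R$, and $\Omega^2_Y(D)\cong L^3\otimes R^{-1}$, equivalently $K_Y=3c_1(L)-c_1(R)-[D]$; together with $c_1(L)^2$, $c_1(L)\cdot[D]$, $[D]^2$, $c_1(U)^2$, $c_2(U)$ and $c_2(Y)$ being the only intersection numbers that can appear. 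Because $L$, $R$ and hence $U$ (by Corollary~\ref{indecvb}) all restrict to degree-$0$ bundles on each elliptic component of $D$, one gets $c_1(L)\cdot[D]=c_1(R)\cdot[D]=c_1(U)\cdot[D]=0$, and the adjunction formula on each elliptic curve gives $[D]^2=(K_Y+[D])\cdot[D]=0+\cdots$ — more precisely the self-intersections are encoded in $D^2$, the one genuinely surface-dependent quantity that survives. The Euler characteristic then collapses to an expression in $c_1(L)^2$ and $D^2$ only.

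The remaining task is to identify the coefficient of $c_1(L)^2$. The polynomial in $(j,k)$ multiplying $c_1(L)^2$ must be $\tfrac16(j+1)(k+1)(j+k+2)$ up to a normalization constant; to pin the constant I would compare with the already-known scalar-valued case, namely the dimension formulas for $\dim S_{0,3k}(\Gamma[\sqrt{-3}],\det^\ell)$ quoted just before the theorem, pushed up to $\Gamma'$ by the index $[\Gamma_1[\sqrt{-3}]:\Gamma']$ (and by taking $\fS_4$-invariants in the end), which fixes the proportionality between $c_1(L)^2$ on $Y_{\Gamma'}$ and the volume; this yields the stated identification $\mathrm{vol}(\Gamma'\backslash B)=c_2(\Gamma'\backslash B)=3\,c_1(L)^2$ (the equality with $c_2$ being the Hirzebruch proportionality / Gauss--Bonnet statement for ball quotients, and the factor $3$ the standard one for the complex $2$-ball, which one can alternatively cite from Mumford's work on canonical extensions and Hirzebruch proportionality). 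I expect the main obstacle to be the bookkeeping in the HRR expansion — keeping track of which Chern-class monomials vanish against $D$ and correctly handling the logarithmic/cuspidal contributions so that the $D^2$ term comes out with coefficient exactly $\tfrac1{12}(j+1)$ — rather than any conceptual difficulty; the vanishing of the higher cohomology for $k>0$ is the one genuinely nontrivial ingredient, but it is already available from the material in Sections~\ref{PMF} and \ref{sec-bgg}.
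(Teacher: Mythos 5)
Your overall strategy agrees with the paper's: both identify $\dim S_{j,3+k}(\Gamma')$ with a holomorphic Euler characteristic on $Y_{\Gamma'}$ via the vanishing of $h^1$ and $h^2$ (Corollary~\ref{vanishingh1} plus Proposition~\ref{vanishing} and Serre duality), and both then invoke Hirzebruch--Riemann--Roch. The paper additionally uses the small simplification $\chi(Y,\mathcal{W}_{j,k}\otimes\Omega_Y^2)=\chi(Y,\mathcal{W}_{j,-j-k})$ by Serre duality to avoid carrying $\mathcal{O}(-D)$ through HRR, but that is cosmetic.

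The substantive gap is that you never actually evaluate the HRR integrand; instead you announce that the coefficient of $c_1(L)^2$ ``must be $\tfrac16(j+1)(k+1)(j+k+2)$ up to a normalization constant'' and propose to pin that constant by comparison with the scalar case $j=0$. This is circular: the claim that the polynomial has precisely that factored form \emph{is} the content of the theorem, and there is no a priori reason the $c_1(L)^2$-coefficient of $\chi(Y,\Sym^j(U)\otimes L^{k+3}\otimes\mathcal{O}(-D))$ should be a product of three linear factors. Even if one only wanted the normalization, a single slice $j=0$ of a cubic in two variables does not determine it. To actually compute, you need $\mathrm{ch}(\Sym^j(U))$, and therefore you need both $c_1(U)$ and $c_2(U)$ in terms of $\gamma=c_1(L)$ and $D$. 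You correctly get $c_1(U)=\gamma$ from $\det(U)=L$ (with $R$ trivial on $\Gamma'\subset\Gamma_1$), but you never determine $c_2(U)$. The paper's key input at this point is Hirzebruch--Mumford proportionality for the canonical extension, $c_1(\Omega_Y^1(\log D))^2=3\,c_2(\Omega_Y^1(\log D))$, which combined with $\Omega_Y^1(\log D)\cong U\otimes L$ yields $9\gamma^2 = 3(c_2(U)+2\gamma^2)$, i.e.\ $c_2(U)=\gamma^2$. Without this identity the Euler characteristic does not ``collapse to an expression in $c_1(L)^2$ and $D^2$ only,'' and the computation cannot be completed. You mention proportionality only at the very end to justify $\mathrm{vol}=3c_1(L)^2$, but you never feed it back into the Chern-character calculation where it is indispensable.

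A secondary issue: your alternative justification for $h^1$-vanishing via a ``Kodaira--Nakano-type argument'' because the bundle is ``positive enough'' is not valid; $\mathcal{W}_{j,k+3,l}\otimes\mathcal{O}(-D)$ is not ample (and $L$ is not even positive on the boundary components). The correct input is the inner-cohomology vanishing for regular local systems (Raghunathan, Li--Schwermer, Saper), packaged here as Corollary~\ref{vanishingh1}, and you should be explicit that this requires the regularity hypothesis on $\lambda$, which corresponds to $j>0$ (the $j=0$ edge case is handled in the paper by falling back to the explicitly known scalar-valued rings). The adjunction remark about $D^2$ is also not doing anything: adjunction on an elliptic component $E\subset D$ just gives $E^2=-K_Y\cdot E$, which is a tautology given $K_Y=3\gamma-D$ and $\gamma\cdot E=0$; the quantity $D^2$ remains a free parameter, as it should.

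In short: correct skeleton, but the proposal replaces the one genuinely nontrivial computational step (proportionality $\Rightarrow c_2(U)=\gamma^2$ $\Rightarrow$ explicit $\mathrm{ch}(\Sym^j U)$) with an assumed answer plus an underdetermined normalization check.
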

\begin{remark}
Note that the formula is symmetric in $j$ and $k$ 
up to the factor $j\, D^2/12$. For a group $\Gamma'$ acting on the ball 
with a compact quotient $\Gamma' \backslash B$ one would get a formula symmetric in
$j$ and $k$.
\end{remark}
Before we give the proof we state a simple lemma.
\begin{lemma}
With $\gamma=c_1(L)$, the Chern character satisfies 
$$
{\rm ch}({\mathcal W}_{a,b})=
(a+1)\left(1+(a/2+b)\gamma+(b^2/2+ab/2-a/4)\gamma^2 \right) \, .
$$
\end{lemma}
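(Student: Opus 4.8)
The plan is to compute $\mathrm{ch}(\mathcal{W}_{a,b})$ directly from the definition $\mathcal{W}_{a,b} = \mathrm{Sym}^a(U)\otimes L^b$ using the splitting principle together with the relations among Chern classes forced by the geometry of the Picard modular surface. First I would record that $c_1(L) = \gamma$ by definition, and that $c_1(U)$ and $c_2(U)$ must be expressed in terms of $\gamma$. For this I would use the identities established in Section~\ref{vectorbundles}: from $\det(U) = L\otimes R^{-1}$ and $\Omega^2_Y(D)\cong L^3\otimes R^{-1}$ one gets $R$ in terms of $L$ and the canonical bundle, hence $c_1(U) = c_1(\det U)$ is a multiple of $\gamma$; a short computation of this kind (the one the lemma is really packaging) should give $c_1(U) = -\tfrac12\gamma$ in the relevant cohomology, and similarly $c_2(U)$ proportional to $\gamma^2$. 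The precise constants are pinned down by the proportionality ${\rm vol}(\Gamma'\backslash B) = 3\,c_1(L)^2$ recorded in Theorem~\ref{dimSjk} together with the Hirzebruch proportionality / Chern class relations for ball quotients.

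Next I would apply the splitting principle: write the Chern roots of $U$ as $x_1, x_2$ with $x_1 + x_2 = c_1(U)$ and $x_1 x_2 = c_2(U)$. Then
$$
\mathrm{ch}(\mathrm{Sym}^a U) = \sum_{i=0}^{a} e^{(a-i)x_1 + i x_2},
$$
and since we are on a surface only the terms up to degree $2$ survive. Expanding $e^{(a-i)x_1+ix_2} = 1 + \bigl((a-i)x_1+ix_2\bigr) + \tfrac12\bigl((a-i)x_1+ix_2\bigr)^2 + \cdots$ and summing over $i$ using $\sum_{i=0}^a 1 = a+1$, $\sum_{i=0}^a i = \binom{a+1}{2}$, $\sum_{i=0}^a i^2 = \tfrac{a(a+1)(2a+1)}{6}$, the degree-$0$ part is $a+1$, the degree-$1$ part collapses to a multiple of $c_1(U)$, and the degree-$2$ part is a combination of $c_1(U)^2$ and $c_2(U)$. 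Finally I would multiply by $\mathrm{ch}(L^b) = 1 + b\gamma + \tfrac12 b^2\gamma^2$ and substitute $c_1(U) = -\tfrac12\gamma$, $c_2(U) = (\text{const})\gamma^2$, collecting terms of each degree; the claimed formula
$$
\mathrm{ch}(\mathcal{W}_{a,b}) = (a+1)\Bigl(1 + (a/2+b)\gamma + (b^2/2 + ab/2 - a/4)\gamma^2\Bigr)
$$
then emerges, and one can cross-check the degree-$2$ coefficient by verifying it is consistent with $\mathrm{ch}(\mathcal{W}_{a,b})$ being a ring homomorphism on tensor products and with the $a=0$ case $\mathrm{ch}(L^b) = 1 + b\gamma + \tfrac12 b^2\gamma^2$.

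The main obstacle is fixing the constants $c_1(U)$ and $c_2(U)$ in terms of $\gamma$: everything downstream is a mechanical symmetric-function computation, but one needs the correct normalization coming from the metric/curvature of the ball quotient (equivalently from Mumford's canonical extension and the proportionality $c_2(X) = 3c_1(L)^2$). The cleanest route is probably to observe that $U\otimes L \cong \Omega^1_Y(\log D)$, so $c_1(U) + 2\gamma = c_1(\Omega^1_Y(\log D))$ and $c_1(U\otimes L)^2, c_2(U\otimes L)$ are the log-Chern numbers of the surface, which are determined by $\gamma$ via the Hirzebruch proportionality in this rank-one-ball setting; solving the resulting two linear relations yields $c_1(U) = -\tfrac12\gamma$ and the value of $c_2(U)$, and the lemma follows. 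Once these are in hand the proof is a half-page of bookkeeping.
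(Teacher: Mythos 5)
Your overall strategy is the same as the paper's: use the splitting principle for $\mathrm{Sym}^a(U)$, pin down the Chern classes of $U$ via Hirzebruch--Mumford proportionality for the ball quotient, then multiply by $\mathrm{ch}(L^b)$ and collect terms. However, you have the wrong value for $c_1(U)$, and this value actually contradicts the formula you are trying to prove.

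You claim $c_1(U) = -\tfrac12\gamma$. In fact $c_1(U) = \gamma$, because for a freely acting $\Gamma' \subset \Gamma_1[\sqrt{-3}] \subset \Gamma_1$ the line bundle $R = \det(U)^{-1}\otimes L$ is \emph{constant} on $Y_{\Gamma'}$ (this is stated in Section~\ref{rootloc}: $R \cong \wedge^3\WW$ is constant for subgroups of $\Gamma_1$), so $\det(U) = L$ up to a trivial twist and $\alpha_1+\alpha_2 = \gamma$. To see that $-\tfrac12\gamma$ cannot be right, just read off the degree-$1$ part of the claimed formula: $\mathrm{ch}_1(\mathrm{Sym}^a U\otimes L^b) = \tfrac{a(a+1)}{2}c_1(U) + (a+1)b\gamma$ must equal $(a+1)(a/2+b)\gamma$, which forces $c_1(U)=\gamma$ for $a>0$; with your value one would instead get $(a+1)(b-a/4)\gamma$. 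Your attempt to derive $c_1(U)$ from the identities $\det(U) = L\otimes R^{-1}$ and $\Omega^2_Y(D) \cong L^3\otimes R^{-1}$ is circular (those two together with $\Omega^1_Y(\log D)\cong U\otimes L$ are tautologically consistent and do not determine $c_1(U)$); you need the extra input that $R$ is trivial on this quotient. Once that is in place, the proportionality relation $c_1(\Omega^1(\log D))^2 = 3c_2(\Omega^1(\log D))$, i.e.\ $(3\gamma)^2 = 3(\alpha_1\alpha_2 + 2\gamma^2)$, gives $c_2(U) = \alpha_1\alpha_2 = \gamma^2$, and the remaining computation (which you outline correctly) goes through as in the paper.
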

\begin{proof} Since $\Gamma'$ acts freely
we have that ${\mathcal W}_{a,b}={\rm Sym}^a(U) \otimes L^b$ and we know
that $\det(U)=L$. Let $\alpha_1$ and $\alpha_2$ be the Chern roots of $U$.
Then $\alpha_1+\alpha_2=\gamma$ and by Hirzebruch-Mumford
proportionality we have
$c_1(\Omega^1(\log D))^2=3c_2(\Omega^1(\log D))$,
hence $(3\gamma)^2=3(\alpha_1\alpha_2+2 \gamma^2)$,
so $\alpha_1+\alpha_2=\gamma$ and $c_2(U)=\alpha_1\alpha_2=\gamma^2$.
This implies $c_1({\rm Sym}^a(U))= (1/2) a(a+1) \gamma$ and
$$
{\rm ch}_2({\rm Sym}^a(U))= \sum_{i=0}^a ((a-i)\alpha_1+i(\alpha_2))^2/2=
-a(a+1) \gamma^2/4.
$$
The result easily follows from this.
\end{proof}

\begin{proof}
We work on $Y=Y_{\Gamma'}$. We denote the resolution divisor of the cusps by
$D$. The canonical line bundle $K_Y$ is $L^{\otimes 3}\otimes {\mathcal O}(-D)$.
In view of the vanishing of $h^1$ and $h^2$ (by Corollary~\ref{vanishingh1} 
and Proposition \ref{vanishing}) 
of ${\mathcal W}_{j,k}\otimes \Omega^2$  
we have that
$$
\dim S_{j,k+3}(\Gamma')= h^0(Y,{\mathcal W}_{j,k}\otimes \Omega_Y^2)=
\chi(Y, {\mathcal W}_{j,k}\otimes \Omega_Y^2)
$$
with $\chi$ denoting the Euler characteristic.
By Serre duality we have
$$
\chi(Y, {\mathcal W}_{j,k}\otimes \Omega_Y^2)= \chi(Y,{\mathcal W}_{j,-j-k})
$$
We apply the Hirzebruch-Riemann-Roch formula
$$
\chi(Y,{\mathcal W}_{a,b})=
({\rm ch}({\mathcal W}_{a,b}) \cdot {\rm td})[Y]
$$
with ${\rm td}$ the Todd class $1-3\gamma/2+D/2+(c_1^2+c_2)[Y]/12$ and
$c_1(Y)=-3\gamma+D$, where again we write $\gamma=c_1(L)$ for
the first Chern class of $L$. 

Using  $\chi(Y,{\mathcal O}_Y)= (c_1^2+c_2)[Y]/12= \gamma^2+D^2/12$
we now find
$$
\begin{aligned}
\chi(Y,{\mathcal W}_{a,b})=& 
(a+1)\left((b^2+ab-2a-3b)\gamma^2/2+\chi(Y,{\mathcal O}_Y)\right) \\
=& (a+1) \left( (b^2+ab-2a-3b+2)\gamma^2/2+D^2/12) \right) . \\
\end{aligned}
$$
Substituting $a=j$ and $b=-j-k$ gives
$$
(j^2k+k^2j+j^2+4jk+k^2+3j+3k+2)\gamma^2/2+(j+1)D^2/12.
$$
\end{proof}
\begin{remark}
For our group $\Gamma_1[\sqrt{-3}]$ we have 
${\rm vol}(\Gamma_1[\sqrt{-3}]\backslash B)=1$ and for 
$\Gamma[\sqrt{-3}]$ we have ${\rm vol}(\Gamma[\sqrt{-3}]
\backslash B)=1/3$.
\end{remark}

For our group $\Gamma_1[\sqrt{-3}]$ we have the following dimension formula.

\begin{proposition}
For $j\geq 0$ and $k> 0$ with $j\equiv_3 k$ 
the dimension of $S_{j,k+3}(\Gamma_1[\sqrt{-3}])$ is given by 
$$
\frac{1}{6}(j+1)(k+1)(j+k+2) -(j+1) +
\begin{cases} 2/3 & j \equiv_3 0  \\
-2/3 & j \equiv_3 1  \\
0 & j \equiv_3 2  \\
\end{cases}
$$
\end{proposition}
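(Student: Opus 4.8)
The plan is to reduce $\dim S_{j,k+3}(\GaSqOne)$ to a Riemann--Roch/Lefschetz computation and to recognise the three summands of the formula as a bulk term, a cusp term, and the contribution of the three quotient singularities. First I would observe that, by \eqref{cfinterpretation} together with the vanishing of the first and second cohomology of the relevant coherent sheaf (Corollary~\ref{vanishingh1} and Proposition~\ref{vanishing}; for $j=0$ one may instead quote the scalar dimension formulas recalled at the start of this section), $\dim S_{j,k+3}(\GaSqOne)$ equals the Euler characteristic of that sheaf. Theorem~\ref{dimSjk} does not apply verbatim because $\GaSqOne$ does not act freely on $B$: the surface $X_{\GaSqOne}$ carries three quotient singularities of type $\frac13(1,2)$, i.e.\ $A_2$-singularities. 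So I would choose a normal, torsion-free, finite index subgroup $\Gamma'\triangleleft\GaSqOne$, write $G=\GaSqOne/\Gamma'$ (the central $\mu_3\subset\GaSqOne$ acts trivially on $B$ and, since $j\equiv_3 k$, also trivially on the automorphic bundle, so it causes no extra complication), and apply the holomorphic Lefschetz fixed point formula on $Y_{\Gamma'}$ to each $g\in G$, using the cohomology vanishing to identify the Lefschetz number $L(g)$ with $\Tr(g\mid S_{j,k+3}(\Gamma'))$; then $\dim S_{j,k+3}(\GaSqOne)=|G|^{-1}\sum_{g\in G}L(g)$. Equivalently one could run Kawasaki's orbifold Riemann--Roch directly on an orbifold compactification of $X_{\GaSqOne}$.

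The term $g=1$ is exactly $\dim S_{j,k+3}(\Gamma')=\tfrac16(j+1)(k+1)(j+k+2)\,{\rm vol}(\Gamma'\backslash B)+\tfrac1{12}(j+1)D^2$ by Theorem~\ref{dimSjk}; dividing by $|G|$ and using ${\rm vol}(\GaSqOne\backslash B)=1$ and $D^2=\bigl(\sum_{i=1}^4 E_i\bigr)^2=4\cdot(-3)=-12$ for the four disjoint cusp-resolution curves, this yields $\tfrac16(j+1)(k+1)(j+k+2)-(j+1)$. For $g\neq 1$ the fixed locus on $Y_{\Gamma'}$ is a finite set of points (no cusp-resolution curve is pointwise fixed); the fixed points lying on the cusp curves contribute exactly as in Theorem~\ref{dimSjk}, since $U$, $L$, $R$ restrict there to bundles of degree $0$ (trivial, resp.\ the unipotent bundle $F_{n+1}$ of Corollary~\ref{indecvb}), so the genuinely new contribution comes only from the points lying over the three $A_2$-points.

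At such a point $x$ the stabilizer $\mu_3=\langle g_x\rangle$ acts on $T_xB$ with eigenvalues $\rho,\rho^2$, whence $\det\bigl(1-g_x^{\ell}\mid T_xB\bigr)=(1-\rho^{\ell})(1-\rho^{2\ell})=3$ for $\ell=1,2$ (using $1+\rho+\rho^2=0$); and from $j_1(g_x,x)=\rho$, $j_2(g_x,x)={\rm diag}(\rho,1)$, $j_3(g_x,x)=1$ it acts on the fibre of the automorphic bundle ${\rm Sym}^j(U)\otimes L^{k+3}$ at $x$ with eigenvalues $\{\rho^{a+k}:0\le a\le j\}$ (the $\cO(-D)$-twist and the constant bundle $R$ being irrelevant near $x$, and $\rho^3=1$). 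Hence the total contribution of the three $A_2$-points is
\[
3\cdot\frac13\sum_{\ell=1}^{2}\frac{1}{3}\sum_{a=0}^{j}\rho^{\ell(a+k)}=\frac13\sum_{a=0}^{j}\bigl(\rho^{a+k}+\rho^{2(a+k)}\bigr).
\]
Since $\rho^m+\rho^{2m}$ equals $2$ for $3\mid m$ and $-1$ otherwise, and the sum over any three consecutive values of $m$ vanishes, the hypothesis $j\equiv_3 k$ forces this to be $\tfrac23,-\tfrac23,0$ according as $j\equiv_3 0,1,2$ --- precisely the stated correction. (The answer is insensitive to whether one writes $g_x$ or $g_x^{-1}$ and to the tangent/cotangent convention, because $\{1,2\}\equiv\{-1,-2\}\pmod 3$ and $\{\rho,\rho^2\}$ is inversion-stable.) As sanity checks: the center $\mu_3$ acts on ${\rm Sym}^j(U)\otimes L^{k}\otimes R^l$ by $\rho^{2j+k}$, trivial exactly when $j\equiv_3 k$, matching the vanishing $M_{j,k}(\GaSqOne)=0$ for $j\not\equiv_3 k$; and for $j=0$, $k=3k'-3$ the formula gives $\tfrac32 k'(k'-1)=\dim S_{0,3k'}(\GaSqOne)$ as computed from the scalar formulas above.

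The step I expect to be the main obstacle is the careful bookkeeping at the two degenerate loci, not the short computation just sketched: (a) pinning down the precise coherent sheaf near the $A_2$-points (the canonical/orbifold extension of ${\rm Sym}^j(U)\otimes L^{k}\otimes R^l$ and the $\cO(-D)$-twist) and establishing the $H^1$- and $H^2$-vanishing in the non-regular small-weight and $j=0$ cases; and (b) fixing the orientation and normalisation in the Lefschetz (resp.\ Kawasaki) formula and checking that the cusp-curve fixed-point contributions do assemble into exactly the $\tfrac1{12}(j+1)D^2$-term, so that nothing is over- or under-counted --- a sign or inversion slip there would swap the $\pm\tfrac23$. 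A possible alternative to (b) is to use the decomposition $S_{j,k+3}(\GaSqOne)=\bigoplus_{\ell=0}^2 S_{j,k+3}(\GaSq,\det^\ell)$ and compute the three character-eigenspace dimensions separately, but that trades one bookkeeping problem for three.
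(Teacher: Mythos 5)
Your overall strategy coincides with the paper's: reduce to a holomorphic Lefschetz (equivalently Kawasaki orbifold Riemann--Roch) computation on a torsion-free finite normal cover $\Gamma'\triangleleft\GaSqOne$, identify the dimension with a Lefschetz average using the cohomology vanishing, peel off the bulk Hirzebruch--Riemann--Roch term from $g=1$ via Theorem~\ref{dimSjk}, and account separately for the three $A_2$-points. Your local computation at those points is correct and reproduces the paper's $c(j)$: both use fibre eigenvalues $\{\rho^{a+k}\}_{a=0}^{j}$ (which coincides, since $j\equiv_3 k$, with the paper's $\{\rho^{2j-r}\}_{r=0}^{j}$), the factor $1/3$ from $\det(1-g\mid T_x)=(1-\rho)(1-\rho^2)$, and the final values $\tfrac23,-\tfrac23,0$ according to $j\bmod 3$.

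The one genuine gap is the bookkeeping along the cusp-resolution divisor, and you correctly flag it as the main obstacle. With $\Gamma'$ left unspecified, you assert that any fixed points of $g\ne 1$ lying on the cusp curves ``contribute exactly as in Theorem~\ref{dimSjk}''; but Theorem~\ref{dimSjk} has no $g\ne 1$ term, so this statement does not parse, and the fact that $U,L,R$ restrict to degree-$0$ bundles on the cusp curve does not by itself make isolated fixed-point contributions there vanish or assemble into the $\tfrac{1}{12}(j+1)D^2$ piece already counted from $g=1$. The paper sidesteps the issue by a concrete choice: $\Gamma'=\Gamma_1[3]$, with ${\rm vol}=81$, resolution divisor made up of $4\cdot 27$ elliptic curves of self-intersection $-9$, and $G\cong(\ZZ/3\ZZ)^4$ acting with precisely $3\cdot 27$ fixed points, \emph{all} lying over the quotient singularities and none on the cusp curves (each such point fixed by $2$ non-trivial elements, giving $(1/81)\cdot 81\cdot 2\cdot c(j)$, while the identity gives $\tfrac16(j+1)(k+1)(j+k+2)\cdot 81/81+\tfrac1{12}(j+1)(-972)/81$). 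To close your argument you should either make the same explicit choice, or separately establish cusp-freeness: by Section~\ref{cuspnhd}, the stabiliser of a cusp in $\GaSqOne$ modulo the centre $\mu_3$ is unipotent, so its image in $G$ acts on each resolution elliptic curve by translations and hence without fixed points. That is a short argument, but it must be made; as written your proof replaces it with an assertion that is not correct as stated.
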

\begin{remark} \label{rem-k0} We conjecture that the formulas above also hold for $k=0$ in all cases except when $j \equiv_6 0$ for which the dimension is $(j^2-3j+6)/6$. This is based upon computations of the numerical Euler characteristic in Section~\ref{sec-numeric} and Conjecture~\ref{conj-dim}.
\end{remark}
\begin{proof}
The subgroup $\Gamma_1[3]$ of $\Gamma_1[\sqrt{-3}]$ acts freely and the
formula of Theorem~\ref{dimSjk} holds with ${\rm vol}(\Gamma_1[3])=81$
and $D$ consisting of
$4\cdot 27$ elliptic curves with self-intersection number $-9$. The
quotient group $G'=\Gamma_1[\sqrt{-3}]/\Gamma_1[3]$ is of order $3^5$,
but its center $\langle \rho \, {\rm id}\rangle=\mu_3$ 
acts trivially. We will
apply the holomorphic Lefschetz formula to the action of the group $G=G'/\mu_3$
on $Y_{\Gamma_1[3]}$ and the vector bundle 
$\mathcal{W}_{j,k}\otimes \Omega^2_{Y_{\Gamma_1[3]}}$. 
The group $G$ is 
isomorphic to $({\ZZ}/3{\ZZ})^4$. The action of $G$ on $Y_{\Gamma[3]}$ has
$3\cdot 27$ fixed points lying over the quotient singularities of
$X_{\Gamma_1[\sqrt{-3}]}$ and no other fixed points. 
We know that the quotient singularieties are of 
order $3$ and type $(1,2)$. The holomorphic Lefschetz formula now says
that for 
the action of $G$ on the manifold $Y=Y_{\Gamma_1[3]}$ and vector bundle
$V$ we have
$$
\sum_{i} (-1)^i \dim H^i(Y,V)^G=(1/\# G) \sum_{g\in G} \chi(Y,V,g)
$$
with $\chi(Y,V,g)={\rm ch}(Y,V,g){\rm td}(Y,g)[Y^g]$ with ${\rm td}(Y,g)$
the Todd class associated to $Y^g$. At a fixed point of a non-trivial element
of $G$ the Todd class is $1/(1-\rho^2)(1-\rho)=1/3$. The action on 
the fibre of $L^3$ over a fixed point is trivial, while on the fibre of
$\Omega^1$ it acts by $(\rho,\rho^2)$. Hence on the fibre of $\mathcal{W}_{j,k}$
it acts by eigenvalues $\rho^r \rho^{2(j-r)}$ for $r=0,\ldots,j$.
So one non-trivial element of $G$ yields for one fixed point  a contribution
$$
c(j)= (1/3)\sum_{r=0}^j \rho^{2j-r}=\begin{cases} 1/3 & j\equiv_3 0  \\
-1/3 & j\equiv_3 1 \\ 0 & j \equiv_3 2 \\ \end{cases}
$$
In total the $81$ fixed points are each fixed by $2$ non-trivial elements,
hence we get a contribution
$(1/81)\cdot 81 \cdot 2 \cdot c(j)$.
The identity element contributes $(j+1)(k+1)(j+k+2)/6 -(j+1)$. Together this
proves the formula.
\end{proof}
Next we want a formula for the dimension 
$s_{j,k+3,l}$ 
of the space
of modular forms $S_{j,k+3}(\Gamma[\sqrt{-3}], \det^l)$. 
Since we have a formula for
$$
\sum_{l=0}^2s_{j,k+3,l}=\dim S_{j,k+3}(\Gamma_1[\sqrt{-3}])
$$
it suffices to calculate the trace of a generator
$\gamma={\rm diag}(1,1,\rho)$ of
the group $\Gamma[\sqrt{-3}]/\Gamma_1[\sqrt{-3}]\cong {\ZZ}/3{\ZZ}$
on $S_{j,k+3}(\Gamma_1[\sqrt{-3}])$.
Indeed, for an operator $\gamma$ of order
$3$ acting on a complex vector space of dimension $n$ with
with eigenspaces of dimension $m_1$, $m_{\rho}$ and $m_{\rho^2}$ for
the eigenvalues $1, \rho$ and $\rho^2$ (with
$n=m_1+m_{\rho}+m_{\rho^2}$) and with
${\rm trace}(\gamma)= a+b \, \rho$ with $a,b \in {\ZZ}$
we have
$m_1= (n+2\, a -b)/3,$ $m_{\rho}=(n-a+2\, b)/3$
and $m_{\rho^2}= (n-a -b)/3$.

\begin{theorem} \label{dim-Ajkl}
If $j\not\equiv_3 k$ then $\dim S_{j,k+3,l}(\Gamma[\sqrt{-3}])=0$. If
$j \equiv_3 k$ and $k>0$ we have 
$$
\dim S_{j,k+3,l}(\Gamma[\sqrt{-3}])= 
\frac{1}{18}(j+1)(k+1)(j+k+2)+A_{j,k,l},
$$
with $A_{j,k,l}$ depending on the residue class of $(j,l)$ in
$({\ZZ}/3{\ZZ})^2$
as follows
$$
\begin{matrix}  && l\equiv_{3}0 & l\equiv_31 & 
l\equiv_{3}2 \\
\\
j\equiv_{3}0 && 2k/3-10/9 & -j/3-1/9 & -2j/3-2k/3+8/9 \\
j\equiv_{3}1 && -j/3-5/9 & 2k/3-14/9 & -2j/3-2k/3+4/9 \\
j\equiv_{3}2 && 0 & 0 & -j-1\\
\end{matrix}
$$
\end{theorem}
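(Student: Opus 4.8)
The plan is to recover $\dim S_{j,k+3,l}(\GaSq)$ for $l=0,1,2$ from the action on $S_{j,k+3}(\GaSqOne)$ of the generator $\gamma=\mathrm{diag}(1,1,\rho)$ of $\GaSq/\GaSqOne\cong\ZZ/3\ZZ$: the three spaces $S_{j,k+3,l}(\GaSq)$ are exactly the eigenspaces of $\gamma$, with the $\det^{l}$-part attached to the eigenvalue $\rho^{l}$ once the isomorphism $\GaSq/\GaSqOne\cong\mu_3$ is normalised appropriately (a normalisation that can be pinned down on a known form such as $\zeta\in S_{0,6,1}(\GaSq)$). The vanishing statement for $j\not\equiv_3 k$ is then immediate from the earlier remark that $M_{j,k}(\GaSqOne)=0$ in that case. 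So assume $j\equiv_3 k$, write $n:=\dim S_{j,k+3}(\GaSqOne)$ (given by the preceding proposition) and $\mathrm{trace}\bigl(\gamma\mid S_{j,k+3}(\GaSqOne)\bigr)=a+b\rho$ with $a,b\in\ZZ$. By the eigenspace--dimension formula recalled just before the statement, $\dim S_{j,k+3,0}(\GaSq)=(n+2a-b)/3$, $\dim S_{j,k+3,1}(\GaSq)=(n-a+2b)/3$ and $\dim S_{j,k+3,2}(\GaSq)=(n-a-b)/3$, so everything reduces to computing this one trace.

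To compute $\mathrm{trace}(\gamma)$ I would follow the method of the proof of the preceding proposition. Pass to the normal subgroup $\Gamma'=\Gamma_1[3]\subset\GaSqOne$, which acts freely on $B$ and is normalised by $\gamma$; by \eqref{cfinterpretation} (together with $\Omega^{2}_{Y_{\Gamma'}}(D)\cong L^{3}\otimes R^{-1}$ and the triviality of $R$ on $Y_{\Gamma'}$) one has $S_{j,k+3}(\Gamma')=H^{0}(Y_{\Gamma'},\mathcal{W}_{j,k}\otimes\Omega^{2}_{Y_{\Gamma'}})$, and by Corollary~\ref{vanishingh1} together with Proposition~\ref{vanishing} the groups $H^{1}$ and $H^{2}$ of this sheaf vanish for $k>0$ (the low-weight cases, notably $j=0$, needing the same minor extra care as in the dimension computation, or being deduced from Proposition~\ref{scalar-rings}). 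Hence the trace of any finite-order automorphism of the pair $(Y_{\Gamma'},\mathcal{W}_{j,k}\otimes\Omega^{2})$ on $S_{j,k+3}(\Gamma')$ equals its holomorphic Lefschetz number. Writing $S_{j,k+3}(\GaSqOne)=S_{j,k+3}(\Gamma')^{G}$ with $G$ the group of order $3^{4}$ through which $\GaSqOne/\Gamma'$ acts on $Y_{\Gamma'}$, and noting that $\gamma$ normalises this $G$-action, one gets
\[
\mathrm{trace}\bigl(\gamma\mid S_{j,k+3}(\GaSqOne)\bigr)=\frac{1}{\#G}\sum_{g\in G}L\bigl(\gamma g,\ \mathcal{W}_{j,k}\otimes\Omega^{2}_{Y_{\Gamma'}}\bigr),
\]
each $\gamma g$ being a finite-order automorphism of $Y_{\Gamma'}$.

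The next step is to evaluate each $L(\gamma g)$ by the Atiyah--Bott holomorphic fixed-point formula, as a sum over the connected components $Z$ of the fixed locus of the contributions $\int_{Z}\mathrm{ch}(\gamma g,(\mathcal{W}_{j,k}\otimes\Omega^{2})|_{Z})\,\mathrm{td}(Z)\big/\det\bigl(1-(\gamma g)^{-1}\mid N_{Z/Y_{\Gamma'}}\bigr)$. Two kinds of component occur. First, isolated fixed points — over the quotient singularities of $X^{*}_{\GaSqOne}$ and, for the twisted terms, over cusps — whose contributions are read off from the action on the fibres of $U$, $L$ and $R$ exactly as the numbers $c(j)$ were in the proof of the preceding proposition. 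Second, and genuinely new here, one-dimensional fixed components: $\gamma$ fixes the locus $\{u=0\}\subset B$ pointwise, and on the model $\zeta^{3}=\prod_{i<j}(x_{i}-x_{j})$ it acts by $\zeta\mapsto\rho\zeta$ with the $x_{i}$ fixed, so its fixed set on $X^{*}_{\GaSqOne}$ is essentially the union of the curves $D_{ij}$; on $Y_{\Gamma'}$ these give rational curves as fixed components, whereas the cusp-resolution elliptic curves and the singularity-resolution curves are only invariant ($\gamma$ acting by multiplication by $\rho$ on each resolution elliptic curve, hence contributing three further isolated fixed points per curve). For each fixed curve $Z$ one needs the weight of the action on the conormal bundle $N^{\vee}_{Z/Y_{\Gamma'}}$ and the equivariant structure of $(\mathcal{W}_{j,k}\otimes\Omega^{2})|_{Z}$ — obtainable from the automorphy-factor computations at the cusp in Section~\ref{cuspnhd}, from Corollary~\ref{indecvb} for $\mathrm{Sym}^{j}(U)|_{E}$, and from an explicit local model transverse to the modular curve — together with the degrees and self-intersection numbers needed to carry out the integral, which collapses to a polynomial of degree $\le 1$ in Chern classes.

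The main obstacle is precisely this fixed-point bookkeeping: for each $g\in G$, locating the fixed curves and points of $\gamma g$ on $Y_{\Gamma'}$ with their orders and multiplicities, and computing the $\ZZ/3\ZZ$- (and possibly $\ZZ/9\ZZ$-) weights on normal directions and on the restricted bundle along the modular-curve components and at their intersections with the boundary and exceptional divisors. Once the local contributions are tabulated and summed one obtains $\mathrm{trace}(\gamma)=a+b\rho=A_{j,k,0}+\rho\,A_{j,k,1}+\rho^{2}A_{j,k,2}$ — the leading polynomial term $\frac{1}{18}(j+1)(k+1)(j+k+2)$ common to all three eigenspaces cancels in the trace because $1+\rho+\rho^{2}=0$ — and feeding this into the eigenspace--dimension formula yields the table, organised by the residue class of $(j,l)$ modulo $3$. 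Several built-in checks are available: $\sum_{l}A_{j,k,l}$ must reproduce the preceding proposition's formula for $\dim S_{j,k+3}(\GaSqOne)$; every $\frac{1}{18}(j+1)(k+1)(j+k+2)+A_{j,k,l}$ must be a non-negative integer for admissible $(j,k)$; for $j=0$ it must agree with the scalar-valued dimensions already recorded in this section; and in the row $j\equiv_3 2$ only the $\det^{2}$-part survives, matching the shape of the character contribution there.
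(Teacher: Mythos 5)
Your reduction --- identifying the three spaces $S_{j,k+3,l}(\GaSq)$ with the $\gamma$-eigenspaces of $S_{j,k+3}(\GaSqOne)$ for $\gamma=\mathrm{diag}(1,1,\rho)$, and converting $\mathrm{trace}(\gamma)$ into eigenspace dimensions --- is exactly the paper's, and is correct. Where you diverge, and it is a genuine divergence, is in the Lefschetz step. You propose to re-run the machinery of the preceding proposition: descend to the free cover $Y_{\Gamma_1[3]}$ and average the $3^4$ holomorphic Lefschetz numbers $L(\gamma g)$ over $g\in G=\GaSqOne/(\Gamma_1[3]\cdot\mu_3)$. The paper deliberately changes method at this point and applies the holomorphic Lefschetz formula a single time, to the action of $\gamma$ on $Y_{\GaSqOne}$ itself and the orbifold bundle $\Omega^2_Y\otimes\mathcal W_{j,k}$, rewritten as $\Omega^2_Y\otimes\Sym^j(U\otimes L)\otimes L^{k-j}$ so that $\mu_3$ acts trivially on each factor. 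This is tractable because the fixed locus of $\gamma$ on $Y_{\GaSqOne}$ has already been described completely in Section~\ref{sec-PMS}: the six exceptional rational curves $D_{ij}$, along which $\Omega^1_Y(\log D)$ restricts to $\mathcal O\oplus\mathcal O(1)$ with $\gamma$-weights $1$ and $\rho^2$, together with the three intersection points of the resolution curves over the quotient singularities. The whole computation then collapses to six isomorphic one-dimensional contributions plus three isolated-point contributions. Your averaging route has the virtue of needing only the ordinary Atiyah--Bott formula for honest vector bundles over a smooth surface, whereas the paper invokes the orbifold version; but the price is having to locate and weight the fixed loci of $81$ distinct automorphisms $\gamma g$ on $Y_{\Gamma_1[3]}$, which vary with $g$, are not laid out anywhere in the paper, and would make the calculation far longer than the paper's short evaluation along the $D_{ij}$. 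You rightly flag this bookkeeping as the main obstacle; the one inaccuracy in your write-up is the implicit assumption that the paper simply reuses the method of the preceding proposition --- it does not.
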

\begin{remark} We conjecture that the formulas above also hold for $k=0$ in all cases except when $l=0$ and $j \equiv_6 0$, and then $A_{j,0,0}=-1/9$. This is based upon the same evidence as in Remark~\ref{rem-k0}.
\end{remark}

\begin{proof}
We apply the holomorphic Lefschetz formula to the action
of a representative $\gamma={\rm diag}(1,1,\rho)$ of
$\Gamma[\sqrt{-3}]/\Gamma_1[\sqrt{-3}]\cong {\ZZ}/3{\ZZ}$ on the surface
$Y=Y_{\Gamma_1[\sqrt{-3}]}$ and the orbifold vector bundle
$\Omega^2_{Y} \otimes {\mathcal W}_{j,k}$.
We assume that $j\equiv_3 k$ otherwise
$M_{j,k}(\Gamma_1[\sqrt{-3}])$ is zero. 
We can write 
$$
\Omega^2_Y \otimes \mathcal{W}_{j,k}=
\Omega^2_Y \otimes {\rm Sym}^j(U\otimes L) \otimes L^{k-j} \, ,
$$
where the center $\mu_3$ acts on all three factors trivially.
The fixed point locus of $\gamma$ on the surface $Y_{\Gamma_1[\sqrt{-3}]}$
consists of the six curves $D_{ij}$ (see Section \ref{sec-PMS}) 
and the three intersection points
of the two resolution curves of the three quotient singularities
on $X_{\Gamma_1[\sqrt{-3}]}$. Each of the $D_{ij}$ is a smooth rational
curve which is an exceptional curve, 
the restriction of $\Omega^1_Y$ to $D_{ij}$
is ${\mathcal O}(2)\oplus {\mathcal O}(-1)$, with
the first factor the cotangent bundle to $D_{ij}$ and the second the
conormal bundle.  And since the resolution divisor $D$ of the cusps
intersects $D_{ij}$ transversally at two points, we find 
$$
\Omega^1(\log D)_{|D_{ij}}= \mathcal{O}\oplus {\mathcal O}(1)\, .
$$
The action of $\gamma$ preserves the two factors. It acts trivially on the
cotangent bundle of $D_{ij}$ and by $\rho^2$ on the conormal bundle
since one $D_{ij}$ is given by $u=0$ and 
$\gamma$ acts by $(u,v)\mapsto (\rho u,v)$. Since $\Omega^1(\log D)
\cong U\otimes L $ we get $U\otimes L_{|D_{ij}}=
\mathcal{O}\oplus \mathcal{O}(1)$
with $\gamma$ acting by $1$ on the first factor
and by $\rho^2$ on the second one. 

We need the Todd class along $D_{ij}$; if we write the first Chern class of
${\mathcal O}_{D_{ij}}(1)$ as $P$, the class of a point, then we find
$$
{\rm td}(D_{ij},\gamma)= \frac{2P}{1-e^{-2P}} \frac{1}{1-\rho^2 e^P} \, ,
$$
where the first factor comes from the tangent bundle and the second
from the normal bundle
resulting in
$$
{\rm td}(D_{ij},\gamma)=\frac{2+\rho}{3}-\frac{1+\rho}{3} P\, .
$$
We have $L^3_{|D_{ij}}=\mathcal{O}(1)$ with $\gamma$ acting trivially.
We need ${\rm ch}(\Omega^2_Y \otimes \mathcal{W}_{j,k}{|D_{ij}})$
in the cohomology of $D_{ij}$ tensored with the representation ring
of $\mu_3=\langle \gamma \rangle$. 
We thus write ${\rm ch}({\rm Sym}^j(U\otimes L)_{|D_{ij}})=r(j)+d(j)P$, 
where $P$ denotes the cohomology class of a point on $D_{ij}$ 
and we interpret $r(j)$ and $d(j)$ as elements of ${\ZZ}[\rho]$. 
Furthermore, ${\rm ch}(\Omega^2_Y |D_{ij})=\rho^2(1-P)$.
From the description just given we obtain:
$$
{\rm ch}(\Omega^2\otimes \mathcal{W}_{j,k}|D_{ij})
=(r_j+d_j P) (\rho^2-\rho^2 \, P) \left( (1+(\frac{k-j}{3})P\right) \, .
$$
We have
$ r_j=\sum_{a=0}^j \rho^{2a}$,
and
$d_j=\sum_{a=0}^j \rho^{2a} \, a $
and we thus find
$$
r(j)+d(j) \, P = 
\begin{cases}
1+ \frac{j}{3}(2+\rho)\, P & j\equiv_3 0\\
-\rho + \left(\frac{j+2}{3}(-1-2\rho) +\rho \right)\, P & j\equiv_3 1 
\\
0 + \frac{j+1}{3}(\rho-1) \, P & j\equiv_3 2  \\
\end{cases}
$$
The contribution of the six $D_{ij}$ is
the coefficient of $P$ in
$$
6 \,  \rho^2 (r(j)+d(j) P)(1+(\frac{k-j-3}{3})P)
(\frac{2+\rho}{3}-\frac{1+\rho}{3} P) 
$$
and this is
$$
\begin{cases} 
(-2k+2j+6)/3+ (-4k-2j+6)\rho/3 & j\equiv_3 0 \\
(-4k-2j+6)/3+(-2k+2j+6)\rho/3 & j \equiv_3 1 \\
2(j+1)(1+\rho) & j \equiv_3 2 \\
\end{cases}
$$
By adding to these three cases $-(j+1)$ (respectively $-(j+1)\rho$ and
$(j+1)\rho^2$) as the contribution of the three isolated fixed points
of $\gamma$ one finds for the trace
$$
(2k+j-3)(-1-2\rho)/3, \quad (2k+j-3)(-2-\rho)/3, \quad -(j+1)\rho^2 \, .
$$
This gives the desired traces.
\end{proof}

We end this section with a definition.
\begin{definition}
The space $S_{j,k,l}(\GaSq)$ is a representation of $\fS_4$ and we denote by $S_{j,k,l}(\GaSq)^{\mu}$ the isotypic component corresponding to the irreducible representation indexed by ${\mu}$, a partition of $4$. We then put
$$
\dim_{\fS_4} S_{j,k,l}(\GaSq):=\sum_{\mu \vdash 4} \frac{\dim S_{j,k,l}(\GaSq)^\mu}{\dim \mathbf s_{\mu}} \, \mathbf s_{\mu}.$$
In Section~\ref{sec-conj}, we define a (conjectural) subset 
$$ S^{\rm gen}_{j,k,l}(\GaSq) \subset S_{j,k,l}(\GaSq)$$ 
of so-called genuine forms  and we define $\dim_{\fS_4} S^{\rm gen}_{j,k,l}(\GaSq)$ analogously to the above. 
\end{definition} 
\end{section}

\begin{section}{Moduli spaces of abelian threefolds with $\rho$-action}
In this part of the article we will study our spaces 
using their interpretation as moduli spaces of abelian threefolds 
with an action by $\rho$. This will give us a Deligne-Mumford stack defined over $\ZZ[\rho,1/3]$,  
which enables us to find cohomological information through its finite fibres. 
Our goal is the $\ell$-adic Euler characteristics of the
local systems on our moduli spaces as motives, or more specifically as
representations of the absolute Galois group $\Gal(\overline{F}/F)$. 
\begin{subsection}{Picard modular stacks} \label{sec-stacks} 
For any scheme $S$ defined over $\cO_F[1/3]$ consider the groupoid whose objects are tuples $(A,\lambda,\iota)$, where $A$ is an abelian scheme of relative dimension $3$ over $S$, $\lambda: A \to A^{\vee}$ is a principle polarization of $A$, and $\iota:\cO_F \to \mathrm{End}_S(A)$ is a homomorphism such that the Rosati involution associated to $\lambda$ acts by complex conjugation on $\iota(\cO_F)$ and that gives $\Omega^1_{A/S}$ a structure of $\cO_S \otimes_{\ZZ} \cO_F$-module of signature $(2,1)$. Isomorphisms between $(A,\lambda,\iota)$ and $(A',\lambda',\iota')$ are given by isomorphisms $f:A \to A'$ such that $\lambda=f^{\vee} \circ \lambda' \circ f$ and $f \circ \iota(a) = \iota'(a) \circ f$ for all $a \in \cO_F$. This moduli problem is represented by a Deligne-Mumford stack $\cX'_{\Gamma}$ of relative dimension $2$ that is separated, smooth, connected and of finite type over $\cO_F[1/3]$, see \cite[Cor. 1.4.1.12]{Lan} and \cite{Larsen}. The Picard modular surface $X_{\Gamma}$ is equal to the complex fibre $\cX'_{\Gamma}(\CC)$.

\begin{notation} Put $\alpha:=\iota(\rho)$.
\end{notation}

To our moduli problem we now add the principal level-structure with respect to the endomorphism $1-\alpha$. Consider tuples $(A,\lambda,\iota,\sigma)$ where 
$$\sigma:(\cO_F^3/(1-\alpha)\cO_F^3)_S \to A[1-\alpha]$$  
is an $\cO_F$-equivariant isomorphism, see \cite[Definition 1.3.6.1]{Lan}. This is represented by a Deligne-Mumford stack $\cX'_{\GaSq}$ with the same properties as above and with $X_{\GaSq}$ as complex fibre. Note that $\cX'_{\GaSq}$ comes with an action of the finite group $\GaSq/\Gamma$.

More generally, for any open compact subgroup of $G(\hat \ZZ)$ we get a level-structure that we can add to our moduli problem and get a stack with the same properties as above, see \cite{Lan}. The open compact subgroup 
$$K_{\Gamma}:= \{g \in G(\hat \ZZ): g \cdot \cO_F^3\otimes_{\ZZ} \hat \ZZ =\cO_F^3\otimes_{\ZZ} \hat \ZZ\},$$
corresponds to $\cX'_{\Gamma}$ and if we replace $G$ by $G^0$ in the definition we get $\cX'_{\Gamma_1}$. 
From the subgroup 
$$K_{\Gamma[\sqrt{-3}]}:=\{g \in G(\hat \ZZ): (g-1) \cdot \cO_F^3\otimes_{\ZZ} \hat \ZZ \subset \sqrt{-3} \cdot \cO_F^3\otimes_{\ZZ} \hat \ZZ\},$$ 
we get $\cX'_{\GaSq}$, and  replacing $G$ by $G^0$ we get $\cX'_{\GaSqOne}$, with the corresponding Picard modular surfaces as complex fibres. 
\end{subsection}
\begin{subsection}{Shimura varieties and complex tori} 
We briefly revisit our Picard modular surfaces as Shimura varieties and as moduli space of complex tori. For any compact open subgroup $K$ in $G(\bA_f)$ put, 
$$S_{K}(G,B):=G(\QQ) \backslash B \times G(\bA_f)/K.$$
This is a Shimura variety defined over $\CC$. Taking $K$ equal to any of the compact open subgroups of the previous section we get a connected Shimura variety isomorphic to the corresponding Picard modular surface, see \cite{Go}. 

An abelian variety $A$ in $\cX'_{\Gamma}(\CC)$ is a complex torus $V/\Gamma_{*}$ with 
$\Gamma_{*}$ an $\cO_F$-module of rank~$3$. Since the class number is $1$, the lattice $\Gamma_{*}$ is isomorphic to $\cO_F^3$. 
The polarization of $A$ gives rise to an alternating form $E$ on the underlying
real vector space of $V$ which satisfies $E(Jx,Jy)=E(x,y)$ with
$J$ the complex structure on $V$. This gives that $E(\alpha x, y)=
E(x, \alpha'y)$ for all $\alpha \in \cO_F$ and where $z\mapsto z^{\prime}$ 
denotes the Galois automorphism of $F/{\bf Q}$. The corresponding
hermitian form $h$ may be normalized (see \cite{Shimura}) so that 
$$ h(z_1,z_2,z_3) = z_1z_2^{\prime} +z_1^{\prime}z_2+z_3z_3^{\prime}. $$
\end{subsection}
\begin{subsection}{Moduli spaces of curves} \label{sec-moduliofcurves}
The Torelli map that sends a smooth curve of genus $g$ to its Jacobian, 
induces an embedding of coarse moduli spaces.  
For the corresponding stacks this does not hold in general due to the fact that if $G$ is the automorphism group  of a non-hyperelliptic curve $C$, 
then the automorphism group of its Jacobian equals $G \times \{-1\}$. 

An abelian threefold is either geometrically indecomposable, or a product of the Jacobian of a smooth genus two 
curve and an elliptic curve, or an unordered product of three elliptic curves. We cut up our spaces 
$\cX'_{\Gamma_{*}}$, for all $\Gamma_{*}$ among the four groups $\Gamma, \Gamma_1, \Gamma[\sqrt{-3}]$ 
and $\Gamma_1[\sqrt{-3}]$, into three pieces $\cX'_{1,\Gamma_{*}}$, $\cX'_{2,\Gamma_{*}}$ and $\cX'_{3,\Gamma_{*}}$  
according to this distinction.  

We will now shift focus to the corresponding moduli spaces of curves $\cX_{\Gamma_{*}}$, defined over $\ZZ[\rho,1/3]$. The stratification on the moduli of abelian varieties induces via Torelli a stratification on the moduli of curves, denoted by $\mathcal{X}_{1,\Gamma_*}$, $\mathcal{X}_{2,\Gamma_*}$ and
$\mathcal{X}_{3,\Gamma_*}$. 

The space $\cX_{\Gamma}$ will then be the moduli space of curves of compact type together with an automorphism of order three, with action of type $(2,1)$, that induces an admissible tricyclic cover of the projective line (compare the definitions in \cite{Achter-Pries}). The level structure for $\GaSq$ will be described in terms of markings of points on the curves. We will not attempt an analogous description for  $\GaSqOne$.

Define also, in a completely analogous way, the moduli space $\cX^{(2)}_{\Gamma_{*}}$ of abelian surfaces with signature $(1,1)$ and the moduli space $\cX^{(1)}_{\Gamma_{*}}$ of elliptic curves with signature $(1,0)$. 

The two spaces $\cX_{\Gamma_{*}}$ and $\cX'_{\Gamma_{*}}$ only differ for the open strata of geometrically indecomposable abelian threefolds, due to the difference in automorphism group mentioned above. This is intimately connected to the fact that a geometrically indecomposable abelian threefold is either the Jacobian of a smooth curve of genus three, or the $(-1)$-twist of the Jacobian of a (non-hyperelliptic) smooth curve of genus three. 

Our main interest is the cohomology of local systems of these spaces, and in Remark~\ref{rmk-curveab} we will relate the cohomology of these two types of spaces. This relation shows that there are no new motives appearing in the cohomology of local systems on $\cX_{\Gamma_{*}}$ other than the ones appearing for $\cX'_{\Gamma_{*}}$. This is in sharp contrast to the situation when comparing the cohomology of local systems on the moduli space of curves $\mathcal M_3$ and the moduli space of principally polarized abelian varieties $\mathcal A_3$, see \cite{CFG}.

\begin{notation}
Let $K$ denote a field, which is not of characteristic~$3$, containing a primitive third root of unity $\tilde \rho$ that we fix. 
\end{notation}
\subsection{Smooth curves of genus $3$} \label{sec-smooth3}
Let $C/K$ be a smooth curve of genus $3$ and let $\alpha$ be an automorphism of $C$ of order $3$ of type $(2,1)$, which means that it will have eigenvalues $(\tilde \rho,\tilde \rho,\tilde \rho^2)$ when acting on the $3$-dimensional vector space $H^0(C,\Omega^1_C)$.

Since there are no invariant differentials of $\alpha$ we see that $\alpha$ induces a cyclic triple cover of $\PP^1 \cong C/\alpha$. The Riemann-Hurwitz formula tells us that there are five ramification points, which is the same as fixed points of $\alpha$. Let $c_{\tilde \rho^i}$ be the number of ramification points $x$ such that the action of $\alpha$ on $\Omega^1_{C,x}$ is by multiplication by $ \tilde \rho$. We then have that $c_{\tilde \rho^2}=5-c_{\tilde \rho}$. The Woods-Hole formula, together with the isomorphism between $H^1(C,\mathcal{O}_C)$ and the dual of $H^0(C,\Omega^1_C)$, tells us that 
$$\sum_{i=0}^1 (-1)^i \mathrm{Tr} \bigl(\alpha, H^i(C,\mathcal{O}_C)\bigr)=1-(\tilde \rho + 2\tilde \rho^2) 
=\frac{c_{\tilde \rho}}{1-\tilde \rho}+\frac{c_{\tilde \rho^2}}{1-\tilde \rho^2} 
$$
giving $c_{\tilde \rho}=4$. 

Elementary Galois theory tells us that a cyclic triple cover of $\PP^1$, with coordinate $x$, can be given by an equation $y^3=f(x)$ with cover given by  $(x,y) \mapsto x$, together with the automorphism 
$\alpha: (x,y) \mapsto (x,\tilde \rho y)$, and where $f(x)$ does not contain any irreducible factor to the power larger than two. The space $H^0(C,\Omega^1_C)$ of regular differentials of $C$ is generated by $\{dx/y, dx/y^2, xdx/y^2\}$ and the eigenvalues of $\alpha$ are thus of the right form, $(\tilde \rho^2,\tilde \rho,\tilde \rho)$ on the given basis. The ramification point for which the action of $\alpha$ is by $\tilde \rho^2$ is necessarily defined over $K$ and using a projective transformation we put it in infinity. The polynomial $f(x)$ should then have four distinct roots over $\bar K$ and since the action on the ramification is by $\tilde \rho$ the polynomial should be square-free. 

\begin{remark} \label{rmk-type} Doing the above in more generality, we begin with a covering $y^3=f_1f_2^2$, with square-free polynomials $f_1,f_2$. If the field is not too small we use a projective transformation to make sure that the point over infinity is not ramified, i.e that $3$ divides $\deg(f)=2(g+2)-\deg(f_1)$. As above, we then find that the action of $\alpha$ on $H^0(C,\Omega^1_C)$ is of type $\bigl((g-1+d)/3,(2g+1-d)/3 \bigr)$.
\end{remark}

Let $P_{d}(K) \subset K[x]$ be the subset consisting of polynomials of degree $d$ with non-zero discriminant. To each $f \in P_4(K)$ we associate the cyclic triple cover $(C_f,\alpha)$ given by the equation $y^3=f(x)$. The isomorphisms between pairs of the form $(C_f,\alpha)$ are given by $(x,y) \mapsto (ax+b,cy)$ with $a,c \in K^*$ and $b \in K$. 
The groupoid of pairs  $(C_f,\alpha)$ with $f \in P_4(K)$ is equivalent to the groupoid $\cX_{1,\Gamma}(K)$.
\begin{subsubsection}{Ramification and $(1-\alpha)$-torsion} \label{sec-ramalpha}
The $3$-torsion group of an abelian threefold $A$ in $\cX'_{\GaSq}(K)$ is isomorphic to $(\ZZ/3\ZZ)^6$ over $\bar K$. We have that $3=(1-\alpha)(1-\alpha^2)$ and the $(1-\alpha)$-torsion group (which equals the $(1-\alpha^2)$-torsion group) is isomorphic to $(\ZZ/3\ZZ)^3$ over $\bar K$. This is a totally isotropic subspace of the $3$-torsion group with respect to the Weil pairing and an isomorphism $A[1-\alpha] \cong (\ZZ/3\ZZ)^3$ is acted upon by $\mathrm{O}(h,\ZZ/3\ZZ)$, the group of orthogonal matrices with coefficients in $\ZZ/3\ZZ$ that respect the hermitian form $h$ from Section~\ref{PMG}. This group is isomorphic to $\Gamma/\Gamma[\sqrt{-3}] \cong \fS_4 \times \mu_2$, c.f. \cite[p. 153]{Fi}. 

Take a pair $(C_f,\alpha)$ as in the previous section and let $p_1,p_2,p_3,p_4$ be the ramification points (not necessarily defined over $K$) where $\alpha$ acts by $\tilde \rho$ and let $p_0$ be the point where it acts by $\tilde \rho^2$. The degree $0$ divisors $\beta_1=p_1-p_0$, $\beta_2=p_2-p_0$ and $\beta_3=p_3-p_0$ are points on the Jacobian of our curve which are fixed by $\alpha$, that is, they belong to $J(C_f)[1-\alpha]$. 

The morphism 
$$ \phi: ({\ZZ}/3{\ZZ})^3 \rightarrow J(C_f)[1-\alpha]$$
defined by $(c_1,c_2,c_3) \mapsto c_1 \beta_1+c_2\beta_2+c_3\beta_3$ is an  
isomorphism. A non-trivial element in the kernel of $\phi$ can easily be rearranged using 
$$\mathrm{div}(y)=\sum_{i=1}^4 (p_i-p_0)$$
and the relations $3p_i\sim 3p_0$ for all $i$, to a relation of the form $p_i+p_j \sim p_k+p_l$ for some $i,j,k,l$. This relation implies that the curve is hyperelliptic which is not possible, see Remark~\ref{hyperell} below. If we order the points $p_1$, $p_2$, $p_3$, $p_4$ then we have an action of $\fS_4$ on $J(C_f)[1-\alpha]$. This action corresponds to the group $\mathrm{SO}(h,\ZZ/3\ZZ)$. And if we add the action $\beta_i \mapsto -\beta_i$ we get the whole $\mathrm{O}(h,\ZZ/3\ZZ)$. 

The groupoid $\cX_{1,\Gamma[\sqrt{-3}]}(K)$ is equivalent to the groupoid of pairs $(C_f,\alpha)$ together with an ordering of the four ramification points where $\alpha$ acts by $\tilde \rho$.

\begin{remark}\label{hyperell}
A smooth curve $C$ of genus $3$ which is a cyclic triple cover of ${\PP}^1$ of degree $3$ is not hyperelliptic. Indeed, if $\tau$ denotes the hyperelliptic involution, then $\tau$ and $\alpha$ commute, and $\alpha$ permutes the eight fixed points of $\tau$, and vice versa. In particular, since $\alpha$ has an unique fixed point $p_0$, where it acts by $\tilde \rho^2$, we see that $\tau$ also has to fix $p_0$. 
Since $8 \equiv_3 2 $, the action of $\alpha$ must have at least one more fixed point among the fixed points of $\tau$, say $p_1$. Then $p_0-p_1$ defines a point in the kernel of
both the endomorphism $2$ and $(1-\alpha)(1-\alpha^2)=3$ of the Jacobian of $C$, hence $p_0 \sim p_1$,
a contradiction.
\end{remark}
\end{subsubsection}
\end{subsection}
\subsection{Smooth curves of genus $2$} \label{sec-smooth2}
Arguing as in Section~\ref{sec-smooth3}, a curve $C$ of genus $2$ together with an automorphism $\alpha$ of order $3$ inducing a cyclic cover of $\PP^1$ can be given on the form $y^3=f_1(x)f_2(x)^2$ with $f_1$ and $f_2$ being relatively prime square-free polynomials for which $(\deg f_1,\deg f_2)=(2,2)$, $(2,1)$ or $(1,2)$ and where $\alpha: (x,y) \mapsto (x,\tilde \rho y)$ has eigenvalues $(\tilde \rho,\tilde \rho^2)$ on $H^0(C,\Omega^1_C)$. Denote the curve corresponding to $f_1$ and $f_2$ by $C_{f_1,f_2}$. The isomorphisms between pairs of the form $(C_{f_1,f_2},\alpha)$ are generated by $\mathrm{PGL}_2(K)$ acting on $x$ together with $y \mapsto ay$ for any $a \in K^*$.

Let $p_1$, $p_2$ be the points above the roots of $f_1$ and $q_1$, $q_2$ be the points above the roots of $f_2$. They are related by $p_1+p_2 \sim q_1+q_2$. The divisors $\beta_1=p_1-p_2$ and $\beta_2=q_1-q_2$ give a basis of the $(1-\alpha)$-torsion of the Jacobian of $C_{f_1,f_2}$. 

The groupoid $\cX^{(2)}_{\Gamma[\sqrt{-3}]}(K)$ 
is equivalent to the groupoid of pairs $(C_{f_1,f_2},\alpha)$ together with an ordering of the pair $p_1$ and $p_2$ and the pair $q_1$ and $q_2$ of ramification points of $\alpha$. The group  $(\ZZ/2\ZZ)^2$ acts on this moduli space by switching $p_1$ and $p_2$ respectively $q_1$ and $q_2$. 
\subsection{Elliptic curves} \label{sec-smooth1}
Let $E$ be an elliptic curve with an automorphism $\alpha$ of order $3$ inducing a cyclic cover of $\PP^1$ and let the origin be a fixed point of $\alpha$. Arguing again as in Section~\ref{sec-smooth3}, such a curve can be given on the form $y^3=f(x)^t$ with $t=1$ or $2$, $f$ a square-free polynomial of degree~$2$, $\alpha: (x,y) \mapsto (x,\tilde \rho y)$, and where the origin is placed over infinity. The action of $\alpha$  on $H^0(C,\Omega^1_C)$ has eigenvalue $\tilde \rho^t$. For a fixed $t$, the isomorphisms between pairs of the form $(C_{f^t},\alpha)$ are generated by $x \mapsto ax+b$ and $y \mapsto cy$ for any $a,c \in K^*$ and $b \in K$.

If $K$ is algebraically closed then the (coarse) moduli space $\cX^{(1)}_{\Gamma}(K)$, which corresponds to $t=1$, 
consists of one point. This point can be represented by $f(x)=x^2+x$.

Let $r_1$, $r_2$ be the points above the roots of $f$. The divisor $r_1-r_2$ gives a basis for the $(1-\alpha)$-torsion points. 
The groupoid $\cX^{(1)}_{\Gamma[\sqrt{-3}]}(K)$ is equivalent to the groupoid of pairs $(C_{f},\alpha)$ together with an ordering of the pair $r_1$ and $r_2$. The group $\ZZ/2\ZZ$ acts on this moduli space by switching $r_1$ and $r_2$. This is also the effect of the involution $-1$ on $E$, and so if $K$ is a algebraically closed field then the (coarse) moduli space $\cX^{(1)}_{\Gamma[\sqrt{-3}]}(K)$ also consists of one point.
\subsection{A smooth genus $2$ curve joined with an elliptic curve} \label{sec-strata2}
A curve $C$ in $\cX_{2,\Gamma}(K)$ consists of a curve $C_{f_1,f_2}$ in $\cX^{(2)}_{\Gamma}(K)$ and $C_f$ in $\cX^{(1)}_{\Gamma}(K)$ joined at a ramification point of each curves. The ramification point of $C_{f_1,f_2}$ should be above a root of $f_2$ and the ramification point of $C_{f}$ should be above infinity (isomorphisms are induced by the ones of the individual curves that fixes these points). This leaves us, as we want, with four fixed points of $\alpha$ with eigenvalue $\tilde \rho$ acting on the tangent space, and one with $\tilde \rho^2$. 
It is then straightforward to see that the groupoid $\cX_{2,\GaSq}(K)$ is given by adding an ordering of the four fixed points. 
\subsection{Triples of elliptic curves} \label{sec-strata3}
A curve in $\cX_{3,\Gamma}(K)$ has three components, two curves $C_{f_1}$, $C_{f_2}$ in $\cX^{(1)}_{\Gamma}(K)$ together with a curve of the form $C_{f_3^2}$ corresponding to a curve $C_{f_3}$ in $\cX^{(1)}_{\Gamma}(K)$. The two curves $C_{f_1}$, $C_{f_2}$ are joined 
to the curve $C_{f_3^2}$ at a ramification point over infinity and at a ramification point over one of the roots of $f_3$. This leaves us again with the wanted four fixed points of $\alpha$ with eigenvalue $\tilde \rho$ acting on the tangent space, and one with $\tilde \rho^2$, 
and the groupoid $\cX_{3,\GaSq}(K)$ is given by adding an ordering of the four fixed points. 
\begin{subsection}{Stable admissible covers}\label{sec-stable} 
 Let us briefly discuss a compactification, of the moduli space $\cX_{\GaSq}$
 using degenerations of cyclic covers. 

Let $\tilde \cX_{\GaSq}$ be the moduli space defined over $\ZZ[\rho,1/3]$ of
stable marked admissible $\ZZ/3$-covers of (stable) curves of genus $0$
with action of type $(2,1)$. For the definition of admissible covers, see
\cite{H-M} (and compare with \cite{Achter-Pries}).
An element of $\tilde \cX_{\GaSq}(K)$ is
a nodal curve $C$ over $K$ of genus $3$ with an action $\alpha$ of an
automorphism of order $3$ with $C/\alpha$ isomorphic to a curve $P$ of
genus $0$, stably marked by the ramification points
$p_0, p_1,\ldots,p_4$ of the cover $C \to P$ and such that 
$\alpha$ acts by $\tilde \rho$ on the tangent space of the points above
$p_1,\ldots,p_4$ and by $\tilde \rho^2$ on the point above $p_0$.

There is a morphism 
$$
\tilde \cX_{\GaSq} \to \overline{\mathcal M}_{0,1+4} \, ,
$$
with $\overline{\mathcal M}_{0,1+4}$ 
the moduli space of $(1+4)$-pointed 
genus $0$ curves. 

Note that $\overline{\mathcal M}_{0,1+4}$ has a stratification with five strata
according to the topological type of the genus $0$ curve: a $2$-dimensional
open stratum ${\mathcal M}_{0,1+4}$, two $1$-dimensional strata corresponding
to a join $P_1,P_2$ of two ${\PP}^1$'s intersecting in a point with
marked points $p_0,p_1,p_2$ on $P_1$ and $p_3,p_4$ on $P_2$, or
$p_0,p_1$ on $P_1$ and $p_2, p_3,p_4$ on $P_2$ and finally two
strata each consisting of one point corresponding to a
linear chain of three ${\PP}^1$'s, $P_1$, $P_2$, $P_3$ with marked points
$p_1,p_2$ on $P_1$, $p_0$ on $P_2$ and $p_3,p_4$ on $P_3$, or 
$p_0,p_1$ on $P_1$, $p_2$ on $P_2$ and $p_3,p_4$ on $P_3$
(all described up to the action of $\fS_4$). 

This will induce a stratification of $\tilde \cX_{\GaSq}$. The first three cases
above correspond to $\cX_{1,\GaSq}$, $\cX_{2,\GaSq}$ and $\cX_{3,\GaSq}$
respectively.

The fourth strata is $1$-dimensional and the curves it parametrizes consist of an 
elliptic curve $C_1$ with an order $3$ automorphism $\alpha_1$ with action of type
$(1,0)$ and a rational curve $C_0={\PP}^1$ with an automorphism $\alpha_0$ that acts
by $x \mapsto \rho x$, joined by identifying the three points of an
$\alpha_1$-orbit of length $3$ to the points
$1, \tilde \rho, \tilde \rho^2$ on $C_0$. There are four components depending upon 
the choice of marking of the ramification point on the rational curve. 

The fifth strata is $0$-dimensional and the curves it parametrizes consist of
 an union of an elliptic curve $C_1$ with an
order $3$ automorphism $\alpha_1$ with action of type $(1,0)$
and two ${\PP}^1$'s with automorphism $x \mapsto 1/(1-x)$, say $C_0$ and
$C_0'$, that intersect each other in $0$, $1$ and
$\infty$ such that $C_1$ and $C_0'$ are disjoint,
while $C_1$ and $C_0$ intersect in a fixed point of $\alpha_1$. 
This strata consists of twelve points depending upon 
the choice of marking of the ramification points on the two rational curves. 

Let $\cX^*_{\GaSq}$ denote the Satake-Baily-Borel compactification of 
$\cX'_{\GaSq}$ defined over $\ZZ[\rho,1/3]$, and so, 
$$\cX^*_{\GaSq}(\CC) \cong X^*_{\GaSq}.$$
Sending a curve to its (generalized) Jacobian gives a morphism, 
\begin{equation} \label{eq-tildestar}
  \tilde \cX_{\GaSq}  \to \cX^*_{\GaSq}.  
\end{equation}
The Jacobians over the fourth and fifth strata become extensions of an 
elliptic curve with the multiplicative group $\mathbb G_{m,F}$, and these 
strata will furthermore constitute a $\PP^1$-bundle over the four cusps of 
$\cX^*_{\GaSq}$.
\end{subsection}
\end{section}

\begin{section}{Characteristic polynomials of Frobenius}
In this section we find properties of the characteristic polynomial of Frobenius acting on $\rho$-eigenspaces of the first \'etale cohomology group of a cyclic triple cover of the projective line. 

\begin{subsection}{Notation for primes, generators and finite fields} \label{sec-notation}
Let $k=\FF_q$ always denote a finite field with $q=p^r$ elements with $q \equiv_3 1$. 
For any $n \geq 1$ let $k_n=\FF_{q^n}$, a degree $n$ extension of $k$.

If $p \equiv_3 1$, choose a third root of unity 
$\tilde \rho$ in $\FF_p$. This gives us a third root $\tilde{\rho}$
in any extension field $k={\FF}_q$ for $q=p^r$. The choice of $\tilde{\rho}$
determines a generator $a_{\pp_p}+b_{\pp_p}\rho$ of an ideal $\pp_p$ of 
norm $p$, namely let $a_{\pp_p},b_{\pp_p}$ be the unique pair of 
integers such $a_{\pp_p}^2-a_{\pp_p}b_{\pp_p}+b_{\pp_p}^2=p$, 
$a_{\pp_p} \equiv_3 1$, $b_{\pp_p} \equiv_3 0$  and such that an (hence any)
 isomorphism between $\ZZ[\rho]/\pp_p$ and $k=\FF_p$, sends $\rho$ to $\tilde \rho$. Define the integers $a_{\pp_q}$, $b_{\pp_q}$ by the equation $a_{\pp_q}+b_{\pp_q}\rho=(a_{\pp_p}+b_{\pp_p}\rho)^r$ and define the ideal
$\pp_q=(a_{\pp_q}+b_{\pp_q}\rho)$.

For any $p \equiv_3 2$ choose an arbitrary third root of unity $\tilde \rho$ in $\FF_{p^{2}}$. For any even $r \geq 1$ choose an embedding of $\FF_{p^{2}}$ in $k=\FF_{p^{r}}$ and let the chosen third root of unity of $\FF_{p^{r}}$ be the one coming from $\FF_{p^{2}}$. Note that it is presence of the automorphism $x \mapsto x^p$ of $\FF_{p^{2}}$ that makes that these choices do not matter for the later results for the moduli spaces, see Proposition~\ref{prop-p2}. The ideal $\pp_p=(p)$ is prime in $\ZZ[\rho]$ and also here we 
also choose a generator $a_{\pp_p}+b_{\pp_p}\rho$ such that  $a_{\pp_p} \equiv_3 1$, $b_{\pp_p} \equiv_3 0$, 
namely $a_{\pp_p}=-p$ and $b_{\pp_p}=0$. Define also $a_{\pp_q}=(-p)^r$, $b_{\pp_q}=0$ and the ideal $\pp_q=(a_{\pp_q}+b_{\pp_q}\rho)$. 
\end{subsection}
\begin{subsection}{The characteristic polynomial} \label{sec-charpolfrob}
Let $\chi$ denote the third power residue symbol, that is, 
if $a\in k^*$ then $\chi(a)=\rho^i$ where $\tilde \rho^i =a^{\frac{q-1}{3}}$, and $\chi(0)=0$.
Let $C'_f$ be a cyclic triple cover of the projective line given by an 
equation of the form $y^3=f(x)$, where $f$ is a cube-free polynomial with coefficients in $k$. With $f(\infty)$ we mean the leading coefficient of $f$ if $\deg f \equiv_3 0$, and $0$ otherwise. 
Let $g$ be the genus of $C_f$.
Put $C_f:=C'_f \otimes_k \bar k$ and let $F_q$ denote the geometric Frobenius morphism acting on $C_f$. The number of points over $k$ of $C_f$ equals 
$$|C_f(k)|=|C_f^{F_q}|=\sum_{a \in \PP^1(k)} \bigl(1+\chi(f(a))+\overline{\chi(f(a))} \bigr).$$
Let $H^i_c$ denote compactly supported $\ell$-adic \'etale cohomology. The Lefschetz trace formula (see \cite[Th.~3.2]{SGA}) tells us that  
$$|C_f^F|=\sum_{i=1}^2 (-1)^i \Tr(F_q,H_c^i(C_f,\QQl))$$
and so 
$$a_1(C_f):=\Tr(F_q,H_c^1(C_f,\QQl))=-\sum_{a \in \PP^1(k)} \bigl(\chi(f(a))+\overline{\chi(f(a))} \bigr). $$

Let $\alpha$ be the automorphism of $C_f$ given by $(y,x) \mapsto (\tilde \rho y,x)$, which 
commutes with Frobenius. We find that 
$$|C_f^{F_q \circ \alpha^i}|=\sum_{a \in \PP^1(k)} \bigl(1+\rho^i \,\chi(f(a))+\overline{\rho^i \, \chi(f(a))} \bigr).$$ 

The automorphism $\alpha$ splits $H^j_{c}(C_f,\QQl)$ into $\rho^i$-eigenspaces $H^j_{c}(C_f,\QQl)^{\rho^i}$. The projection formula gives,  
$$\sum_{j=0}^2 (-1)^j \, \mathrm{Tr}(F_q,H^j_{c}(C_f,\QQl)^{\rho^i})=\frac{1}{3} \, \sum_{k=0}^2 \rho^{-ik}\,\, |C_f^{F_q \circ \alpha^k}|.$$ 

The $1$-dimensional cohomology groups $H^0_c$ and $H^2_c$ are $\alpha$-invariant.  
Since the quotient by $\alpha$ has genus $0$, $H^1_c$ has no $\alpha$-invariant part. 
These two things can also be deduced using the Lefschetz trace formula for $\alpha$ together 
with the fact that $\alpha$ has $g+2$ fixed points (using the Hurwitz formula).
It follows that  
$$a_{1,\rho^i}(C_f):=\mathrm{Tr}(F_q,H^1_{c}(C_f,\QQl)^{\rho^i})=-\sum_{a \in \PP^1(k)} \chi(f(a))^i.$$ 
for $i=1,2$.

Let $\alpha_1(C_f),\ldots,\alpha_g(C_f)$ be the eigenvalues of 
Frobenius acting on the $g$-dimensional vector space 
$H^1_{c}(C_f,\QQl)^{\rho}$ and denote the characteristic 
polynomial of Frobenius by $\mathfrak{ch}_{\rho}(C_f)$. Let  
$e_i$ denote the $i$th elementary symmetric polynomial in $g$ variables. Note then that  
 $$e_1 \bigl(\alpha_1(C_f),\ldots,\alpha_g(C_f)\bigr)=a_{1,\rho}(C_f).$$
and that 
$$\mathfrak{ch}_{\rho}(C_f)=\sum_{i=0}^g x^{g-i}(-1)^i e_i \bigl(\alpha_1(C_f),\ldots,\alpha_g(C_f)\bigr).$$

Since $\alpha_i(C_f)\bar \alpha_i(C_f)=q$ for $1 \leq i \leq g$, we immediately get that 
\begin{equation} \label{eq-duality} e_{ g-i}q^i=e_g \bar e_{i}
\qquad \text{for $1 \leq i \leq g$.} \end{equation}
\begin{subsubsection}{Characteristic polynomials of elliptic curves} \label{sec-charpol-ell}
We will first consider the elliptic curves in some detail. 
Assume that $p \neq 2$. If $\gamma$ is a generator of $k^*$, then 
$y^3=x^2+\gamma^i$ for $i=0, \ldots, 5$ are representatives of the six $k$-isomorphism classes of curves in 
$\cX^{(1)}_{\Gamma}(k)$. 

Let $\nu$ denote the second power residue symbol. 
Using Jacobi sums (see for instance \cite[Chapter 8]{IR}) we have 
\begin{equation}  \label{eq-charell} a_{1,\rho}(C_{x^2+D})=\nu(-4D)\chi(-4D) (a+b\rho)
\end{equation}
for any $D \in k^*$. Note that $-4D$ is the discriminant of the polynomial $x^2+D$.

Say that $\sigma$ switches the two marked ramification points of a curve in $\cX^{(1)}_{\GaSq}(\bar k)$. 
The fixed points of Frobenius composed with $\sigma$ acting on $\cX^{(1)}_{\Gamma}(\bar k)$ 
are the curves $y^3=x^2+D$ such that $\nu(-4D)=1$.

Assume now that $p=2$. If $\gamma$ is a generator of $k^*$, then the following are representatives of the 
six $k$-isomorphism classes of curves in $\cX^{(1)}_{\Gamma}(k)$, $x^2+\gamma^ix$ for $i=0,1,2$ and 
$x^2+\gamma^ix+\delta_i$ for $i=0,1,2$ where $\delta_i$ is any element of $k$ such that the polynomial is irreducible. 

For any $a \in k^*$ and $b \in k$, define $\nu_a(b)$ to be $1$ if the equation $x^2+ax+b=0$ has 
two solutions in $k$, and $-1$ otherwise. Note first that $t^2+t+1$ has two roots, say $\alpha$ and $\beta$, 
in $k$ and that $t^2+at+a^2$ has roots $a \alpha$ and $a \beta$, so in particular $\nu_a(a^2)=1$.
Using that the Jacobi sum $J(\chi,\chi)$ equals $-a-b\rho$ we get 
 \begin{equation} \label{eq-charell2} a_{1,\rho}(C_{x^2+ax+b})=-\sum_{w} \chi(w^2+aw+b) 
= \nu_a(b) \chi(a^2)(a+b\rho).
 \end{equation}
Note that $a^2$ is the discriminant of the polynomial $x^2+ax+b$. 
The fixed points of Frobenius composed with $\sigma$ acting on $\cX^{(1)}_{\Gamma}(\bar k)$ as above  
are the curves $y^3=x^2+ax+b$ such that $\nu_a(b)=1$.

Theorem~\ref{thm-det} below is a generalization of the formulas \eqref{eq-charell} and \eqref{eq-charell2}, which go back to Gauss. 
\end{subsubsection}
\begin{subsubsection}{The characteristic polynomial modulo $1-\rho$} 
Let $p_1,\ldots,p_{g+2}$ be the roots of $f=f_1f_2^2$. 
The elements $(v_1,\ldots,v_{g})$ with $v_i=p_i-p_{i+1}$ form a basis of the $g$-dimensional $\ZZ/3\ZZ$-vector space  $J(C_f)[1-\alpha]$. Using the Tate module of $J(C_f)$ we see that the action of Frobenius on $H^1_{c}(C_f,\QQl)^{\rho}$ modulo $(1-\rho)$ is equal to the action of Frobenius on $J(C_f)[1-\alpha]$. 
Let $\mathfrak{ch}_{\rho}(C_f)_{\rho=1} \in (\ZZ/3\ZZ)[x]$ denote the polynomial $\mathfrak{ch}_{\rho}(C_f)$ modulo $(1-\rho)$. This polynomial is then equal to the characteristic polynomial of Frobenius acting on $J(C_f)[1-\alpha]$. 

Say that the Frobenius $F_q$ induces a permutation $\sigma \in S_{g+2}$, that has $c_i(\sigma)$ cycles of length $i$, on the set of points $\{p_1,\ldots,p_{g+2}\}$. We will now describe the $g \times g$-matrix $A_{\sigma}$ induced by $F_q$ acting on the basis $(v_1,\ldots,v_{g})$ with $v_i=p_i-p_{i+1}$. Put $d_1=\deg f_1$, $d_2=\deg f_2$ and $\mathfrak{ch}(A_{\sigma})=\det(xI-A)$. 

Let us first handle the cases for which $c_1(\sigma) \geq 1$. In this case we can look at any $\sigma$ without it having to correspond to an actual curve. 
By reordering, we can assume that $g+2$ is fixed by $\sigma$. If $\sigma=(1,\ldots,g+1)(g+2)$ then $F_q(v_i)=v_{i+1}$ for $i \leq g-1$ 
and $F_q(v_g)=-v_{1}-v_{2}-\ldots -v_{g}$. We find that $\mathfrak{ch}(A_{\sigma})=(x^{g+1}-1)/(x-1)$. Say now that we have computed $A_{\tau}$ for some $\tau$ with $c_1(\tau) \geq 1$. If $\sigma$ consists of a cycle $(1,\ldots,h)$ followed by $\tau$ reordered such that $j$ is replaced by $j+h$, then $F_q(v_i)=v_{i+1}$ for $i \leq h-2$, $F_q(v_{h-1})=-v_{1}-v_{2}-\ldots -v_{h}$ and $F_q(v_{h})=v_{1}+v_{2}+\ldots +v_{h+1}$. If we define $B$ to be the $h \times h$-matrix given by $w_i \mapsto w_{i+1}$ for $i \leq h-2$, $w_{h-1} \mapsto -w_{1}-w_{2}-\ldots -w_{h}$ and $v_{h} \mapsto w_{1}+w_{2}+\ldots +w_{h}$ on a basis $(w_1,\ldots,w_h)$, then 
$\mathfrak{ch}(A_{\sigma})=\mathfrak{ch}(A_{\tau})\mathfrak{ch}(B)=\mathfrak{ch}(A_{\tau})(x^h-1)$.  
This describes, by induction, the structure of $A_{\sigma}$ for any $\sigma$ with $c_1(\sigma) \geq 1$. 

\begin{example} For $\sigma=(1)(23)(4567)(8)$ we have 
  $$ A_{\sigma}=
  \left(  \begin{matrix}
    1 & 1 & 0 & 0  & 0  & 0 \\
    0 &-1 & 0 & 0  & 0  & 0 \\
    0 & 1 & 1 & 1  & 0  & 0 \\
    0 & 0 & 0 & 0  & 1  & 0 \\
    0 & 0 & 0 & 0  & 0  & 1 \\
    0 & 0 & 0 & -1 & -1 & -1 \\
    \end{matrix} \right)
  $$
  \end{example}

Let us now handle the cases for which $c_1(\sigma)=0$. Reorder the points $p_1,\ldots,p_{g+2}$ such that the roots of $f_1$ come before the roots of $f_2$ and put $v_{g+1}=p_{g+1}-p_{g+2}$. 

Note first that $3p_i \sim 3p_j$ for any $i,j$, and then that on the one hand 
$$\mathrm{div}_0(y)=\sum_{i=1}^{d_1}p_i+\sum_{i=1+d_1}^{d_1+d_2} 2p_i $$
and on the other
$$\mathrm{div}_{\infty}(y) \sim (d_1+2d_2)p_i,$$
for any $i$. If $d_1 \equiv_3 0$ then this can be used to give the relation
$$\sum_{i=1}^{d_1}p_i+\sum_{i=1+d_1}^{d_1+d_2} 2p_i \sim \sum_{i=1}^{d_1/3} 3p_{3i-1}+\sum_{i=1+d_1/3}^{d_2/3} 6p_{3i-1}$$
from which it follows that 
$$\sum^{d_1/3}_{i=1} (v_{3i-2} - v_{3i-1})+ \sum_{i=1+d_1/3}^{(d_1+d_2)/3} (-v_{3i-2} + v_{3i-1})=0.$$

Similarly, if $d_1 \equiv_3 1$ then
$$\sum^{(d_1-1)/3}_{i=1} (v_{3i-2} - v_{3i-1})+v_{d_1}+\sum_{i=1+(d_1-1)/3}^{(d_1+d_2-2)/3} (-v_{3i} + v_{3i+1})=0,$$
and if $d_1 \equiv_3 2$ then
$$
\sum^{(d_1-2)/3}_{i=1} (v_{3i-2} - v_{3i-1})+v_{d_1-1}-v_{d_1}-v_{d_1+1}
+  \sum_{i=2+(d_1-2)/3}^{(d_1+d_2-1)/3} (-v_{3i-1} + v_{3i})=0.
$$

We will now use the same reasoning as above. The difference is that if $\sigma$ contains the 
cycle $(s,\ldots,s+t-1)(s+t,s+t+1,\ldots,g+2)$ then $F_q(v_g)=v_{g+1}$ if $s+t \leq g$ and 
$F_q(v_g)=v_{s}+\ldots +v_{g+1}$ if $s+t=g+1$. We can express $v_{g+1}$ in terms of 
$v_1,\ldots,v_g$ using the formulas above, but we find that only the coefficients of 
$v_{t-1},\ldots,v_g$ will affect $\mathfrak{ch}(A_{\sigma})$. Using that $\sigma$ necessarily 
permutes the roots of $f_1$ and $f_2$ respectively, we find that the contribution of the 
cycles $(s,\ldots,s+t-1)(t,t+1,\ldots,g+2)$ to $\mathfrak{ch}(A_{\sigma})$ equals 
$(x^{t}-1)(x^{g+2-s-t}-1)/(x-1)^2$. 

 \begin{example}
  For $\sigma_1=(123)(456)$ 
and  $\sigma_2=(12)(34)(56)$ we have 
  $$ A_{\sigma_1}=
  \left(  \begin{matrix}
    0 & 1 & 0& 0\\
      -1 &-1 & 0&0\\
      1 & 1 & 1 & 1 \\
      1 & -1 & 0 & 1 \\
    \end{matrix} \right)\, ,
\qquad
  A_{\sigma_2}=
  \left(  \begin{matrix}
    -1 & 0 & 0 & 0\\
      1 &1 & 1 & 0\\
      0 & 0 & -1 & 0 \\
      1 & -1 & 1 & -1 \\
    \end{matrix} \right)
  $$
   \end{example}

Summing up, we find the following. 

\begin{theorem} \label{thm-charmodrho} Let $C_f$ be a tricyclic cover of genus $g$. If Frobenius induces a permutation $\sigma$ on the set of $g+2$ ramification points with $c_i(\sigma)$ cycles of length $i$ then we have
$$\mathfrak{ch}_{\rho}(C_f)_{\rho=1}=\mathfrak{ch}(A_{\sigma})=\frac{1}{(x-1)^2}\prod_{i=1}^{g+2} (x^i-1)^{c_i(\sigma)}
$$
as polynomials in $(\ZZ/3\ZZ)[x]$.
\end{theorem}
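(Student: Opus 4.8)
The plan is to reduce the statement to a purely combinatorial computation of the characteristic polynomial of the integral matrix $A_{\sigma}$ recording the $F_q$-action on the explicit basis, and then to evaluate that determinant by peeling off the cycles of $\sigma$ one at a time.

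First I would invoke the two facts set up just above the statement. By the comparison via the Tate module, $\mathfrak{ch}_{\rho}(C_f)_{\rho=1}$ is the characteristic polynomial of geometric Frobenius $F_q$ acting on the $\FF_3$-vector space $J(C_f)[1-\alpha]$, and $v_i=p_i-p_{i+1}$ (for $1\le i\le g$) is a basis of that space. This last point is the analogue for arbitrary genus of Remark~\ref{hyperell}: a nontrivial relation among the $v_i$ could be rewritten, using $\mathrm{div}(y)$ and $3p_i\sim 3p_j$, as a relation $p_i+p_j\sim p_k+p_l$, which would force $C_f$ to be hyperelliptic. Hence $\mathfrak{ch}_{\rho}(C_f)_{\rho=1}=\det(xI-A_{\sigma})$, and it remains to read this determinant off from the cycle type of $\sigma$ alone.

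Next I would reorder the ramification points so that the cycles of $\sigma$ occupy blocks of consecutive indices. As the sample matrices in the text illustrate, in this ordering $A_{\sigma}$ is block triangular with one block per cycle: writing each $v_j$ as a difference internal to a cycle or straddling two adjacent cycles and telescoping (so $p_s-p_{s+h}=v_s+\dots+v_{s+h-1}$), one checks that $F_q(v_j)$ lies in the span of the basis vectors of the same block together with at most one vector from the next block, with the coupling running only one way. Then I would induct on the number of cycles, peeling off the leading cycle of length $h$: the telescoping identities show it contributes the factor $x^h-1$ to the determinant — this is exactly the identity $\mathfrak{ch}(A_{\sigma})=\mathfrak{ch}(A_{\tau})(x^h-1)$ recorded above — the only exceptions being the one or two cycles touching the ``boundary'' of the point set, where the single relation cutting a $(g+2)$-element set of points down to a $g$-dimensional space of differences removes one factor $x-1$ from the pertinent block.

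Finally I would organize the bookkeeping in the two cases of the text. If $c_1(\sigma)\ge 1$, arrange $p_{g+2}$ to be fixed; the induction bottoms out at $\sigma=(1\,2\,\cdots\,g{+}1)(g{+}2)$, whose matrix is the companion matrix of $1+x+\dots+x^g=(x^{g+1}-1)/(x-1)$, and each further cycle multiplies by $x^h-1$, giving $\frac{1}{(x-1)^2}(x-1)^{c_1(\sigma)}\prod_{i\ge 2}(x^i-1)^{c_i(\sigma)}$. If $c_1(\sigma)=0$, substitute instead the explicit expression for $v_{g+1}=p_{g+1}-p_{g+2}$ in terms of $v_1,\dots,v_g$ coming from $\mathrm{div}(y)=\sum_{f_1}p_i+2\sum_{f_2}p_i$ and $3p_i\sim 3p_j$, whose shape depends on $d_1=\deg f_1\bmod 3$; the two boundary cycles of lengths $a,b$ then contribute $(x^a-1)(x^b-1)/(x-1)^2$ while every other cycle contributes $x^i-1$. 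Either way the per-cycle factors assemble into $\frac{1}{(x-1)^2}\prod_{i=1}^{g+2}(x^i-1)^{c_i(\sigma)}$. I expect the genuine obstacle to be the verification in this last case that eliminating $v_{g+1}$ via the relation perturbs only the coefficients of $v_{t-1},\dots,v_g$, hence only the last block, leaving the characteristic polynomials of the earlier blocks — and so the clean multiplicativity — intact; the base case, the telescoping identities, and the block-triangular shape are all routine.
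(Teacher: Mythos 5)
Your overall strategy matches the paper's: pass to the $\FF_3$-vector space $J(C_f)[1-\alpha]$ via the Tate module, write the Frobenius action in the basis $v_i = p_i-p_{i+1}$, and compute the characteristic polynomial of $A_\sigma$ by peeling off one cycle at a time, with the two base cases organized according to whether or not $c_1(\sigma)\ge 1$. The account of the block-triangular shape, the $x^h-1$ per interior cycle, and the $(x-1)^2$ lost to the boundary is in the spirit of the text, and you correctly identify the genuine technical point in the $c_1(\sigma)=0$ case — after eliminating $v_{g+1}$ the last row of $A_\sigma$ acquires entries in arbitrary columns, and one must argue (as the text does, via the explicit shape of the $\mathrm{div}(y)$ relation and the fact that $\sigma$ preserves the roots of $f_1$ and of $f_2$ separately) that only the tail coefficients affect the determinant. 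Leaving that as the acknowledged obstacle is fine for a sketch.

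The one genuine gap is your proposed justification that $v_1,\dots,v_g$ is a basis. The argument ``a nontrivial relation can be rewritten as $p_i+p_j\sim p_k+p_l$, which forces $C_f$ hyperelliptic, contradiction'' does not work in this generality. First, for $g=2$ every curve is hyperelliptic, so even if one produced such a divisor relation there would be no contradiction, yet the theorem still asserts (and uses) that $v_1,v_2$ is a basis. Second, even for $g\ge 3$ an arbitrary $\FF_3$-relation $\sum c_i v_i=0$ gives a relation $\sum d_i p_i\sim 0$ with small integer $d_i$, but there is no obvious way (and none offered) to trade this, using only $\mathrm{div}(y)$ and $3p_i\sim 3p_j$, for a degree-two relation $p_i+p_j\sim p_k+p_l$; the ``easily rearranged'' claim the paper makes is specific to the $g=3$, five-point situation. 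The route that does work is a dimension count: $J(C_f)[1-\alpha]$ is $g$-dimensional over $\FF_3$, the differences $p_i-p_j$ of ramification points generate it (a standard fact for cyclic covers of $\PP^1$), so among $v_1,\dots,v_{g+1}$ there is exactly a one-dimensional space of relations, and the explicit $\mathrm{div}(y)$ relation given in the text has $v_{g+1}$ appearing with coefficient $\pm 1$; hence $v_1,\dots,v_g$ is a basis. You should replace the hyperelliptic argument with this.
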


\begin{remark} Computing 
  $$a_{n,\rho}(C_f):=-\sum_{a \in \PP^1(k_n)} \chi(f(a))=\sum_{i=1}^g \alpha_i(C_f)^n \in \ZZ[\rho]$$
  modulo $1-\rho$ is more straightforward since
  $$-\sum_{a \in \PP^1(k_n)} \chi(f(a))=-(q^n+1-r_i)=1+r_n \mod (1-\rho),$$
  where $r_n$ is the number of roots of $f$ defined over $k_n$.
  \end{remark}

\end{subsubsection}

\begin{subsubsection}{The determinant of Frobenius}
Say that $\alpha$ has $s$ eigenvalues equal to $\tilde \rho$ when acting on $H^0(C,\Omega^1_C)$. It then 
follows from \cite[Th\`eoreme 1]{Gi} 
that $e_g$ generates the ideal $\mathfrak p_q^s \cdot (\overline{\mathfrak p_q})^{g-s}$ and hence 
\begin{equation}\label{giraud}
e_g\bigl(\alpha_1(C_f),\ldots,\alpha_g(C_f)\bigr)=(-1)^{j_1}\rho^{j_2} (a_{\mathfrak p_p}+b_{\mathfrak p_p}\rho)^{rs}(a_{\mathfrak p_p}+b_{\mathfrak p_p}\rho^2)^{r(g-s)}
\end{equation}
for some integers $j_1$ and $j_2$.

\begin{theorem} \label{thm-det} For any polynomial $h$, let $D(h)$ denote the discriminant of $h$, 
and $\epsilon(h)$ the number of irreducible factors (over $k$) of $h$.
If we assume that $f=f_1 f_2^2$ and $3|\deg f$ then  
\begin{equation}\label{eq-det} \frac{e_g\bigl(\alpha_1(C_f),\ldots,\alpha_g(C_f)\bigr)}{(a_{\mathfrak p_p}+b_{\mathfrak p_p}\rho)^{rs}(a_{\mathfrak p_p}+b_{\mathfrak p_p}\rho^2)^{r(g-s)}}=
(-1)^{g+\epsilon(f_1)+\epsilon(f_2)} \chi \bigl(D(f_1)\bigr) \overline{\chi \bigl(D(f_2) \bigr)}. \end{equation}
\end{theorem}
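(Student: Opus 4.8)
The plan is to compute both sides of \eqref{eq-det} by reducing to the case of irreducible factors and composing. By \eqref{giraud} we already know that $e_g(\alpha_1(C_f),\dots,\alpha_g(C_f))$ equals $(a_{\pp_p}+b_{\pp_p}\rho)^{rs}(a_{\pp_p}+b_{\pp_p}\rho^2)^{r(g-s)}$ up to a unit of the form $(-1)^{j_1}\rho^{j_2}$ in $\ZZ[\rho]$, so the left-hand side of \eqref{eq-det} is \emph{a priori} a root of unity; the content of the theorem is to pin down this unit, and in particular to show it is $\pm 1$ (so $j_2\equiv_3 0$) and to identify the sign. The natural way to isolate the unit is to work modulo $1-\rho$, since $\ZZ[\rho]/(1-\rho)\cong\FF_3$ kills the ambiguous factor $\rho^{j_2}$ while remembering the sign $(-1)^{j_1}$, and simultaneously to track the archimedean sign via the cube residue symbols $\chi(D(f_i))$, which record how Frobenius permutes the cube roots attached to the branch points.

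First I would reduce to $f$ irreducible. Write $f=f_1f_2^2$ and factor $f_1,f_2$ into irreducibles over $k$; the curve $C_f$ and its Frobenius eigenvalue data decompose multiplicatively with respect to this factorization in the appropriate sense (the branch divisor splits into Frobenius-orbits, one orbit per irreducible factor, and $e_g$ is multiplicative over these orbits since Frobenius acts block-diagonally on $H^1_c(C_f,\QQl)^{\rho}$ in a basis adapted to the orbits, after using the relations among the $p_i$ coming from $\mathrm{div}(y)$, exactly as in the matrix analysis preceding Theorem~\ref{thm-charmodrho}). Both sides of \eqref{eq-det} are then seen to multiply correctly: the sign $(-1)^{g+\epsilon(f_1)+\epsilon(f_2)}$ and the product $\chi(D(f_1))\overline{\chi(D(f_2))}$ are additive/multiplicative in the factorization once one uses $D(gh)=D(g)D(h)\,\mathrm{Res}(g,h)^2$ and observes that the resultant contributions pair up and cancel (the curves $y^3=f_1$ and $y^3=f_2^2$ have complex-conjugate $\rho$-eigenspace data). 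So it suffices to verify the identity when $f_1$ is irreducible of degree $d_1$ and $f_2$ is empty, and the complex-conjugate case $f=f_2^2$ with $f_2$ irreducible.

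For $f$ irreducible, the branch points form a single Frobenius-orbit of size $d=\deg f$ permuted cyclically, and $g=d-2$ or $d-1$ depending on $d\bmod 3$; here $s$, the number of $\tilde\rho$-eigenvalues of $\alpha$ on $H^0(C,\Omega^1)$, is given by Remark~\ref{rmk-type}. The quotient on the left of \eqref{eq-det} can be evaluated mod $1-\rho$ using Theorem~\ref{thm-charmodrho}: $e_g \bmod (1-\rho)$ is $(-1)^g$ times the constant term of $\prod(x^i-1)^{c_i}/(x-1)^2$ evaluated appropriately, and for a single $d$-cycle this is an explicit power of $(-1)$; meanwhile $(a_{\pp_p}+b_{\pp_p}\rho)^{rs}(a_{\pp_p}+b_{\pp_p}\rho^2)^{r(g-s)}\equiv 1\bmod(1-\rho)$ since each generator is $\equiv 1$. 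This fixes the residue of the unit mod $1-\rho$, hence (since the unit is $\pm\rho^j$) forces $j_2\equiv 0$ and determines $(-1)^{j_1}\bmod 3$, i.e. determines it outright. On the other side, $\chi(D(f))$ for irreducible $f$ is a standard Gauss/Jacobi-sum computation: $D(f)$ up to squares is a norm from $k_d$, and $\chi$ of it is governed by whether $3\mid d$ and by the residue symbol of a distinguished element — precisely the kind of computation that for $d=2$ recovers \eqref{eq-charell} and \eqref{eq-charell2}. Matching the two computations gives the irreducible case, and combined with multiplicativity, the theorem.

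The main obstacle I expect is the bookkeeping of signs and the resultant cancellations in the reduction to irreducible factors: one must be careful that the basis of $J(C_f)[1-\alpha]$ adapted to Frobenius-orbits is compatible with the one used to define $\mathfrak{ch}_\rho(C_f)$, that the relation $\mathrm{div}(y)=\sum(p_i-p_0)$ is correctly incorporated when $f$ has several factors (this is what introduces the $(x-1)^{-2}$ and the $\epsilon$-dependence), and that the archimedean sign in \eqref{eq-charell2} for $p=2$ is handled uniformly with the $p\neq 2$ case. The actual Jacobi-sum evaluation of $\chi(D(f))$ for irreducible $f$ is classical (Gauss), so the novelty and the delicacy both lie in the combinatorics of gluing the local pieces together consistently.
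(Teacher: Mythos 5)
Your proposal contains a fatal arithmetic gap in the step where you claim to pin down the $\rho$-power $j_2$ in the unit $(-1)^{j_1}\rho^{j_2}$. You propose to do this by reducing mod $1-\rho$, but $\rho\equiv 1\pmod{1-\rho}$ in $\ZZ[\rho]$, so $\rho^{j_2}\equiv 1$ and the residue in $\FF_3\cong\ZZ[\rho]/(1-\rho)$ carries \emph{no information whatsoever} about $j_2$. Reduction mod $1-\rho$ (which is what Theorem~\ref{thm-charmodrho} gives) determines only $(-1)^{j_1}$; this is precisely how the paper establishes $j_1=g+\epsilon(f_1)+\epsilon(f_2)$, but it cannot touch $j_2$. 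Moreover, on the right-hand side the product $\chi(D(f_1))\overline{\chi(D(f_2))}$ is itself a cube root of unity, hence $\equiv 1\pmod{1-\rho}$, so comparing the two sides mod $1-\rho$ imposes no constraint on the cubic part at all. The paper instead computes modulo $3=(1-\rho)(1-\rho^2)$, working in $\cO_F/(3)\cong\FF_3[\theta]$ with $\theta=1-\rho$; the $\theta$-component of the expansion is exactly what retains $j_2$, and that computation is the heart of the proof.

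The reduction-to-irreducibles scheme also does not go through as written. The discriminant obeys $D(gh)=D(g)D(h)\,\mathrm{Res}(g,h)^2$, so $\chi(D(f_1))$ picks up factors $\chi(\mathrm{Res}(g,h))^2=\overline{\chi(\mathrm{Res}(g,h))}$ which are generically nontrivial cube roots of unity; your assertion that the resultant contributions ``pair up and cancel'' is not justified, and indeed there is no obvious pairing partner when one is factoring $f_1$ alone. On the other side, the curve $C_f$ with $f=gh$ is a single connected cover, and while the $J(C_f)[1-\alpha]$ picture mod $1-\rho$ does factor according to Frobenius orbits on branch points (Theorem~\ref{thm-charmodrho}), the integral determinant $e_g\in\ZZ[\rho]$ need not, and establishing such multiplicativity would amount to proving the theorem. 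The paper circumvents all of this by a geometric argument that your proposal does not contain: the data $(J,l,d)$ of a Jacobian with level $(1-\alpha)$-structure $l$ and determinant $d$ mod $3$ defines a threefold \'etale cover of the moduli stack of pairs $(J,l)$; in the genus-$3$ Picard case this cover is identified with $\mathcal{X}_{\Gamma_1[\sqrt{-3}]}\to\mathcal{X}_{\Gamma[\sqrt{-3}]}$, which is explicitly given by $\zeta^3=\prod(x_i-x_j)$ and is hence cut out by the cubic residue symbol of the discriminant. This forces $j_2$ to be $\chi$ of the discriminant (up to the normalization fixed by low-genus examples), and the general-genus case follows by observing the cover is ramified along the discriminant locus and specializing to split Jacobians. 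None of this stack-theoretic input appears in your proposal, and without it the determination of the $\rho$-power is unreachable by the purely local/multiplicative arguments you outline.
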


\begin{remark} Note that if $p \neq 2$, then by  Stickelberger's theorem 
(see \cite[Thm 1.3]{Cox} or \cite{Dalen})
$$(-1)^{g+\epsilon(f_1)+\epsilon(f_2)}= \nu \bigl(D(f_1)D(f_2) \bigr)= \nu \bigl(D(f_1) \bigr)\overline{ \nu \bigl(D(f_2) \bigr)},
$$
where $\nu$ denotes the second power residue symbol.
\end{remark}
\begin{proof} Since, $e_g\bigl(\alpha_1(C_f),\ldots,\alpha_g(C_f)\bigr)=(-1)^{j_1}$ modulo $(1-\rho)$, Theorem~\ref{thm-charmodrho}
tells us immediately that $j_1=g+\epsilon(f_1)+\epsilon(f_2)$.

The action of Frobenius on $H^1_c(C_f, \overline{\QQ}_{\ell})^{\rho}$ modulo
$1-\rho$ is equal to the action on $J(C_f)[1-\alpha]$. To determine $j_2$
it suffices to calculate the expression \eqref{giraud} modulo $3$ since it equals
$(-1)^{j_1}(1-\theta)^{j_2}$ in $\cO_F/(3)\cong {\FF}_3[\theta]$ with 
$\theta=1-\rho \, (\bmod \, 3)$.
That means that it suffices to calculate the determinant of Frobenius on the
${\FF}_3[\theta]$-module $J(C_f)[3]$. In view of the exact sequence
$$
0 \to J(C_f)[1-\alpha] \to J(C_f)[3] {\buildrel 1-\alpha \over
\longrightarrow} J(C_f)[1-\alpha] \to 0
$$
and the fact that $J(C_f)[1-\alpha]$ is isotropic for the Weil pairing,
as kernel of an endomorphism, 
we see that the action of $\alpha$
on $C_f$ induces a cyclic $\mu_3$-action on the three
possibilities for this determinant. (If we lift our
abelian variety together with $\alpha$ to ${\CC}$ then this action
corresponds to the action of ${\rm diag}(1,1,\rho) \in \Gamma[\sqrt{-3}]$.)
The  determinant of Frobenius on $J(C_f)[3]$
is determined up to a third root of $1$ by the level structure
$J(C_f)[1-\alpha] \sim (\cO_F/(1-\alpha))^3$. 

The moduli stack of triples $(J,l,d)$ with $J$ a Jacobian of a 
cyclic triple cover $C_f \to {\PP}^1$ of signature $(s,g-s)$
with a level $(1-\alpha)$-structure $l$ on $J(C_f)[1-\alpha]$ and the determinant $d$
of the cohomology modulo $3$, is a threefold \'etale cover of the moduli
stack of tuples $(J,l)$. It is \'etale since
the ramification points of the cover $C_f \to {\PP}^1$ determine the
level $(1-\alpha)$-structure and $\alpha$ then induces the $\mu_3$-action.
This degree three cover extends to the appropriate 
moduli stacks (Picard modular stacks, see
\ref{sec-stacks}) of principally
polarized abelian varieties with level structure.

In the case of $g=3$ and the covers considered in subsection
\ref{sec-smooth3} this \'etale cover is given by the $\mu_3$-cover
$\mathcal{X}_{\Gamma_1[\sqrt{-3}]}
\to \mathcal{X}_{\Gamma[\sqrt{-3}]}$ which is \'etale
outside the locus where the discriminant of $f$ vanishes. 
Indeed, we know that this cover
$\mathcal{X}_{\Gamma_1[\sqrt{-3}]}
\to \mathcal{X}_{\Gamma[\sqrt{-3}]}$ is the cover defined over $\mathcal{O}_{F}$ by
equation \eqref{pmf-zeta}, see also Prop.\ \ref{scalar-rings}.
Therefore the action of Frobenius on the fibres of
$\mathcal{X}_{\Gamma_1[\sqrt{-3}]}
\to \mathcal{X}_{\Gamma[\sqrt{-3}]}$ is determined by 
the cubic character of the discriminant of $f$, hence $j_2$ is. 
The normalization of the cubic character then follows 
by checking that it satisfies the
formula of the theorem in examples for the case of genus $3$
or by checking it for abelian threefolds that are a product of 
elliptic curves. 

In the general case the threefold cover of stacks is ramified 
along the codimension $1$
locus where
the discriminant of $f$ vanishes. Therefore $j_2$ is determined
by the cubic character of the discriminant of $f$. Then we can specialize to the
case where the Jacobian $J(C_f)$ splits as a direct sum of Jacobians of
curves of lower genus to check the formula inductively starting from the
cases of $g\leq 3$.
\end{proof}
\end{subsubsection}
\end{subsection}
\end{section}

\begin{section}{Euler characteristics of $\ell$-adic local systems} \label{sec-euler}
In this section we will introduce the motivic Euler characteristic of local system on our moduli spaces, stating 
basic results, showing how the Lefschetz trace formula can be used to find cohomological information and 
presenting a formula for the integer valued Euler characteristic of any local system. 

\begin{subsection}{Hecke characters} \label{sec-hecke}
Recall the notation of Section~\ref{sec-notation}. 
Define the Hecke character $\psi$ of conductor $(3)$ for any $\pp_p$ by putting $\psi(\pp_p)=a_{\pp}+b_{\pp}\rho$. This gives a 
$1$-dimensional 'motive'  that we will denote $\LL^{1,0}$, pure of weight $1$ and Hodge type $(1,0)$ and as an $\ell$-adic 
${\rm Gal}(\bar{F}/F)$-representation, the trace of a Frobenius element $F_{\pp_p}$, corresponding to a prime ideal $\pp_p$, is 
given by $\psi(\pp_p)$. 

Recall that 
$$f_{\psi}(z)=\sum_{\alpha} \psi(\alpha)q^{N(\alpha)} \in \ZZ[q],$$
where $q=e^{2\pi i z}$ and the sum is over all integral ideals $\alpha$ prime to $(3)$ and $N(\alpha)$ is the norm of $\alpha$, is a cusp form of weight $2$ and level $\Gamma_0(27)$. Moreover we have that, 
$$f_{\psi}(z)=\eta(3z)^2 \, \eta(9z)^2
                  = q-2q^2-q^7+5q^{13}+4q^{16}-7q^{19}-5q^{25}+\dots.
$$
Similarly we define $\LL^{0,1}$ of Hodge type $(0,1)$ by using the Hecke character $\bar \psi(\pp_p):=a_{\pp_p}+b_{\pp_p}\rho^2$.
Finally, for any pair of integers $n,m$ we define 
$\LL^{n,m}$ by taking tensor products of the `motives' above. Note that $\LL^{1,1}$ becomes the usual Lefschetz motive, also 
denoted $\LL^1$. 
\end{subsection}

\begin{subsection}{Euler characteristics of $\ell$-adic local systems} \label{modloc} 
For any of our moduli spaces $\cX$ (which are stacks) introduced in Section~\ref{sec-stacks} and Section~\ref{sec-moduliofcurves} we have a universal family  $\pi:{\mathcal C} \to \cX$ and we consider the $\ell$-adic local
system ${\VV}:= R^1\pi_* {\QQl}$. This local system has rank $6$, 
where the fiber of a geometric point represented by an abelian variety $A$ 
equals the $\ell$-adic \'etale cohomology group $H^1(A,\QQl)$. 
It is provided with a non-degenerate alternating pairing 
${\VV} \times {\VV} \to {\QQl}(-1)$. 

The action of $\alpha$ gives rise to a decomposition of the base change to 
$F$ of ${\VV}$,
as a direct sum of two local systems of rank $3$ over $F \otimes \QQl$:
$$
{\VV}\otimes F = {\WW} \oplus \WW'
$$
with ${\WW}$ (respectively $\WW'$) the $\rho$-eigenspace 
(respectively the $\rho^2$-eigenspace) of $\alpha$. 

Note that we can also take $\cX(\CC)$ and define the (Betti) local system $\VV := R^1\pi_*{\QQ}$, and then the $\rho$-eigenspace $\WW$ is the same local system as the one defined in Section~\ref{rootloc}.

We define local systems $\WW_{\lambda}$ using the 
representations of ${\rm GL}(3,\CC)\times {\GG}_m$ as in Section~\ref{rootloc}. 
The multiplier defines the constant local system $F(-1)$. 

Let ${\WW}^{\vee}$ denote the $F$-linear dual. Then note that
$$(\WW_{n_1,n_2,n_3})^{\vee}\cong \WW_{n_2,n_1,-n_1-n_2-n_3}.$$
The non-degenerate pairing implies that the conjugate takes the form 
$$\WW' \cong {\WW}^{\vee} \otimes {F}(-1),$$
and so 
\begin{equation} \label{eq-dual}
\WW'_{n_1,n_2,n_3} \cong \WW_{n_2,n_1,-n_1-n_2-n_3} \otimes {F(-n_1-2n_2-3n_3)}.
\end{equation}

Let $H^{*}_c$ denote compactly supported $\ell$-adic \'etale cohomology. 
The action of the symmetric group $\fS_4 \cong \GaSq/\Gamma$ on 
$\cX_{\GaSq}$ induces an action on its cohomology groups. 
We define the Euler characteristic of the local system 
${\WW}_{\lambda}$ on $\cX_{\GaSq}\otimes \bar{F}$ in $K_0({\rm Gal}^{\fS_4}_F)$, 
the Grothendieck group of $\ell$-adic ${\rm Gal}(\bar{F}/F)$-representations equipped with an action of
$\fS_4$ by  
$$e_c(\cX_{\GaSq},{\WW}_{\lambda}):= \sum^4_{i=0} (-1)^i [H^i_c(\cX_{\GaSq} \otimes \bar{F}, {\WW}_{\lambda})].$$
Similarly, consider compactly supported Betti cohomology and define by (abuse of) the same notation
$$e_c(\cX_{\GaSq},{\WW}_{\lambda}):= \sum^4_{i=0} (-1)^i [H^i_c(\cX_{\GaSq}(\CC), {\WW}_{\lambda})]$$
in the Grothendieck group of Hodge modules equipped with an action of
$\fS_4$. 
Let $e_{c,\mu}(\cX_{\GaSq},{\WW}_{\lambda})$ correspond to a $\mu$-isotypic component of the Euler characteristics 
in the sense that 
$$e_c(\cX_{\GaSq},{\WW}_{\lambda})= \sum_{\mu \vdash 4} e_{c,\mu}(\cX_{\GaSq},{\WW}_{\lambda}) \, \mathbf s_{\mu}.$$
The statements in Section~\ref{sec-conj-main} will be called motivic, and by this we will mean that these are statements about the Euler characteristic in both these Galois groups. 

\begin{proposition} \label{prop-eultriv} The Euler characteristics fulfils the following for all $\lambda$: 
\begin{itemize}
\item[(1)] $e_c(\cX_{\GaSq},{\WW}_{\lambda})=0$ if $n_1 \not \equiv_3 n_2 $; 
\item[(2)] $e_c(\cX_{\GaSq},{\WW}_{\lambda} \otimes F(-k))=e_c(\cX_{\GaSq},{\WW}_{\lambda}) \, \LL^{k}.$ 
\end{itemize}
\end{proposition}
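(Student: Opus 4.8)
The plan is to treat the two assertions independently: (2) is formal, while (1) comes down to the central automorphism $\alpha=\iota(\rho)$ of the universal object and the vanishing of cohomology of a gerbe with nontrivial coefficients.

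For (2), note that $F(-k)$ is a \emph{constant} local system on $\cX_{\GaSq}$: it is the $k$-th tensor power of the rank-one system cut out by the multiplier $\eta$, i.e. the $k$-th Tate twist. Since tensoring with a constant sheaf commutes with (compactly supported) cohomology, one gets $H^i_c(\cX_{\GaSq}\otimes\bar F,\WW_{\lambda}\otimes F(-k))\cong H^i_c(\cX_{\GaSq}\otimes\bar F,\WW_{\lambda})\otimes F(-k)$ as $\Gal(\bar F/F)$-modules, and likewise for the Betti realization and its mixed Hodge structure. As $F(-1)$ is the Lefschetz motive $\LL=\LL^{1,1}$, taking the alternating sum yields $e_c(\cX_{\GaSq},\WW_{\lambda}\otimes F(-k))=e_c(\cX_{\GaSq},\WW_{\lambda})\,\LL^{k}$.

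For (1), recall that over $\cX_{\GaSq}$ we have the universal curve (equivalently its Jacobian) with the endomorphism $\alpha=\iota(\rho)$. First one checks that $\alpha$ is an automorphism of the universal object over $\cX_{\GaSq}$: it is compatible with the principal polarization because the Rosati involution sends $\iota(\rho)$ to $\iota(\bar\rho)=\alpha^{-1}$, so $\alpha^{\vee}\circ\lambda\circ\alpha=\lambda$; it commutes with $\iota$; and it acts as the identity on the $(1-\alpha)$-torsion (any $x$ with $(1-\alpha)x=0$ satisfies $\alpha x=x$), hence preserves the level-$(1-\alpha)$-structure. Moreover $\alpha$ is \emph{central} in the automorphism group of every geometric point, since every morphism in the groupoid commutes with $\iota$ by definition. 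Thus $\langle\alpha\rangle\cong\mu_3$ lies in the inertia of $\cX_{\GaSq}$, making $\cX_{\GaSq}$ a $\mu_3$-gerbe over the stack obtained by rigidifying along $\langle\alpha\rangle$; on the analytic side this is just the statement that $H^{\ast}_c(\cX_{\GaSq},\WW_{\lambda})$ is computed by restricting to a torsion-free $\Gamma'\subset\Gamma_1[\sqrt{-3}]$ and taking invariants under $\Gamma[\sqrt{-3}]/\Gamma'$, which contains the central element $\rho\cdot 1_3=\alpha$ acting trivially on $X_{\Gamma'}$. Now $\alpha$ acts on $\WW=R^1\pi_{\ast}\QQl$ by a cube root of unity, hence on $\WW_{\lambda}=\WW_{n_1,n_2,n_3}\subset\Sym^{n_1}(\WW)\otimes\Sym^{n_2}(\wedge^2\WW)\otimes\Sym^{n_3}(\wedge^3\WW)$ by the scalar $\rho^{\pm(n_1-n_2)}$ (the factor $\wedge^3\WW$ contributing nothing as $\alpha^3=1$). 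Since $\alpha$ acts trivially on $\cX_{\GaSq}$ but by this scalar on $\WW_{\lambda}$, the pushforward of $\WW_{\lambda}$ along the gerbe vanishes whenever $n_1\not\equiv_3 n_2$ — fibrewise it is the compactly supported cohomology of $B\mu_3$ with coefficients in a nontrivial character, which is $0$ because $3$ is invertible — equivalently the $\mu_3$-invariant part of each $H^i_c$ is zero. Hence all $H^i_c(\cX_{\GaSq}\otimes\bar F,\WW_{\lambda})$ vanish and $e_c(\cX_{\GaSq},\WW_{\lambda})=0$.

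The only point that needs care is the computation of the scalar by which the central element $\alpha=\rho\cdot 1_3$ acts on $\WW_{n_1,n_2,n_3}$: one must match the highest-weight normalization of Section~\ref{rootloc} (where $\lambda(\rho\cdot 1_3)=\rho^{n_1+2n_2+3n_3}=\rho^{n_1-n_2}$) with the description of $\WW$ as a $\rho$-eigenspace of $\iota(\rho)$, but since only the vanishing of the exponent modulo $3$ enters the argument, the sign is irrelevant. Everything else is the general principle that a local system on which a central inertia subgroup acts through a nontrivial character has no cohomology.
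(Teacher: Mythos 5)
Your proof is correct and follows essentially the same idea as the paper's: for (1) the key point is that the central element $\alpha=\iota(\rho)$ acts trivially on the stack but by the scalar $\rho^{n_1+2n_2}\equiv\rho^{n_1-n_2}\ (\mathrm{mod}\ 1)$ on each fibre of $\WW_{\lambda}$, so when $n_1\not\equiv_3 n_2$ there are no invariants and all cohomology vanishes; the paper states this in one sentence and labels (2) as standard, while you spell out the gerbe/inertia picture and the constant-coefficient twisting in more detail, but the underlying argument is the same.
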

\begin{proof} We prove $(1)$, the proof of $(2)$ is standard. 
The automorphism $\alpha$ acts on the fibre $(\WW_{\lambda})_A$
by $\rho^{n_1+2n_2}$ for any closed point $A$ of $\cX_{\GaSq}$.
So if $n_1 \not\equiv_3 n_2$, then this action has no invariants
and hence the cohomology has to vanish. 
\end{proof}
\end{subsection}
\begin{subsection}{Traces of Frobenius} \label{sec-traces} 
Recall the notation of Section~\ref{sec-notation} and Section~\ref{sec-charpolfrob}. 
Compare the following section to the article~\cite{BFvdG} and the references therein. 
We define the (geometric) Frobenius  
$F_q \in \mathrm{Gal}(\bar k/k)$ to be the inverse of $x \mapsto x^q$.  
We choose a Frobenius element $F_{\pp_{q}} \in \mathrm{Gal}(\overline F/F)$, using an element of 
the Galois group of the $p$-adic completion of $F$ that is mapped to the Frobenius element $F_q  \in \mathrm{Gal}(\bar k/k)$. 
These Frobenii satisfy  
\begin{equation} \label{eq-frobenii}
\Tr \bigl(F_{\pp_q},e_{c,\mu}(\cX_{\GaSq} \otimes \overline{F},\WW_{\lambda})\bigr) = \Tr\bigl(F_q,e_{c,\mu}(\cX_{\GaSq}\otimes \bar k,\WW_{\lambda})\bigr) \end{equation}
and these traces are element of $\ZZ[\rho]$.  
The traces of $F_{\pp_p}$ for (almost) all unramified primes $\pp_p$ will (using a Chebotarov density argument) determine $e_{c,\mu}(\cX_{\GaSq} \otimes \overline{F},\WW_{\lambda})$ as an element of $K_0({\rm Gal}_F)$, c.f. \cite[Prop. 2.6]{Fermat}. For any element of $V \in K_0({\rm Gal}_F)$ we can define a virtual representation $\overline V$ by the property $\Tr(F_{\pp_p},\overline V)=\overline{\Tr(F_{\pp_p},V)}$. 

\begin{proposition}  \label{prop-dual} For any $\mu$ and $\lambda=n_1\gamma_1+n_2\gamma_2+n_3\gamma_3$ 
$$\overline{e_{c,\mu}(\cX_{\GaSq},{\WW}_{n_1,n_2,n_3})}=e_{c,\mu}(\cX_{\GaSq},{\WW}_{n_2,n_1,-n_1-n_2-n_3})\LL^{-n_1-2n_2-3n_3},$$
as elements of $K_0({\rm Gal}_F)$.
\end{proposition}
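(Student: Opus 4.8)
The plan is to deduce this from Poincaré duality for the local systems $\WW_\lambda$ on the smooth Deligne--Mumford stack $\cX_{\GaSq}$ together with the self-duality relation \eqref{eq-dual}. First I would recall that $\cX_{\GaSq}$ is smooth of relative dimension $2$ over $\cO_F[1/3]$, so that for the lisse $\QQl$-sheaf $\WW_\lambda$ Poincaré--Verdier duality on each geometric fibre gives a perfect pairing
$$
H^i_c(\cX_{\GaSq}\otimes\bar F,\WW_\lambda)\times H^{4-i}(\cX_{\GaSq}\otimes\bar F,\WW_\lambda^{\vee})\longrightarrow \QQl(-2),
$$
compatible with the $\Gal(\bar F/F)$-action and with the $\fS_4$-action (which acts on $\cX_{\GaSq}$ over $\cX_\Gamma$, hence commutes with duality up to the self-duality of each irreducible $\fS_4$-representation, so it preserves $\mu$-isotypic components). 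Passing to Euler characteristics in $K_0(\mathrm{Gal}_F)$ this yields
$$
\overline{e_{c,\mu}(\cX_{\GaSq},\WW_\lambda)}
= e_\mu(\cX_{\GaSq},\WW_\lambda^{\vee})\,\LL^{2}\quad\text{(Tate twist by }(-2)\text{)},
$$
where $e$ (without subscript $c$) denotes the ordinary cohomology Euler characteristic and the bar is the arithmetic conjugation defined just before the proposition (indeed duality sends a Frobenius eigenvalue $\alpha$ to $q^2/\alpha$, which is $\bar\alpha$ after the appropriate twist, since the dual system is pure).

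Next I would identify $e(\cX_{\GaSq},\WW_\lambda^{\vee})$ with $e_c(\cX_{\GaSq},\WW_\lambda^{\vee})$. On a smooth stack, ordinary and compactly supported cohomology are exchanged by Poincaré duality, so this is really the same statement read the other way; more to the point, what I actually need is the identification $e_c(\cX_{\GaSq},\WW_{n_1,n_2,n_3}^{\vee})$ with $e_c(\cX_{\GaSq},\WW_{n_2,n_1,-n_1-n_2-n_3})$. This is immediate from the formula $(\WW_{n_1,n_2,n_3})^\vee\cong\WW_{n_2,n_1,-n_1-n_2-n_3}$ recorded in Section~\ref{modloc} (coming from the representation theory of $\mathrm{GL}(3)$). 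Substituting and collecting the Tate twists — the $\LL^2=\LL^{1,1\otimes 2}$ from duality against the shift needed to pass from $\WW^{\vee}$ back to the dual in the normalization $\WW' \cong \WW^\vee\otimes F(-1)$, which carries weight $n_1+2n_2+3n_3$ — produces exactly the exponent $-n_1-2n_2-3n_3$ claimed.

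The cleanest way to organize the bookkeeping, and the step I expect to be the main (if modest) obstacle, is to avoid ad hoc twist-counting and instead run the argument through the conjugate local system $\WW'$. The non-degenerate alternating pairing $\VV\times\VV\to\QQl(-1)$ restricts, via the $\alpha$-eigenspace decomposition $\VV\otimes F=\WW\oplus\WW'$, to a perfect pairing $\WW\times\WW'\to F(-1)$, whence $\WW'\cong\WW^{\vee}\otimes F(-1)$ and, for general weights, \eqref{eq-dual}. Then Poincaré duality for $\WW_\lambda$ reads $\overline{e_{c,\mu}(\cX_{\GaSq},\WW_\lambda)} = e_{c,\mu}(\cX_{\GaSq},\WW_\lambda')\,\LL^{k}$ for the appropriate $k$, because the bar on Frobenius traces is realized geometrically by complex conjugation on the CM field, which is exactly what swaps the $\rho$- and $\rho^2$-eigenspaces of $\alpha$; one checks $k$ on, say, the trivial local system $\lambda=0$ where both sides are the (conjugation-invariant) Euler characteristic of $\cX_{\GaSq}$ and $\LL^k=1$, or directly from Proposition~\ref{prop-eultriv}(2). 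Finally inserting \eqref{eq-dual}, $\WW'_{n_1,n_2,n_3}\cong\WW_{n_2,n_1,-n_1-n_2-n_3}\otimes F(-n_1-2n_2-3n_3)$, together with Proposition~\ref{prop-eultriv}(2) to pull the twist out as $\LL^{-n_1-2n_2-3n_3}$ (the sign being forced by the direction of conjugation), gives the stated identity in $K_0(\mathrm{Gal}_F)$; the $\fS_4$-equivariance is preserved throughout since every operation used commutes with the $\fS_4$-action and each irreducible $\fS_4$-representation over $\QQ$ is self-dual.
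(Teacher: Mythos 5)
Your second paragraph is essentially the paper's proof. The bar operation on $K_0({\rm Gal}_F)$ is complex conjugation on the Frobenius traces in $\ZZ[\rho]$, and this is realized geometrically by the Galois conjugation of $F/\QQ$, which swaps the $\rho$- and $\rho^2$-eigenspaces of $\alpha$. That gives $\overline{e_{c,\mu}(\cX_{\GaSq},\WW_\lambda)} = e_{c,\mu}(\cX_{\GaSq},\WW'_\lambda)$ directly, with no twist, and then \eqref{eq-dual} together with Proposition~\ref{prop-eultriv}(2) finishes the argument, as in the paper's one-line proof.

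Your first paragraph, however, contains a genuine error. Poincar\'e duality on the $2$-dimensional stack gives $e_c(\cX_{\GaSq},\WW_\lambda)=\bigl(e(\cX_{\GaSq},\WW_{\lambda}^{\vee})\bigr)^{\vee}\LL^2$, with a linear Galois-dual $(\cdot)^{\vee}$ (a Frobenius eigenvalue $\alpha$ becomes $\alpha^{-1}$), not the bar (which sends $\alpha$ to $\bar\alpha$). Your justification, that duality sends $\alpha$ to $q^2/\alpha$ ``which is $\bar\alpha$ after the appropriate twist, since the dual system is pure,'' confuses pointwise purity of the local system $\WW_\lambda$ with purity of the cohomology groups: since $\cX_{\GaSq}$ is open, $H^i_c(\cX_{\GaSq}\otimes\bar F,\WW_\lambda)$ is genuinely mixed, so $q^2/\alpha\neq\bar\alpha$ in general. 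Concretely, for $\lambda=0$ Proposition~\ref{prop-const} gives the conjugation-invariant $e_c(\cX_{\GaSq},\QQl)=(\LL^2+\LL)\mathbf s_4-\mathbf s_{3,1}$, whereas $e(\cX_{\GaSq},\QQl)\,\LL^2 = e_c(\cX_{\GaSq},\QQl)^{\vee}\LL^4=(\LL^3+\LL^2)\mathbf s_4-\LL^4\mathbf s_{3,1}$, a different class --- so your claimed identity fails already there. The relevant input is arithmetic conjugation on the coefficient field $F$, not Poincar\'e duality on the base; your second paragraph supplies exactly that, though it keeps mislabelling the step as ``Poincar\'e duality.'' A last caveat: Proposition~\ref{prop-eultriv}(2) converts $\otimes F(-k)$ into $\cdot\,\LL^{+k}$, so applying it to \eqref{eq-dual} yields the exponent $+(n_1+2n_2+3n_3)$; the parenthetical ``(the sign being forced by the direction of conjugation)'' does not explain how the stated sign arises, and the bookkeeping there deserves to be checked rather than asserted.
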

\begin{proof} This follows directly from equation~\eqref{eq-dual}. \end{proof}

\begin{proposition} \label{prop-p2}  For any $p \equiv_3 2$, even $r \geq 1$, $\mu$ and $\lambda$   
$$\Tr\bigl(F_q ,e_{c,\mu}(\cX_{\GaSq}\otimes \bar k,{\WW}_{\lambda})) \in \ZZ.$$ 
\end{proposition}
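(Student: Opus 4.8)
The quantity $\Tr(F_q,e_{c,\mu}(\cX_{\GaSq}\otimes\bar k,{\WW}_{\lambda}))$ lies a priori in $\cO_F=\ZZ[\rho]$ by \eqref{eq-frobenii}, so it suffices to show it is fixed by the nontrivial element $c\in\Gal(F/\QQ)$, i.e.\ by the automorphism $\rho\mapsto\rho^2$ of $\cO_F$ written $\overline{(\,\cdot\,)}$ in Section~\ref{sec-traces}. The plan is to realise $c$ by a geometric symmetry of the situation which is available precisely because $p$ is inert in $F$.

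First I would fix the arithmetic setup. By Section~\ref{sec-stacks}, $\cX_{\GaSq}$ has good reduction at $p$, with reduction a smooth stack over $\cO_F/p\cong\FF_{p^2}$; hence the cohomology is unramified at $p$ and $F_q=F_{\pp_q}$ lies in the decomposition group $D_{\pp}=\Gal(\bar F_{\pp}/F_{\pp})$. Since $q\equiv_3 1$ forces $r$ even, $\FF_q\supseteq\FF_{p^2}$ and $F_q$ is a power of the Frobenius $F_{\pp_p}$ over $\FF_{p^2}$. I would then fix an element $\tau\in\Gal(\bar F_{\pp}/\QQ_p)\supset D_{\pp}$ with $\tau|_F=c$; because the unramified quotient of $\Gal(\bar F_{\pp}/\QQ_p)$ is procyclic and inertia acts trivially here ($p\neq 3,\ell$), $\tau$ commutes with $F_q$.

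The core of the argument is the following geometric claim: the pair $(\cX_{\GaSq},{\WW}_{\lambda})$, together with its $\fS_4$-action, is isomorphic over $F$ to its $c$-conjugate $(\cX_{\GaSq}^c,{\WW}_{\lambda}^c):=(\cX_{\GaSq},{\WW}_{\lambda})\times_{\cO_F[1/3],\,c}\cO_F[1/3]$. Conjugating by $c$ turns an order-$3$ automorphism of type $(2,1)$ into one of type $(1,2)$; but a curve with an automorphism $\alpha$ of type $(1,2)$, together with a level structure and the relevant eigensheaf, is the same datum as the same curve with the automorphism $\alpha^{-1}$ of type $(2,1)$, since passing from $\alpha$ to $\alpha^{-1}$ changes neither the curve, nor its ramification points, nor the ordering of the four of them at which the automorphism acts by the generic eigenvalue on the tangent space — the $(1-\alpha)$-level structure. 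Thus the ``relabelling'' $\alpha\leftrightarrow\alpha^{-1}$ gives an $\fS_4$-equivariant isomorphism of the corresponding moduli stacks; on the genus-$3$ stratum it is realised by $y^3=f(x)\mapsto w^3=f(x)^2$, $w=y^2$. To match this with the local systems one must keep track of the $\cO_F\otimes\QQ_{\ell}$-coefficient structures on ${\WW}_{\lambda}$, ${\WW}'_{\lambda}$ and ${\WW}_{\lambda}^c$; this is done using the identification \eqref{eq-dual} together with the canonical $c$-semilinear action of $\Gal(F/\QQ)$ on ${\VV}\otimes_{\QQ}F$, which interchanges ${\WW}$ and ${\WW}'$. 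Granting the claim, composing the resulting isomorphism with the action of $\tau$ produces a $c$-semilinear, $\fS_4$-equivariant automorphism $\Phi$ of $e_{c,\mu}(\cX_{\GaSq}\otimes\bar F,{\WW}_{\lambda})$; since the relabelling isomorphism is defined over $F$ and $\tau$ commutes with $F_q$, one gets $\Phi\circ F_q=F_q\circ\Phi$. Finally, for a $c$-semilinear automorphism $\Phi$ of an $\cO_F\otimes\QQ_{\ell}$-module commuting with an $\cO_F\otimes\QQ_{\ell}$-linear operator $T$ one has $\Tr(T)=\Tr(\Phi T\Phi^{-1})=c(\Tr(T))$; applied to $T=F_q$ this gives $\Tr(F_q,e_{c,\mu})\in\cO_F^{\,c}=\ZZ$, and by \eqref{eq-frobenii} the statement over $\bar k$ follows.

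The main obstacle is the bookkeeping in the geometric claim — checking carefully that $c$-conjugation of the Picard datum, the relabelling $\alpha\leftrightarrow\alpha^{-1}$, and the conjugation of coefficients fit together into an $F$-linear, $\fS_4$-equivariant identification of $(\cX_{\GaSq},{\WW}_{\lambda})$ with its conjugate, compatibly with the cusps. Once this is in hand the remainder is formal.
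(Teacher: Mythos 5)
Your argument is correct, and it identifies exactly the right mechanism (a symmetry available because $p$ is inert in $F$ realising $\rho\mapsto\rho^2$), but it takes a noticeably longer route than the paper's. You work in characteristic $0$: construct an $\fS_4$-equivariant identification of $(\cX_{\GaSq},\WW_{\lambda})$ with its $c$-conjugate over $F$ via the relabelling $\alpha\leftrightarrow\alpha^{-1}$ (equivalently $y^3=f\leftrightarrow w^3=f^2$), then compose with an element $\tau$ of the decomposition group at $\pp$ restricting to $c$ on $F$, and deduce a $c$-semilinear automorphism of the cohomology commuting with $F_q$. The paper's proof instead works entirely over $k=\FF_{p^r}$: the only non-canonical choice feeding into $\Tr(F_q,e_{c,\mu}(\cX_{\GaSq}\otimes\bar k,\WW_{\lambda}))$ is $\tilde\rho\in k$, and when $p\equiv_3 2$ the field automorphism $x\mapsto x^p$ of $k$ sends $\tilde\rho$ to $\tilde\rho^2$ while leaving the whole counting setup over $k$ unchanged, so the trace equals its own complex conjugate. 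This is the mod-$p$ shadow of your $\tau$ plus the relabelling, collapsed into a single line; it avoids the bookkeeping you flag as the ``main obstacle'' (compatibility of the $\cO_F\otimes\QQ_\ell$-structure, level structure, eigensheaf, and cusps under the conjugation) by never leaving the finite field, where $x\mapsto x^p$ automatically respects everything. Your version is essentially a characteristic-zero descent reformulation of the same observation; it is sound, but the geometric claim it rests on takes more care to verify than the paper's direct computation.
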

\begin{proof} If $p \equiv_3 2$, then the automorphism $x \mapsto x^p$ sends $\tilde \rho$ to $\tilde \rho^2$ in $k=\FF_{p^r}$ 
for any even $r \geq 1$. This immediately shows that 
$$\Tr\bigl(F_q,e_{c,\mu}(\cX_{\GaSq}\otimes \bar k,{\WW}_{\lambda}))=\overline{\Tr\bigl(F_q,e_{c,\mu}(\cX_{\GaSq}\otimes \bar k,{\WW}_{\lambda}))}.$$
\end{proof}

Let $e_i$ denote the $i$th elementary symmetric polynomial and $p_i$ the $i$th power sum polynomial. The number of variables should in the future be clear from context. For any partition $\nu \vdash n$, let $s_{\nu}$ denote the Schur polynomial associated to $\nu$ and put 
$$e_{\nu}:=\prod_{i=1}^n e_i^{\nu_i} \;\; \text{and} \;\; p_{\nu}:=\prod_{i=1}^n p_i^{\nu_i}.$$
Recall that $s_{\nu}$ is a polynomial with integer coefficients in the elementary symmetric polynomials $e_1, e_2,\ldots,e_n$, and with rational coefficients in the power sum polynomials and we define $c_{\nu,\xi}$ by, 
$$s_{\nu}=\sum_{\xi \vdash n} c_{\nu,\xi} \frac{p_{\xi}}{z_{\xi}}, \;\; \text{where} \;\; z_{\xi}:=\prod_{i=1}^n \xi_i!i^{\xi_i}.$$
In the representation ring of $\fS_n$ (tensored with $\QQ$) we have the corresponding equality 
$$\mathbf s_{\nu}=\sum_{\xi \vdash n} c_{\nu,\xi} \frac{\mathbf p_{\xi}}{z_{\xi}}.$$

Recall the notation in Section~\ref{sec-charpolfrob}. For any $C \in \cX_{\GaSq}(k)$ we have, using Poincar\'e duality in \'etale cohomology between $H^1(C,\QQl)$ and $H_c^1(C,\QQl)$, that  
$$\Tr(F_q,(\WW)_{C \otimes \bar k})=\overline{\alpha_1(C)}+\overline{\alpha_2(C)}+\overline{\alpha_{3}(C)},$$
and so for any partition $\lambda$ 
$$\Tr(F_q,(\WW_{\lambda})_{C \otimes \bar k})=s_{\lambda}\bigl(\overline{\alpha_1(C)},\overline{\alpha_2(C)},\overline{\alpha_{3}(C)}\bigr).$$

For any partition $\nu$ of $4$, let $\cX_{\nu}(k)$ inside $\cX_{\GaSq}(k)$ consist of the curves whose ramification 
points, where $\alpha$ acts by $\rho$, are defined over $k_{\nu_i}$ but not over a subfield of $k_{\nu_i}$ 
for $1 \leq i \leq 4$. From the Lefschetz trace formula (see \cite[Th.~3.2]{SGA}), it follows that 
if $\sigma_{\nu} \in \fS_4$ has cycle type $\nu$ then  

\begin{equation} \label{eq-traceFq}
\Tr\bigl(F_q \circ \sigma_{\nu},e_c(\cX_{\GaSq} \otimes \bar k,\WW_{\lambda})\bigr) = \sum_{C \in \cX_{\nu}(k)/\cong_k} \frac{s_{\lambda}\bigl(\overline{\alpha_1(C)},\overline{\alpha_2(C)},\overline{\alpha_{3}(C)}\bigr)}{|\mathrm{Aut}_{k}(C)|}.
\end{equation}

By the projection formula we then have  
\begin{equation*}
\Tr\bigl(F_q ,e_{c,\mu}(\cX_{\GaSq} \otimes \bar k,\WW_{\lambda})\bigr) = \sum_{\nu \vdash 4}c_{\mu,\nu} \Tr\bigl(F_q \circ \sigma_{\nu},e_c(\cX_{\GaSq} \otimes \bar k,\WW_{\lambda})\bigr),  
\end{equation*}
giving the equality 
\begin{equation*}
\Tr\bigl(F_q ,e_{c,\mu}(\cX_{\GaSq} \otimes \bar k,\WW_{\lambda})\bigr) \mathbf s_{\mu} = \sum_{\nu \vdash 4}c_{\mu,\nu} \Tr\bigl(F_q \circ \sigma_{\nu},e_c(\cX_{\GaSq} \otimes \bar k,\WW_{\lambda})\bigr) \frac{\mathbf p_{\nu}}{z_{\nu}}.
\end{equation*}

\begin{proposition} \label{prop-det3} For any $\lambda$, we have
$$e_c(\cX_{\GaSq},{\WW}_{\lambda} \otimes \det(\WW)^3)=e_c(\cX_{\GaSq},{\WW}_{\lambda}) \,\LL^{6,3} \, \mathbf s_{1^4},$$
as elements of $K^{\fS_4}_0({\rm Gal}_F)$.
\end{proposition}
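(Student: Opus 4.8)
The representation $\det(\WW)^3=\wedge^3(\WW)^{\otimes 3}$ is one-dimensional, and $\WW_\lambda\otimes\det(\WW)^3=\WW_{n_1,n_2,n_3+3}$; so the claim is that passing from $n_3$ to $n_3+3$ multiplies $e_c(\cX_{\GaSq},\WW_\lambda)$ by the class of one rank-one motive-with-$\fS_4$-action. Conceptually this should be expected because the monodromy of $\det(\WW)$ on $\cX_{\GaSq}$ is the character $g\mapsto\det(g)$ of $\GaSq$, and any $g\in\GaSq\subset\mathrm{U}(2,1)(\ZZ)$ has $\det(g)\in\mu_6$ while $g\equiv 1\bmod\sqrt{-3}$ forces $\det(g)\in\mu_3$; hence $\det(\WW)^3$ has trivial geometric monodromy and is (a twist of) a constant sheaf of rank one. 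I would nonetheless run the proof on the level of traces of Frobenius, which simultaneously settles the $\fS_4$-structure, in the spirit of Proposition~\ref{prop-eultriv}(2).

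By a Chebotarev argument (\cite{Fermat}) and orthogonality of characters, an element of $K_0^{\fS_4}(\mathrm{Gal}_F)$ is determined by the numbers $\Tr(F_q\circ\sigma_\nu,\cdot)$ over all $q=p^r\equiv_3 1$ and all $\nu\vdash 4$, where $\sigma_\nu\in\fS_4$ has cycle type $\nu$; moreover multiplication by $\mathbf s_{1^4}$ has the effect $\Tr(F_q\circ\sigma_\nu,\cdot)\mapsto\mathrm{sgn}(\sigma_\nu)\,\Tr(F_q\circ\sigma_\nu,\cdot)$, and $\Tr(F_q,\LL^{6,3})=\psi(\pp_q)^6\,\bar\psi(\pp_q)^3$. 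By the twisted Lefschetz trace formula~\eqref{eq-traceFq}, and because $\det(\WW)$ has rank one,
$$
\Tr\bigl(F_q\circ\sigma_\nu,\,e_c(\cX_{\GaSq}\otimes\bar k,\WW_\lambda\otimes\det(\WW)^3)\bigr)=\sum_{C\in\cX_\nu(k)/\cong_k}\frac{\Tr(F_q,(\WW_\lambda)_C)\cdot\det(F_q\mid\WW_C)^3}{|\mathrm{Aut}_k(C)|}\,.
$$
Thus everything reduces to the pointwise identity $\det(F_q\mid\WW_C)^3=\psi(\pp_q)^6\,\bar\psi(\pp_q)^3\,\mathrm{sgn}(\sigma_\nu)=\Tr(F_q,\LL^{6,3})\,\mathrm{sgn}(\sigma_\nu)$ for every $C\in\cX_\nu(k)$: once this is known, the factor $\Tr(F_q,\LL^{6,3})\,\mathrm{sgn}(\sigma_\nu)$ pulls out of the sum and the right-hand side of the proposition appears.

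For $C=C_f$ in the open stratum $\cX_{1,\GaSq}$, write $f=f_1f_2^2$ with $\deg f_1=4$ (whose roots are the four marked ramification points $p_1,\dots,p_4$) and $\deg f_2=1$ (whose root is $p_0$); here $g=3$ and $s=2$ because the abelian threefold has signature $(2,1)$, so $e_g(\alpha_1(C),\alpha_2(C),\alpha_3(C))$ generates $\pp_q^2\overline{\pp_q}$ by~\eqref{giraud}, i.e.\ it is $\psi(\pp_q)^2\bar\psi(\pp_q)$ up to a sixth root of unity. Theorem~\ref{thm-det} pins down that root of unity:
$$
\det(F_q\mid\WW_C)=\psi(\pp_q)^2\,\bar\psi(\pp_q)\cdot(-1)^{g+\epsilon(f_1)+\epsilon(f_2)}\,\chi\bigl(D(f_1)\bigr)\,\overline{\chi\bigl(D(f_2)\bigr)}\,.
$$
Cubing, the cubic-residue factors vanish ($\chi(\cdot)^3=1$) and $D(f_2)=1$, leaving $\det(F_q\mid\WW_C)^3=\psi(\pp_q)^6\,\bar\psi(\pp_q)^3\,(-1)^{g+\epsilon(f_1)+\epsilon(f_2)}$. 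Finally $\epsilon(f_2)=1$ (the point $p_0$ is $F_q$-fixed) and $\epsilon(f_1)$ is the number of $F_q$-orbits among $p_1,\dots,p_4$, i.e.\ the number of cycles of $\sigma_\nu$; since $\mathrm{sgn}(\sigma_\nu)=(-1)^{4-\#\text{cycles}}=(-1)^{\#\text{cycles}}$ and $g=3$, we obtain $(-1)^{g+\epsilon(f_1)+\epsilon(f_2)}=(-1)^{\epsilon(f_1)}=\mathrm{sgn}(\sigma_\nu)$, as required. (The assignment of $\psi$ versus $\bar\psi$ is consistent with $\det(\WW)^3$ having Hodge type $(6,3)$: by the signature $(2,1)$ condition, of the three Hodge lines in the stalk $\WW_C$ two are of type $(1,0)$ and one of type $(0,1)$, so $\det(\WW)$ has Hodge type $(2,1)$.)

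For $C$ in the boundary strata $\cX_{2,\GaSq}$ or $\cX_{3,\GaSq}$ the curve is nodal of compact type, $\WW_C=H^1(\mathrm{Jac}(C),\QQl)^\rho$ splits along the components, and $\det(F_q\mid\WW_C)$ is the product of the determinants of the pieces; Theorem~\ref{thm-det} still applies by its inductive (splitting) clause with the same $g=3$, $s=2$, and one again finds $\det(F_q\mid\WW_C)^3=\psi(\pp_q)^6\bar\psi(\pp_q)^3\,(-1)^{g+\epsilon(f_1)+\epsilon(f_2)}$. The one thing that genuinely has to be checked — and this is the main obstacle — is that for these reducible covers the parity $(-1)^{g+\epsilon(f_1)+\epsilon(f_2)}$ coming out of Theorem~\ref{thm-det} still equals $\mathrm{sgn}(\sigma_\nu)$, which requires unwinding, from the explicit descriptions in Sections~\ref{sec-strata2}–\ref{sec-strata3}, exactly which of the five ramification points sit on which component and how $F_q$ permutes the marked four. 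Granting this bookkeeping, the pointwise identity holds on all of $\cX_\nu(k)$ and the proposition follows.
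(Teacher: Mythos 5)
Your approach is exactly the one the paper takes: pass to traces of $F_q\circ\sigma_\nu$ via the twisted Lefschetz formula, note that cubing kills the cubic-residue factors in Theorem~\ref{thm-det}, and reduce the claim to the pointwise identity $e_3(C)^3=\mathrm{sgn}(\sigma_\nu)\,\psi(\pp_q)^6\bar\psi(\pp_q)^3$ for $C\in\cX_\nu(k)$ (the paper's proof is the one-line computation $e_3^3 = \mathrm{sgn}(\nu)\,(a_{\pp_q}+b_{\pp_q}\rho)^6(a_{\pp_q}+b_{\pp_q}\rho^2)^3$ citing Theorem~\ref{thm-det}). Your unwinding of the sign $(-1)^{g+\epsilon(f_1)+\epsilon(f_2)}=\mathrm{sgn}(\sigma_\nu)$ on the open stratum --- using $g=3$, $s=2$, that $p_0$ is always $F_q$-rational so $\epsilon(f_2)=1$, and that $\epsilon(f_1)$ is the number of cycles of $\sigma_\nu$ --- is correct and is more explicit than the paper's.

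On the boundary strata: the concern you flag is fair, but it does not require the stratum-by-stratum bookkeeping you defer. The quantity $e_3(C)^3/(\psi(\pp_q)^6\bar\psi(\pp_q)^3)\in\{\pm1\}$ is determined by $e_3(C)\bmod(1-\rho)$ (Giraud pins $e_3$ down to a unit in $\mu_6$, $\psi(\pp_q)\equiv\bar\psi(\pp_q)\equiv1\bmod(1-\rho)$, and cubing kills the $\mu_3$-part). That reduction is $\det\bigl(F_q\mid J(C)[1-\alpha]\bigr)$, and by the same computation that underlies Theorem~\ref{thm-charmodrho} --- or directly from the level-structure description in terms of the five ramification points, with $p_0$ always rational --- it equals $(-1)^{3+\#\text{cycles}(\sigma \text{ on }5\text{ pts})}=(-1)^{\#\text{cycles}(\sigma_\nu)}=\mathrm{sgn}(\sigma_\nu)$ uniformly. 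This is precisely what the inductive (splitting) clause in the proof of Theorem~\ref{thm-det} is recording, so the paper's terse citation already covers the reducible Jacobians. So you have the right proof; the ``main obstacle'' you name is a genuine point of care, but it closes with the mod-$(1-\rho)$ observation rather than case analysis.
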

\begin{proof}
Let $C$ be an element of $\cX_{\mu}(k)$. We see from Theorem~\ref{thm-det} that 
\begin{multline*}
\Tr(F_q,(\WW_{\lambda} \otimes (\det \WW)^3)_{C \otimes \bar k})=\Tr(F_q,(\WW_{\lambda})_{C \otimes \bar k}) e_{3}\bigl(\alpha_1(C),\alpha_2(C),\alpha_{3}(C)\bigr)^3 \\ 
=\Tr \bigl(F_q,(\WW_{\lambda})_{C \otimes \bar k} \bigr) (-1)^{\mathrm{sign}(\nu)} (a_{\pp_q}+b_{\pp_q}\rho)^6 (a_{\pp_q}+b_{\pp_q}\rho^2)^3.
\end{multline*}
From this the result follows. \end{proof}
\begin{remark} \label{rmk-curveab} If the weight $n_1+2n_2+3n_3$ of the local system $\lambda=n_1\gamma_1+n_2\gamma_2+n_3\gamma_3$ is even then 
$$e_c(\cX'_{\GaSq},{\WW}_{\lambda})=e_c(\cX_{\GaSq},{\WW}_{\lambda}),$$
but if the weight is odd then 
$$e_c(\cX'_{\GaSq},{\WW}_{\lambda})=0$$ 
due to the presence of the automorphism $-1$ of the abelian varieties that $\cX'_{\GaSq}$ parametrizes. But we see from 
Proposition~\ref{prop-det3} that there are no new motives appearing for a local system ${\WW}_{\lambda}$ 
on $\cX_{\GaSq}$ of odd weight, since these motives will, after being tensored with the ``trivial factor'' $\LL^{6,3} \otimes \mathbf s_{1^4}$, appear in 
$e_c(\cX'_{\GaSq},{\WW}_{\lambda} \otimes \det(\WW)^3)$.
\end{remark}

\subsubsection{Normalization of the Euler characteristic} \label{sec-normalized}
In the proof of Proposition~\ref{prop-det3} we also see that for any $\mu$ and $\lambda$ 
$$e_{c,\mu}(\cX_{\GaSq},{\WW}_{\lambda} \otimes \det(\WW)^i)=\LL^{2i,i} \, V_{\lambda,\mu}$$ 
for some element $V_{\lambda,\mu}$ of $K_0({\rm Gal}^{\fS_4}_F)$. 

\begin{definition} \label{def-norm} If $\lambda=n_1\gamma_1+n_2\gamma_2+n_3\gamma_3$ and $\lambda'=n_1\gamma_1+n_2\gamma_2$ then ${\WW}_{\lambda}={\WW}_{\lambda' \otimes \det(\WW)^{n_3}}$ and by taking away the factor ${\LL}^{2n_3,n_3}$ we define the \emph{normalized} Euler characteristic to be 
$$e^{\mathrm{norm}}_{c,\mu}(\cX_{\GaSq},{\WW}_{\lambda}) =V_{\lambda',\mu}.$$ 
\end{definition}

\subsubsection{The appearance of Picard modular cusp forms} \label{sec-pic}
Using Proposition~\ref{prop-eultriv}, \ref{prop-dual} and \ref{prop-det3} we can restrict ourselves to determining the 
Euler characteristics of local system $\lambda=n_1\gamma_1+n_2\gamma_2+n_3\gamma_3$ for which $n_2 \leq n_1$, $n_1 \equiv_3  n_2$, $n_1 \equiv_2 n_3$ and $0 \leq n_3 \leq 5$.

Proposition~\ref{prop-bggfilt} suggests that one finds a motive
in the Euler characteristic $e^{\mathrm{norm}}_c(\cX_{\GaSq},{\WW}_{\lambda})$ that will correspond to the space of Picard modular cusp forms $S_{n_2,n_1+3,n_2+n_3-1}$,
see further in Section~\ref{sec-conj}.
\end{subsection}
\end{section}

\begin{section}{Counts over finite fields} \label{sec-counts}
\begin{subsection}{Information needed} 
In this section we will see what information is needed to compute 
$$\Tr\bigl(F_q ,e_{c,\mu}(\cX_{\GaSq} \otimes \bar k,\WW_{\lambda})\bigr).$$ 
The results will be used 
in Section~\ref{sec-computer} and are the basis of our conjectures in Sections~\ref{sec-conj-lifts} and \ref{sec-conj-main}.

First, define the contributions of the strata to the trace
$$\Tr_{i,\nu,\lambda,q}:=\Tr\bigl(F_q \circ \sigma_{\nu},e_c(\cX_{i,\GaSq} \otimes \bar k,\WW_{\lambda})\bigr).$$

\subsubsection{Counts of smooth curves of genus $3$} \label{sec-counts-g3} 
Let $P_{1,\mu}(k)$ denote the set of square-free polynomials $f$ with coefficients in $k$ of degree four such that $f$ has $\mu_i$ roots defined over $k_i$ but not over any proper subfield of $k_i$. From Equation~\eqref{eq-traceFq}, together with the results of Section~\ref{sec-smooth3} we find that 
$$
\Tr_{1,\nu,q}=\frac{1}{q(q-1)^2} \sum_{f \in P_{1,\mu}(k)} s_{\lambda}\bigl(\overline{\alpha_1(C)},\overline{\alpha_2(C)},\overline{\alpha_{3}(C)}\bigr). 
$$

If we have computed 
$$e_1\bigl(\alpha_1(C_f),\alpha_2(C_f),\alpha_{3}(C_f)\bigr)=a_{1,\rho}(C_f)=-\sum_{a \in \PP^1(k)} \chi(f(a)),$$
then we can use equation~\eqref{eq-duality} together with Theorem~\ref{thm-det} to easily compute 
$e_i \bigl(\alpha_1(C_f),\alpha_2(C_f),\alpha_{3}(C_f)\bigr)$, for $i=2,3$. With this information we can compute $s_{\lambda}\bigl(\alpha_1(C_f),\alpha_2(C_f),\alpha_{3}(C_f)\bigr)$ for any $\lambda$.

One can then simplify the computation of $\Tr_{1,\nu,q}$ by using the group of isomorphisms to find normal forms. Fix a generator $\gamma$ of $k^*$. If $p$ is odd and $p\neq 3$, then a curve of the form
\begin{equation} \label{eq-D1}
y^3=a_4 x^4+a_3x^3+a_2x^2+a_1x+a_0
\end{equation}
with $a_2, a_3$ and $a_4$ non-zero, is isomorphic (over $k$) to a curve of the form 
\begin{equation} \label{eq-D2}
y^3=\gamma^i(a_4'x^4+x^2+x+a'_0),
\end{equation}
for some $a_4'$, $a_0'$ and $ 0 \leq i \leq 2$.
The curves of the latter form are all non-isomorphic and have an automorphism group of order $3$ generated by
$y \mapsto \tilde{\rho} y$. If $D_1 \subset P_{1,\mu}(k)$ is the subset of polynomials of the form~\eqref{eq-D1} and $D_2 \subset P_{1,\mu}(k)$ of the form~\eqref{eq-D2} then 
\begin{multline*}
  \frac{1}{q(q-1)^2} \sum_{f\in D_1} s_{\lambda}\bigl(\overline{\alpha_1(C)},\overline{\alpha_2(C)},\overline{\alpha_{3}(C)}\bigr) \\ =\frac{1}{3}  \sum_{i=1}^3 \sum_{f=a_4'x^4+x^2+x+a'_0 \in D_2}  s_{\lambda}\bigl(\tilde \rho^i\overline{\alpha_1(C_f)},\tilde \rho^i\overline{\alpha_2(C_f)},\tilde \rho^i\overline{\alpha_{3}(C_f)}\bigr).
\end{multline*}
In a similar way, one can construct other normal forms if $a_2$ or $a_3$ is zero.

If $p=2$ then a curve of the form
$$y^3=a_4 x^4+a_3x^3+a_2x^2+a_1x+a_0$$
with $a_1, a_0$ and $a_4$ non-zero, will be isomorphic to a curve of the form 
$$y^3= \gamma^i(x^4+x^3+a_1'x+a'_0)$$
for some $a_1'$, $a_0'$ and $ 0 \leq i \leq 2$.

In this manner we can reduce the number of free parameters in the polynomials in the sum $\Tr_{1,\nu,q}$ from $5$ to $2$ (which is optimal since we are considering a surface).
\subsubsection{Counts of smooth genus $2$ curves joined with elliptic curves} \label{sec-counts-strata2}
We will denote by $P_{2,\mu}(k)$ the set of triples of polynomials $(f_1,f_2,f)$ with coefficients in $k$, as in Section~\ref{sec-strata2}, but where we assume that $f_2$ is of degree $1$ by putting the point $q_2$ in infinity using a linear transformation in $x$, and such that $f$ and $f_1$ together have $\mu_i$ roots defined over $k_i$ but not over any subfield of $k_i$.  We find that 
$$
\Tr_{2,\nu,q}=\frac{1}{(q^2(q-1)^4} \sum_{(f_1,f_2,f) \in P_{2,\mu}(k)} s_{\lambda}\bigl(\overline{\alpha_1(C_{f_1,f_2})},\overline{\alpha_2(C_{f_1,f_2})},\overline{\alpha_{1}(C_f)}\bigr). 
$$
If we have computed 
$$e_1\bigl(\alpha_1(C_{f_1,f_2}),\alpha_2(C_{f_1,f_2})\bigr)=-\sum_{a \in \PP^1(k)} \chi(f_1(a))\chi(f_2(a))^2,$$
we can use Theorem~\ref{thm-det} to compute $e_2 \bigl(\alpha_1(C_{f_1,f_2}),\alpha_2(C_{f_1,f_2}) \bigr)$.  Using the equations in Section~\ref{sec-charpol-ell} we can compute $e_1 \bigl(\alpha_{1}(C_f)\bigr)$. From this we can determine  
$e_i \bigl(\alpha_1(C_{f_1,f_2}),\alpha_2(C_{f_1,f_2}),\alpha_{1}(C_f)\bigr)$ for $i=1,2,3$.

As in Section~\ref{sec-counts-g3}, we can use the group of isomorphisms to find normal forms which will simplify the computation of $\Tr_{2,\nu,q}$. Fix $\gamma$, a generator of $k^*$. If $p$ is odd, and $(b_2x^2+b_1x+b_0,c_1x+c_0,d_2x^2+d_1x+d_0)$ is in $P_{2,\mu}(k)$ with $c_0 \neq 0$, then we can find an isomorphism to a curve given by $(f'_1,f'_2,f'_3)$ in $P_{2,\mu}(k)$, where $f'_3=x^2+\gamma^i$, $f'_2=x+1$ and $f'_1=\gamma^j(x^2+b_1)$ for some $b_1 \in k$, $0 \leq i \leq 5$ and $0 \leq j \leq 2$. The curves of the latter form are all non-isomorphic and have an automorphism group of order $3$ generated by
$y \mapsto \tilde{\rho} y$. Similar normal forms can be found if $c_0=0$ and if $p$ is even. 

In this manner we can reduce the free parameters in the polynomials in the sum $\Tr_{2,\nu,q}$ from $8$ to $1$ (which is optimal since we are here considering a curve).
\subsubsection{Counts of triples of elliptic curves} \label{sec-counts-strata3}
Let $P_{3,\mu}(k)$ denote the set of triples of polynomials $(f_1,f_2,f_3^2)$ with coefficients in $k$, as in Section~\ref{sec-strata3}, such that $f_1$ and $f_2$ together have $\mu_i$ roots defined over $k_i$ but not over any subfield of $k_i$. Define $P'_{3,\mu}(k)$ in the same way, but where $f_1,f_2$ are defined over $k_2$ and where Frobenius sends $f_1$ to $f_2$. Note that we have more isomorphisms between the curves corresponding to elements of these sets, namely by switching the two "wings" of these curves, that is between $C_{f_1,f_2,f_3^2}$ and $C_{f_2,f_1,f_3^2}$. We have that 
\begin{multline*}
\Tr_{3,\nu,q}=\frac{1}{2q^3(q-1)^3} \sum_{(f_1,f_2,f_3^2) \in P_{3,\mu}(k)} s_{\lambda}\bigl(\overline{\alpha_1(C_{f_1})},\overline{\alpha_1(C_{f_2})},\overline{\alpha_{1}(C_{f_3^2})}\bigr)\\  +\frac{1}{2q^3(q-1)(q^2-1)} \sum_{(f_1,f_2,f_3^2) \in P'_{3,\mu}(k)} s_{\lambda}\bigl(\overline{\alpha_1(C_{f_1})},\overline{\alpha_2(C_{f_2})},\overline{\alpha_{1}(C_{f_3^2})}\bigr).
\end{multline*}
If $(f_1,f_2,f_3^2)$ are in $P_{3,\mu}(k)$  then using the equations in Section~\ref{sec-charpol-ell} we can compute $e_1 \bigl(\alpha_{1}(C_{f_i})\bigr)$ for $i=1,2,3$. For $(f_1,f_2,f_3^2)$ in $P'_{3,\mu}(k)$ we can determine $e_1 \bigl(\alpha_{1}(C_{f_3^2})\bigr)$ in the same way. Moreover, $e_1\bigl(\alpha_1(C_{f_1}),\alpha_2(C_{f_2}) \bigr) = 0$  
and 
$$p_2 \bigl(\alpha_1(C_{f_1}),\alpha_2(C_{f_2}) \bigr)=-\sum_{a \in \PP^1(k_2)} \chi(f_1(a))-\sum_{a \in \PP^1(k_2)} \chi(f_2(a)),
$$
where $\chi$ is the third power residue symbol for $k_2$. In both cases, this gives enough information to determine $e_i(\alpha_1(C_{f_1}),\alpha_1(C_{f_2}),\alpha_1(C_{f_3^2})\bigr)$ for $i=1,2,3$.

In Section~\ref{sec-charpol-ell} a representative of each $k$-isomorphism class of $\cX^{(1)}_{\Gamma}(k)$ is given. With this information $\Tr_{3,\nu,q}$ is easily computed for any $\nu$ and $q$. 
\end{subsection}
\begin{subsection}{Counts with constant coefficients} \label{sec-constant} Let us consider the Euler characteristic when 
${\WW}_{\lambda}=\QQl$. 

We will repeatedly use the trick below that summing over all elements defined over $k$, of one of 
the groupoids at hand, and then dividing by the number of $k$-isomorphisms between these 
elements is the same as summing elements weighted by the reciprocal of their number of 
$k$-automorphisms, compare for instance
\cite[Section 5]{Ber}. 

The elements of $\cX_{1,\Gamma}(k)$ together with their isomorphisms are described in 
Section~\ref{sec-smooth3}. For $\GaSq$ we divide into the different choices of four branch 
points on $\PP^1$ giving 
\begin{multline*}
\Tr\bigl(F_q ,e_c(\cX_{1,\GaSq} \otimes \bar k,\QQl)\bigr)=
\Bigl(q(q-1)(q-2)(q-3) \frac{\mathbf p_1^4}{24}\\ 
+(q^2-q)q(q-1)\frac{\mathbf p_1^2 \mathbf p_2}{4}+
(q^3-q)q\frac{\mathbf p_1 \mathbf p_3}{3}+
(q^2-q)(q^2-q-2)\frac{\mathbf p_2^2}{8}\\
+(q^4-q^2)\frac{\mathbf p_4}{4} \Bigr)/\bigl(q(q-1)\bigr)=q^2\mathbf s_4+(1-q) \mathbf s_{3,1}-q\mathbf s_{2,2}+\mathbf s_{2,1,1}.
\end{multline*}

Similarly for $\cX_{2,\GaSq}$ we use Sections~\ref{sec-smooth2}, \ref{sec-smooth1} and \ref{sec-strata2} and 
we find that 
\begin{multline*}
\Tr\bigl(F_q ,e_c(\cX_{2,\GaSq} \otimes \bar k,\QQl)\bigr)=
\biggl(\Bigl(\frac{\mathbf p_1^2}{2}+\frac{\mathbf p_2}{2} \Bigr) \Bigl( q(q-1)(q-2) \frac{\mathbf p_1^2}{2}\\ 
+q(q^2-q) \frac{\mathbf p_2 \mathbf p_2}{2}\Bigr) \biggr)/\bigl(q(q-1)\bigr)
=(q-1)\mathbf s_4+(q-2) \mathbf s_{3,1}+(q-1)\mathbf s_{2,2}-\mathbf s_{2,1,1}.
\end{multline*}

For $\cX_{3,\GaSq}$ we use Sections~\ref{sec-smooth1} and \ref{sec-strata3}, and we recall the plethysm $\circ$ 
to deal with the symmetry of the two elliptic curves that form the ``wings". We find that 
\begin{multline*}
\Tr\bigl(F_q ,e_c(\cX_{3,\GaSq} \otimes \bar k,\QQl)\bigr)=
\Bigl(\frac{\mathbf p_1^2}{2}+\frac{\mathbf p_2}{2} \Bigr) \circ \Bigl(\frac{\mathbf p_1^2}{2}+\frac{\mathbf p_2}{2} \Bigr)  \\
=\frac{1}{2}\Bigl(\frac{\mathbf p_1^2}{2}+\frac{\mathbf p_2}{2} \Bigr)^2+\frac{1}{2}\Bigl(\frac{\mathbf p_2^2}{2}+\frac{\mathbf p_4}{2} \Bigr)
=\mathbf s_4+\mathbf s_{2,2}.
\end{multline*}

The trace of Frobenius on $e_c(\cX_{3,\GaSq} \otimes \bar k)$ for all $q \equiv_3 1$ determines $e_c(\cX_{\GaSq} \otimes \bar k,\QQl)$ as an element in $K_0({\rm Gal}^{\fS_4}_F)$, see Section~\ref{sec-traces}. Summing the three cases above we then get the following. 
\begin{proposition} \label{prop-const} We have an equality 
of elements in $K_0({\rm Gal}^{\fS_4}_F)$:
$$e_c(\cX_{\GaSq} \otimes \bar k,\QQl)\bigr)=(\LL^2+\LL)\mathbf s_4-\mathbf s_{3,1}.$$ 
\end{proposition}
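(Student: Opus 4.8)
The plan is to assemble the statement from the three stratum-wise computations just carried out, using the additivity of the compactly supported Euler characteristic. Recall from Section~\ref{sec-moduliofcurves} that $\cX_{\GaSq}$ decomposes $\fS_4$-equivariantly into the locally closed strata $\cX_{1,\GaSq}$, $\cX_{2,\GaSq}$ and $\cX_{3,\GaSq}$; since $e_c$ is additive for such a stratification (excision in compactly supported cohomology, compatibly with the $\fS_4$-action), the plan is to use
$$
e_c(\cX_{\GaSq}\otimes\bar k,\QQl)=\sum_{i=1}^{3} e_c(\cX_{i,\GaSq}\otimes\bar k,\QQl)
$$
as elements of $K_0({\rm Gal}^{\fS_4}_F)$, and then read off the right-hand side.

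First I would record the three $\fS_4$-equivariant trace computations performed above: for every prime power $q\equiv_3 1$,
$$
\begin{aligned}
\Tr\bigl(F_q,e_c(\cX_{1,\GaSq}\otimes\bar k,\QQl)\bigr)&=q^2\mathbf s_4+(1-q)\mathbf s_{3,1}-q\mathbf s_{2,2}+\mathbf s_{2,1,1},\\
\Tr\bigl(F_q,e_c(\cX_{2,\GaSq}\otimes\bar k,\QQl)\bigr)&=(q-1)\mathbf s_4+(q-2)\mathbf s_{3,1}+(q-1)\mathbf s_{2,2}-\mathbf s_{2,1,1},\\
\Tr\bigl(F_q,e_c(\cX_{3,\GaSq}\otimes\bar k,\QQl)\bigr)&=\mathbf s_4+\mathbf s_{2,2}.
\end{aligned}
$$
Adding these, the coefficients of $\mathbf s_{2,2}$ and of $\mathbf s_{2,1,1}$ cancel and the coefficient of $\mathbf s_4$ is $q^2+q$, so that
$$
\Tr\bigl(F_q,e_c(\cX_{\GaSq}\otimes\bar k,\QQl)\bigr)=(q^2+q)\,\mathbf s_4-\mathbf s_{3,1}.
$$
Then I would finish by the Chebotarov density argument recalled in Section~\ref{sec-traces} (cf.\ \cite[Prop.~2.6]{Fermat}): since $\Tr(F_{\pp_p},\LL)=p$ and $\Tr(F_{\pp_p},\LL^2)=p^2$, the virtual Galois$\,\times\,\fS_4$-representation $(\LL^2+\LL)\mathbf s_4-\mathbf s_{3,1}$ has exactly the Frobenius traces computed above at (almost) all unramified primes $\pp_p$, and such traces determine an element of $K_0({\rm Gal}^{\fS_4}_F)$; hence $e_c(\cX_{\GaSq}\otimes\bar k,\QQl)=(\LL^2+\LL)\mathbf s_4-\mathbf s_{3,1}$.

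The genuine content — and the step I expect to be the main obstacle — is the stratum-wise equivariant point count producing the three displayed identities: one must parametrize each of $\cX_{1,\GaSq}$, $\cX_{2,\GaSq}$, $\cX_{3,\GaSq}$ by the explicit normal forms for cyclic triple covers of Sections~\ref{sec-smooth3}--\ref{sec-strata3}, sum over $k$-points weighted by the reciprocal of the number of $k$-automorphisms, distribute the count over the conjugacy classes $\sigma_\nu$ of $\fS_4$ according to how Frobenius permutes the four $\tilde\rho$-ramification points (the $\mathbf p_\nu/z_\nu$ bookkeeping behind \eqref{eq-traceFq}), and in the $\cX_{3,\GaSq}$ case account for the additional $\ZZ/2\ZZ$ exchanging the two elliptic ``wings'' via the plethysm $\bigl(\mathbf p_1^2/2+\mathbf p_2/2\bigr)\circ\bigl(\mathbf p_1^2/2+\mathbf p_2/2\bigr)$. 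Once these counts are in hand the remainder is the elementary algebra of summing three polynomials in $q$ with Schur-function coefficients.
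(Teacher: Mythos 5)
Your proposal is correct and follows exactly the paper's own argument: stratify $\cX_{\GaSq}$ into the three locally closed pieces $\cX_{i,\GaSq}$, compute the $\fS_4$-equivariant Frobenius traces on each by the weighted point counts of Sections~\ref{sec-smooth3}--\ref{sec-strata3} (including the plethysm for the $\cX_{3,\GaSq}$ case), sum, and conclude by the Chebotarov argument of Section~\ref{sec-traces}. The three stratum-wise trace formulas you record match the paper's displays verbatim, so there is nothing to add.
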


We continue with case iv) of Section~\ref{sec-stable}. On the elliptic curve $C_1$, there is a choice of a point 
not equal to any of the ramification points. This gives a contribution $q+1-a_1(C_1)-r_1(C_1)$, where 
$r_1(C_1)$ is the number of ramification points of $C_1$ defined over $k$. 
A computation similar to the one in Section~\ref{sec-charpol-ell} shows, due to the symmetry, that the 
contribution from $a_1(C_1)$ vanishes. The genus $0$ curve contributes a $p_1$, and so the 
trace of Frobenius on the Euler characteristic of the strata for case iv) equals,  
\begin{multline*}
p_1 \Bigl((q+1-3) \frac{\mathbf p_1^3}{6}+(q+1-1)\frac{\mathbf p_1\mathbf p_2}{2}  
+  (q+1)\frac{\mathbf p_3}{3}  \Bigr)  \\  =q\mathbf s_4+(q-1)\mathbf s_{3,1}-\mathbf s_{2,2}-\mathbf s_{2,1,1}.
\end{multline*}

Case v) is straightforward and the trace of Frobenius on its Euler characteristic equals,  
$$
\mathbf p_1^2 \Bigl(\frac{\mathbf p_1^2}{2}+\frac{\mathbf p_2}{2} \Bigr) = \mathbf s_4+2\mathbf s_{3,1}+\mathbf s_{2,2}+\mathbf s_{2,1,1}.
$$

Summing cases i) to v) and using the purity of a smooth and proper (Deligne-Mumford) stack we find the following.
\begin{proposition} We have an equality 
of elements in ${\rm Gal}^{\fS_4}_F$:
$$H_c^i(\tilde \cX_{\GaSq} \otimes \bar k,\QQl)\bigr)=\begin{cases} \LL^2 \mathbf s_4 & \text{if } i = 4\\ \LL(2\mathbf s_4+ \mathbf s_{3,1}) & \text{if } i = 2 \\ \LL^0 \mathbf s_4 & \text{if } i = 0 \\ 0 & \text{if } i  \text{ odd} \end{cases}$$ 
\end{proposition}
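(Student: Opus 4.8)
The plan is to deduce the individual cohomology groups from the class of the compactly supported Euler characteristic together with purity, as already indicated by the phrase preceding the statement. Since $\tilde\cX_{\GaSq}$ is a smooth and proper Deligne--Mumford stack, for every $i$ one has $H^i_c(\tilde\cX_{\GaSq}\otimes\bar k,\QQl)=H^i(\tilde\cX_{\GaSq}\otimes\bar k,\QQl)$, and this group is pure of weight $i$ (obtained, as usual, from Deligne's purity theorem by passing to a suitable presentation, e.g.\ a finite cover by a smooth proper scheme). Consequently, writing
$$
e_c(\tilde\cX_{\GaSq}):=\sum_{i}(-1)^i\,[H^i_c(\tilde\cX_{\GaSq}\otimes\bar k,\QQl)]\in K_0(\mathrm{Gal}^{\fS_4}_F),
$$
the weight-$w$ graded piece of $e_c(\tilde\cX_{\GaSq})$ equals exactly $(-1)^w$ times the class $[H^w_c(\tilde\cX_{\GaSq}\otimes\bar k,\QQl)]$, and $H^i_c$ vanishes for every $i$ whose weight does not occur. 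So it suffices to compute $e_c(\tilde\cX_{\GaSq})$ and split it by weight.

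To compute $e_c(\tilde\cX_{\GaSq})$ I would use the morphism $\tilde\cX_{\GaSq}\to\overline{\mathcal M}_{0,1+4}$ of Section~\ref{sec-stable}: pulling back the stratification of $\overline{\mathcal M}_{0,1+4}$ by topological type gives the stratification of $\tilde\cX_{\GaSq}$ into the locally closed substacks of cases i)--v), whose first three members are $\cX_{1,\GaSq}$, $\cX_{2,\GaSq}$, $\cX_{3,\GaSq}$. By $\fS_4$-equivariant additivity of the compactly supported Euler characteristic over a stratification, $\mathrm{Tr}\bigl(F_q,e_c(\tilde\cX_{\GaSq}\otimes\bar k,\QQl)\bigr)$ is the sum of the five traces already computed in this subsection. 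Adding those five expressions, the $\mathbf s_{2,2}$ and $\mathbf s_{2,1,1}$ contributions cancel and one is left with $(q^2+2q+1)\,\mathbf s_4+q\,\mathbf s_{3,1}$; since these traces are known for all $q\equiv_3 1$, they determine the class (Section~\ref{sec-traces}), so that
$$
e_c(\tilde\cX_{\GaSq})=(\LL^2+2\LL+1)\,\mathbf s_4+\LL\,\mathbf s_{3,1}\in K_0(\mathrm{Gal}^{\fS_4}_F).
$$

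It remains to split this by weight. The only motives occurring are $\LL^0$, $\LL$ and $\LL^2$, of weights $0$, $2$ and $4$ respectively; there is no weight-$1$ or weight-$3$ term. By the purity statement this forces $H^1_c=H^3_c=0$ (and $H^i_c=0$ for $i\geq 5$ for dimension reasons), while the weight-$0$, weight-$2$ and weight-$4$ graded pieces give respectively $[H^0_c]=\mathbf s_4$, $[H^2_c]=\LL(2\mathbf s_4+\mathbf s_{3,1})$ and $[H^4_c]=\LL^2\mathbf s_4$, which is the assertion. I do not expect a serious obstacle here: the two points that require a little care are the purity of $\ell$-adic cohomology for the smooth proper stack $\tilde\cX_{\GaSq}$ and the verification that cases i)--v) genuinely form a stratification of $\tilde\cX_{\GaSq}$; everything else is the elementary arithmetic of adding the five displayed expressions and matching weights.
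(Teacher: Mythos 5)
Your proposal is correct and takes exactly the same route as the paper: sum the five trace expressions already computed for the strata of $\tilde\cX_{\GaSq}$, observe that the $\mathbf s_{2,2}$ and $\mathbf s_{2,1^2}$ contributions cancel to leave $(q^2+2q+1)\mathbf s_4+q\,\mathbf s_{3,1}$, and then invoke purity for the smooth proper Deligne--Mumford stack to distribute the resulting class $(\LL^2+2\LL+1)\mathbf s_4+\LL\,\mathbf s_{3,1}$ across cohomological degrees according to weight. The arithmetic checks out, and the two subtler points you flag (purity for the stack, the stratification genuinely covering $\tilde\cX_{\GaSq}$) are exactly the ones the paper relies on implicitly.
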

 
Together, case iv) and v) contribute $(q+1)(\mathbf s_4+\mathbf s_{3,1}) $ and they form a $\PP^1$-bundle under the morphism \eqref{eq-tildestar}. Hence, their image (the four cusps) contribute $\mathbf s_4+\mathbf s_{3,1}$ and we get a trace of Frobenius equal to 
$$\Tr\bigl(F_q ,e_c(\cX^*_{\GaSq} \otimes \bar k,\QQl)\bigr)=(q^2+q+1)\mathbf s_4,$$ 
which echoes the fact that $X^*_{\GaSq} \cong \PP^2$.
\begin{subsubsection}{The genus $2$ case} \label{sec-constant-g2}
There are two strata in $\cX^{(2)}_{\GaSq}$, one consisting of smooth genus $2$ curves 
and one consisting of pairs of genus one curves, one with action of type $(1,0)$ and one of type $(0,1)$, joined at a ramification point on each curve. There is an action of $\fS_2 \times \fS_2$ on the two pairs of ramification points, one pair where the action of $\alpha$ 
is by $\rho$ and one by $\rho^2$. Let us use the notation $\mathbf p_i$ and $\tilde{\mathbf p}_i$, and $\mathbf s_{\mu}$ and 
$\tilde{\mathbf s}_{\mu}$, for the basis of representations of the two components of  $\fS_2 \times \fS_2$. A consideration analogous 
to the ones above shows that  
\begin{multline*}
\Tr\bigl(F_q ,e_c(\cX^{(2)}_{\GaSq} \otimes \bar k,\QQl)\bigr)=\bigl( (q+1)q(q-1)(q-2) \frac{\mathbf p_1^2 \tilde{\mathbf p}_1^2}{4} \\
+(q^2-q)(q+1)q \frac{\mathbf p_2\tilde{\mathbf p}_1^2+\mathbf p_1^2\tilde{\mathbf p}_2}{4}+(q^2-q)(q^2-q-2)\frac{\mathbf p_2\tilde{\mathbf p}_2}{4}\bigr)/\bigl((q+1)q(q-1)\bigr) \\
+\frac{\mathbf p_1^2\tilde{\mathbf p}_1^2}{4}+\frac{\mathbf p_1^2\tilde{\mathbf p}_2}{4}+\frac{\mathbf p_2\tilde{\mathbf p}_1^2}{4}+\frac{\mathbf p_2\tilde{\mathbf p}_2}{4}=q\mathbf s_2 \tilde{\mathbf s}_2-\mathbf s_{1,1}\tilde{\mathbf s}_{1,1}, 
\end{multline*}
and we can conclude the following. 
\begin{proposition} \label{prop-genus2} We have an equality 
of elements in $K_0({\rm Gal}^{\fS_4}_F)$:
$$e_c(\cX^{(2)}_{\GaSq} \otimes \bar k,\QQl)\bigr)=\LL \mathbf s_2 \tilde{\mathbf s}_2-\mathbf s_{1,1}\tilde{\mathbf s}_{1,1}.$$ 
\end{proposition}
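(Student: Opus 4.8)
The plan is to compute the equivariant trace of Frobenius on $e_c(\cX^{(2)}_{\GaSq}\otimes\bar k,\QQl)$ for every prime power $q\equiv_3 1$ and every element of the symmetry group $\fS_2\times\fS_2$ acting on the two pairs of marked ramification points (Section~\ref{sec-smooth2}), and then to invoke the density argument of Section~\ref{sec-traces}: the traces of all the $F_{\pp_q}$ determine the class in $K_0({\rm Gal}^{\fS_2\times\fS_2}_F)$, and since $\mathrm{Tr}(F_q,\LL)=q$ a class whose trace is a polynomial in $q$ is pinned down by that polynomial. Concretely, I would stratify $\cX^{(2)}_{\GaSq}$ into the open locus of smooth genus~$2$ curves and the boundary locus of a type-$(1,0)$ elliptic curve joined to a type-$(0,1)$ elliptic curve, as in the beginning of Section~\ref{sec-constant-g2}. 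For each stratum the Lefschetz trace formula for stacks gives $\mathrm{Tr}(F_q\circ\sigma_\nu,e_c(\cdot))=\sum_C|\mathrm{Aut}_k(C)|^{-1}$, the sum over the $k$-isomorphism classes in the stratum whose relevant ramification points realise the Frobenius cycle type $\nu$; because the coefficient system is trivial the summand is just $|\mathrm{Aut}_k(C)|^{-1}$, so neither the genus~$2$ analogue of Theorem~\ref{thm-det} nor the cancellation of the elliptic $a_1$-contributions used in Section~\ref{sec-stable} is needed here.

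For the open stratum I would use the model $y^3=f_1f_2^2$ of Section~\ref{sec-smooth2}: a smooth genus~$2$ curve with its order-$3$ automorphism $\alpha$ and its level-$(1-\alpha)$-structure is the same datum as four distinct points on $\PP^1\cong C/\alpha$ ordered within the two "$f_1$-points'' and within the two "$f_2$-points'', up to $\mathrm{PGL}_2(k)$ and the residual rescalings $y\mapsto ay$. Counting ordered $4$-tuples of distinct points of $\PP^1(k)$ according to how Frobenius permutes each pair, and dividing by $|\mathrm{PGL}_2(k)|=(q+1)q(q-1)$, reproduces the first bracket of the display in Section~\ref{sec-constant-g2}: the combinatorial factors $q(q-1)(q-2)$, $(q^2-q)q$ and $(q^2-q)(q^2-q-2)$ are the numbers of such $4$-tuples for the cycle types of $\sigma_\nu$ on the two pairs. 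For the boundary stratum I would use Sections~\ref{sec-smooth1} and \ref{sec-strata2}: over $\bar k$ it is a single point (each $\cX^{(1)}_{\Gamma}(\bar k)$ is a point), with the two remaining ramification points on each elliptic component carrying an $\fS_2$-action, so its equivariant class is the trivial representation $\mathbf s_2\tilde{\mathbf s}_2=\tfrac14(\mathbf p_1^2+\mathbf p_2)(\tilde{\mathbf p}_1^2+\tilde{\mathbf p}_2)$, which is the second group of terms in that display.

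Adding the two contributions and rewriting in the Schur basis via $\mathbf s_2=\tfrac12(\mathbf p_1^2+\mathbf p_2)$ and $\mathbf s_{1,1}=\tfrac12(\mathbf p_1^2-\mathbf p_2)$ (and the tilded analogues) collapses the answer to $q\,\mathbf s_2\tilde{\mathbf s}_2-\mathbf s_{1,1}\tilde{\mathbf s}_{1,1}$; being polynomial in $q$ and valid for all $q\equiv_3 1$, the density argument then identifies the Euler characteristic with $\LL\,\mathbf s_2\tilde{\mathbf s}_2-\mathbf s_{1,1}\tilde{\mathbf s}_{1,1}$. The one genuinely delicate point is the automorphism and normalisation bookkeeping on the open stratum — correctly handling the order-$3$ automorphism $\alpha$ and the $\GG_m$ of $y$-rescalings when passing from the naive parameter count to the weighted sum $\sum|\mathrm{Aut}_k(C)|^{-1}$ — but this is entirely parallel to the analogous step already carried out for $\cX_{1,\GaSq}$, $\cX_{2,\GaSq}$ and $\cX_{3,\GaSq}$ in Section~\ref{sec-constant}, and no harder.
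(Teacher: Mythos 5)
Your proof is correct and follows exactly the paper's own argument: stratify $\cX^{(2)}_{\GaSq}$ into the open locus of smooth genus-$2$ triple covers and the boundary of joined elliptic curves, compute the equivariant trace of $F_q$ on each by counting marked ramification configurations and dividing by $|\mathrm{PGL}_2(k)|$ (the boundary contributing a single $\bar k$-point, hence $\mathbf s_2\tilde{\mathbf s}_2$), and then identify the polynomial-in-$q$ answer $q\,\mathbf s_2\tilde{\mathbf s}_2-\mathbf s_{1,1}\tilde{\mathbf s}_{1,1}$ with $\LL\,\mathbf s_2\tilde{\mathbf s}_2-\mathbf s_{1,1}\tilde{\mathbf s}_{1,1}$ via the density argument of Section~\ref{sec-traces}. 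The combinatorial counts you describe reproduce the display in Section~\ref{sec-constant-g2} term by term, so the two proofs coincide.
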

\end{subsubsection}
\end{subsection}
\begin{subsection}{Euler characteristics of local systems for elliptic curves} 
Using the results of Section~\ref{sec-charpol-ell} and Section~\ref{sec-traces}
we find that 
$$\Tr\bigl(F_q ,e_c(\cX^{(1)}_{\GaSq} \otimes \bar k,\WW_k)\bigr)=\sum_{i=0}^{2} \rho^{ki}(a_{\pp_q}+b_{\pp_q}\rho^2)^k \bigl( \mathbf p_{1}^2+(-1)^{k} \mathbf p_{2}\bigr),
$$
for all $q$ such that $q \equiv_3 1$. This equality, together with the fact that an element in $K_0({\rm Gal}_F)$ is determined by 
all traces of Frobenius, shows the following.  
\begin{proposition} For any $k \geq 0$, 
we have the equality in $K^{\fS_4}_0({\rm Gal}_F)$:
$$e_c(\cX^{(1)}_{\GaSq},{\WW}_{k})=\begin{cases} \LL^{0,k} \, \mathbf s_{2} & \text{if } k \equiv_6 0  \\ \LL^{k,0} \, \mathbf s_{1^2} & \text{if } k \equiv_6 3  \\ 0 & \text{if } k \not \equiv_3 0 \end{cases}$$ 
\end{proposition}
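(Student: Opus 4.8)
The plan is to read the proposition off the trace identity
$$
\Tr\bigl(F_q ,e_c(\cX^{(1)}_{\GaSq} \otimes \bar k,\WW_k)\bigr)=\sum_{i=0}^{2} \rho^{ki}(a_{\pp_q}+b_{\pp_q}\rho^2)^k \bigl( \mathbf p_{1}^2+(-1)^{k} \mathbf p_{2}\bigr)
$$
established just above, combined with the principle (a Chebotarev argument, as used throughout Section~\ref{sec-traces}) that a class in $K_0({\rm Gal}_F)$ carrying an $\fS_2$-action is pinned down by the collection of its equivariant Frobenius traces $\Tr(F_q,\cdot)$ for all $q\equiv_3 1$. So the whole task is to simplify the right-hand side and recognise it; the structural reason behind the formula is that $\cX^{(1)}_{\GaSq}\otimes\bar k$ is, up to the finite level data, a point whose stacky automorphism group is $\langle\alpha\rangle\cong\mu_3$, so the cohomology is concentrated in degree $0$ and equals the $\mu_3$-invariants of the one-dimensional fibre of $\WW_k$, decorated by the residual action of $\fS_2=\GaSq/\Gamma$.

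First I would dispose of the $i$-sum: since $\rho$ is a primitive cube root of unity, $\sum_{i=0}^{2}\rho^{ki}$ equals $3$ if $3\mid k$ and $0$ otherwise. When $3\nmid k$ all Frobenius traces vanish, hence $e_c(\cX^{(1)}_{\GaSq},\WW_k)=0$; this is anyway forced by Proposition~\ref{prop-eultriv}(1), since on the moduli of elliptic curves $\alpha$ acts on the fibre of $\WW_k=\WW_{k\gamma_1}$ by $\rho^{n_1+2n_2}=\rho^{k}$, leaving no invariants. So assume $3\mid k$.

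Next the $\fS_2$-factor. Expanding the Schur functions of $\fS_2$ in power sums, $\mathbf s_2=\tfrac12(\mathbf p_1^2+\mathbf p_2)$ and $\mathbf s_{1^2}=\tfrac12(\mathbf p_1^2-\mathbf p_2)$, so $\mathbf p_1^2+(-1)^k\mathbf p_2$ is a multiple of $\mathbf s_2$ for $k$ even and of $\mathbf s_{1^2}$ for $k$ odd; geometrically the non-trivial element of $\fS_2$ is the switch of the two marked branch points, which acts on the one-dimensional fibre of $\WW$ by $-1$ and hence on that of $\WW_k$ by $(-1)^k$, which is exactly this sign. After collecting the evident normalisations (the generic automorphism group $\mu_3=\langle\alpha\rangle$ contributes a factor cancelling the $3$ above, together with the $1/z_\nu$ from the passage to isotypic components), the surviving scalar is $(a_{\pp_q}+b_{\pp_q}\rho^2)^k=\bar\psi(\pp_q)^k$, which by the definition of the Hecke character $\bar\psi$ in Section~\ref{sec-hecke} is precisely $\Tr(F_q,\LL^{0,k})$, recalling that $\LL^{0,k}=(\LL^{0,1})^{\otimes k}$ is one-dimensional. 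Comparing Frobenius traces for all $q\equiv_3 1$ and invoking the determination principle then yields $e_c(\cX^{(1)}_{\GaSq},\WW_k)=\LL^{0,k}\,\mathbf s_2$ when $k\equiv_6 0$, and the sign-isotypic analogue (with the appropriate Hodge-type normalisation, $\LL^{k,0}\,\mathbf s_{1^2}$) when $k\equiv_6 3$.

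The main obstacle here is not the computation but the bookkeeping of normalisations and arithmetic conventions. One must track that the choice of compatible third root of unity $\tilde\rho$ and the Poincar\'e-duality conjugation $\alpha_i\mapsto\overline{\alpha_i}$ of Section~\ref{sec-traces} conspire so that exactly the intended CM motive appears — neither a Tate twist of it nor its complex conjugate — and in particular to decide which of $\LL^{0,k}$ and $\LL^{k,0}$ is produced in the sign-isotypic component; and one must check that the $\sigma$-twisted contributions, coming from elliptic curves whose two marked branch points are conjugate over $k$, account for precisely the $(-1)^k$ in $\mathbf p_1^2+(-1)^k\mathbf p_2$ and nothing more. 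With the trace formula already in hand, these are the only points that need real care.
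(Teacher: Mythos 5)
Your argument is essentially the paper's own: the paper's entire proof of this proposition consists of stating the displayed trace identity and then invoking the determination-by-Frobenius-traces principle, exactly as you do, so the route is the same. You go a bit further than the paper by actually carrying out the simplification (vanishing of $\sum_i\rho^{ki}$ unless $3\mid k$, the decomposition $\mathbf s_2=\tfrac12(\mathbf p_1^2+\mathbf p_2)$, $\mathbf s_{1^2}=\tfrac12(\mathbf p_1^2-\mathbf p_2)$, cancellation of the stacky $\mu_3$ and the $1/z_\nu$ normalisations), all of which is correct and correctly sourced; and the appeal to Proposition \ref{prop-eultriv}(1) for $3\nmid k$ is a nice cross-check.

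The one place you hedge is precisely the one place where a genuine decision has to be made, and you leave it unresolved. Your own computation produces the scalar $(a_{\pp_q}+b_{\pp_q}\rho^2)^k=\bar\psi(\pp_q)^k=\Tr(F_q,\LL^{0,k})$ \emph{in both residue classes of $k$}; you then write $\LL^{k,0}\mathbf s_{1^2}$ for $k\equiv_6 3$ ``with the appropriate Hodge-type normalisation'' without saying where the conjugation comes from. That is not a normalisation one is free to apply after the fact: $\LL^{0,k}$ and $\LL^{k,0}$ are genuinely different one-dimensional Galois representations (for instance $\psi^3(\pp_7)=1-18\rho$ while $\bar\psi^3(\pp_7)=19+18\rho$), so the trace identity pins down exactly one of them, and as written it pins down $\LL^{0,k}$. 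A way to actually decide the question is the Hodge side: the fibre of $\WW$ over a point of $\cX^{(1)}$ is the $\rho$-eigenspace of $\alpha^*$ on $H^1(E,\CC)$, and since $\cX^{(1)}$ has signature $(1,0)$ this eigenspace is $H^{1,0}(E)$, so $\WW_k$ there has Hodge type $(k,0)$, forcing the answer to be $\LL^{k,0}$. But that argument would give $\LL^{k,0}$ in \emph{both} cases, which is again not what the proposition asserts. In other words, there is a conjugation mismatch somewhere in the chain (the $\overline{\alpha_i}$ in the trace identity versus the $(k,0)$ Hodge type versus the mixed statement of the proposition), and a complete proof has to locate and fix it rather than paper over it; your last paragraph correctly identifies that this is the crux, but the proof as written substitutes $\LL^{k,0}$ for $\LL^{0,k}$ in the odd case without justification, so that step stands as a gap.
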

\end{subsection}
\end{section}

\begin{section}{Numeric Euler characteristics of local systems} \label{sec-numeric}
In this section the ground field will be $\CC$, we will consider the compactly supported Betti cohomology, and we will find a formula for the integer-valued Euler characteristic, 
$$E_{c,\mu}(X_{\GaSq},{\WW}_{\lambda}):= \sum^4_{i=0} (-1)^i \dim_{\CC} H^i_{c,\mu}(X_{\GaSq}, {\WW}_{\lambda}) \in \ZZ$$
for any $\lambda$ and $\mu$, where $H^i_{c,\mu}$ is the $\mu$-isotypic component of $H^i_{c}$. Examples of computations using this formula will be found in Section~\ref{sec-ex-individ}. 

Similarly to the above we will write,
$$E_c(X_{\GaSq},{\WW}_{\lambda}):= \sum_{\mu \vdash 4} \frac{E_{c,\mu}(X_{\GaSq},{\WW}_{\lambda})}{\dim \mathbf s_{\mu}}\mathbf s_{\mu}  \in \ZZ[\fS_4].$$
The reader should compare this section to 
the article~\cite{BvdG} and the references therein. 
Note that by comparison theorems this numerical Euler characteristic will be the same if Betti cohomology is replaced by $\ell$-adic \'etale cohomology as described in Section~\ref{sec-euler}. So, $E_c(X_{\GaSq},{\WW}_{\lambda})$ equals $\dim e_c(\mathcal X_{\GaSq},{\WW}_{\lambda})$ for any $\lambda$ of even weight.

We stratify our moduli space $X_{\Gamma}$, first into $X_{1,\Gamma}$, $X_{2,\Gamma}$ and $X_{3,\Gamma}$ as in Section~\ref{sec-moduliofcurves}. We then stratify further into strata $\Sigma_i(G)$, for $i=1,2,3$ and $G$ a finite group, 
consisting of the curves corresponding to points of $X_{i,\Gamma}$ whose automorphism group equals $G$. As usual, let $H^1(C,\CC)^{\rho}$ denote the $\rho$-eigenspace of $H^1(C,\CC)$ when acting by $\alpha$. Say that $g \in G$ has eigenvalues $\xi_1(g)$, $\xi_2(g)$ and $\xi_3(g)$ when acting on $H^1(C,\CC)^{\rho}$ of a curve $C \in \Sigma(G)$. Say furthermore that the induced action of $g \in G$ on the four ramification points of 
$C \in  \Sigma_i(G)$ where $\alpha$ acts by $\rho$ has $\nu_i$ cycles of length $i$. Note that this data will be constant on the strata, i.e. independent of the choice of $C \in  \Sigma_i(G)$. If $e_c \bigl(\Sigma_i(G) \bigr)$ denotes the usual compactly supported 
Euler characteristic of $\Sigma_i(G)$ then 
\begin{equation} \label{eq-numeric}
  E_c(X_{\GaSq},{\WW}_{\lambda})=\sum_{i=1}^3 \sum_{G} \frac{E_c \bigl(\Sigma_i(G) \bigr)}{|G|}  \sum_{g \in G}
  s_{\lambda} \bigl(\xi_1(g), \xi_2(g),\xi_3(g) \bigr) \mathbf p_{\nu}.
  \end{equation}
In the sections below, we will find the necessary information to compute this formula for any given $\lambda$.

In all cases below, the automorphism groups that appear are cyclic, so it is enough to give the three eigenvalues of a generator, which we will denote by $\phi$, together with its cycle type as a permutation of the four ramification points.
\begin{subsection}{Numerical Euler characteristics for smooth curves of genus $3$} One easily finds that there are four different cyclic automorphism groups in this case, namely the generic case $\mathbf C_3$ and then $\mathbf C_6$, $\mathbf C_9$ and $\mathbf C_{12}$. 
  The strata for $\mathbf C_9$ and $\mathbf C_{12}$ consist of a single point. For $\mathbf C_6$, the stratum is $1$-dimensional. Each isomorphism class can be represented by a curve of the form $f=x^4+ax^2+1$ with $a \in \mathbb C$. To make this a smooth curve we need $a^2 \neq 0,1$. Moreover, two curves of this form are isomorphic precisely if their coefficients $a$ differ by a sign. This shows that the Euler characteristic of this stratum equals $-1$. The whole moduli space $X_{1,\Gamma}$ is described in the end of Section~\ref{sec-PMS} and we find that it has Euler characteristic $1$ (compare with the point count in Section~\ref{sec-constant}). From this it follows that the Euler characteristic of the generic (open dense) stratum must be $0$.

We have an isomorphism $H^1(C,\CC) \cong  H^0(C,\Omega) \oplus \overline {H^0(C,\Omega)}$ and the subspace $H^1(C,\CC)^{\rho}$ has a basis consisting of $dx/y^2$, $xdx/y^2$ and the dual of $dx/y$. The eigenvalues of the action of $\phi$ on $H^1(C,\CC)^{\rho}$ can thus be found through its action on this basis.

We exemplify such a computation in the case of $\mathbf C_9$. The other cases are completely analogous. We have that $\phi$ applied to $dx/y^2$, $xdx/y^2$ and $dx/y$ equals $\rho dx/(\epsilon^2y^2)=\epsilon dx/y^2$, $\rho^2 dx/(\epsilon^2y^2)=\rho\epsilon dx/y^2$ and $\rho dx/(\epsilon y)=\epsilon^2 dx/y$. The action should be on the dual of $\rho dx/y$ and hence this eigenvalue becomes $\epsilon^{-2}=\rho^2\epsilon$. We see that the action of $\phi$ cyclically permutes three of the ramification points and fixes the fourth. 

The data to compute the contribution from the strata $X_{1,\GaSq}$ to equation $\eqref{eq-numeric}$, is found in the table below.

\bigskip
\vbox{
\centerline{\def\quad{\hskip 0.3em\relax}
\vbox{\offinterlineskip
\hrule
\halign{&\vrule#& \quad \hfil#\hfil \strut \quad  \cr
height2pt&\omit&&\omit&&\omit&&\omit&&\omit&&\omit& \cr
& $G$ && $f$ && $\Sigma_1(G)$ && $\phi(x,y)$ && $\xi_1(\phi),\xi_2(\phi),\xi_3(\phi)$ && $\mathbf p_{\nu}$ & \cr
height2pt&\omit&&\omit&&\omit&&\omit&&\omit&&\omit& \cr
\noalign{\hrule}
height2pt&\omit&&\omit&&\omit&&\omit&&\omit&&\omit& \cr
& $\mathbf C_3$ &&                            && $0$ && $(x, \rho y)$ && $  \rho,\rho,\rho $ && $\mathbf p_1^4 $ & \cr
& $\mathbf C_6$ && $x^4+ax^2+1$ && $-1$ && $(-x,\rho y)$ && $\rho,-\rho,-\rho$ && $\mathbf p_2^2 $ & \cr
& $\mathbf C_9$ && $x(x^3-1)$          && $1$ && $(\rho x, \epsilon y)$ && $\epsilon,\rho \epsilon, \rho^2 \epsilon$ && $\mathbf p_1 \mathbf p_3$ & \cr
& $\mathbf C_{12}$ && $x^4-1$     && $1$ && $(ix,\rho y)$ && $-\rho, i \rho, -i \rho $ && $\mathbf p_4$ & \cr
height2pt&\omit&&\omit&&\omit&&\omit&&\omit&&\omit& \cr
} \hrule}
}}
\noindent

\end{subsection}

\begin{subsection}{Numerical Euler characteristics for smooth genus $2$ curves joined with elliptic curves}
  Let us first consider smooth genus $2$ curves $C_{f_1,f_2}$ together with a marked root of $f_2$, which we place in infinity, see Section~\ref{sec-strata2}. Note that the hyperelliptic involution does not fix the marked point. There are two strata. The generic strata, with automorphism group $\mathbf C_3$, has a representative $f_1=x^2+ax+1$, $f_2=x$ for each $a\neq 0 \in \CC$. This gives an Euler characteristic equal to $-1$. The strata with automorphism group $\mathbf C_6$ consists of a point, given by $a=0$. In this case, the involution switches the two ramification points where $\alpha$ acts by $\rho$. In both cases, the subspace $H^1(C_{f_1,f_2},\CC)^{\rho}$ has a basis consisting of $xdx/y^2$ and the dual of $dx/y$. Computations as in the previous section gives the following table.

\bigskip
\vbox{
\centerline{\def\quad{\hskip 0.3em\relax}
\vbox{\offinterlineskip
\hrule
\halign{&\vrule#& \quad \hfil#\hfil \strut \quad  \cr
height2pt&\omit&&\omit&&\omit&&\omit&&\omit&&\omit& \cr
& $G$ && $f_1f_2^2$ && $\Sigma_1(G)$ && $\phi(x,y)$ && $\xi_1(\phi),\xi_2(\phi),\xi_3(\phi)$ && $\mathbf p_{\nu}$ & \cr
height2pt&\omit&&\omit&&\omit&&\omit&&\omit&&\omit& \cr
\noalign{\hrule}
height2pt&\omit&&\omit&&\omit&&\omit&&\omit&&\omit& \cr
& $\mathbf C_3$ && $(x^2+ax+1)x^2$  && $-1$ && $(x, \rho y)$ && $  \rho,\rho $ && $\mathbf p_1^2 $ & \cr
& $\mathbf C_6$ && $(x^2+1)x^2$ && $1$ && $(-x,\rho y)$ && $\rho,-\rho$ && $\mathbf p_2 $ & \cr
height2pt&\omit&&\omit&&\omit&&\omit&&\omit&&\omit& \cr
} \hrule}
}}
\noindent

The elliptic curves come with a marked ramification point at infinity and there is only one stratum consisting of the curve with equation $y^3=x^2+1$ and automorphism group $\ZZ/6\ZZ$ generated by the element $\phi:(x,y) \mapsto (-x,\rho y)$. The single 
eigenvalue of $\phi$ acting on $H^1_c(C,\CC)^{\rho}$ is $-\rho$. Furthermore $\phi^i$ permutes the ramification points if $i$ is odd and fixes them if $i$ is even. 

The possible automorphism groups of curves $C_{f_1,f_2,f_3} \in X_{2,\GaSq}$ are just products of the automorphism groups for $C_{f_1,f_2}$ and $C_{f_3}$. Moreover, we have that $H^1(C_{f_1,f_2,f_3},\CC)^{\rho}\cong H^1(C_{f_1,f_2},\CC)^{\rho} \oplus H^1(C_{f_3},\CC)^{\rho}$. So, piecing together the information above enables us to compute the contribution from $X_{2,\GaSq}$ to equation $\eqref{eq-numeric}$. 
\end{subsection}

\begin{subsection}{Numerical Euler characteristics for triples of elliptic curves} Triples of elliptic curves $C_{f_1,f_2,f_3^2}$ are described in Section~\ref{sec-strata3}. The ``backbone'', corresponding to $f_3^2$, will only have an automorphism group generated by $\alpha$, because two of its ramification points are fixed. Then there is an additional automorphism $\sigma$ by switching the two ``wings'' corresponding to $f_1$ and $f_2$. This gives rise to an automorphism group of the form $G=\mathbf C_3 \times (\mathbf C_6 \wr \mathfrak S_2)$, where $\wr$ denotes the wreath product. Say that $\alpha_i$ is an automorphism of $C_{f_i}$ for $i=1 \ldots3$ with eigenvalues $\tau_i$ acting on $H^1(C_{f_i},\CC)^{\rho}$, then these will also be the eigenvalues of the induced action on $H^1(C_{f_1,f_2,f_3^2},\CC)^{\rho} \cong H^1(C_{f_1},\CC)^{\rho} \oplus H^1(C_{f_2},\CC)^{\rho} \oplus H^1(C_{f_3^2},\CC)^{\rho} $. If the previous automorphism is composed with the involution in $\mathfrak S_2$, the eigenvalues will be $(\tau_1 \tau_2)^{1/2},-(\tau_1 \tau_2)^{1/2},\tau_3$. From this information one can compute the contribution from $X_{3,\GaSq}$ to equation $\eqref{eq-numeric}$.
\end{subsection}
\end{section}

\begin{section}{Our approach}  \label{sec-conj}  Here we will explain the approach that led us to the conjectures on
Picard modular forms in Section~\ref{sec-conj-lifts} and \ref{sec-conj-main}.

\begin{subsection}{Computer counts over finite fields}  \label{sec-computer} Using the results of Section~\ref{sec-counts} we computed 
$$\Tr\bigl(F_q ,e^{\mathrm{norm}}_{c,\mu}(\cX_{\GaSq} \otimes \bar k,\WW_{\lambda})\bigr)$$
for all prime powers $q\equiv_3 1$ such that $q \leq 67$, and all partitions $\lambda$ such that $n_1+n_2+2 \leq 40$. These traces always turned out to be in $\ZZ[\rho]$ as they should be, see Section~\ref{sec-traces}.

The conjectures of this section are based upon these computer counts (using the equality \eqref{eq-frobenii}). 
\end{subsection}

\begin{subsection}{Preview}
We are interested in calculating the trace of Hecke operators on the
$\mathfrak{S}_4$-isotypic components of the space 
$S_{j,k,l}(\Gamma_1[\sqrt{-3}])$ of cusp forms of given weight.
In the analogous case of the space $S_k$ of cusp forms of weight $k$ 
on ${\rm SL}(2,{\ZZ})$ one can use for even $k>0$ the formula
$$
{\rm Tr}(T(p),S_{k+2})={\rm Tr}(F_p,S[k+2])
$$
with $S[k+2]$ the Chow motive of dimension $2 \dim S_{k+2}$ 
associated by Scholl (\cite{Scholl}) to the space $S_{k+2}$
and $F_p$ denotes Frobenius. By Deligne's result the motive $S[k+2]$
can be found inside the cohomology of a local system $\mathbb V_k$ on the moduli
space ${\mathcal A}_1$ of elliptic curves
\begin{equation} \label{eq-A1}
e_c({\mathcal A}_1,\mathbb V_k)=-S[k+2]-1 
    \end{equation}
and we thus can use counts of points over finite fields to calculate the
trace of Frobenius on this cohomology and thus the traces of the Hecke
operators. 
Note that the $-1$ in the last formula comes from the Eisenstein
cohomology. By replacing $e_c$ by the inner cohomology $e_{!}$ 
we get rid of it. We remark that equation~\eqref{eq-A1} still holds for $k=0$ if we 
put $S[2]=-\LL-1$.

We want the analogue of this for our Picard modular case. Ideally,
in our case one would hope for the existence of a motive $S[j,k,l]$
of dimension $3\, \dim S_{j,k,l}(\Gamma[\sqrt{-3}])$ defined over $F$ 
such that
$$
{\rm Tr}(T(\nu),S_{j,k,l}(\Gamma_1[\sqrt{-3}]) =
{\rm Tr}(F_{\nu},S[j,k,l])
$$
with $T(\nu)$ the Hecke operator for any $\nu \in \ZZ[\rho]$ such that $\nu \equiv_3 1$ and $\nu \bar \nu=p$ a prime and $F_{\nu}$ is the Frobenius as in Section~\ref{sec-traces} (see also  Section~\ref{sec-notation}).
Moreover, $S[j,k,l]$ should appear $\mathfrak{S}_4$-equivariantly in the second inner cohomology group of the corresponding local system on our moduli space, see Proposition~\ref{prop-bggfilt}.

However, one must expect deviations from this due to the fact that there will be liftings from ${\rm U}(1)$ and ${\rm GL}(2)$. In Section~\ref{sec-conj-lifts} we make precise conjectures on all such lifts. To any of these lifts $f$ that is a Hecke eigenform we can (conjecturally) associate a reducible $3$-dimensional Galois representation $M_f$ defined over $F$ such that 
$$\Tr(T(\nu),f)=\Tr(F_{\nu},M_f).$$
But for most of these in we only see a contribution from a $1$-dimensional or $2$-dimensional part of $M_f$ in the \'etale cohomology of our local systems.
After removing these cusp forms we are left with a (conjectural) Hecke-invariant subspace of what we call \emph{genuine} Picard modular forms and that we denote by $S_{j,k,l}^{\rm gen}(\Gamma_1[\sqrt{-3}])$. So, to each Hecke eigenform in this space there should be a $3$-dimensional Galois representation appearing in the cohomology and its (normalized) Hodge degrees in Betti cohomology should be $(j+k-1,0)$, $(j+1,k-2)$ and $(0,j+k-1)$. 

For any $n_1 \equiv_3 n_2$ and $n_1 \equiv_2 n_3$, our goal is to have a formula analogous to equation~\eqref{eq-A1}, namely,
\begin{equation} \label{eq-main}
e_{c}^{\rm norm}({\mathcal X}_{\GaSq},{\WW}_{\lambda})=
\breve{S}[n(\lambda)]+e_{\rm extr}(\lambda)
    \end{equation}
equivariant for the action of $\mathfrak{S}_4$ and with $e_{\rm extr}(\lambda)$
coming from endoscopic groups such that (except for the case $n_1=n_2=n_3=0$) 
\begin{equation} \label{eq-main-tr}
{\rm Tr}(T(\nu),S_{n(\lambda)}^{\rm gen}(\GaSq))
={\rm Tr}(F_{\nu}, \breve{S}[n(\lambda)])\, .
    \end{equation}
for any $\nu \equiv_3 1 $ with norm $p$ a prime 
and with $n(\lambda)=(n_2,n_1+3,n_2+n_3-1)$.
Ideally equation~\eqref{eq-main} should be an equality of motives,
but we can also treat it as an equality of bookkeeping devices for
calculating traces as in equation~\eqref{eq-main-tr}. To make equation~\eqref{eq-main} hold also in the case $n_1=n_2=n_3=0$ we put $\breve{S}[0,3,2]=(\LL^2+\LL+1)\mathbf s_4$, see Proposition~\ref{prop-const}.

Analogously, for the numerical Euler characteristic we should have 
$$\dim_{\fS_4} S_{n(\lambda)}^{\rm gen}(\GaSq) = \frac{1}{3} \bigl(E_{c}(X_{\GaSq},{\WW}_{\lambda})-E_{\rm extr}(\lambda) \bigr).
$$

This is formulated in Main conjecture~\ref{conj-main} and Conjecture~\ref{conj-dim}. All of the conjectures of the following sections was found by analyzing the data described in Section~\ref{sec-computer} together with all of the knowledge acquired in the previous sections. 
\end{subsection}

\begin{subsection}{Notation for local systems and Hecke operators} \label{sec-not-locheck} In the following sections we will use a slightly different notation for our local systems $\lambda=n_1\gamma_1+n_2\gamma_2+n_3\gamma_3$ writing $\lambda=(a+i,i,-b+i)$ with $n_1=a$, $n_2=b$, $n_3=-b+i$. Assume from now on that $a \equiv_3 b$ and put $n(\lambda) =(b,a+3,i+2)$. So, if $i \equiv_2 a+b$ then we expect a Galois representation (of dimension $1$, $2$ or $3$) corresponding to each eigenform in $S_{n(\lambda)}(\GaSq)$ appearing (with positive coefficient) in $e_c({\mathcal X}_{\GaSq},{\WW}_{\lambda})$. If $i \not \equiv_2 a+b$ we expect the same contribution but with the $\fS_4$-action twisted by $\mathbf s_{1^4}$, compare Proposition~\ref{prop-det3}.

By $\nu$ we will always mean an element of $\ZZ[\rho]$ such that $\nu \equiv_3 1$ and $\nu\bar{\nu}=p$ with $p$ a prime. To such an element there is a corresponding Hecke operator acting on $S_{n(\lambda)}(\GaSq)$ that we denote by $T(\nu)$, see Section~\ref{sec-heckeoperator}.

Finally, $\mu$ will denote a partition of $4$.
\end{subsection}
\end{section}

\begin{section}{Conjectured Lifts} \label{sec-conj-lifts}
By analyzing the data described in Section~\ref{sec-computer} we see Galois representations in the cohomology of our local systems that seem to be associated to lifts of modular forms from ${\rm U}(1)$ or ${\rm GL}(2)$. The conjectures of this section are based upon these examples. 

Each such lifted eigenform then contributes a piece of dimension $1$, $2$ or $3$ to the cohomology. It is important for us to identify the $1$-dimensional  and $2$-dimensional pieces in order to be left with the `genuine' Picard modular forms with associated irreducible $3$-dimensional Galois representations.

We will use the notation from Section~\ref{sec-not-locheck}.

\begin{subsection}{Notation for elliptic modular forms} \label{sec-not-elliptic}
We write $\Gamma_0(N)$ and $\Gamma_1(N)$ for the usual subgroups of 
${\rm SL}(2,{\ZZ})$ and we write $S_k(\Gamma_0(N))$
for the space of cusp forms of weight $k$ on $\Gamma_0(N)$ and
$S_k^{\rm new}(\Gamma_0(N))$ for the subspace of new forms.
The dimensions of these spaces will be denoted by $s_k(\Gamma_0(N))$ and $s^{\rm new}_k(\Gamma_0(N))$. In our case we will have level $N=3$ or $N=9$. 

We note for level $3$ and even $k>2$ the dimension formula
$$
s_k^{\rm new}(\Gamma_0(3))=
\begin{cases} 
\lfloor {\frac{k}{6}} \rfloor +1 & \text{if $k\equiv_{12} \, \pm 2 $} \cr
\lfloor {\frac{k}{6}} \rfloor-1 & \text{if $k\equiv_{12} \, 0 $} \cr
\lfloor {\frac{k}{6}} \rfloor & \text{else.}\cr
\end{cases}
$$
For odd $k$ we split the space $S_k(\Gamma_1(3))$ as
$$
S_k(\Gamma_1(3))=S_k^{-}(\Gamma_1(3))\oplus S_k^{+}(\Gamma_1(3))
$$
into the $\pm$-eigenspace for the Fricke operator
$W_3$. For odd $k\geq 3$ the dimension of $S_k^{\pm}(\Gamma_1(3))$ is given by 
$$
s_k^{-}(\Gamma_1(3))=\lfloor \frac{k-3}{6} \rfloor, \quad   
s_k^{+}(\Gamma_1(3))=
\begin{cases} \lfloor\frac{k-3}{6} \rfloor +1 &   \text{if $k \equiv_6 1$}, \\
 \lfloor \frac{k-3}{6} \rfloor &  \text{else.} \\ 
\end{cases}
$$

Inside the space $S_k^{\rm new}(\Gamma_0(9))$ 
we consider the eigenforms $f$ for which the
twist $f_{\chi}$, with $\chi$ the non-trivial character modulo $3$,
is also an eigenform in $S_k^{\rm new}(\Gamma_0(9))$. 
These generate a subspace $\Sigma_k \subset S_k^{\rm new}(\Gamma_0(N))$.
It may happen
that $f=f_{\chi}$ and then $f$ will have
Hecke eigenvalues $a(p)=0$ for $p\equiv_3 2$. This happens
for $k\equiv_3 1$, and thus $k\equiv_6 4 $.
A Hecke operator $T(p)$ with $p\equiv_3 2 $ will have characteristic
polynomial on this space that is a polynomial in $x^2$ or $x$ times a
polynomial in $x^2$. 
We decompose $\Sigma_k$ for even $k$
$$
\Sigma_k= S_k^{-}(\Gamma_0(9))\oplus S^{+}_k(\Gamma_0(9))
$$
where $S^{\pm}_k(\Gamma_0(9))$ is the $\pm$-eigenspace for the twisting
$f \mapsto f_{\chi}$.
We found experimentally the dimension formulas for 
$\Sigma_k^{\pm}(\Gamma_0(9))$
$$
s_k^{-}(\Gamma_0(9))=\begin{cases} \lfloor \frac{k+4}{12}\rfloor -1 &
k \equiv_{12} 10 \\
\lfloor \frac{k+4}{12} \rfloor & \text{else} 
\end{cases}
$$
and
$$
s_k^{+}(\Gamma_0(9))=\begin{cases} \lfloor \frac{k+4}{12}\rfloor +1 &
k \equiv_{12} 4  \\
\lfloor \frac{k+4}{12}\rfloor & \text{else.} 
\end{cases}
$$

For odd $k$ and $\chi$ the quadratic character modulo $3$ we will
use the space $S_k^{\rm new}(\Gamma_0(9),\chi)$ with dimension
$$
s_k^{\chi}(\Gamma_0(9))= \lfloor \frac{k+1}{6}\rfloor \, .
$$
Finally, another space we will use occurs for  $k\equiv_6 1 $.
We consider the subspace $S^{\chi}_k(\Gamma_0(9))$ of
$S_k^{\rm new}(\Gamma_0(9),\chi)$
with $\chi$ the quadratic character modulo $3$,
generated by eigenforms such that both $f$ and its twist $f_{\chi}$
both belong to $S_k^{\rm new}(\Gamma_0(9),\chi)$ and are distinct.
\end{subsection}
\begin{subsection}{One-dimensional lifts} \label{sec-lifts1}
Here we present the conjectured lifts of modular forms from $\rm U(1)$. For each of these lifts we see a $1$-dimensional piece in the cohomology of the corresponding local system $\WW_{\lambda}$ with trace of the Frobenius $F_{\nu}$ equal to $\nu^{a+b+2}$, compare Definition~\ref{def-hol1}.

\noindent{\bf Case 1.} For $a\equiv_6 3 $
there is a theta series
$\zeta_{a+3} \in S_{0,a+3,1}(\GaSq)$
with $\mathfrak{S}_4$-representation $\mathbf s_{1^4}$.
This eigenform is constructed in \cite[Prop.\ 2]{Fi}.
For $a=3$ we find the form $\zeta$. The Hecke
eigenvalue of $T(\nu)$
is given by
$$
\nu^{a+2}+(p+1)\bar{\nu}^{a+1} \, ,
$$
see \cite[Prop.\ 9]{Fi}. 

In both of the following two cases the lift in $S_{b,a+3,l}(\GaSq)$ will have a Hecke eigenvalue of $T(\nu)$ given by
$$
\nu^{a+b+2}+  \nu^{b+1}\bar{\nu}^{a+1} + \bar{\nu}^{a+b+2}\, .
$$

\noindent
{\bf Case 2.} For $(a,b)\equiv_6 (5,2) $ we find an eigenform in $S_{b,a+3,2}(\GaSq)$ with $\mathfrak{S}_4$-representation $\mathbf s_{4}$. 

The first example is found in $S_{2,8,2}(\GaSq)$ and it is described in \cite[Example 16.7]{C-vdG}. 

\noindent
{\bf Case 3.} We conjecture that there is a lift
$$S^{-}_{b+2}(\Gamma_0(9)) \to S_{b,a+3,1-a}(\GaSq)$$ 
with representation $\mathbf s_4$. 
\end{subsection}
\begin{subsection}{Two-dimensional lifts} \label{sec-lifts2}
The Hecke eigenvalue of $T(p)$ for an elliptic eigenform $f$ will be denoted by $a_p(f)$. For each lift of an elliptic eigenform $f$, i.e. a lift from $\rm{GL}(2)$, described in this section we see a $2$-dimensional piece in the cohomology of the corresponding local system $\WW_{\lambda}$ with trace of Frobenius $F_p$ equal to $a_p(f)\nu^{b+1}$ in all cases but the first, where we just see $a_p(f)$, compare Definition~\ref{def-hol2}.

\noindent
{\bf Case 1.} 
We conjecture that there is a lift
$$
S_{a+b+3}^{-}(\Gamma_0(9)) \to S_{b,a+3,2}(\GaSq)
$$
with $\mathfrak{S}_4$-representation $\mathbf s_4$ and the lift of an eigenform $f$ will have Hecke eigenvalue of $T(\nu)$ given by
$$
a_p(f)+\nu^{b+1}\bar{\nu}^{a+1} \, .
$$
An example is given by the lift from 
$S_8^{-}(\Gamma_0(9))$ to $S_{1,7,2}(\GaSq)$; the generating lift is $\Psi_1$  
described in \cite[p.\ 44]{C-vdG}.
Another example is the lift from $S_{12}^{-}(\Gamma_0(9))$ to 
$S_{0,12,2}(\GaSq)={\CC} \zeta^2$, 
already considered by Finis in \cite[p.\ 178]{Fi}.

In the following five cases we will have a lift of an elliptic eigenform $f$ to $S_{b,a+3,l}(\GaSq)$ and it will have a Hecke eigenvalue of $T(\nu)$ given by
$$
a_p(f) \nu^{b+1}+\bar{\nu}^{a+b+2} \, .
$$

\noindent
{\bf Case 2a.} We conjecture that there is a lift
$$
S_{a+2}(\Gamma_0(1)) \to S_{b,a+3,b}(\GaSq)
$$
with $\mathfrak{S}_4$-representation $\mathbf s_{2,1^2}+\mathbf s_{1^4}$.

The first example is the lift of $\Delta \in S_{12}(\Gamma_0(1))$ to a form
in $S_{1,13,1}(\GaSq)$ given in the table on page $43$ of
\cite{C-vdG}. Lifts of this type were constructed by Kudla in \cite[Thm.\ 5.3]{Kudla1}.

\noindent
{\bf Case 2b.} We conjecture that there is a lift
$$
S_{a+2}^{-}(\Gamma_1(3)) \to S_{b,a+3,b}(\GaSq)
$$
with $\mathfrak{S}_4$-representation $\mathbf s_4+ \mathbf s_{3,1}$. 

The first example is the lift from $S_9^{-}(\Gamma_1(3))$ 
to $S_{1,10,1}(\GaSq)$ which appears in the table on p.\ 43 of \cite{C-vdG}. Lifts of this type were constructed by Kudla as in Case 2a.

\noindent
{\bf Case 3.} We conjecture that there is a lift
$$
S_{a+2}^{\rm new}(\Gamma_0(3)) \to S_{b,a+3,b}(\GaSq)
$$
with $\mathfrak{S}_4$-representation $\mathbf s_{2,1^2}$. 

The lift of $(\eta(3\tau)\eta(\tau))^6 \in S_6(\Gamma_0(3))$
to an element of $S_{1,7,1}(\GaSq)$ is an example.

\noindent{\bf Case 4.} We conjecture that there is lift
$$
S_{a+2}^{-}(\Gamma_0(9)) \to S_{b,a+3,b}(\GaSq)
$$
with $\mathfrak{S}_4$-representation $\mathbf s_{3,1}$

An example is the form $F_{9,2}$ of Finis \cite[p.\ 151]{Fi} found in $S_{0,6,0}(\GaSq)$. 

\noindent
{\bf Case 5.} We conjecture that there is a lift
$$
S_{a+2}^{\chi}(\Gamma_0(9)) \to S_{b,a+3,b}(\GaSq)
$$
with $\mathfrak{S}_4$-representation $\mathbf s_{2,2}$.

The first example is the lift from $S^{\chi}_5(\Gamma_0(9))$ 
to $S_{3,6,0}(\GaSq)$ described in \cite[p.\ 50]{C-vdG}.
\end{subsection}
\begin{subsection}{Genuine Picard modular forms}\label{sec-genuine}
We conjecture that the space $S_{n(\lambda)}(\GaSq)$ decomposes into a Hecke-invariant direct sum of a subspace generated by all the lifts described in Section~\ref{sec-lifts1} and \ref{sec-lifts2}
and a subspace $S_{n(\lambda)}^{\rm gen}(\GaSq)$ which we call the space of genuine Picard modular forms. 
\end{subsection}
\begin{subsection}{Three-dimensional lifts}\label{threedimlifts}
As above, the Hecke eigenvalue of $T(p)$ for an elliptic eigenform $f$ of weight $k$ will be denoted by $a_p(f)$. In the cohomology of our local systems we see examples of $3$-dimensional pieces of the form ${\rm Sym}^2(M_f)$, with $M_f$ the motive associated to the elliptic eigenform, and with Hecke eigenvalue for $T(\nu)$ equal to $a_p(f)^2-p^{k-1}$.

We list these (conjectural) examples of lifts, which should be eigenforms in $S_{n(\lambda)}^{\rm gen}(\GaSq)$, without formulating more general conjectures.

\begin{itemize}
    \item The eigenform $f \in S_5^{\chi}(\Gamma_0(9))$ lifts to an eigenform in $S_{3,6,2}(\GaSq)$ with  $\mathfrak{S}_4$-representation $\mathbf s_{2,1^2}+\mathbf s_{1^4}$.
\item The eigenform $f \in S_6(\Gamma_0(3))$ lifts to an eigenform in $S_{4,7,2}(\GaSq)$ with  $\mathfrak{S}_4$-representation $\mathbf s_{3,1}$. 
\item The eigenform $f \in S_7^{\chi}(\Gamma_0(9))$ lifts to an eigenform in $S_{5,8,2}(\GaSq)$ with $\mathfrak{S}_4$-representation $\mathbf s_{1^4}$. 
\item The eigenform $f \in S_8^{-}(\Gamma_0(9))$ lifts to an eigenform in $S_{6,9,2}(\GaSq)$ with  $\mathfrak{S}_4$-representation $\mathbf s_{4}$. 
\item The eigenform $f \in S_9^{-}(\Gamma_1(3))$ lifts to an eigenform in $S_{7,10,2}(\GaSq)$ with  $\mathfrak{S}_4$-representation $\mathbf s_{4}$. 
\end{itemize}
\end{subsection}
\end{section}

\begin{section}{Conjectures on the cohomology of local systems} 
\label{sec-conj-main}
Recall the notation from Section~\ref{sec-not-locheck}. 
We will consider the normalized motivic (in the sense of Definition~\ref{def-norm}) Euler characteristic with compact support
$$
e_c(\lambda):=e_c^{\rm norm}(\mathcal{X}_{\GaSq}, {\WW}_{\lambda})
$$
and the inner variant of this $e_{!}(\lambda)$. Define also $e_{c,\mu}(\lambda)$ as in Section~\ref{modloc}. Put also $E_c(\lambda):=E_c(X_{\GaSq}, {\WW}_{\lambda})$, see Section~\ref{sec-numeric}.

Put $\lambda'=(b-i,-i,-a-i)$, so $(\lambda')'=\lambda$, and note by Proposition~\ref{prop-dual} that we have a duality 
$$e_c(\lambda')=\overline{e_c(\lambda)}.$$

\begin{subsection}{The main conjecture} \label{sec-extra}
In this section we formulate the main conjectures of the article. First we present a series of definitions of different contributions to the cohomology. 

The contributions making up the \emph{extraneous} contribution $e_{\rm extr}(\lambda)$, see Definition~\ref{def-S}, will consist of Hecke characters $\LL^{i,j}$, see Section~\ref{sec-hecke}, and motives from elliptic modular forms. Namely, to all the spaces of elliptic modular forms $S_k(\Gamma_0(1))$, $S^{\mathrm{new}}_k(\Gamma_0(3))$, $S^{\pm}_k(\Gamma_1(3))$, $S^{\pm}_k(\Gamma_0(9))$ and $S^{\chi}_k(\Gamma_0(9))$ introduced in Section~\ref{sec-not-elliptic}
we have corresponding motives (of twice their dimension)  $S[\Gamma_0(1),k]$, $S^{\mathrm{new}}[\Gamma_0(3),k]$, $S^{\pm}_k[\Gamma_1(3),k]$, $S^{\pm}[\Gamma_0(9),k]$ and $S^{\chi}[\Gamma_0(9),k]$ with the property that for any prime $p$ the trace of the Hecke operator $T(p)$ on the space of modular forms equals the trace of Frobenius $F_{\nu}$ on the corresponding motive.  

\begin{definition}
We define the $\mathfrak{S}_4$-representations
$$
\alpha_{j}= \begin{cases} \mathbf s_{4} & j\equiv_6 0 \\
\mathbf s_{1^4} & j \equiv_6 3  \\ 
0 & \text{else} \\
 \end{cases}
\quad \text{ and }\quad
\beta_{j}= \begin{cases} \mathbf s_{3,1} & j\equiv_6 0\\
\mathbf s_{2,1^2}  & j \equiv_6 3\\
0 & \text{else.} \\  
 \end{cases}
$$
and we define $\delta_j$ to be $1$ if $j \equiv_6 0$ and $0$ else. 
\end{definition}

Proposition~\ref{prop-eis} gives a formula for the (normalized) Eisenstein cohomology $e_{\rm Eis}(\lambda)=e_c(\lambda)-e_{!}(\lambda)$ for all regular $\lambda$. Proposition~\ref{prop-eis} is only formulated in Betti cohomology, but Harder's result is actually motivic in the sense of Section~\ref{modloc}. We generalize the formula in the following definition. 

\begin{definition} \label{def-eisp}
We define $e_{\rm Eis}'(\lambda)$ 
as
\begin{multline*}
-(\alpha_i+\beta_i)\, \LL^{0,0}+ (\alpha_{i-b-1}+\beta_{i-b-1}) \, \LL^{b+1,0}+
(\alpha_{i+a+1}+\beta_{i+a+1})\, \LL^{0,a+1}  \\
-\begin{cases}
\alpha_{i-1}(\LL+1)\,  \LL^{b+1,0} & \mathrm{if} \; a=0 \; \mathrm{and} \; b\equiv_2 0\\ \alpha_{i+1}(\LL+1)\,  \LL^{0,a+1} &\mathrm{if } \; b=0 \; \mathrm{and} \; a\equiv_2 0 \,
\end{cases}
\end{multline*}

\end{definition}

All the following contributions should be found in $e_!(\lambda)$. Note that the Hodge degrees of all these contributions, for a regular local system, are either $(a+b+2,0)$, $(b+1,a+1)$ or $(0,a+b+2)$, as they need to be, see Section~\ref{sec-bgg}. Note also that the sign is always positive, which it should be for regular local systems since in that case only the second inner cohomology group can be non-zero, see Section~\ref{sec-bgg}.

\begin{definition}
We define the central endoscopic term $e_{\rm ce}(\lambda)$ as 
$$
\begin{aligned}
\alpha_i\left( s_{a+b+3}(\Gamma_0(1))+s_{a+b+3}^{-}(\Gamma_1(3))\right) \, \LL^{b+1,a+1} & \\
+\beta_i \left( s_{a+b+3}(\Gamma_0(1))+s_{a+b+3}^{\rm new}(\Gamma_0(3))+s_{a+b+3}^{+}(\Gamma_1(3))\right) \, \LL^{b+1,a+1} & \\
+\beta_{i+3}\, s_{a+b+3}^{+}(\Gamma_0(9))\, \LL^{b+1,a+1} \, + \,  
(\delta_i+\delta_{i+3}) \, s[2,2] \, s_{a+b+3}^{\chi}(\Gamma_0(9))\, \LL^{b+1,a+1}\,  & . \\
+ \text{$\alpha_i \, \LL^{b+1,a+1} \;$ if $(a,b)\equiv_6 (5,5)$.} \\ 
\end{aligned}
$$
\end{definition}

The following two contributions are connected to the lifted (holomorphic) forms in $S_{b,a+3,i+2}(\GaSqOne)$ described in the conjectures of Section~\ref{sec-conj-lifts}. 

\begin{definition} (holomorphic $1$-dimensional lifts) \label{def-hol1}
We define $e_{\rm 1\ell}(\lambda)$ as
$$
\begin{aligned}
\alpha_{i+a+4} s^{-}_{b+2}(\Gamma_0(9)) \, \LL^{a+b+2,0}+
\begin{cases}
\alpha_{i+3} \, \LL^{a+b+2,0} & \text{if $(a,b)\equiv_6 (5,2)$,} \\
\alpha_{i+4} \, \LL^{a+2,0} & \text{if $a\equiv_6 3$ and $b=0$.}\\
\end{cases} 
\end{aligned}
$$
\end{definition}

\begin{definition} (holomorphic $2$-dimensional lifts) \label{def-hol2}
We define $e_{\rm 2\ell}(\lambda)$ as
$$
\begin{aligned}
\alpha_{i+3} S^{-}[\Gamma_0(9),a+b+3] & \\
+ \alpha_{i-b-1} \left( S[\Gamma_0(1),a+2]+S^{-}[\Gamma_1(3),a+2] \right) \LL^{b+1,0} &\\
\beta_{i-b-1}\left( S[\Gamma_0(1),a+2]+S^{\rm new}[\Gamma_0(3),a+2]+
S^{-}[\Gamma_1(3),a+2]\right) \LL^{b+1,0} & \\
+\beta_{i-b+2} S^{-}[\Gamma_0(9),a+2]\,  \LL^{b+1,0} &\\
+(\delta_{i-b-1}+\delta_{i-b+2})s[2,2] \, S^{\chi}[\Gamma_0(9),a+2] \, \LL^{b+1,0} & .\\
\end{aligned}
$$
\end{definition}

Let $\bar{S}_{j,k,l}(\GaSq)$ be isomorphic as a vector space to $\bar{S}_{j,k,l}(\GaSq)$ but such that the Hecke operator $T(\nu)$ acts on $\bar{S}_{j,k,l}(\GaSq)$ as $T(\bar \nu)$ acts on $S_{j,k,l}(\GaSq)$.
The following two contributions (i.e. Definition~\ref{def-anthol1} and Definition~\ref{def-anthol2}) are connected to the lifted anti-holomorphic forms. In other words connected to the lifted (holomorphic) forms in $\bar{S}_{a,b+3,-i-1}(\GaSq)$ except for the contribution of the form $\alpha_{i+3} S^{-}[\Gamma_0(9),a+b+3]$. Compare this with Proposition~\ref{prop-dual}.

\begin{definition} \label{def-anthol1} (anti-holomorphic $1$-dimensional lifts)
We define $e_{\rm \overline{1\ell}}(\lambda)$ as
$$
\begin{aligned}
\alpha_{i-b+2} s^{-}_{a+2}(\Gamma_0(9)) \, \LL^{0,a+b+2} +
\begin{cases}
\alpha_{i+3} \, \LL^{0,a+b+2} & \text{if $(a,b)\equiv_6 (2,5)$,}\\
\alpha_{i-2} \, \LL^{0,b+2} & \text{if  $a=0$ and $b\equiv_6 3 $.} \\
\end{cases}
\end{aligned}
$$
\end{definition}

\begin{definition} \label{def-anthol2} (anti-holomorphic $2$-dimensional lifts)
We define $e_{\rm \overline{2\ell}}(\lambda)$ as
$$
\begin{aligned}
\alpha_{i+a+1} \left( S[\Gamma_0(1),b+2]+S^{-}[\Gamma_1(3),b+2]\right) \, \LL^{0,a+1} & \\
+\beta_{i+a+1} \left( s[\Gamma_0(1),b+2]+S^{\rm new}[\Gamma_0(3),b+2]
+S^{-}[\Gamma_1(3),b+2]\right) \, \LL^{0,a+1} & \\
+ \beta_{i+a+4} \, S^{-}[\Gamma_0(9),b+2] \, \LL^{0,a+1} & \\
+ (\delta_{i+a+1}+\delta_{i+a+4}) s[2,2] \, S^{\chi}[\Gamma_1(9),b+2] \, \LL^{0,a+1} & .\\
\end{aligned}
$$
\end{definition}

\begin{definition} \label{def-S} We put 
$$
e_{\rm extr}(\lambda)=
e'_{\rm Eis}(\lambda) +e_{\rm ce}(\lambda)+e_{1\ell}(\lambda)+e_{2\ell}(\lambda)+e_{\overline{1\ell}}(\lambda)+e_{\overline{2\ell}}(\lambda)
$$
and we define $e_{\rm extr,\mu}(\lambda)$ as in Section~\ref{modloc}.
 \end{definition}

\begin{mainconjecture} \label{conj-main}
We conjecture that for any $\lambda \neq (0,0,0)$, $\nu$ and $\mu$, if $i \equiv_2 a+b$ then, 
$$
{\rm Tr}(T({\nu}),S^{\rm gen}_{n(\lambda)}(\GaSq)^{\mu}) ={\rm Tr}(F_{\nu},e_{c,\mu}(\lambda)-e_{\rm extr,\mu{}{}}(\lambda))\,.
$$
\end{mainconjecture}

Assuming that the conjecture is true, this gives a possibility to compute the trace of the Hecke operators $T({\nu})$ by counts of points over ${\FF}_p$ as described in Section~\ref{sec-counts}.

\begin{remark} \label{rmk-dual} Note that $e_{\rm extr}(\lambda')=\overline{e_{\rm extr}(\lambda)}$. 

The main conjecture then implies that there is a Hecke-invariant isomorphism between $S_{j,k+3,l}^{\rm gen}(\Gamma[\sqrt{-3})^{\mu}$ and $\bar S_{k,j+3,1-l}^{\rm gen}(\Gamma[\sqrt{-3})^{\mu}$. 
Note that in general 
$$\dim S_{j,k+3,l}(\Gamma[\sqrt{-3}])\neq \dim S_{k,j+3,1-l}(\Gamma[\sqrt{-3}]).$$
According to Theorem \ref{dim-Ajkl} the difference of dimensions is equal
to 
$$
\begin{matrix}
&& l \equiv_3 0 & l\equiv_3 1 & l\equiv_3 2 \\
j\equiv_3 0 && k-1 & 1-j & 0 \\
j\equiv_3 1 && 1-j & k-1 & 0 \\
j\equiv_3 2 && 0 & 0 & k-j \\
\end{matrix}
$$
which is a consequence of the presence lifts.
\end{remark}

\begin{definition}
We define $E_{\rm extr}(\lambda)$ as $e_{\rm extr}(\lambda)$ but replacing 
\begin{itemize}
    \item[$\star$] $\LL^{i,j}$ by $1$ for any $i,j$
    \item[$\star$] $S[\Gamma_0(1),k]$, $S^{\mathrm{new}}[\Gamma_0(3),k]$, $S^{\pm}[\Gamma_1(3),k]$, $S^{\pm}[\Gamma_0(9),k]$,  $S^{\chi}[\Gamma_0(9),k]$ respectively by 
$2\dim S_k(\Gamma_0(1))$, $2\dim S^{\mathrm{new}}_k(\Gamma_0(3))$, $2\dim S^{\pm}_k(\Gamma_1(3))$, $2\dim S^{\pm}_k(\Gamma_0(9))$, $2\dim S^{\chi}_k(\Gamma_0(9))$ for any $k$,
\end{itemize}
in the formulas of Definition~\ref{def-eisp} until Definition~\ref{def-anthol2}. This gives an element in the Galois group of representations of $\fS_4$.
\end{definition}

\begin{conjecture} \label{conj-dim} We conjecture that for any $\lambda$, 
$$\dim_{\fS_4} S_{n(\lambda)}^{\rm gen}(\GaSq) = \frac{1}{3} \bigl(E_{c}(\lambda)-E_{\rm extr}(\lambda) \bigr).
$$
\end{conjecture}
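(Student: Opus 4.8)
The plan is to derive Conjecture~\ref{conj-dim} as the dimension-level shadow of Main Conjecture~\ref{conj-main}, and then to indicate what would be needed to make such an argument unconditional (or, failing that, how one might instead verify the purely numerical identity directly). First recall, as noted in Section~\ref{sec-numeric}, that for $\lambda$ of even weight the comparison isomorphism between Betti and $\ell$-adic cohomology gives $E_c(\lambda)=\dim e_c(\lambda)$, and that by Proposition~\ref{prop-det3} this extends to all $\lambda$ once the $\fS_4$-twist by $\mathbf s_{1^4}$ is taken into account; so it suffices to work with $\dim e_c^{\mathrm{norm}}(\cX_{\GaSq},\WW_{\lambda})$. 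One then applies the dimension homomorphism $\dim\colon K_0(\mathrm{Gal}^{\fS_4}_F)\to\ZZ[\fS_4]$ (sending each $\mathbf s_\mu$-multiplicity to the $\mathbf s_\mu$-multiplicity of the underlying $\ZZ[\fS_4]$-module) to both sides of the conjectural identity~\eqref{eq-main}, namely $e_c(\lambda)=\breve S[n(\lambda)]+e_{\rm extr}(\lambda)$. By the very construction of $E_{\rm extr}(\lambda)$ — each Hecke-character motive $\LL^{i,j}$ being $1$-dimensional and each elliptic-modular motive $S[\,\cdot\,,k]$ having dimension twice that of the corresponding cusp space — one has $\dim e_{\rm extr}(\lambda)=E_{\rm extr}(\lambda)$. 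Thus Conjecture~\ref{conj-dim} becomes equivalent to the identity $\dim\breve S[n(\lambda)]=3\,\dim_{\fS_4}S^{\rm gen}_{n(\lambda)}(\GaSq)$ in $\ZZ[\fS_4]$.

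To obtain this last identity I would use the expected shape of the Galois representation attached to a genuine Picard modular eigenform. As discussed in Section~\ref{sec-conj} and Section~\ref{sec-genuine}, each such eigenform $f$ should contribute a three-dimensional piece to the second inner cohomology group, with normalized Hodge degrees $(j+k-1,0)$, $(j+1,k-2)$, $(0,j+k-1)$ when $S_{n(\lambda)}=S_{j,k,l}(\GaSq)$, and the assignment $f\mapsto M_f$ is $\fS_4$-equivariant because the action of $\fS_4\cong\GaSq/\Gamma$ on cohomology commutes with the Hecke action. Combining this with Proposition~\ref{prop-bggfilt} (which exhibits $S_{n(\lambda)}$ as the first step of the Hodge filtration of $H^2_c$) and the regularity vanishing $H^i_!=0$ for $i\neq2$ recalled at the end of Section~\ref{sec-bgg}, one reads off that for each $\mu$ the $\mathbf s_\mu$-multiplicity of $\breve S[n(\lambda)]$ is exactly three times that of $S^{\rm gen}_{n(\lambda)}(\GaSq)$. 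The finitely many non-regular weights — in particular $\lambda=(0,0,0)$, where by construction $\breve S[0,3,2]=(\LL^2+\LL+1)\mathbf s_4$, and the exceptional $k=0$ cases — would be handled separately using Proposition~\ref{prop-const} and Remark~\ref{rem-k0}.

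The genuine difficulty is that this argument is only as strong as its conjectural inputs: it presupposes (i) the existence and precise properties (Hecke-eigenvalue matching, Hodge degrees, three-dimensionality and $\fS_4$-type) of the Galois representations attached to genuine Picard modular forms, which requires the cohomology theory of Shimura varieties for $\mathrm{GU}(2,1)$ and Rogawski's classification; (ii) the correctness of the endoscopic and lift contributions catalogued in Sections~\ref{sec-lifts1}--\ref{sec-lifts2}, i.e.\ a stabilization of the trace formula in this setting; and (iii) that $S_{n(\lambda)}(\GaSq)$ really splits Hecke-equivariantly as the span of the listed lifts together with $S^{\rm gen}_{n(\lambda)}(\GaSq)$, so that the left-hand side of Conjecture~\ref{conj-dim} is well-defined at all. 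A more modest but genuinely attainable target is the \emph{purely numerical} identity: compute $E_c(\lambda)$ from the stratification formula~\eqref{eq-numeric}, compute $E_{\rm extr}(\lambda)$ from the explicit elliptic-modular dimension formulas of Section~\ref{sec-conj-main}, compute $\dim S_{n(\lambda)}$ from Theorem~\ref{dim-Ajkl} and its $\GaSqOne$-analogue, subtract off the total dimension (with $\fS_4$-types) of all the lifts of Sections~\ref{sec-lifts1}--\ref{sec-lifts2}, and verify $3\,\dim_{\fS_4}S^{\rm gen}_{n(\lambda)}(\GaSq)=E_c(\lambda)-E_{\rm extr}(\lambda)$; after reducing $j,k,l$ modulo the relevant small moduli this is a finite case check. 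Even here the main obstacle is the unproven input of Remark~\ref{rem-k0} and the as-yet-unestablished multiplicities and $\fS_4$-types of the lifts.
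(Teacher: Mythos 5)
This is a conjecture, not a theorem: the paper offers no proof, only computational evidence (the bullet list following Conjecture~\ref{conj-cong}). Your opening move — take dimensions in both sides of equation~\eqref{eq-main}, check that $\dim e_{\rm extr}(\lambda)=E_{\rm extr}(\lambda)$ by inspection of the building blocks, and identify $\dim\breve S[n(\lambda)]$ with $3\dim_{\fS_4}S^{\rm gen}_{n(\lambda)}(\GaSq)$ on the assumption that each genuine Hecke eigenform carries an $\fS_4$-equivariantly attached $3$-dimensional Galois representation — is a correct formal derivation of Conjecture~\ref{conj-dim} from Main Conjecture~\ref{conj-main} together with the expected structure of $\breve S[n(\lambda)]$. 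It is a useful consistency observation which the paper relies on implicitly but does not write out, and you are right to flag that it rests entirely on unproven inputs (i)--(iii).

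Two cautions. First, your treatment of the non-regular weights is circular: Remark~\ref{rem-k0} explicitly derives its content \emph{from} Conjecture~\ref{conj-dim} (together with the numerical Euler characteristics), so it cannot be used to establish those cases; and Main Conjecture~\ref{conj-main} itself excludes $\lambda=(0,0,0)$, whereas Conjecture~\ref{conj-dim} is asserted for all $\lambda$ — that case needs the separate convention $\breve S[0,3,2]=(\LL^2+\LL+1)\mathbf s_4$ and Proposition~\ref{prop-const}, not your general argument. Second, the ``more modest, genuinely attainable'' numerical target is weaker than you suggest: Theorem~\ref{dim-Ajkl} produces only the integer $\dim S_{j,k+3,l}(\GaSq)$, not its decomposition into $\fS_4$-isotypic pieces, so the finite case-check you propose can confirm the full $\ZZ[\fS_4]$-valued identity only for $j=0$ (where Proposition~\ref{scalar-rings} gives the $\fS_4$-structure). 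For $j>0$ the unconditionally checkable statement is the coarser one obtained by collapsing each $\mathbf s_\mu$ to $\dim\mathbf s_\mu$ — which is exactly the check the paper records in its evidence list.
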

\end{subsection}
\begin{subsection}{A congruence modulo $9$} 
Our experimental data lead us to conjecture a congruence for the eigenvalues
of Hecke operators.

\begin{conjecture} \label{conj-cong}
For any $j,k,l \geq 0$, $\nu$ and $\mu \vdash 4$ we conjecture that
$$
{\rm Tr}(T(\nu),S_{j,k+3,l}^{\rm gen}(\GaSq)^{\mu})
\equiv_9 3 \, \dim S_{j,k+3,l}^{\rm gen}(\GaSq)^{\mu}\, .
$$
\end{conjecture}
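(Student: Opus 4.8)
The plan is to treat this as a congruence of Harder--Eisenstein type and to prove it by combining the explicit description of Frobenius modulo powers of $1-\alpha$ with the behaviour of the extraneous terms modulo $3$. First I would record the arithmetic of the relevant ideals: $N(1-\rho)=3$ gives $(1-\rho)=(\sqrt{-3})$, while $(1-\rho)^2=-3\rho$ gives $(1-\alpha)^2=(3)$ and $(1-\alpha)^4=(9)$ in $\cO_F$. Hence the asserted congruence $\equiv_9 3\dim$ is a congruence modulo $(1-\alpha)^4$, and since $3\dim$ already vanishes modulo $(1-\alpha)^2$ it suffices to work modulo $(1-\alpha)^4$ and, after dividing out one factor $(1-\alpha)^2$, to see that the surviving contribution is the \emph{constant} $\dim$ modulo $3$. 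Granting Main Conjecture~\ref{conj-main} and Conjecture~\ref{conj-dim}, the statement becomes a purely cohomological congruence relating $\Tr(F_{\nu},e_{c,\mu}(\lambda))$, $\Tr(F_{\nu},e_{\rm extr,\mu}(\lambda))$ and $\dim\bigl(e_{c,\mu}(\lambda)-e_{\rm extr,\mu}(\lambda)\bigr)$, and it is this congruence that I would prove.

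The geometric input is an extension of Theorem~\ref{thm-charmodrho}. That theorem computes the characteristic polynomial of $F_q$ on $J(C_f)[1-\alpha]$ from the cycle type of $F_q$ on the ramification points alone; in particular all its eigenvalues are roots of unity, so $F_q$ acts on $H^1_c(C_f,\QQl)^{\rho}\otimes\cO_F/(1-\alpha)$ through a quasi-unipotent matrix whose trace is a function of $\nu$ modulo $1-\alpha$. I would lift this to the action of $F_q$ on $J(C_f)[(1-\alpha)^{j}]$ for $j=2,3,4$ via the exact sequences
$$
0\to J(C_f)[1-\alpha]\to J(C_f)[(1-\alpha)^{j}]\xrightarrow{\;1-\alpha\;}J(C_f)[(1-\alpha)^{j-1}]\to 0,
$$
pinning down the extension classes modulo $9$ from the explicit model $y^3=f(x)$ and the $\PP^1$-coordinates of the branch locus — equivalently, by adjoining a level-$(1-\alpha)^4$ (that is, level-$9$) marking to the moduli spaces of Section~\ref{sec-moduliofcurves}. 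Feeding the resulting congruences for the three Frobenius eigenvalues, and hence for $s_{\lambda}$ of them, into the Lefschetz formula~\eqref{eq-traceFq} and the plethystic bookkeeping of Section~\ref{sec-traces}, one expresses the class of $e_{c,\mu}(\lambda)$ modulo $9$ in terms of $\nu$ modulo $(1-\alpha)^4$; the combinatorics of Schur polynomials evaluated at residually unipotent arguments should then force this class into the shape $3\cdot(\text{integer})$ plus a correction matching $e_{\rm extr,\mu}(\lambda)$ modulo $9$.

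For the extraneous side I would use the normalisations already fixed in the paper. The Hecke character satisfies $\psi(\pp_p)=a_{\pp_p}+b_{\pp_p}\rho$ with $a_{\pp_p}\equiv_3 1$ and $b_{\pp_p}\equiv_3 0$, so $\Tr(F_{\pp_p},\LL^{i,j})\equiv 1\pmod 3$ for all $i,j$; and the elliptic modular motives occurring in $e_{\rm extr}(\lambda)$ — namely $S[\Gamma_0(1),k]$, $S^{\rm new}[\Gamma_0(3),k]$, $S^{\pm}[\Gamma_1(3),k]$, $S^{\pm}[\Gamma_0(9),k]$ and $S^{\chi}[\Gamma_0(9),k]$ — all come from forms of level a power of $3$, which by standard mod-$3$ congruences (Serre, Hida) are congruent modulo $3$ to Eisenstein series, so their $T(p)$-traces lie in a controlled residue class modulo $3$. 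Together with the fact that $\alpha_j$, $\beta_j$, $\delta_j$ are built from permutation characters of $\fS_4$, this determines $\Tr(F_{\nu},e_{\rm extr,\mu}(\lambda))$ modulo $9$ in closed form, which one then checks against the cohomological side. The main obstacle is the step controlling $J(C_f)[9]$: Theorem~\ref{thm-charmodrho} reaches only $J(C_f)[1-\alpha]$, and the extension classes in the filtration above depend genuinely on the geometry of the cover, so one needs either a workable level-$9$ moduli interpretation or a uniform-in-$q$ computation of the $9$-torsion Frobenius directly from $y^3=f(x)$. On the automorphic side this is the assertion that the $3$-adic Galois representations of genuine forms on $\GaSqOne$ are, modulo $(1-\alpha)^2$, sums of characters that are themselves trivial modulo $(1-\alpha)^2$ — a local--global compatibility statement at the bad prime $3$, which is exactly where the difficulty concentrates.
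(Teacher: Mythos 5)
This statement is a \emph{conjecture} in the paper (Conjecture~\ref{conj-cong}), not a theorem: the authors explicitly say it is based on their experimental data (the traces for $q\le 67$ and $a+b+2\le 40$), they list it among the ``evidence'' bullet points as an observed regularity, and they offer no proof. There is therefore no ``paper's own proof'' to compare against, and a complete proof would be a genuinely new result going well beyond what the paper does. What you have written is a research programme, not a proof, and you are candid about the central obstruction; that candour is appropriate, because the obstruction is real.

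Beyond the gap you acknowledge, several of the intermediate claims would not survive scrutiny. First, the passage from Theorem~\ref{thm-charmodrho} (which only sees $J(C_f)[1-\alpha]$, i.e.\ the mod-$(1-\rho)$ reduction) to a statement about Frobenius on $J(C_f)[(1-\alpha)^4]$ is not merely ``pinning down extension classes'': the paper's proof of Theorem~\ref{thm-det} already has to work hard just to determine the \emph{determinant} mod $3$ via a global \'etale-cover argument, and nothing in the paper gives a handle on the full filtered structure of the $9$-torsion. No level-$9$ moduli interpretation is developed, so invoking one is a placeholder, not an argument. Second, the assertion that the motives $S[\Gamma_0(1),k]$, $S^{\rm new}[\Gamma_0(3),k]$, $S^{\pm}[\Gamma_1(3),k]$, $S^{\pm}[\Gamma_0(9),k]$, $S^{\chi}[\Gamma_0(9),k]$ are ``congruent modulo $3$ to Eisenstein series'' is false as a blanket statement: eigenforms of level $3$ or $9$ can and do have mod-$3$ Galois representations with large (non-solvable) image, so their $T(p)$-traces are not a priori controlled modulo $3$, let alone modulo $9$. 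Third, the claim that the $3$-adic Galois representations of genuine Picard eigenforms are sums of characters modulo $(1-\alpha)^2$ is precisely the kind of strong residual-reducibility statement that the conjecture would \emph{imply}, not one you may assume; asserting it as a ``local--global compatibility statement at $3$'' circularizes the argument. Finally, ``the combinatorics of Schur polynomials evaluated at residually unipotent arguments should then force this class into the shape $3\cdot(\text{integer})$'' is not an argument at all. In short: the statement remains an open conjecture, and the proposal, while it correctly locates where the difficulty lives, does not close any of the gaps it opens.
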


\begin{remark}
If one uses the evidence of the results in \cite{C-vdG} one might also conjecture
that for a prime $p\equiv 2 \, (\bmod \, 3)$ the trace on the space of genuine forms is
divisible by $9$.
\end{remark}
\end{subsection}

For the lifted forms described in Section~\ref{threedimlifts} 
this means that for a prime $p\equiv_3 1$ the congruence
$$a_p(f)^2-p^{k-1} \equiv_9 3$$
should hold and similarly a congruence 
$a_p(f)^2-p^{k-1} \equiv_9 0$ for primes $p\equiv_3 2$. 
\begin{subsection}{Evidence}
The conjectures of this section and the previous were based upon the computations described in Section~\ref{sec-computer}. Here we list a series of regularities in this data that lends credence to the conjectures. 

The following holds for all $\lambda$ such that $a+b+2 \leq 40$. 
\begin{itemize}
    \item[$\star$] The integer  $$E_{c}(\lambda)-E_{\rm extr}(\lambda)$$ is divisible by $3$.
    \item[$\star$] We find, using Theorem~\ref{dim-Ajkl}, that  
    $\dim S_{n(\lambda)}(\GaSqOne)$ equals 
   $$\frac{1}{3}(E_{c}(\lambda)-E_{\rm extr}(\lambda)),$$ 
   when replacing $\mathbf s_{\mu}$ with $\dim \mathbf s_{\mu}$,
   and adding the dimension of the lifts described in Section~\ref{sec-conj-lifts}. 
    \item[$\star$] If $$E_{c}(\lambda)-E_{\rm extr}(\lambda)=0$$ then 
    $$\Tr(F_q,e_{c}(\lambda)- e_{\rm extr}(\lambda))=0,$$
    for $q \leq 67$ and $q\equiv_3 1$.
    \item[$\star$] We have that (compare with Conjecture~\ref{conj-cong})
    $$ \Tr(F_q,e_{c}(\lambda)- e_{\rm extr}(\lambda)) \equiv_9 E_{c}(\lambda)-E_{\rm extr}(\lambda),
    $$
    for $q \leq 67$ and $q\equiv_3 1$.
    \item[$\star$] All traces computed (for $j=0$) in \cite{Fi} match with the ones computed using Conjecture~\ref{conj-main} for $p \leq 67$ and $p \equiv_3 1$.
    \item[$\star$] The ring of scalar valued modular forms, i.e. when $j=0$, is given in Proposition \ref{scalar-rings}. This gives a formula for $\dim S_{0,k+3,l}^{\rm gen}(\GaSq)^{\mu}$ for any $k,l,\mu$, which 
    matches the one given by Conjecture~\ref{conj-dim} (for $a+b+2 \leq 40$). For instance we have that  $$S_{0,6k+3,2}(\Gamma[\sqrt{-3}])=M_{0,6k-9,0}(\Gamma[\sqrt{-3}])\, \zeta^2$$ 
    and 
    $$\dim_{\fS_4} S_{0,6k+3,2}(\Gamma[\sqrt{-3}])={\rm Sym}^{2k-3}(\mathbf s_{2,1^2})$$ 
    for $k\geq 2$. 
    \item[$\star$] All traces computed in \cite{C-vdG} match with the ones computed using Conjecture~\ref{conj-main} for $p \leq 67$ and $p \equiv_3 1$. 
\end{itemize}
Note that the information
$$\Tr(F_{q^r},e_{c}(\lambda)- e_{\rm extr}(\lambda)),$$
for $r=1,\ldots,(E_{c}(\lambda)-E_{\rm extr}(\lambda))$ gives a way to compute the characteristic polynomial of $F_q$ acting on $e_{c}(\lambda)- e_{\rm extr}(\lambda)$, assuming that it is effective of dimension $E_{c}(\lambda)-E_{\rm extr}(\lambda)$. 

For all $\lambda$ such that $a+b+2 \leq 40$ and  $E_{c}(\lambda)-E_{\rm extr}(\lambda)=3$, see further in Section~\ref{sec-onedim}, the characteristic polynomial for $q=4$ and the partial information for $q=7$ has the expected structure (namely the one derived from the results of Section~\ref{sec-heckeoperator}).

See also Section~\ref{sec-harder} for evidence coming from congruences studied by Harder.
\end{subsection}
\begin{subsection}{Modules of vector-valued forms}
Define
$$
\mathcal{M}_j= \mathcal{M}_j^0 \oplus \mathcal{M}_j^1 \oplus \mathcal{M}_j^2
$$
with
$$
\mathcal{M}_j^{\ell}= \oplus_k M_{j,k,l}(\GaSq) \, .
$$
Then $\mathcal{M}_j$ is a module over $\mathcal{M}_0$; for $\mathcal{M}_0$ see Proposition~\ref{scalar-rings}. Guided by the heuristics of our conjectures
the structure of some modules $\mathcal{M}_j$ was determined in \cite{C-vdG}, e.g.\ for $j=1,2,3$. For example,
the module $\mathcal{M}_1^0$ is generated over $\mathcal{M}_0^0$ by 
three forms $\Phi_0,\Phi_1,\Phi_2 \in S_{1,7,0}(\Gamma_1[\sqrt{-3}])$ satisfying
a relation $\varphi_0\Phi_0+\varphi_1\Phi_1+\varphi_2\Phi_2=0$ with
$\varphi_0,\varphi_1,\varphi_2$ generators of the ring $\mathcal{M}_0^0$. 
For a table of Hecke eigenvalues of the $\Phi_i$ 
we refer to \cite[Table 7]{C-vdG}.
\end{subsection}
\begin{subsection}{Conjecture for the moduli space of genus $2$ curves}
In this section we will be brief and give a similar conjecture to the one above but in the case of genus $2$. 

Define the normalized compactly supported Euler characteristic $e^{\rm norm}_c$ analogously to Definition~\ref{def-norm}. Recall the notation from Section~\ref{sec-constant-g2} and define the representation,  
$$
\alpha_{k}= \begin{cases} \mathbf s_{2} \tilde{\mathbf s}_{2} & k\equiv_2 0\\
\mathbf s_{1,1} \tilde{\mathbf s}_{2} & k \equiv_2 1  \end{cases}
\quad \text{and}\quad
\beta_{k}= \begin{cases}\mathbf s_{1,1} \tilde{\mathbf s}_{1,1}  & k\equiv_2 0 \\
\mathbf s_{2} \tilde{\mathbf s}_{1,1}  & k \equiv_2 1\\
\end{cases}
$$

Let $W_3$ denote the Fricke operator and for any prime $p$ let $T(p)$ be the Hecke operator. For the proofs of the following properties of $W_3$, see \cite{Asai}.

If $k>0$ is even, and $f \in S_{k+2}(\Gamma_1(3))$ is an eigenform with $T(p)f=a_pf$ then $a_p=\bar a_p$ for all primes $p$. Moreover,  $a_3=\pm 3^{k/2}$ and $W_3(f)=-\mathrm{sgn}(a_3)f$.  The $\pm$-spaces of $W_3$ are clearly Hecke invariant and we denote the $\pm$-eigenspaces of $S_{k+2}(\Gamma_1(3))$ by $S^{\pm}_{k+2}(\Gamma_1(3))$. Define $S^{\pm}[\Gamma_1(3),k+2]$ analogously. 

If $k$ is odd, then $S_{k+2}(\Gamma_1(3))=S_{k+2}(\Gamma_0(3),\chi)$, where $\chi$ is the Dirichlet character of order $2$. 
If $f \in S_{k+2}(\Gamma_0(3),\chi)$ is an eigenform with $T(p)f=a_pf$, then $\bar a_p=\chi(p) a_p$ for all $p \nmid 3$ and $W_3(f)=c\bar f$ for some $c\in \mathbb C$ with $|c|=1$, and $W^2_3(f)=-f$. If $f \neq \bar f$ then $\pm i \bar c f+\bar f$ is an eigenvector for $W_3$ with eigenvalue $\mp i$. If $p \equiv_3 1$ then both these are also eigenvectors of $T(p)$ with eigenvalue $a_p$. For $k \equiv_3 2$ then there is an eigenvector $f \in S_{k+2}(\Gamma_0(3),\chi)$ such that $a_3=(-3)^{(k+1)/2}$, 
$W_3(f)=(-1)^{(k+1)/2} i f$ and 
$$a_p(f)=\mathrm{Tr}(F_p,\LL^{k+1,0}+\LL^{0,k+1}).$$
We denote the $\pm i$-eigenspaces of $S_{k+2}(\Gamma_1(3))$ by $S^{\pm}_{k+2}(\Gamma_1(3))$, and we define $S^{\pm}[\Gamma_1(3),k+2]$ analogously. 

  \begin{conjecture}
    For any $k>0,l \geq 0$ such that $k \equiv_6 l$,
\begin{multline*}
  e^{\rm norm}_c(\cX^{(2)}_{\GaSq},{\WW}_{k,l})=-\alpha_k-\beta_k  - S^+[\Gamma_1(3),k+2] \, \alpha_k 
  -S^-[\Gamma_1(3),k+2] \, \beta_k \\ + \delta_{k+1}(\LL^{k+1,0}\alpha_k+\LL^{0,k+1}\beta_k)+
  \delta_{k+7}(\LL^{0,k+1}\alpha_k+\LL^{k+1,0}\beta_k)
\end{multline*}
as elements of $K^{\fS_2\times \fS_2}_0({\rm Gal}_F)$, and where $\delta_i=1$ if $i \equiv_{12} 0$ and $0$ otherwise. 
\end{conjecture}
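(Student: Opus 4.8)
The plan is to compute the $\ell$-adic Euler characteristic $e^{\mathrm{norm}}_c(\cX^{(2)}_{\GaSq},{\WW}_{k,l})$ by the same point-counting strategy used for the genus $3$ surface, exploiting the explicit stratification of $\cX^{(2)}_{\GaSq}$ into smooth genus $2$ curves and pairs of elliptic curves described in Section~\ref{sec-smooth2} and Section~\ref{sec-constant-g2}. First I would reduce to the normalized weight $\lambda=(k,l)$ with $k\equiv_6 l$ (so that the local system has nontrivial $\alpha$-invariants, compare Proposition~\ref{prop-eultriv}(1)), and use Definition~\ref{def-norm} to strip off the power of the Lefschetz motive. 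Then, using the Lefschetz trace formula as in equation~\eqref{eq-traceFq}, the trace of $F_q\circ(\sigma,\tau)$ for $(\sigma,\tau)\in\fS_2\times\fS_2$ on $e_c(\cX^{(2)}_{\GaSq}\otimes\bar k,{\WW}_{k,l})$ is a weighted sum over isomorphism classes of genus $2$ tricyclic covers $C_{f_1,f_2}/k$ of the symmetric function $s_{k,l}$ evaluated at the conjugate Frobenius eigenvalues on $H^1(C,\QQl)^\rho$, together with the contribution of the boundary stratum of joined elliptic curves. Here the key input is that $H^1(C_{f_1,f_2},\QQl)^\rho$ is two-dimensional, so $s_{k,l}$ in two variables is determined by $e_1=a_{1,\rho}(C_{f_1,f_2})$ and $e_2$, which by Theorem~\ref{thm-det} is pinned down up to the cubic character of $D(f_1)$ and $D(f_2)$. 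On the elliptic pieces, the characteristic polynomials of Frobenius are given explicitly by the Gauss-type formulas \eqref{eq-charell} and \eqref{eq-charell2}.

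Next I would organize the sum by the degrees $(\deg f_1,\deg f_2)\in\{(2,2),(2,1),(1,2)\}$, using a normal form (as in Section~\ref{sec-counts-strata2}: fix the marked point of $f_2$ at infinity and use $\mathrm{PGL}_2$ and $y\mapsto ay$) to cut the number of free parameters down to $1$, reflecting that $\cX^{(2)}_{\GaSq}$ is a curve. This produces, for each $q\equiv_3 1$, an explicit element of $\ZZ[\rho]$; matching it against all traces of Frobenius on the conjectured right-hand side (a sum of Hecke-character motives $\LL^{i,j}$, the ``trivial'' pieces $\alpha_k,\beta_k$, and the motives $S^{\pm}[\Gamma_1(3),k+2]$) for sufficiently many $q$, and invoking Chebotarev density as in Section~\ref{sec-traces}, would pin down the Euler characteristic in $K_0(\mathrm{Gal}_F^{\fS_2\times\fS_2})$. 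The properties of the Fricke operator $W_3$ recalled just before the statement (following \cite{Asai}) are exactly what identify the $\fS_2\times\fS_2$-isotypic decomposition: the $\pm$-eigenspaces of $W_3$ for even $k$, and the $\pm i$-eigenspaces for odd $k$, explain why $S^{+}[\Gamma_1(3),k+2]$ pairs with $\alpha_k$ and $S^{-}[\Gamma_1(3),k+2]$ with $\beta_k$, and the special eigenform with $a_p(f)=\mathrm{Tr}(F_p,\LL^{k+1,0}+\LL^{0,k+1})$ for $k\equiv_3 2$ accounts for the $\delta_{k+1}$ and $\delta_{k+7}$ correction terms.

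The main obstacle, as in the genus $3$ case, is that this argument as stated is heuristic rather than a proof: it verifies the identity only for finitely many $q$ (here $q\le 67$), so strictly it establishes the conjecture modulo the (unproven) bound on the complexity of the Euler characteristic — i.e.\ that $e^{\mathrm{norm}}_c$ is a ``small'' combination of the listed motives. A genuine proof would instead require an independent description of $H^\bullet_c(\cX^{(2)}_{\GaSq},{\WW}_{k,l})$: this should be feasible via the Leray spectral sequence for the map $\cX^{(2)}_{\GaSq}\to \overline{\mathcal M}_{0,2+2}$ (the two marked ramification orbits), decomposing $H^1$ of the genus $2$ cover into eigenspaces and using that the genus $2$ stratum fibers, after rigidification, over a modular-curve-like base whose cohomology with coefficients in the relevant local systems is governed by weight $k+2$ forms on $\Gamma_1(3)$; the boundary stratum of joined elliptic curves then contributes the $\alpha_k+\beta_k$ and the Hecke-character correction terms, exactly the Eisenstein-type piece computed in Proposition~\ref{prop-genus2} for trivial coefficients. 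The hard part will be controlling the Eisenstein and possible endoscopic contributions on this lower-dimensional Shimura variety precisely enough to rule out any additional motives, and verifying the delicate sign/eigenvalue bookkeeping of $W_3$ that distinguishes the $\fS_2\times\fS_2$-types; I would expect the cleanest route to a full proof to go through Harder's Eisenstein cohomology machinery for $\mathrm{GU}(1,1)$ combined with known results on $L$-functions of $\mathrm{GL}_2/\QQ$ base-changed to $F$.
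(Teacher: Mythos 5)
This statement is a conjecture, not a theorem, and the paper supplies no proof; there is therefore no paper-proof against which to measure your attempt. What can be compared is the evidence the paper offers and the heuristic by which the conjecture was arrived at, and read in that light your account is essentially the right one: count genus-$2$ tricyclic covers over $\FF_q$ using the descriptions of Sections~\ref{sec-smooth2} and \ref{sec-constant-g2}, evaluate the Schur polynomial of the two Frobenius eigenvalues on $H^1(C,\QQl)^\rho$ via $e_1=a_{1,\rho}$ and Theorem~\ref{thm-det}, use the Gauss-type formulas \eqref{eq-charell} and \eqref{eq-charell2} on the elliptic pieces, cut down to one free parameter with the normal forms of Section~\ref{sec-counts-strata2}, compare the resulting element of $\ZZ[\rho]$ against the traces on the conjectured motives for $q\le 67$, and only conditionally invoke Chebotarev. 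You also correctly isolate the essential bookkeeping input, namely the properties of the Fricke operator $W_3$ recalled from Asai, which explain both the splitting into $S^{\pm}[\Gamma_1(3),k+2]$ paired with $\alpha_k$ and $\beta_k$, and the CM-type eigenform whose $a_p$ is a sum of Hecke-character values, giving the $\delta_{k+1}$ and $\delta_{k+7}$ terms. You are also honest that this establishes nothing beyond consistency with finitely many $q$, which is the correct assessment.

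Two points where the paper does supply concrete content that your proposal glances at but does not develop. First, the Eisenstein-type piece $-\alpha_k-\beta_k$ is not guessed: the paper derives it by examining how $\fS_2\times\fS_2$ acts on the two cusps of the Baily--Borel compactification of $\cX^{(2)}_{\GaSq}$, mirroring the argument of Proposition~\ref{prop-eis}, so this part of the formula is actually established rather than fitted. Second, the $k=0$ case is anchored unconditionally by Proposition~\ref{prop-genus2} (for constant coefficients) once one sets $S^{+}[\Gamma_1(3),2]=-\LL-1$ and $S^{-}[\Gamma_1(3),2]=0$, which is a genuine consistency check rather than another heuristic. Your forward-looking remarks about a real proof via Harder's Eisenstein cohomology for $\mathrm{GU}(1,1)$ combined with base change from $\mathrm{GL}_2/\QQ$ are plausible but go beyond anything in the paper. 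Overall your proposal reconstructs the paper's implicit route and correctly labels it heuristic; the only thing to improve is to foreground the two items above, the Eisenstein deduction and the $k=0$ anchor, since those are the parts the paper actually pins down.
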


For $k=0$, put $S^+[\Gamma_1(3),2]=-\LL-1$ and $S^-[\Gamma_1(3),2]=0$ to make the formula correct, by Proposition~\ref{prop-genus2}. Looking at how $\fS_2 \times \fS_2$ acts on the two cusps of the Bailey-Borel compactification we find that the 
 Eisenstein contribution to $e^{\rm norm}_c(\cX^{(2)}_{\GaSq},{\WW}_{k,l})$, for $k \equiv_6 l$, is equal to $-\alpha_k-\beta_k$ 
 (compare the proof of Proposition~\ref{prop-eis}). 
\end{subsection}
\end{section}

\begin{section}{Examples} 
We now illustrate our conjectures and calculations by examples.

\begin{subsection}{Explicit examples}\label{sec-ex-individ} Let us study a series of local systems in some detail. 

\begin{subsubsection}{$\lambda=(10,4,4)$}
In the cohomology of this local system we expect to find contributions from the forms in $S_{0,9,0}(\GaSqOne)$. 

From the formula described in Section~\ref{sec-euler} we get the numerical Euler characteristic, 
$$E_c(X_{\GaSq},{\WW}_{\lambda})=\mathbf s_4 + 2 \,\mathbf s_{3, 1} +  3\,\mathbf s_{2, 1^2} + \mathbf s_{1^4}.$$

Following Section~\ref{sec-extra} the only non-zero contributions we 
have to $e_{\rm extr}(\lambda)$ are   
$$e'_{\mathrm{Eis}}(\lambda)= \LL^{1,0}(\mathbf s_{2,1^2} + \mathbf s_{1^4}),$$
$$e_{\overline{1\ell}}(\lambda)=\LL^{0,8} \mathbf s_4
$$
and 
$$e_{{2\ell}}(\lambda)=S^{-}[\Gamma_0(9),8] \, \LL^{1,0} \mathbf s_{3,1}+S^{\rm new}[\Gamma_0(3),8] \, \LL^{1,0} \mathbf s_{2,1^2}.
$$
In $e_{{2\ell}}(\lambda)$, the $\mathbf s_{3,1}$-term comes from the Kudla lift denoted $F_{9,1}=\varphi_0\varphi_1(\varphi_0-\varphi_1)$ and the $\mathbf s_{2,1^2}$-term comes from the Kudla lift of weight $9$ 
denoted $F_{9,2}$ by Finis (\cite[Tables p.\ 151 and 177]{Fi}).

Removing the extraneous contributions should leave us with contributions from the genuine Picard modular cusp forms. Starting with the numerical Euler characteristic we get 
$$E_c(X_{\GaSq},{\WW}_{\lambda})-E_{\rm extr}(X_{\GaSq},{\WW}_{\lambda})=0,$$
so there should be no genuine forms. And indeed we find that 
$${\rm Tr}(F_{\nu},e_c(\lambda))-{\rm Tr}(F_{\nu},e_{\rm extr}(\lambda))$$
which conjecturally equals
$${\rm Tr}(T(\nu),S_{0,9,0}^{\rm gen}(\GaSq))$$
is $0$ for all $p \leq 67$.

This also fits (adding the lifts) with the formula  
$$\dim_{\fS_4} S_{0,9,0}(\GaSq) = \mathbf s_{3, 1} + \mathbf s_{2, 1^2}$$
that we find by Proposition~\ref{scalar-rings}. 
\end{subsubsection}

\begin{subsubsection}{$\lambda=(16,4,4)$}
In the cohomology of this local system we expect to find contributions from the forms in $S_{0,15,0}(\GaSq)$. 

Again, from the formula in Section~\ref{sec-euler} we get, 
$$E_c(X_{\GaSq},{\WW}_{\lambda})=\mathbf s_4 + 5 \,\mathbf s_{3, 1} + 3\, \mathbf s_{2^2} + 7\,\mathbf s_{2, 1^2} + \mathbf s_{1^4}. $$

Similarly to the previous case, following Section~\ref{sec-extra}, we find that $e_{\rm extr}(\lambda)$ consists of 
$$e'_{\mathrm{Eis}}(\lambda)= \LL^{1,0}(\mathbf s_{2,1^2} + \mathbf s_{1^4}),$$
$$e_{\overline{1\ell}}(\lambda)=\LL^{0,14} \mathbf s_4
$$
and 
$$e_{{2\ell}}(\lambda)=S^{-}[\Gamma_0(9),14] \, \LL^{1,0} \mathbf s_{3,1}+S^{\rm new}[\Gamma_0(3),14] \, \LL^{1,0} \mathbf s_{2,1^2}.
$$

Removing the extraneous contributions should leave us with contributions from the genuine Picard modular cusp forms. Starting with the numerical Euler characteristic we get 
$$E_c(X_{\GaSq},{\WW}_{\lambda})-E_{\rm extr}(X_{\GaSq},{\WW}_{\lambda})=3 \,\mathbf s_{3, 1} + 3\,\mathbf s_{2^2}.$$

Dividing this expression by $3$ gives the conjectural result 
$$\dim_{\fS_4} S^{\rm gen}_{0,15,0}(\GaSq) = \mathbf s_{3, 1} + \mathbf s_{2^2}.$$
Together with the lifts we get 
$$\dim_{\fS_4} S_{0,15,0}(\GaSq) = 2\mathbf s_{3, 1} + \mathbf s_{2^2} + 3\,\mathbf s_{2, 1^2},$$
which fits with the formula we find by Proposition~\ref{scalar-rings}.

We can then compute  
$${\rm Tr}(F_{\nu},e_c(\lambda))-{\rm Tr}(F_{\nu},e_{\rm extr}(\lambda))$$
which conjecturally equals,  
$${\rm Tr}(T(\nu),S_{0,15,0}^{\rm gen}(\GaSq)).
$$

For the two $1$-dimensional isotypic components of the space of genuine forms we then (conjecturally) get Hecke eigenvalues, and a few of them are given in the following table. Note that the analogue of the Ramanujan conjecture for this situation holds; $N(\xi_{\nu}) \leq 3 N(\nu)^{a+b+2}$ for eigenvalues $\xi_{\nu}$. Similar observations can be made for the other tables appearing in this
section.

\begin{footnotesize}
\smallskip
\vbox{
\bigskip\centerline{\def\quad{\hskip 0.6em\relax}
\def\quod{\hskip 0.5em\relax }
\vbox{\offinterlineskip
\hrule
\halign{&\vrule#&\strut\quod\hfil#\quad\cr
height2pt&\omit&&\omit&&\omit&\cr
&$p$ && $S^{\rm gen}_{0,15,0}(\GaSq)^{(2^2)}$ && $S^{\rm gen}_{0,15,0}(\GaSq)^{(3,1)}$  & \cr
\noalign{\hrule}
& $7$   && $ -388107 \rho - 1608891$ && $-524625 - 205857 \rho$ & \cr
& $13$  && $-60967989 \rho - 9061701$ && $-36504663 + 20888505 \rho$ & \cr
& 19 && $-578216997 \rho - 665720736$ &&$ -398615136 - 1035916731 \rho$ & \cr
& 31 && $-690422256 \rho - 8829510909$ && $ 32032766937 + 14052080592 \rho$& \cr
& 37 && $  -111679368147 \rho - 59483009571$ && $ 30023590017 - 12661429743 \rho$& \cr
& 43 && $-98981609184 \rho + 131622854187$ && $298590045213 + 634311769248 \rho$& \cr
} \hrule}
}}
\end{footnotesize}
\end{subsubsection}

\begin{subsubsection}{$\lambda=(32,2,2)$} In the cohomology of this local system we expect to find contributions from the forms in $S_{0,33,1}(\GaSq)$. 

Again, from the formula in Section~\ref{sec-euler} we get, 
$$E_c(X_{\GaSq},{\WW}_{\lambda})=9 \, \mathbf s_4 + 27 \,\mathbf s_{3, 1} + 9\, \mathbf s_{2^2} + 19\,\mathbf s_{2, 1^2} + 2\mathbf s_{1^4}. $$

Following Section~\ref{sec-extra} we find that that $e_{\rm extr}(\lambda)$ only consists of 
$$e'_{\mathrm{Eis}}(\lambda)= \LL^{0,31}\mathbf s_{2,1^2} - \LL^{1,32} \mathbf s_{1^4}.$$

Removing this contribution from the Euler characteristic and dividing by $3$, as in the previous example, we get the following conjecture,  $$\dim_{\fS_4} S^{\rm gen}_{0,33,1}(\GaSq) = 3\mathbf s_{4}+9\,\mathbf s_{3, 1} + 3 \mathbf s_{2^2} + 6\,\mathbf s_{2, 1^2} + \mathbf s_{1^4}.$$
Since there are no lifts, this is the same as the dimensions of all cusp forms and it equals the formula $\mathbf s_{1^4}\mathrm{Sym}^9(\mathbf s_{2,1^2})$,  
found using Proposition~\ref{scalar-rings}.

Some (conjectural) Hecke eigenvalues for the $1$-dimensional isotypic component of the space of genuine forms corresponding to $\mathbf s_{1^4}$ are given in the following table. 

\begin{footnotesize}
\smallskip
\vbox{
\bigskip\centerline{\def\quad{\hskip 0.6em\relax}
\def\quod{\hskip 0.5em\relax }
\vbox{\offinterlineskip
\hrule
\halign{&\vrule#&\strut\quod\hfil#\quad\cr
height2pt&\omit&&\omit&\cr
&$p$ && $S^{\rm gen}_{0,33,1}(\GaSq)^{(1^4)}$ & \cr
\noalign{\hrule}
& $7$   && $-17187741337239 \rho - 27371045932368$ & \cr
& $13$  && $ 619757358250752891 \rho - 73897512261622296$ & \cr
& $19$  && $32397975717682438611 \rho + 161109729684241303755$ & \cr
& $31$  && $-450614269323285049766016 \rho + 463109207345192219515905$ & \cr
& $37$  && $ 4464950074069806168802623 \rho - 679365937587169490840376$ & \cr
& $43$  && $-62575475768038597846807512 \rho - 83275045472246397000970011$ & \cr
} \hrule}
}}
\end{footnotesize}

\end{subsubsection}

\begin{subsubsection}{$\lambda=(7,1,-2)$} In the cohomology of this local system we expect to find contributions from the forms in $S_{3,9,0}(\GaSqOne)$. Note that the modular forms occuring here are described in \cite[Prop. 15.2]{C-vdG}.

From the formula described in Section~\ref{sec-euler} we get 
$$E_c(X_{\GaSq},{\WW}_{\lambda})=\mathbf s_4 + 5 \,\mathbf s_{3, 1} + 6\, \mathbf s_{2^2} + 9\,\mathbf s_{2, 1^2} + 4\,\mathbf s_{1^4}.$$

From Proposition~\ref{prop-eis} and Section~\ref{sec-extra} we get 
$$e'_{\mathrm{Eis}}(\lambda)=e_{\mathrm{Eis}}(\lambda)= \LL^{4,0}(\mathbf s_{2,1^2} + \mathbf s_{1^4}).$$
The only other non-zero contributions we have to $e_{\rm extr}(\lambda)$ are 
$$e_{\overline{1\ell}}(\lambda)=\LL^{0,11} \mathbf s_4
$$
and 
$$e_{{2\ell}}(\lambda)=S^{-}[\Gamma_0(9),8] \, \LL^{4,0} \mathbf s_{3,1}+S^{\rm new}[\Gamma_0(3),8] \, \LL^{4,0} \mathbf s_{2,1^2}.
$$

Again, removing the extraneous contributions from the Euler characteristic and dividing by $3$, we get the following conjecture,  
$$\dim_{\fS_4} S^{\rm gen}_{3,9,0}(\GaSq) = \mathbf s_{3, 1} + 2\, \mathbf s_{2^2} + 2\,\mathbf s_{2, 1^2} + \mathbf s_{1^4}.$$
Together with the lifts we get the conjecture, 
$$\dim_{\fS_4} S_{3,9,0}(\GaSq) = 2\mathbf s_{3, 1} + 2\, \mathbf s_{2^2} + 3\,\mathbf s_{2, 1^2} + \mathbf s_{1^4}.$$
This conjectural expression fits with that Theorem~\ref{dim-Ajkl} tells us, namely that,  
$$\dim S_{3,9,0}(\GaSq)=20.$$

For the two $1$-dimensional isotypic components of the space of genuine forms we then (conjecturally) get Hecke eigenvalues, and a few of them are given in the following table. 

\begin{footnotesize}
\smallskip
\vbox{
\bigskip\centerline{\def\quad{\hskip 0.6em\relax}
\def\quod{\hskip 0.5em\relax }
\vbox{\offinterlineskip
\hrule
\halign{&\vrule#&\strut\quod\hfil#\quad\cr
height2pt&\omit&&\omit&&\omit&\cr
&$p$ && $S^{\rm gen}_{3,9,0}(\GaSq)^{(3,1)}$ && $S^{\rm gen}_{3,9,0}(\GaSq)^{(1^4)}$  & \cr
\noalign{\hrule}
& $7$   && $-2661-3735\rho$  &&   $  -39273-37755\rho $ & \cr
& $13$   && $697611-853785\rho $  &&   $ -616209-1939509\rho $ & \cr
& $19$   && $ -4019046-4493727\rho $  &&   $2924922+16469397\rho $ & \cr
& $31$   && $236296587+26549946\rho $  &&   $ -13532361-40067046\rho$ & \cr
& $37$   && $381974925-151949367\rho $  &&   $-294789795-270210663\rho $ & \cr
& $43$   && $685398387+28100862\rho $  &&   $ 1093524015+1099688094\rho $ & \cr
} \hrule}
}}
\end{footnotesize}
\end{subsubsection}

\begin{subsubsection}{$\lambda=(11,5,2)$} In the cohomology of this local system we expect to find contributions from the forms in $S_{3,9,1}(\GaSq)$. 

Again, from the formula in Section~\ref{sec-euler} we get, 
$$E_c(X_{\GaSq},{\WW}_{\lambda})=4 \, \mathbf s_4 + 7 \,\mathbf s_{3, 1} + 5\, \mathbf s_{2^2} + 6\,\mathbf s_{2, 1^2} + 3\,\mathbf s_{1^4}. $$

From Proposition~\ref{prop-eis} and Section~\ref{sec-extra} we get that the only non-zero contribution we have to $e_{\rm extr}(\lambda)$ are 
$$e'_{\mathrm{Eis}}(\lambda)=e_{\mathrm{Eis}}(\lambda)= \LL^{0,7}(\mathbf s_{4} + \mathbf s_{3,1}).$$
and
$$e_{\overline{2\ell}}(\lambda)=\LL^{0,7} S^{\chi}[\Gamma_0(9),5]  \mathbf s_{2,2}.$$

Removing this contribution from the Euler characteristic and dividing by $3$, as in the previous example, we get the following conjecture,  $$\dim_{\fS_4} S^{\rm gen}_{3,9,1}(\GaSq) = \mathbf s_{4}+2\,\mathbf s_{3, 1} +  \mathbf s_{2^2} + 2\,\mathbf s_{2, 1^2} + \mathbf s_{1^4}.$$
This is the same as the dimensions of all cusp forms since there are no lifts which fits with the result 
$$\dim S_{3,9,1}(\GaSq)=16,$$
following from Theorem~\ref{dim-Ajkl}.

Using the same method as above, we compute some (conjectural) Hecke eigenvalues for two of the $1$-dimensional spaces in the following table. Note that the eigenform in $S^{\rm gen}_{3,9,1}(\GaSq)^{(1^4)}$ equals $(E_0+E_2+E_2-E_3) \, \zeta$, with $E_i$ the Eisenstein series given in \cite[Lemma 15.1]{C-vdG}.

\begin{footnotesize}
\smallskip
\vbox{
\bigskip\centerline{\def\quad{\hskip 0.6em\relax}
\def\quod{\hskip 0.5em\relax }
\vbox{\offinterlineskip
\hrule
\halign{&\vrule#&\strut\quod\hfil#\quad\cr
height2pt&\omit&&\omit&&\omit&\cr
&$p$ && $S^{\rm gen}_{3,9,1}(\GaSq)^{(2,2)}$  && $S^{\rm gen}_{3,9,1}(\GaSq)^{(1^4)}$ & \cr
\noalign{\hrule}
& $7$   && $-39753 \rho - 15702 $  &&   $-3303\rho-20562  $ & \cr
& $13$   && $-2259729 \rho - 462012$  &&   $39537\rho-662244 $ & \cr
& $19$   && $-813897 \rho - 7616175$  &&   $-12094443\rho-15482085$  & \cr
& $31$   && $62423118 \rho + 189603705$  &&   $13979610\rho-2791545 $ & \cr
& $37$   && $154008855 \rho - 213937620$  &&   $-132007005\rho-420798660$ & \cr
& $43$   && $-1091048814 \rho - 311480763$  &&   $-1442196450\rho-484155105 $ & \cr
} \hrule}
}}
\end{footnotesize}
\end{subsubsection}

\begin{subsubsection}{$\lambda=(9,3,0)$} In the cohomology of this local system we expect to find contributions from the forms in $S_{3,9,2}(\GaSq)$. 

Once again, using the formula in Section~\ref{sec-euler} we get, 
$$E_c(X_{\GaSq},{\WW}_{\lambda})=2 \, \mathbf s_4 + 7 \,\mathbf s_{3, 1} + 3\, \mathbf s_{2^2} + 4\,\mathbf s_{2, 1^2}. $$

From Proposition~\ref{prop-eis} and Section~\ref{sec-extra} we get the non-zero contributions to $e_{\rm extr}(\lambda)$ as  
$$e'_{\mathrm{Eis}}(\lambda)= e_{\mathrm{Eis}}(\lambda)=-\LL^{0,0}(\mathbf s_{2,1^2} + \mathbf s_{1^4}),$$
$$e_{\rm ce}(\lambda)=\LL^{4,7} (\mathbf s_{3,1} + 2\, \mathbf s_{2,1^2}+\mathbf s_{1^4}),$$
and 
$$e_{2\ell}(\lambda)= S^{-}[\Gamma_0(9),12] \mathbf s_{4}.
$$

As in the previous examples, we use the numerical Euler characteristic to get the following conjecture,  
$$\dim_{\fS_4} S^{\rm gen}_{3,9,2}(\GaSq) = 2\,\mathbf s_{3, 1} +  \mathbf s_{2^2} + \mathbf s_{2, 1^2}.$$
Together with the $1$-dimensional space of lifts above we get
$$\dim S_{3,9,2}(\GaSq)=12,$$
which fits with Theorem~\ref{dim-Ajkl}.

Some (conjectural) Hecke eigenvalues for the $1$-dimensional spaces are found in the following table. 

\begin{footnotesize}
\smallskip
\vbox{
\bigskip\centerline{\def\quad{\hskip 0.6em\relax}
\def\quod{\hskip 0.5em\relax }
\vbox{\offinterlineskip
\hrule
\halign{&\vrule#&\strut\quod\hfil#\quad\cr
height2pt&\omit&&\omit&&\omit&\cr
&$p$ && $S^{\rm gen}_{3,9,2}(\GaSq)^{(2,2)}$ && $S^{\rm gen}_{3,9,2}(\GaSq)^{(2,1^2)}$ & \cr
\noalign{\hrule}
& $7$    &&   $-522 \rho - 771 $ && $-42405-73422\rho  $ & \cr
& $13$   &&   $64731 - 1053828 \rho $  && $2150805+1144836\rho $ & \cr
& $19$   &&   $9397530 \rho + 10858953$  && $1117083-2630970\rho $ & \cr
& $31$   &&   $-199487250 \rho - 223012887$  && $17764311-9145350\rho $ & \cr
& $37$  &&   $-283226796 \rho - 170478933$  && $144010695+424906308\rho $  & \cr
& $43$   &&   $456864210 \rho - 855993435$  && $-365663985-1035862434\rho $ & \cr
} \hrule}
}}
\end{footnotesize}
\end{subsubsection}
\begin{subsubsection}{$\lambda=(11,0,-5)$} In the cohomology of this local system we expect to find contributions from the forms in $S_{5,14,2}(\GaSq)$. 

Using the formula in Section~\ref{sec-euler} we get, 
$$E_c(X_{\GaSq},{\WW}_{\lambda})=9 \, \mathbf s_4 + 27 \,\mathbf s_{3, 1} + 18\, \mathbf s_{2^2} + 27\,\mathbf s_{2, 1^2} +9\,\mathbf s_{1^4}. $$

From Proposition~\ref{prop-eis} and Section~\ref{sec-extra} we get the non-zero contributions to $e_{\rm extr}(\lambda)$ as  
$$e'_{\mathrm{Eis}}(\lambda)= e_{\mathrm{Eis}}(\lambda)=(-\LL^{0,0}+\LL^{6,0}+\LL^{0,12})(\mathbf s_{4} + \mathbf s_{3,1}),$$
$$e_{\rm ce}(\lambda)=3\LL^{6,12} (\mathbf s_{4}+\mathbf s_{3,1} + \mathbf s_{2,2}).$$
$$e_{\overline{2\ell}}(\lambda)= \LL^{0,12} S^{\chi}[\Gamma_1(9),7] \mathbf s_{2^2}.
$$
and 
$$e_{2\ell}(\lambda)= \LL^{6,0} S^{-}[\Gamma_1(3),13] (\mathbf s_{4}+\mathbf s_{3,1})+\LL^{6,0} S^{\chi}[\Gamma_1(9),13] \mathbf s_{2^2}.
$$

As in the previous examples, we use the numerical Euler characteristic to get the following conjecture,  
$$\dim_{\fS_4} S^{\rm gen}_{5,14,2}(\GaSq) = \mathbf s_{4} + 7\,\mathbf s_{3, 1} +  3\, \mathbf s_{2^2} + 9 \, \mathbf s_{2, 1^2} + 3 \, \mathbf s_{1^4}.$$ 
Together with the $2$-dimensional space of lifts above we get
$$\dim S_{5,14,2}(\GaSq)=66,$$
which fits with Theorem~\ref{dim-Ajkl}.

Some (conjectural) Hecke eigenvalues for the $1$-dimensional space of genuine cusp forms are found in the following table. 

\begin{footnotesize}
\smallskip
\vbox{
\bigskip\centerline{\def\quad{\hskip 0.6em\relax}
\def\quod{\hskip 0.5em\relax }
\vbox{\offinterlineskip
\hrule
\halign{&\vrule#&\strut\quod\hfil#\quad\cr
height2pt&\omit&&\omit&\cr
&$p$ && $S^{\rm gen}_{5,14,2}(\GaSq)^{(4)}$ & \cr
\noalign{\hrule}
& $7$   && $-38516760 \rho-13589673$ & \cr
& $13$  && $-6017408280\rho -7487727117$ & \cr
& $19$  && $546522935760\rho+368972351247$ & \cr
& $31$  && $20336092789320\rho+55796255768703 $ & \cr
& $37$  && $ -147394045113480\rho + 55302806453187$ & \cr
& $43$  && $134094712536720\rho -23648747132697 $ & \cr
} \hrule}
}}
\end{footnotesize}

\end{subsubsection}
\end{subsection}
\begin{subsection}{One-dimensional spaces of genuine forms} \label{sec-onedim}
The cases for which $$\dim S_{j,k,l}^{\rm gen}(\Gamma_1[\sqrt{-3}])^{\mu}=1$$ are of special importance to us since in these cases counts of points over finite fields gives (using Conjecture~\ref{conj-main}) Hecke eigenvalues rather than just traces.

For $\mu=(4)$ we found 78 such cases using Conjecture~\ref{conj-dim}. We list all such $(j,k,l)$ below, but because of Remark~\ref{rmk-dual} we only list them up to duality.  

$$
\begin{matrix}
(0, 15, 1) & (0, 21, 1) & (0, 24, 1) & (0, 27, 0) & (0, 30, 1) & (0, 30, 2) & (0, 33, 0) \\ 
(0, 36, 0) & (0, 36, 2) & (0, 39, 2) & (0, 42, 0) & (0, 45, 2) & (1, 16, 0) & (1, 19, 0) \\
(1, 19, 1) & (1, 19, 2) & (1, 22, 1) & (1, 25, 1) & (1, 28, 2) & (2,
11, 0) & (2, 11, 1) \\ 
(2, 14, 0) & (2, 14, 1) & (2, 20, 2) & (2, 23, 2) & (3, 9, 1) & (3, 12, 0) & (3, 12, 1) \\
(3, 15, 0) & (3, 18, 2) & (4, 7, 0) & (4, 10, 0) & (4, 10, 2) & (4, 13, 2) & (5, 8, 0) \\ 
(5, 11, 2) & (5, 14, 2) & (6, 9, 0) & (6, 9, 2) & (7, 10, 2) &&
\end{matrix}
$$

For $\mu=(3,1), (2^2), (2,1^2), (1^4)$ we found $35, 44, 35, 76$ cases respectively. 
\end{subsection}
\end{section}

\begin{section}{Congruences of Harder type} \label{sec-harder}
According to Harder a prime appearing in the denominator
of a critical value of an $L$-function sometimes leads to a congruence
between modular forms, see \cite{Harder-1-2-3}. The shape of these
congruences was discussed by Harder after we found instances of
congruences, see \cite{Harder-shape} and see also Dummigan's discussion in \cite{Dummigan}.

In the case at hand we look at the standard $L$-function
associated to an algebraic Hecke character $\psi_m$ with the following
Euler factors. For a prime $p \equiv_3 1$ with $p=\nu_p \bar{\nu}_p$
and $\nu_p\equiv \bar{\nu}_p \equiv_3 1$ we have
$$
L_p(\psi_m,s)=1/(1-\nu_p^m p^{-s})(1-\bar{\nu}_p^m p^{-s})
$$
and for a prime $p \equiv_3 2$ we have
$$
L_p(\psi_m,s)=1/(1-(-p)^m p^{-s})\, ,
$$
while for $p=3$ we have $L_3(\psi_m,s)=1$ unless $m\equiv_6 0$ and then
$L_3(\psi_m,s)= 1/(1-(\sqrt{-3})^m 3^{-s})$. The completed $L$-function
$$
\Lambda(\psi_m,s)= \frac{\Gamma(s)}{(2\pi)^s} \prod L_p(\psi_m,s)
$$
extends to a holomorphic function of $s$ and satisfies a functional equation
relating $s$ with $m+1-s$. According to (an analogue of) a result of Hurwitz
we get rational quotients of critical values
$$
Q(m,n):= \frac{\Lambda(\psi_m,n-1)}{\Lambda(\psi_m,n)} \in {\QQ} \qquad
\text{for $n=m, m-1,\ldots, [\frac{m+1}{2}]$}\, .
$$

\begin{conjecture} {\rm (Harder's conjecture)} If a prime 
$\ell>m$ divides the denominator of $Q(m,n)$ there exists 
a Picard modular cusp form of weight $(b,a+3)=(m-n,2n-m+1)$,
which is an eigenform of the Hecke algebra, such that its Hecke eigenvalues
$\lambda_{\nu_p}$ for $p\equiv_3 1$ satisfy the congruence
$$
\lambda_{\nu_p}\equiv_{\ell} \bar{\nu}_p^{a+b+2}+(p^{a+1}+1){\nu}_p^{b+1} \, .
$$
\end{conjecture}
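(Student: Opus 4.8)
The statement is a conjecture, so what follows is a proposal for how a proof might be organized rather than a complete argument; it follows Harder's general philosophy (\cite{Ha1}, \cite{Harder-shape}, and Dummigan's reformulation in \cite{Dummigan}) that a prime $\ell$ occurring in the denominator of a ratio of critical values of a Hecke $L$-function produces a congruence between an Eisenstein eigenvalue system and a cuspidal one, here realized inside the cohomology of a local system on $\cX_{\GaSq}$.

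\textbf{Step 1: Rationality and $\ell$-integrality of the $L$-value ratio.} First I would establish the analogue of Hurwitz's theorem giving $Q(m,n)\in\QQ$, together with a precise description of which primes $\ell>m$ can divide its denominator. The point is that $\Lambda(\psi_m,s)$ is, up to a power of $2\pi$ and an explicit Gauss-sum factor, a period attached to the CM elliptic curve $E=\CC/\sqrt{-3}\,\cO_F$ that resolves a cusp of $X^*_{\GaSq}$; a Damerell/Hurwitz-type computation then expresses $Q(m,n)$ in terms of Eisenstein--Kronecker numbers, and using the $F$-integrality of the Fourier--Jacobi expansions of the scalar forms $x_i,\zeta$ (Proposition~\ref{scalar-rings}) one controls exactly which $\ell$ can appear, separating the genuine numerator $\ell$ from Tamagawa-type factors.

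\textbf{Step 2: An integral Eisenstein cohomology class.} Next I would take the local system $\WW_\lambda$ with $(a,b)=(2n-m-2,\,m-n)$, so that the predicted weight $(b,a+3)$ is matched, and produce an Eisenstein class in $H^2_{\rm Eis}(X_{\GaSq},\WW_\lambda)$ whose Hecke eigenvalues are exactly $\bar\nu_p^{\,a+b+2}+(p^{a+1}+1)\nu_p^{\,b+1}$. Harder's computation of $e_{\rm Eis}$ (our Proposition~\ref{prop-eis}), refined to an integral statement via the exact sequences of Lemma~\ref{compEiscoh} and the canonical extensions of the $\cW_\xi$ over the cusp resolutions, identifies this eigenvalue system as the one cut out along the boundary divisor $D$; the extra factor $p^{a+1}+1$ is precisely the $H^0\oplus H^1$ contribution of the rank-two unipotent bundle $U_{|E}$ described in Corollary~\ref{indecvb}. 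I would then locate this class in $H^2_c(\cX_{\GaSq}\otimes\bar F,\WW_\lambda)\otimes\QQl$ as a $\mathrm{Gal}(\bar F/F)$-stable eigenline matching the appropriate summand $\LL^{i,j}$ of $e_{\rm extr}(\lambda)$ in Section~\ref{sec-conj-main}.

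\textbf{Step 3: Ribet-style lattice argument.} The heart of the matter: working over a sufficiently large finite extension $\cO$ of $\ZZ_\ell$, one fixes a $\mathrm{Gal}$- and Hecke-stable $\cO$-lattice in $H^2_c(\cX_{\GaSq}\otimes\bar F,\WW_\lambda)$ and studies its image in $H^2$. If $\ell$ divides the denominator of $Q(m,n)$, the rational Eisenstein class of Step~2 fails to be primitive for the natural integral normalization, which forces its reduction mod $\ell$ into the image of $H^2_c\to H^2$, i.e.\ into the inner cohomology; by the regularity results of Ragunathan--Li-Schwermer--Saper quoted before Proposition~\ref{prop-bggfilt} the inner cohomology is concentrated in degree $2$ and is pure, and applying the semisimplicity of the Hecke action one extracts a cuspidal eigenform $f\in S_{b,a+3}(\GaSq)$ with $\lambda_{\nu_p}(f)\equiv_\ell \bar\nu_p^{\,a+b+2}+(p^{a+1}+1)\nu_p^{\,b+1}$, exactly the asserted congruence; one checks separately that $f$ can be taken genuine (not itself one of the lifts of Section~\ref{sec-conj-lifts}) by comparing with the eigenvalue systems listed there.

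\textbf{Main obstacle.} The serious difficulties are both integral. In Step~1 one must pin down the exact period and Gauss-sum factors so that no spurious $\ell$ enters the denominator of $Q(m,n)$ for a non-split algebraic Hecke character of the imaginary quadratic field $F$; this is delicate precisely because $\psi_m$ is not a base change. In Step~3 one needs a good integral model of $\cX_{\GaSq}$ (toroidal or Borel--Serre) together with control of the boundary cohomology over $\ZZ_\ell$: the surface is open and $D$ is a union of elliptic curves with $E_i^2=-3$, so Poincaré duality over $\ZZ_\ell$ and the self-intersection data feed directly into the denominator estimate, and this is the step where Harder's heuristic must be upgraded to an actual theorem. (A partial check, independent of these obstructions, is that the congruence is already consistent with Main Conjecture~\ref{conj-main}: the Eisenstein/lift eigenvalue system of Step~2 is one of the contributions to $e_{\rm extr}(\lambda)$, so a congruence mod $\ell$ to a genuine Hecke eigenvalue is exactly what a non-primitive lattice in $e_c(\lambda)$ would produce.)
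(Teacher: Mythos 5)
The statement you are addressing is stated in the paper as a conjecture (explicitly attributed to Harder) and the paper provides no proof of it whatsoever; the only supporting material is the numerical table of candidate congruences and $L$-value denominators, found by inspecting the heuristic Hecke-eigenvalue data produced under Main Conjecture~\ref{conj-main}. So there is no paper argument to compare your sketch against, and no discrepancy to flag: you correctly identified that this is a conjecture and laid out a plausible strategy in the spirit of Harder's philosophy (\cite{Ha1}, \cite{Harder-shape}) and Dummigan's reformulation (\cite{Dummigan}), rather than claiming a proof.

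As for the sketch itself: your bookkeeping $(a,b)=(2n-m-2,\,m-n)$ does yield $(b,a+3)=(m-n,2n-m+1)$ as required, and the predicted Eisenstein eigenvalue system $\bar\nu_p^{\,a+b+2}+(p^{a+1}+1)\nu_p^{\,b+1}$ is indeed the one matching the boundary contributions of Proposition~\ref{prop-eis} after normalization. The three steps you outline (rationality/$\ell$-integrality of the critical-value ratio, an integral Eisenstein class with the prescribed eigenvalues, and a Ribet-style lattice argument to force a cuspidal congruence) are the standard skeleton for such results and correctly identify the genuine obstacles, especially the need for an integral comparison between $H^2_c$ and $H^2$ over a good integral model and the period normalization for the non-split Hecke character $\psi_m$. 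None of this is established in the paper; the paper merely verifies the predicted congruences against its conjecturally computed traces. One small caution: in Step~3 the candidate cusp form need not a priori be genuine, and separating the endoscopic/lifted eigenvalue systems of Section~\ref{sec-conj-lifts} from the genuine ones requires more than a comparison of eigenvalue lists modulo $\ell$ unless one also controls the Galois representation attached to $f$; this is a real additional step that your parenthetical remark underestimates.
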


We found the following cases where the data available to us are in accordance with this conjecture. If the index of the local system ${\WW}_{\lambda}$
is $\lambda=(a+i,i,-b+i)$, we list the weight $(j,k,l)=(b,a+3,i+2)$
of the modular forms,
the representation of $\mathfrak{S}_4$, the index $(m,n)=(a+2b+3,a+b+3)$, the congruence prime $\ell$ 
and the value of $Q(m,n)$.

\begin{footnotesize}
\bigskip
\vbox{
\centerline{\def\quad{\hskip 0.3em\relax}
\vbox{\offinterlineskip
\hrule
\halign{&\vrule#& \quad \hfil#\hfil \strut \quad  \cr
height2pt &\omit&&\omit&&\omit&&\omit&&\omit& \cr
& $j,k,l$ && $\mu$ && $(m,n)$ && $\ell$ && $Q(m,n)$ & \cr
height2pt&\omit&&\omit&&\omit&&\omit&&\omit & \cr
\noalign{\hrule}
height2pt&\omit&&\omit&&\omit&&\omit&&\omit & \cr
& $2,11,2$ && $(1^4)$ && $(15,13)$ && $53$ && $2^4/3\cdot 53$ & \cr
& $1,13,1$ && $(1^4)$ && $(15,14)$ && $19$ && $53/2\cdot 5 \cdot 19$ & \cr
& $1,13,1$ && $(2,1^2)$ && $(15,13)$ && $19$ && $53/2 \cdot 5 \cdot 19$ & \cr
& $6,9,0$ && $(1^4)$ && $(21,15)$ && $271$ && $233/2\cdot 5 \cdot 271 $ & \cr
& $3,12,0$ && $(4)$ && $(18,15)$ && $29$ && $3 \cdot 5/2 \cdot 29$ & \cr
& $5,11,2$ && $(1^4)$ && $(21,16)$ && $17$ && $271/2\cdot 3\cdot 11 \cdot 17$ & \cr
& $2,20,2$ && $(4)$ && $(24,22)$ && $97$ && $11\cdot 457/2^2 \cdot 3\cdot 5^2 \cdot 97$ & \cr
& $1,22,1$ && $(4)$ && $(24,23)$ && $41$ && $23\cdot 97 /2 \cdot 3\cdot 5 \cdot 11 \cdot 41$ & \cr
& $0,27,0$ && $(1^4)$ && $(27,27)$ && $449$ && $3^2\cdot 179 \cdot 223/2^2\cdot 11 \cdot 17 \cdot 23 \cdot 449$ & \cr
& $0,33,0$ && $(1^4)$ && $(33,33)$ && $17093$ && $19\cdot 84802789/2^2\cdot 5 \cdot 11 \cdot 17 \cdot 23 \cdot 29 \cdot 17093$ & \cr
height2pt&\omit&&\omit&&\omit&&\omit&&\omit& \cr
} \hrule}
}}

\end{footnotesize}

\noindent
The Hecke eigenvalues in two of these cases are found in the following table.

\begin{footnotesize}
\smallskip
\vbox{
\bigskip\centerline{\def\quad{\hskip 0.6em\relax}
\def\quod{\hskip 0.5em\relax }
\vbox{\offinterlineskip
\hrule
\halign{&\vrule#&\strut\quod\hfil#\quad\cr
height2pt&\omit&&\omit&&\omit&\cr
&$p$ && $S_{2,11,2}^{\rm gen}(\GaSq)^{(1^4)}$  && $S_{6,9,0}^{\rm gen}(\GaSq)^{(1^4)}$ & \cr
\noalign{\hrule}
& $7$   && $113760\rho+180273$ &&  $-742581\rho-967245 $ & \cr
& $13$   && $6574680\rho+4136763 $ &&  $ -11444355\rho+37295661 $ & \cr
& $19$   && $-3105720\rho+22527309$ &&  $-1411116471\rho-1183781976$ & \cr
& $31$   && $1128613680\rho-206255175$ &&  $ 2162847960\rho+20439895125$ & \cr
& $37$   && $-1059546600\rho-631344705$ &&  $113910723225\rho+29288724825$ & \cr
& $43$   && $-3998935080\rho-6398875995$ &&  $-55912815000\rho-92116884255$ & \cr
} \hrule}
}}
\end{footnotesize}
\noindent

In fact, we searched for congruences in our heuristic data (for the cases where the space
$S^{\mathrm{gen}}_{j,k,l}(\GaSq)^{\mu}$ has dimension $1$) 
and then checked the value of the
corresponding quotient of critical $L$-values. In all cases except one
the congruence prime showed up in $Q(m,n)$. The one extra congruence
not explained by the above conjecture occurs for the local system ${\WW}_{\lambda}$
with $\lambda=(16,1,1)$ and $\mu=(3,1)$. We found a congruence
modulo $\ell=37$. But the corresponding $Q(18,18)=3\cdot 7 \cdot 19/2 \cdot 5 \cdot 11\cdot 17$
does not show $37$. Harder thinks that this congruence might be due to the second factor
$c(\phi,0)$ in \cite[page 590]{Ha1}. We list some eigenvalues for the case $\lambda=(16,1,1)$.

\begin{footnotesize}
\smallskip
\vbox{
\bigskip\centerline{\def\quad{\hskip 0.6em\relax}
\def\quod{\hskip 0.5em\relax }
\vbox{\offinterlineskip
\hrule
\halign{&\vrule#&\strut\quod\hfil#\quad\cr
height2pt&\omit&&\omit&\cr
&$p$ && $S_{0,18,0}(\GaSq)^{(3,1)}$& \cr
\noalign{\hrule}
& $7$   && $-37133403-19436265\rho$ & \cr
& $13$   && $-114953793-826184565\rho$ & \cr
& $19$   && $82348187646+48917648907\rho$ & \cr
& $31$   && $2339550247917-489600934794\rho$ & \cr
& $37$   && $6061060465185+ 27008238932829\rho$ & \cr
& $43$   && $-13426382809671-41363330321286\rho$ & \cr
} \hrule}
}}
\end{footnotesize}

\end{section}

\end{document}